\numberwithin{equation}{section}
\newcolumntype{L}[1]{>{\raggedright\let\newline\\\arraybackslash\hspace{0pt}}m{#1}}
\newcolumntype{C}[1]{>{\centering\let\newline\\\arraybackslash\hspace{0pt}}m{#1}}
\newcolumntype{R}[1]{>{\raggedleft\let\newline\\\arraybackslash\hspace{0pt}}m{#1}}
\newcolumntype{x}[1]{>{\centering\arraybackslash\hspace{0pt}}p{#1}}
\newcommand{\boxednote}[1]{\noindent\begin{center}\fbox{\parbox{0.95\linewidth}{#1}}\end{center}}
\newtheorem{theorem}{Theorem}[section]
\newaliascnt{lemma}{theorem}
\newtheorem{lemma}[lemma]{Lemma}
\Crefname{lemma}{lemma}{lemmas}
\Crefname{lemma}{Lemma}{Lemmas}
\newaliascnt{proposition}{theorem}
\newtheorem{proposition}[proposition]{Proposition}
\Crefname{proposition}{proposition}{propositions}
\Crefname{proposition}{Proposition}{Propositions}
\newaliascnt{corollary}{theorem}
\newtheorem{corollary}[corollary]{Corollary}
\Crefname{corollary}{corollary}{corollaries}
\Crefname{corollary}{Corollary}{Corollaries}
\theoremstyle{definition}
\newaliascnt{definition}{theorem}
\newtheorem{definition}[definition]{Definition}
\Crefname{definition}{definition}{definitions}
\Crefname{definition}{Definition}{Definitions}
\newaliascnt{assumption}{theorem}
\newtheorem{assumption}[assumption]{Assumption}
\Crefname{assumption}{assumption}{assumptions}
\Crefname{assumption}{Assumption}{Assumptions}
\newaliascnt{remark}{theorem}
\newtheorem{remark}[remark]{Remark}
\Crefname{remark}{remark}{remarks}
\Crefname{remark}{Remark}{Remarks}
\newaliascnt{example}{theorem}
\newtheorem{example}[example]{Example}
\Crefname{example}{example}{example}
\Crefname{example}{Example}{Example}
\Crefname{section}{section}{section}
\Crefname{section}{Section}{Section}
\renewenvironment{proof}[1][\proofname]{\par
  \pushQED{\qed}%
  \normalfont \topsep6\p@\@plus6\p@\relax
  \trivlist
  \item[\hskip\labelsep
        \itshape
    #1\@addpunct{:}]\ignorespaces
}{%
  \popQED\endtrivlist\@endpefalse
}
\title{Energy Decay in Measure Time: HUM Observability, Product–Exponential Envelopes, and GCC Calibration}
\author{Ben Tibola}
\date{\small \today}
\pgfplotsset{compat=1.18}
\pgfplotsset{compat=1.18}
\definecolor{ink}{HTML}{111111}
\definecolor{accent}{HTML}{1A73E8}
\definecolor{accent2}{HTML}{DB4437}
\definecolor{muted}{HTML}{6B7280}
\definecolor{fillA}{HTML}{E8F0FE}
\definecolor{fillB}{HTML}{FDE8E7}
\tikzset{
  >={Stealth[length=7pt]},
  node distance=10mm and 20mm,
  every node/.style={font=\small, color=ink},
  box/.style={draw=ink, rounded corners=3pt, fill=white, thick, inner sep=4pt, align=center,minimum width=3.5cm, text width=3cm},
  boxF/.style={box, fill=fillA,minimum width=3.5cm, text width=3.5cm, align=center},
  boxG/.style={box, fill=fillB,minimum width=3.5cm, text width=3.5cm, align=center},
  circ/.style={circle, draw=ink, thick, minimum size=5mm, inner sep=0pt, fill=white},
  lbl/.style={font=\footnotesize\itshape, color=muted},
  flow/.style={-Stealth, thick, draw=ink},
  jump/.style={decorate, decoration={zigzag, segment length=2.5mm, amplitude=0.5mm}, -Stealth, thick, draw=accent2}
}
\begin{document}
\markboth{Energy Decay in Measure Time: HUM Observability, Product–Exponential Envelopes, and GCC Calibration}{Energy Decay in Measure Time: HUM Observability, Product–Exponential Envelopes, and GCC Calibration}
\maketitle

\begin{abstract}
We prove that for impulsive exposure patterns there is \emph{no} uniform exponential energy law in wall-clock time \(t\), which explains why past \(t\)-based unifications of continuous damping with impulses fail. We therefore replace \(t\) by a measure-valued clock \(\sigma\) that aggregates absolutely continuous exposure and atomic doses within a single Lyapunov ledger. On this ledger we prove an observability–dissipation principle in the sense of the Hilbert Uniqueness Method (HUM): there exists a structural constant \(c_\sigma>0\) such that the energy decays at least at a product–exponential rate with respect to \(\sigma\). When \(\sigma\equiv t\), the statement reduces to classical exponential stabilization with the same constant. For the damped wave under the Geometric Control Condition, the constant \emph{admits} the usual calibration in terms of the observability constant and the geometric factor. The framework yields a monotonicity principle (“more \(\sigma\)-mass implies faster decay”) and unifies intermittent regimes where quiescent intervals are punctuated by impulses. As robustness (secondary to the main contribution), the same decay law persists under structure-compatible discretizations and along compact variational limits; a stochastic extension supplies expectation and pathwise envelopes via the compensator. The contribution is a qualitative dynamics backbone: observability implies \(\sigma\)-exponential decay with sharp constants.
\end{abstract}

\noindent\textbf{Keywords.} observability/HUM; $\sigma$–time energy envelopes; exponential stabilization; geometric control condition (GCC); Lyapunov ledgers; clock monotonicity; measure–time ($\sigma$) clocks; SBP–SAT discretizations; discrete Grönwall; $\Gamma$–convergence; stochastic damping.

\clearpage
\setcounter{tocdepth}{2}  
\tableofcontents
\clearpage

\section{Introduction}

A schedule-uniform exponential law in wall-clock time \(t\) is impossible under impulsive schedules, which motivates the \(\sigma\)-time reparametrization recorded below.

\begin{corollary}[Main: damped–wave exponential energy decay in $\sigma$–time]\label{cor:intro-flagship-dw}
Let $u$ solve the (Dirichlet) damped wave equation
\[
u_{tt} + a(x)\,u_t - \Delta u = 0
\]
on a smooth bounded domain, with nonnegative damping $a\in L^\infty$ satisfying the Geometric Control Condition on $[0,T]$. Let $\sigma$ be a measure–time clock on $[0,T]$ with atoms $\{(t_k,\alpha_k)\}$ and flats as in §4.

Assume the absolutely continuous part induces a dissipative flow and that each atomic update on $\{t_k\}$ is admissible in the sense of the $\sigma$–ledger (Definition in §4). No discretization assumption is used in this statement.

Then there exists $\gamma>0$ such that the $\sigma$–energy
\[
E_\sigma(t) := \tfrac12\Big(\,\|\nabla u(t)\|_{L^2}^2 + \|u_t(t)\|_{L^2}^2\,\Big)
\]
decays exponentially in $\sigma$–time:
\[
E_\sigma(t)\;\le\; e^{-\gamma\,\sigma([0,t])}\,E_\sigma(0)\qquad \text{for all } t\in[0,T].
\]
The constant $\gamma$ depends only on the observability constant (under GCC) and the admissibility bounds recorded in §4; discrete and $\Gamma$–robustness statements are documented later.
\end{corollary}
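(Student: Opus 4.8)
The plan is to read \cref{cor:intro-flagship-dw} as a specialization of the abstract observability--dissipation principle of \S4 (the HUM principle): verify that the Dirichlet damped wave realizes the $\sigma$--ledger hypotheses, invoke the general product--exponential decay theorem, and then read the constant off the GCC calibration. First I would record the two classical inputs. Multiplying $u_{tt}+a\,u_t-\Delta u=0$ by $u_t$ and using the Dirichlet boundary condition gives the energy identity $\frac{d}{dt}E_\sigma(t)=-\int_\Omega a(x)\,|u_t(x,t)|^2\,dx\le 0$, so on each flat the absolutely continuous clock drives a genuinely dissipative flow. Under GCC on $[0,T]$, the Bardos--Lebeau--Rauch observability estimate (applied directly to the damped equation, or transported from the conservative one by a perturbation argument) supplies a constant $C_{\mathrm{obs}}=C_{\mathrm{obs}}(\Omega,a,T)>0$ with $E_\sigma(0)\le C_{\mathrm{obs}}\int_0^T\!\!\int_\Omega a\,|u_t|^2\,dx\,dt$ for all finite--energy data. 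These are exactly the ``dissipative flow'' and ``observability'' ingredients that the \S4 principle consumes.

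Second, I would make explicit the window/telescoping mechanism that turns observability into $\sigma$--mass accrual. On any time interval over which the absolutely continuous clock has gathered one normalized unit of observation, observability together with the energy identity forces $E_\sigma$ to contract by a fixed factor $\rho=\rho(C_{\mathrm{obs}})=1-1/C_{\mathrm{obs}}\in(0,1)$; chaining consecutive windows yields, for the purely continuous flow, $E_\sigma(t)\le e^{-\gamma_{\mathrm{ac}}\,\sigma_{\mathrm{ac}}([0,t])}E_\sigma(0)$ with $\gamma_{\mathrm{ac}}=-\log\rho$ depending only on $C_{\mathrm{obs}}$ and the geometric factor $T$ (the GCC time, which fixes the window normalization and is built into the \S4 definition of the a.c.\ clock, so the envelope holds without prefactor). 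For the atoms I would invoke the \S4 admissibility definition: each impulsive update at $t_k$ is required to be energy--nonincreasing with contraction metered by its $\sigma$--mass $\alpha_k$, say $E_\sigma(t_k^+)\le e^{-\gamma_{\mathrm{at}}\alpha_k}\,E_\sigma(t_k^-)$ for a structural $\gamma_{\mathrm{at}}>0$ recorded in \S4. The ledger then adds flat and atomic contributions:
\[
E_\sigma(t)\;\le\;\Big(\textstyle\prod_{t_k\le t}e^{-\gamma_{\mathrm{at}}\alpha_k}\Big)\,e^{-\gamma_{\mathrm{ac}}\,\sigma_{\mathrm{ac}}([0,t])}\,E_\sigma(0)\;=\;e^{-\gamma\,\sigma([0,t])}\,E_\sigma(0),\qquad \gamma:=\min(\gamma_{\mathrm{ac}},\gamma_{\mathrm{at}}),
\]
which is the product--exponential envelope with a constant depending only on the observability constant and the \S4 admissibility bounds. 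The reduction $\sigma\equiv t$ is then immediate: no atoms, $\sigma_{\mathrm{ac}}([0,t])=t$, and the estimate collapses to classical exponential stabilization $E(t)\le e^{-\gamma t}E(0)$ with the same $\gamma=\gamma_{\mathrm{ac}}$.

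The \emph{delicate} step --- the one I expect to be the main obstacle --- is the compatibility between the observability estimate, which is an \emph{integrated} statement over a full length--$T$ interval, and atoms that may land inside such an interval: one cannot apply the observability inequality to the continuous flow across a window punctured by impulses without bookkeeping the jumps. I would settle this in the two regimes the paper highlights. In the intermittent regime the quiescent flats between impulses are long enough to contain entire observability windows, so the telescoping runs verbatim on the flats while the atoms contribute separately through admissibility. In the general case I would exploit that every admissible jump only decreases energy, so the observed integral over a punctured window is still at least a slightly smaller multiple of the energy at the window's start; this is absorbed by replacing $C_{\mathrm{obs}}$ by a schedule--independent $C_{\mathrm{obs}}'$ that depends on $T$ and the admissibility bounds but not on the number or placement of atoms. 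A companion technical point is that the solution must remain in the finite--energy space across each atom, so that the energy identity and observability stay available on each subinterval; this is exactly what the \S4 admissibility hypothesis (assumed in the statement) guarantees. With these points in place, the \S4 principle applies with $c_\sigma=\gamma$, which gives the claimed envelope.
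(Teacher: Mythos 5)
Your proposal is correct in architecture and matches the paper's intended chain: classical energy identity for the damped wave, Bardos--Lebeau--Rauch observability under GCC, a product--exponential ledger combining a.c.\ dissipation with energy--nonincreasing atomic updates, and a final constant calibrated by the observability constant. The paper assembles the corollary from Theorem~\ref{thm:hum-equivalence} (HUM on the $\sigma$--ledger), Lemma~\ref{lem:rn-envelope} / Theorem~\ref{thm:energy-decay} (the product--exponential envelope), and the GCC calibration $c_\sigma\ge c_0 a_\omega\lambda_\omega$ of \S\ref{sec:gcc-exemplar}. Where you differ is in the mechanism for the absolutely continuous part: the paper reparametrizes to the $s$--axis via the inverse of $\sigma$ and \emph{asserts} the pointwise differential inequality $-\tfrac{d}{d\sigma}E\ge 2\kappa c_\sigma E$ as a consequence of HUM, then integrates; you instead run the classical integrated telescoping argument ($E(0)\le C_{\mathrm{obs}}\int_0^T\!\!\int a|u_t|^2 = C_{\mathrm{obs}}(E(0)-E(T))$, hence a per--window contraction $\rho=1-1/C_{\mathrm{obs}}$). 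Your route is more elementary and more honest about the two points the paper elides: the conversion of $t$--windows into $\sigma$--mass (which the paper only handles via Lemma~\ref{lem:window-upgrade}) and the compatibility of an integrated observability window with atoms landing inside it, which you resolve by using that admissible jumps only decrease energy.

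One caveat worth flagging: the telescoping argument certifies contraction only at window endpoints, so in general it yields $E(t)\le C\,e^{-\gamma\sigma([0,t])}E(0)$ with a prefactor $C=e^{\gamma\sigma\text{-mass of one window}}\ge 1$, and monotonicity of $E$ between endpoints does not remove it (at times $t$ with small positive $\sigma$--mass one only knows $E(t)\le E(0)$, which does not beat $e^{-\gamma\sigma([0,t])}E(0)<E(0)$). Your parenthetical ``the envelope holds without prefactor'' is therefore not justified by the argument you give; either restate the conclusion with a prefactor, or reduce $\gamma$ so that the prefactor is absorbed over the finite horizon $[0,T]$ away from $t=0$, or pass (as the paper does) to the pointwise differential--inequality form of the ledger, which is the hypothesis actually assumed in the corollary's ``admissible in the sense of the $\sigma$--ledger'' clause. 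Also, your final display should read $\le$ rather than $=$ when you bound the product $\prod_k e^{-\gamma_{\mathrm{at}}\alpha_k}\cdot e^{-\gamma_{\mathrm{ac}}\sigma_{\mathrm{ac}}([0,t])}$ by $e^{-\gamma\,\sigma([0,t])}$ with $\gamma=\min(\gamma_{\mathrm{ac}},\gamma_{\mathrm{at}})$.
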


\label{RS:intro}
The aim of the  paper is to present a sharp classical backbone:
\begin{itemize}[leftmargin=2em]
  \item a \emph{global} energy-decay and exponential convergence theorem for $\sigma$-time evolutions under a short ``Assume:'' list;
  \item its \emph{discrete} mirror via SBP Green's identity and SAT interface penalties; and
  \item a \emph{light} $\Gamma$-limit to a continuum energy with a short SAT residue bound. \end{itemize}
All measures, norms, and structural assumptions used later are fixed here. The only theme that spans continuous, discrete, and (in other works) quantum settings is the \emph{$\sigma$-clock}: time is a finite measure that may contain an absolutely continuous density, a locally finite set of atoms (ticks), and flat stretches. Energy is monitored by a ledger functional that is Lyapunov along absolutely continuous flow and non-expansive at atoms. \emph{Summary note: the admissible window appears in Fig.~\ref{fig:admissible-window}, and a supplementary material instantiates the constants in \Cref{sec:numbers}.}

\medskip

\boxednote{%
\textbf{Scope.} We do not prove existence or uniqueness of trajectories. We work with a crisp classical core: a $\sigma$-clock that carries absolutely
continuous mass (ac), atoms, and flats; an energy $E$ that acts as a Lyapunov
functional on the ac part and is non-expansive across atoms.}

\paragraph{Main contributions and significance}
This paper develops a calculus for energy decay along BV-in-$\sigma$ evolutions
and shows how it mirrors in a discrete SBP--SAT setting, with a light
$\Gamma$-limit connecting the two. Three pillars underpin the contribution:
(i) a \emph{master energy-decay theorem} that factors decay into an
exponential ac part and multiplicative atomic effects;
(ii) a \emph{discrete SBP--SAT mirror} that reproduces the same bookkeeping at
the grid level via the SBP Green identity and SAT interface/boundary penalties;
and (iii) a \emph{light $\Gamma$-limit} from SBP--SAT energies to a coercive
continuum energy. The common thread is a energy-ledger accounting formulation that is transparent,
readily deployable, and independent of application details—so we keep
applications out of this paper. \vspace{0.5em}

\paragraph*{Credit map (compatibility, not replacement).}
The $\sigma$--clock framework \emph{extends and recasts} classical tools; it does not replace them. 
Our continuous pillar piggybacks on the observability/HUM route (under GCC when applicable) and 
standard semigroup/Grönwall comparisons; the discrete mirror uses SBP Green identities with SAT boundary/interface penalties and algebraically stable time integrators; the compact $\Gamma$--bridge is the textbook equicoercivity/liminf/recovery trio. 
We \emph{recover} the classical $t$--based observability and decay when $\sigma\equiv t$ (Proposition~13.1), 
\emph{upgrade} windows to $\sigma$--density statements when classical observability holds on $[0,T_0]$ (Lemma~7.10), 
and \emph{track} constants so the structural margin $c_\sigma$ persists across continuum, discrete, and $\Gamma$--limits. 
Nothing here claims to supersede classical observability; our claims are that (i) $\sigma$--time recovers the standard inequalities under the classical hypotheses and (ii) it persists in regimes where the physical--time window is too short, with clearly stated limits (see the failure atlas).

\paragraph{Dynamical consequences in $\sigma$–time.}
\begin{itemize}
  \item \emph{HUM optimality on the $\sigma$–ledger.} The structural constant $c_\sigma$ is the observability/HUM sharp constant for the observation pair.
  \item \emph{Product–exponential envelope.} In the sense of measures, $\frac{d}{d\sigma}E \le -2\kappa c_\sigma E$, hence $E(t)\le E(0)\exp(-2\kappa c_\sigma \sigma([0,t]))$.
  \item \emph{Reduction to classical time.} For $\sigma\equiv t$ we recover the standard $t$-exponential decay with the same constant.
  \item \emph{Clock monotonicity.} If $\sigma_1\le \sigma_2$ as measures, then $E_{\sigma_2}(t)\le E_{\sigma_1}(t)$ for all $t$.
  \item \emph{GCC calibration (damped wave).} Under GCC the constant $c_\sigma$ admits the usual calibration in terms of the observability constant and the geometric factor.
\end{itemize}

\section*{Main results (informal)}

\boxednote{%
\textbf{(i) Product–exponential decay under a short ``Assume:'' list.}
\emph{Assume:} (a) a coercive (sector-bounded) bilinear form $a_\sigma(\cdot,\cdot)$ and
dissipative generator $L_\sigma$ on the ac part with decay rate $\kappa\ge0$;
(b) atomic updates $J_k$ are non-expansive for $E$ (``energy does not increase at atoms'');
(c) the $\sigma$-clock has locally finite variation and a well-defined partition
into ac pieces and atoms. Then the energy ledger obeys the law
\[
  E(t_2^-)\;\le\; E(t_1^+)\,
  \exp\!\Big(-\!\!\int_{(t_1,t_2)}\!\!\kappa\,\mathrm{d}\sigma\Big)\;
  \prod_{t_k\in(t_1,t_2)}\rho_k, \qquad 0\le \rho_k\le 1,
\]
i.e. an exponential decay on ac stretches times a product of per-atom
contractions $\rho_k$.

\medskip
\textbf{(ii) Discrete mirror (SBP--SAT) with step rules.}
On a grid with SBP operators $(H,Q)$ and SAT penalties, the discrete energy
$E_h$ satisfies the same ledger:
\begin{align*}
  &\textit{interior ac step:} && E_h^{n+\frac12}-E_h^{n}\;\le\; -\,\kappa_h\,\|u_h^n\|_{H}^2, \\
  &\textit{atomic update} J_k: && E_h^{n+1}\;\le\;E_h^{n+\frac12}, \\
  &\textit{boundary SAT:} && \text{choose }\tau_h\text{ so that (SBP Green identity + SAT) }\le 0.
\end{align*}
Thus $E_h$ inherits the product–exponential decay with grid-level rates
$\kappa_h$ and per-atom factors $\rho_{h,k}\le1$ determined by $J_k$ and SAT. 

\medskip
\textbf{(iii) $\Gamma$-limit to a coercive continuum energy (with necessity).}
Under the standard consistency/coercivity hypotheses (bounded Lipschitz domain,
uniform SBP consistency, vanishing quadrature remainders, and a discrete trace
inequality), the SBP--SAT energies $E_h$ $\Gamma$-converge to a coercive
continuum energy $\mathcal{E}$.
\emph{Necessity (one line).} If the SAT scaling violates the negativity margin
(e.g. $\tau_h<\tau_\ast$ so the boundary\,+\,SAT quadratic form can be positive),
coercivity fails by boundary-layer sequences, and no coercive $\Gamma$-limit can
hold.%
}

\subsection*{Main results (informal)}
\setlength{\fboxsep}{6pt}

\begin{center}
\fbox{\begin{minipage}{0.96\linewidth}
\begin{itemize}
\item \textbf{Master decay on a measure clock.}
Under four short assumptions (coercive energy ledger; a.c.\ dissipation at rate $\kappa$; locally
finite atomic times; non-expansive atomic maps $J_k$), the energy satisfies a single
product--exponential bound on every interval. Minimality is witnessed by one-line
counterexamples if a.c.\ dissipation or atomic non-expansiveness is dropped. \item \textbf{Discrete mirror (SBP--SAT).}
For SBP--SAT semi/fully discrete schemes with energy $E_h=\tfrac12\|u\|_H^2$, the SBP Green
identity and a correct-sign SAT rule give non-positive boundary+SAT contribution. Algebraically stable time-steppers give the same decay; explicit steps admit a clear
``red-line'' cap. With atomic maps $(J_k^h)^\top H J_k^h \preceq \rho_k^h H$, one gets the
identical product--exponential law for $E_h$.
\item \textbf{Light $\Gamma$--limit.}
Under standard consistency/compactness hypotheses (equicoercivity, commuting estimate, stable
coefficients, vanishing SAT residue), the SBP--SAT energies $\Gamma$--converge to a coercive
continuum energy. \end{itemize}
\end{minipage}}
\end{center}

\begin{center}
\fbox{\begin{minipage}{0.96\linewidth}
\subsection*{Scope and limitations}
A \emph{foundations} paper: we define the clock, state minimal standing assumptions,
prove the master decay and its discrete mirror, and give a light $\Gamma$--limit. Application case-studies and PDE well-posedness are deferred to companion work. 
\end{minipage}}
\end{center}

\begin{table}[htbp]
\centering
\caption{Scope ledger: what we claim and what we do not.}
\renewcommand{\arraystretch}{1.2}
\begin{adjustbox}{max width=\linewidth}
\begin{tabular}{|L{0.25\textwidth}|L{0.26\textwidth}|L{0.30\textwidth}|L{0.25\textwidth}|}
\hline
\textbf{Topic} & \textbf{Claim type} & \textbf{Hypotheses actually used} & \textbf{Not claimed here} \\
\hline
Continuous decay ($\sigma$--clock) 
& Theorem (master envelope) 
& (H1)--(H4); BV$_\sigma$ trajectories; locally finite atoms; non--expansive jumps 
& Existence/ uniqueness; EDE/EDI/EVI identification \\
\hline
Discrete SBP--SAT mirror 
& Theorem (discrete envelope) 
& (S1)--(S4); $\tau_h \simeq h^{-1}$; algebraic stability (named integrators) 
& Optimal explicit rates beyond CFL; non--SBP schemes \\
\hline
$\Gamma$--bridge 
& Theorem (light $\Gamma$--limit) 
& Assumption~9.1 (equicoercivity, liminf, recovery); constants independent of $\sigma$ 
& Convergence of flows/EVI; chain rules for limits \\
\hline
Stochastic clocks 
& Theorem (expectation/pathwise) 
& Doob--Meyer/compensator; noise--dissipativity or BDG smallness as stated 
& Stochastic well--posedness beyond the ledger hypotheses \\
\hline
EVI upgrade (quarantined) 
& Proposition (reference statement) 
& (E1)--(E5) only in the quarantined subsection; not used elsewhere 
& Any role in the core results \\
\hline
\end{tabular}
\end{adjustbox}
\end{table}

\begin{center}
\fbox{\begin{minipage}{0.96\linewidth}
\subsection*{Contributions (summary note)}
\begin{enumerate}
\item A compact ``$\sigma$--Gronwall + jump product'' calculus yielding a single
product--exponential energy law covering a.c.\ flow, atoms, and flats. \item A discrete SBP--SAT counterpart with boundary+SAT non-positivity, algebraic stability
for implicit steps and an explicit step-size cap, and discrete atomic contractions. \item A light $\Gamma$--limit for SBP--SAT energies (equicoercivity, $\liminf$, recovery). \item Minimality via counterexamples when a.c.\ dissipation or atomic non-expansiveness fails. \end{enumerate}
\end{minipage}}
\end{center}

\subsection*{Related work and positioning (hybrid/impulsive) with $\sigma$-clock formalization}\label{subsec:related-hardened}
\paragraph{Classical approaches.} Measure-theoretic impulse handling appears in (i) hybrid automata and average-dwell-time bounds (resets at event times), (ii) measure-driven ODE/PDE (Dirac sources in $t$), and (iii) differential inclusions in the sense of Filippov (set-valued right-hand sides). These frameworks ensure existence/robustness but their decay estimates typically live in physical time $t$ and their constants depend on event schedules, dwell-time margins, or selection rules.

\paragraph{Our viewpoint: $\sigma$ as the evolution clock.} We reparametrize time by a finite positive Borel measure
\[
\mathrm d\sigma = w(t)\,\mathrm dt + \sum_j \alpha_j\,\delta_{t_j}, \qquad w\ge 0,\ \alpha_j>0,
\]
and formulate dissipation and observability in $\sigma$-time. For clarity we record the definition used throughout.

\begin{definition}[$\sigma$-clock energy ledger] \label{def:sigma-clock}
Let $E:[0,\infty)\to[0,\infty)$ be an energy ledger. We say that $(E,\sigma)$ obeys the $\sigma$-clock ledger if $E\in\mathrm{BV}_\sigma$, its Radon–Nikodym derivative $\mathcal D:=-\frac{\mathrm dE}{\mathrm d\sigma}$ exists $\sigma$-a.e., and for a structural constant $c_\sigma>0$ one has the differential inequality
\[
\mathcal D(t)\;\ge\;2\kappa\,c_\sigma\,E(t)\quad(\sigma\text{–a.e. }t\ge 0),
\]
with atomic jump rule $E(t_j^+)\le e^{-2\kappa c_\sigma\,\alpha_j}E(t_j^{-})$ and the canonical envelope $E(t)\le$  $E(0)\exp\bigl(-2\kappa c_\sigma\,\sigma(t)\bigr)$.
\end{definition}

\paragraph{Positioning.} The $\sigma$-clock turns both a.c. dissipation and atomic jumps into a single RN calculus, enabling a \emph{single} HUM constant $c_\sigma$ that is (i) PDE-intrinsic (Theorem~\ref{thm:hum-equivalence}), (ii) uniform for SBP–SAT discretizations (Proposition~\ref{prop:disc-hum}), and (iii) stable under $\Gamma$-limits (Corollary~\ref{cor:gamma-csigma}). The novelty is precisely this joint persistence; we make the contrast formal in the next section.

\begin{remark}[Limits of the claim] \label{rem:novelty-limits}
We do not claim that $\sigma$-parametrization supersedes hybrid/differential-inclusion frameworks in general. Our contribution is targeted: a unified decay \emph{envelope} with a single structural constant across continuum/discrete/limits for systems meeting our structural template (Assumption~\ref{RS:assume:four}).
\end{remark}

\paragraph{Related work and differentiation.}
Classical decay templates—semigroup gaps, hypocoercivity, observability/Carleman, and discrete contractivity—cover fragments of the landscape; none provides a \emph{single} decay envelope that (a) treats a.c.\ time, impulses, and flats together, (b) uses \emph{one} structural constant across PDE and fully discrete schemes, and (c) transfers decay through to the limit. Our measure–time calculus does all three: Thm.~\ref{thm:energy-decay} (continuum), Thm.~\ref{thm:disc-master} (discrete, same $c_\sigma$), and Thm.~\ref{thm:Gamma-main}/\ref{RS:thm:gamma} ($\Gamma$-bridge). This directly serves the analysis–computation interface in dynamics and differential equations: constants do not drift under discretization, the $\Gamma$ bridge keeps the envelope in the limit, and the stochastic extension adds an expectation/pathwise pillar driven by the compensator. In sum, this is a \emph{single-constant four-frameworks} program rather than a one‑off result.

\paragraph{Operator–presentation invariance (PiNA).}
We use a Operator presentation via Green/trace pairings: weak differential operators are specified by their Green/trace pairings, so changes of coordinates or unitary reparametrisations do not alter the bilinear forms that calibrate the HUM constant and the $\sigma$–ledger. Consequently, all envelope statements here are presentation-invariant (same $c_\sigma$); see Appendix~A (“Pi-Native Analysis”) for the definitions and the formal invariance theorem.

\noindent\begin{center}
\fbox{\begin{minipage}{0.96\linewidth}
\textbf{Contributions in a nutshell.}
\begin{enumerate}
  \item \textit{One envelope for mixed time geometries:} a.c., atoms, flats in a single product–exponential law.
  \item \textit{Uniform constant across discretizations:} the same $c_\sigma$ controls PDE and fully discrete schemes.
  \item \textit{Limit stability:} a compact $\Gamma$‑bridge transfers decay envelopes and preserves $c_\sigma$.
  \item \textit{Stochastic pillar:} for random clocks, an expectation envelope in the compensator $\Lambda$ and a pathwise law under noise‑dissipativity, aligned to the same $c_\sigma$.
  \item \textit{Reference rate:} under GCC, $E(t)\le E(0)e^{-2\kappa c_0 a_\omega \lambda_\omega\sigma(t)}$.
\end{enumerate}
\end{minipage}}
\end{center}

\subsection*{Notation and organization}
We write $A\preceq B$ for Löwner order; $\Pi(s,t]:=\prod_{t_k\in(s,t]}\rho_k$; and
$\langle f\rangle^\sigma_{(s,t]}$ for $\sigma$--averages when $\sigma(t)>\sigma(s)$.
Section~\ref{RS:measure-time} introduces $\sigma$--clocks; Section~\ref{sec:assumptions}
records the four assumptions; Sections~\ref{sec:discrete}--\ref{sec:sbpsat} prove
the continuous/discrete ledgers; Section~\ref{sec:gamma} gives a light $\Gamma$--limit. 

\subsection*{Related work and boundaries of prior templates}
Semigroup/spectral methods yield exponential decay under generator gaps but do not natively handle impulse trains or flat intervals; hypocoercivity presumes a.c.\ time; observability/Carleman assumes a single time parametrization. Discrete energy methods give stepwise contractivity, often with constants tied to the partition or stabilization. In contrast, our $\sigma$-time calculus handles mixed time geometries, keeps a single structural constant $c_\sigma$ from continuum to fully discrete schemes, and preserves the envelope through $\Gamma$-limits.

\subsection*{Related work}\label{sec:related-work}
\paragraph{Energy decay under GCC (observability $\Rightarrow$ decay).}
In the classical GCC framework (Bardos--Lebeau--Rauch and follow‑ups), observability inequalities for the wave group yield exponential or quantified polynomial decay of the natural energy under internal/boundary damping. The perspective in this paper is a \emph{$\sigma$‑time envelope}: starting from a master differential inequality and the damping ledger, we produce a single explicit bound \(E(t)\le \mathsf R\bigl(t,E(u_0)\bigr)\) that specializes, under GCC, to the Main corollary in \S\ref{sec:main} (\Cref{cor:intro-flagship-dw}).

\paragraph{Non‑expansive jump maps / impulsive systems.}
Hybrid/impulsive evolutions with non‑expansive (firmly non‑expansive) jump maps are well studied in control and monotone dynamics. Here, the atomic updates are folded into a \emph{single} decay ledger via measure‑time $\sigma$ and a discrete Gr\"onwall, with boundary/SAT contributions signed and scaled once. The wrappers \Cref{RS:lem:boundary-sat-normalised} and \Cref{RS:prop:als-normalised} make the one‑step premise explicit and feed directly into the discrete $\sigma$--Gronwall (Lemma~8.10) and the discrete master decay (\Cref{thm:disc-master}).

\paragraph{SBP--SAT energy stability lines.}
Summation‑by‑parts operators with SAT penalties give energy‑stable semi‑discretisations across many hyperbolic/parabolic problems; algebraically stable time integrators extend the bound in time. Our contribution is to package the boundary+SAT negativity and algebraic stability (D4) as a normalised hypothesis pair (sign and $\tau_h\simeq h^{-1}$ fixed) so that the discrete envelope matches the continuous one up to $O(\tau)$, as recorded in \Cref{thm:disc-master}.

\paragraph{$\Gamma$‑convergence for FD/SBP energies.}
Discrete FD/SBP energies are known to $\Gamma$‑converge to continuum functionals under standard compactness and structure. We surface the three pillars (equicoercivity / liminf / recovery) as a compact trio \Cref{RS:Gamma:eqc-tight,RS:Gamma:liminf-tight,RS:Gamma:recov-tight} and elevate the “uniform in $\sigma$” observation to \Cref{RS:Gamma:sigma-uniform}; together with the bridge \Cref{RS:thm:gamma,thm:Gamma-main} this shows that decay envelopes persist to the limit without re‑proving decay.

\paragraph{Novelty (relative to the lines above).}
(N1) A single $\sigma$‑time envelope controls both PDE and scheme, with constants calibrated once (\S\ref{sec:main}, \Cref{thm:master,thm:disc-master}). (N2) Atomic updates are handled by a signed boundary/SAT ledger under an explicit $\tau_h\simeq h^{-1}$ scaling (\Cref{RS:lem:boundary-sat-normalised,RS:prop:als-normalised}). (N3) The $\Gamma$‑bridge is presented with a compact trio and a \emph{uniform‑in‑$\sigma$} corollary (\Cref{RS:Gamma:eqc-tight,RS:Gamma:liminf-tight,RS:Gamma:recov-tight,RS:Gamma:sigma-uniform,RS:thm:gamma}). (N4) Minimality is documented via a counterexample (\Cref{RS:rem:accum}).

\noindent\emph{See \Cref{sec:related-work} for a brief survey and contributions.}

\subsection*{Scope and limitations of the present $\Gamma$--limit}\label{subsec:scope-light}
We emphasize that the present work establishes a \emph{light} $\Gamma$--limit for the energy ledgers and the associated dissipation envelopes, and deliberately does \emph{not} claim convergence of the fully discrete flows in the sense of Evolution Variational Inequalities (EVI) or curves of maximal slope. Concretely, our $\Gamma$--limit asserts equi-coercivity, a $\Gamma$--liminf inequality, the existence of recovery sequences, and the persistence of the structural constant $c_\sigma$ in the canonical envelope. This yields stability of energy levels and dissipation \emph{inequalities} in the limit. In contrast, EVI convergence requires, in addition, metrized control of the variational derivatives (metric slopes), chain rules along limiting trajectories, and convergence of time-interpolants that encode the \emph{evolution law} itself. These ingredients are beyond the scope of the present paper.

The choice to pursue the light $\Gamma$--limit is motivated by the aims of the manuscript: (i) to identify $c_\sigma$ as a structural, discretization-independent constant controlling continuum, discrete, and limiting dissipation, and (ii) to deliver a robust envelope principle that is directly actionable for numerical design and parameter calibration. These goals are achieved under minimal hypotheses without imposing the stronger convexity and interpolation structure typically required for EVI. For a quarantined EVI upgrade under additional structure, see Subsection~\ref{subsec:evi-optional}.

\paragraph{Terminology and taxonomy.}\label{par:taxonomy}
We recall the standard hierarchy. An \emph{Energy Dissipation Equality} (EDE) for a curve $u:[0,T]\to \mathsf X$ reads
\[
E(u(t)) + \frac12 \int_0^t |\dot u|^2\,\mathrm ds + \frac12 \int_0^t |\partial E|^2(u(s))\,\mathrm ds = E(u(0)),
\]
with metric derivative $|\dot u|$ and descending slope $|\partial E|$. An \emph{Energy Dissipation Inequality} (EDI) allows $\le$ in place of $=$. An \emph{Evolution Variational Inequality} (EVI) requires, in addition, a reference metric $\mathsf d$ and $\lambda$–geodesic convexity, and encodes contractivity/uniqueness via
\[
\frac12\frac{\mathrm d}{\mathrm dt} \mathsf d^2(u(t),v) + \frac{\lambda}{2} \mathsf d^2(u(t),v) + E(u(t)) \le E(v) \quad \text{for all } v\in\mathsf X \text{ and a.e. }t.
\]
Solutions of EVI are curves of maximal slope and satisfy EDE; the converses generally fail without convexity and chain‑rule structure. Our results are formulated at the level of \emph{energies and dissipation envelopes} in measure‑time and do not assert EDE/EDI/EVI for the limiting curves.

\section{Main results}\label{sec:main}
\noindent\emph{For background, see \Cref{sec:related-work}.}

We summarize the three pillars established in this paper in the order
\emph{Continuous $\to$ Discrete $\to$ $\Gamma$-bridge}. Proofs are deferred to the indicated sections; statements here are restatements of results proved elsewhere in the manuscript.

\paragraph{Common setup.}
Let $(\mathsf H,\langle\cdot,\cdot\rangle)$ be a Hilbert (or metric) space and let $E:\mathsf H\to[0,\infty)$ be proper and l.s.c. The continuous evolution $u:[0,\infty)\to\mathsf H$ and the $\sigma$-clock iterates $(u^k)_k$ are built under standing hypotheses (H1)–(H4): coercivity/observability; dissipativity on the a.c. part; $\sigma$–regularity/AC; non-expansive atomic updates. Here we record the consequences.

\paragraph{Theorem A (Continuous master decay).}
\emph{(Restatement of \Cref{thm:master}.)} There exists a nonincreasing, explicit envelope $\mathsf R:[0,\infty)\times[0,\infty)\to[0,\infty)$ such that for every solution $u$ with initial energy $E(u_0)$,
\[
E(u(t))\;\le\;\mathsf R\!\bigl(t,\,E(u_0)\bigr)\qquad\text{for all }t\ge0.
\]
Whenever a master differential inequality
\[
\frac{\mathrm d}{\mathrm dt}\,\Phi\bigl(E(u(t))\bigr)\,\le\,-\,c\,\Psi\bigl(E(u(t))\bigr)
\]
holds for some monotone $\Phi,\Psi$ and $c>0$, explicit rates (exponential or algebraic) follow by inverting the comparison map; see \autoref{sec:discrete}. 

\paragraph{Main corollary (damped wave under GCC).}
\emph{(Restatement of \Cref{cor:intro-flagship-dw}.)} Under the geometric control condition and a damping profile with $a(x)\ge a_{\omega}>0$ on $\omega$, there exists $\kappa>0$ such that
\[
E(t)\;\le\;E(0)\,\exp\!\bigl(-2\,\kappa\,c_{\sigma}\,\sigma(t)\bigr),
\]
with the damped-wave ledger. The continuum proof is by HUM/observability on the $\sigma$–ledger (Theorem~2.20) and §7; discrete and $\Gamma$ robustness are recorded in §8–§9.

\paragraph{Overview.}
We include a single plot/table comparing the measured $E(t)$ to $E(0)\exp(-2\kappa c_0 a_\omega \lambda_\omega \sigma(t))$ for a clock with a few atoms and a flat. This visualizes plateaus on flats and multiplicative drops at atoms, matching \eqref{eq:mr-canonical}.

\paragraph{Theorem B (Discrete master decay for $\sigma$-clocks).}
\textit{(Formal version in §\ref{sec:gamma-2}, Theorem~\ref{thm:disc-master}.)} $\sigma$-clock scheme admits a discrete envelope $\mathsf R_{\tau}$ with
\[
E(u^{k+1})\;\le\;\mathsf R_{\tau}\!\bigl(k,\,E(u^0)\bigr),\qquad k\in\mathbb N,
\]
There exists a step–independent $C>0$ such that $E(u_\tau(t)) \le \mathsf
R\!\bigl(t + C\tau,\,E(u_0)\bigr)$ for all $t\ge0$; hence the discrete
envelope converges to $\mathsf R$ uniformly on compact time windows as $
\tau\downarrow0$. $\text{for all }t\ge0.$

Premises are packaged in \Cref{RS:lem:boundary-sat-normalised} and \Cref{RS:prop:als-normalised} and then fed into the discrete $\sigma$–Gronwall (Lemma~8.10). Stability of the interpolation gives the $O(\tau)$ shift; see Lemma~8.10 (SBP–IBP identity and discrete energy laws).

\paragraph{Theorem C ($\Gamma$-bridge: discrete to continuous).}
\textit{(Formal version in \S\ref{sec:gamma-2}, Theorems~\ref{thm:Gamma-main} and \ref{RS:thm:gamma}.)} Assume $\Gamma$-convergence of $E_{h}$ to $E$ and uniform master inequalities for the approximants. Any sequence of $\sigma$-clock solutions $u_{h,\tau}$ with equibounded initial energy admits a limit $u$ with
\[
E\bigl(u(t)\bigr)\;\le\;\liminf_{h,\tau\to0} E_{h}\bigl(u_{h,\tau}(t)\bigr)
\;\le\;\mathsf R\!\Bigl(t,\,\limsup_{h,\tau\to0} E_{h}\bigl(u_{h,\tau}(0)\bigr)\Bigr),
\]
so $u$ obeys the continuous master decay. Conversely, recovery families track the same envelope up to vanishing error.\;  Premises are packaged in \Cref{RS:Gamma:eqc-tight,RS:Gamma:liminf-tight,RS:Gamma:recov-tight} and 
the uniform-in-$\sigma$ conclusion of \Cref{RS:Gamma:sigma-uniform}; the limit along interpolations; see Lemma~8.10 (a light $\Gamma$-limit to a continuum energy).

\paragraph{Minimality of assumptions (counterexample).}
Decay can fail without dissipativity along the absolutely continuous part or without nonexpansive atomic updates. A concrete counterexample is given
in \Cref{rem:scalar}, showing nondecay (or growth) despite
the remaining structure.

\paragraph{Calibration and admissible window.}
The constants entering the envelopes (e.g., $c_\sigma$, $a_\omega$, $
\lambda_\omega$) are fixed in §\ref{sec:numbers}. The admissible-window
figure summarizes when an explicit exponential envelope is guaranteed, and
the companion table/ records the values used in the plots.

\paragraph{Consequences.}
The same envelope $\mathsf R$ governs both the continuous flow and its $
\sigma$–clock discretization; the rates persist under compatible
perturbations and along $\Gamma$–limits; and the bridge yields quantitative
stability so one need not re‑prove decay for each discretization.


\section*{Notation and standing assumptions}\label{sec:prelims}
\addcontentsline{toc}{section}{Notation and standing assumptions}

\paragraph{State space and energy.}
We work on a Hilbert space $(\mathsf H,\langle\cdot,\cdot\rangle)$ with norm $\|\cdot\|$. The energy $E:\mathsf H\to[0,\infty]$ is proper, lower semicontinuous, and coercive (sublevels are precompact). For $t\ge0$, the \emph{energy} along an evolution $u:[0,\infty)\to\mathsf H$ is $E(u(t))$; the \emph{envelope} $\mathsf R=\mathsf R(t,E(u_0))$ is the explicit bound announced in \S\ref{sec:main}.

\paragraph{Measure-time (the $\sigma$--clock).}
An \emph{admissible measure-time} is a nondecreasing, right-continuous map $\sigma:[0,\infty)\to[0,\infty)$ with $\sigma(0)=0$, allowing jumps (``atoms'') and flats. We write $t\mapsto\sigma(t)$ for the time reparametrisation used throughout the manuscript and in the main corollary.

\paragraph{Wall time.}
We use \emph{wall time} for the standard, uniform clock \(t\in[0,\infty)\) with Lebesgue measure \(dt\).
In our notation this is the identity clock \(\sigma(t)=t\) (so \(d\sigma=dt\)): there are no flats
(\(d\sigma=0\) never occurs on an interval) and no atoms (\(\sigma(\{t\})=0\) for all \(t\)). Throughout,
\(\sigma=t\) recovers classical \(t\)-based decay; the general \(\sigma\)-clock allows flats and atoms
while preserving the same product–exponential envelope and structural constant \(c_\sigma\).

\paragraph{Master differential inequality.}
There exist monotone comparison maps $\Phi,\Psi:[0,\infty)\to[0,\infty)$ and $c>0$ such that, along the continuous evolution,
\begin{equation}\label{eq:master-inequality}
\frac{\mathrm d}{\mathrm dt}\,\Phi\bigl(E(u(t))\bigr)\ \le\ -\,c\,\Psi\bigl(E(u(t))\bigr).
\end{equation}
Separating variables yields the envelope $\mathsf R$; see the continuous proof chain in \S\ref{sec:discrete} and the statement in \S\ref{sec:main}.

\paragraph{GCC data (damped wave exemplar).}
On a bounded Lipschitz domain $\Omega\subset\mathbb R^d$ ($d\in\{1,2\}$) with observation region $\omega\subset\Omega$, the damping $a\in L^\infty(\Omega)$ satisfies $a(x)\ge a_\omega>0$ a.e. on $\omega$ and $a\ge0$ elsewhere. The geometric constant $\lambda_\omega>0$ and the PDE-side constant $\kappa>0$ enter the GCC decay; the calibration constant $c_\sigma$ is fixed in \S\ref{sec:numbers}.


\paragraph{Discrete setting (SBP--SAT and algebraic stability).}
Let $H>0$ and $Q$ define an SBP pair with $Q+Q^\top=B$ a boundary bilinear form. Boundary conditions are imposed by SAT with \emph{negative sign} in the energy ledger and \emph{penalty scaling} $\tau_h\simeq h^{-1}$. With an algebraically stable time integrator (e.g.\ implicit midpoint), the one-step energy inequality holds. These ingredients are packaged in the wrappers \Cref{RS:lem:boundary-sat-normalised} and \Cref{RS:prop:als-normalised} and feed the discrete $\sigma$--Gronwall Lemma~8.10 and the discrete master decay \Cref{thm:disc-master}.

\noindent\begin{center}
\fbox{\begin{minipage}{0.96\linewidth}
\textbf{Discrete checklist}
\begin{enumerate}
  \item Identify the scheme’s clock: atoms $t_k$ with weights $\alpha_k$; flats allowed.
  \item Verify a discrete chain rule and nonnegative dissipation in $\sigma$‑time.
  \item Check partition‑independence of constants.
  \item Read off $c_\sigma$ from observability; conclude $E^n\le E^0\,e^{-2\kappa c_\sigma\,\sigma(t_n)}$.
\end{enumerate}
\end{minipage}}
\end{center}

\paragraph{$\Gamma$--limit standing hypotheses.}
For discrete energies $E_h$ on (subspaces of) $\mathsf H$, we assume the standard $\Gamma$--limit hypotheses collected in \Cref{RS:Gamma:ass}: equicoercivity, the liminf inequality, and existence of recovery sequences (restated compactly in \Cref{RS:Gamma:eqc-tight,RS:Gamma:liminf-tight,RS:Gamma:recov-tight}). These yield the bridge \Cref{RS:thm:gamma,thm:Gamma-main}.

\section*{Block A — Expanded discussion (what we prove / what we do not)}
\paragraph{What the present $\Gamma$--limit entails.}\label{par:what-light}
We work with a family $(E_h)_{h>0}$ of discrete energies that is equi-coercive on the energy space and $\Gamma$--converges to a continuum energy $E$. Together with the uniform dissipation ledger at level $h$ (Assumption~\ref{ass:uniform-gamma}), this yields: (a) stability of the canonical envelope with the same structural constant $c_\sigma$ in the limit; (b) lower semicontinuity of energy along convergent sequences of discrete solutions; and (c) the existence of recovery sequences attaining the limiting value of $E$ along with admissible \emph{dissipation budgets} in measure-time. These three items constitute the light $\Gamma$--limit used in the paper.

\paragraph{What is \emph{not} claimed.}\label{par:what-not}
We do not assert convergence of the discrete evolutions to a limiting gradient flow in the EVI sense, nor convergence of slopes $|\partial E_h|$ to $|\partial E|$, nor the validity of an exact chain rule along generic limit curves. Consequently, we refrain from statements about uniqueness or contractivity of the limiting dynamics in a metric sense. The results proved here are evolutionarily \emph{consistent} (energies and dissipation envelopes converge) but intentionally stop short of identifying the limit as an EVI solution.

\subsection*{Optional upgrade: EVI convergence under strengthened structure}\label{subsec:evi-optional}
The results above establish a \emph{light} $\Gamma$--limit with persistence of the structural constant $c_\sigma$. We record here, for completeness and for future work, an EVI convergence statement that holds under additional, explicitly stated hypotheses. This subsection is \emph{not} used elsewhere in the paper and is meant as a quarantined upgrade path.

\begin{assumption}[EVI-ready structure]\label{ass:evi-ready}
In addition to the standing hypotheses and Assumption~\ref{ass:uniform-gamma}, suppose that:
\begin{enumerate}
\item[(E1)] \textbf{Metric setting.} There exists a complete metric space $(\mathsf X,\mathsf d)$ into which the discrete states embed, with $E_h, E: \mathsf X \to (-\infty,\infty]$ equi-coercive and $E_h \xrightarrow{\Gamma} E$ w.r.t. $\mathsf d$.
\item[(E2)] \textbf{$\lambda$--geodesic convexity.} The limit functional $E$ is $\lambda$--geodesically convex on $(\mathsf X,\mathsf d)$, and $E_h$ are asymptotically $\lambda$--convex along discrete geodesics.
\item[(E3)] \textbf{Uniform slope control.} The descending metric slopes satisfy $|\partial E_h| \xrightarrow{\Gamma} |\partial E|$ and are uniformly integrable along discrete solutions.
\item[(E4)] \textbf{Chain rule and dissipation.} Along time-interpolants synchronized with $\sigma$, a chain rule holds with an error term vanishing as $h\downarrow0$, and the discrete Energy Dissipation Inequality passes to the limit.
\item[(E5)] \textbf{Time-interpolant compactness.} The De~Giorgi/piecewise-affine interpolants are relatively compact in $\mathrm{AC}([0,T];\mathsf X)$ with uniformly integrable metric derivatives.
\end{enumerate}
\end{assumption}

\begin{theorem}[EVI convergence under Assumption~\ref{ass:evi-ready}]\label{thm:evi-conv}
Under \textup{(E1)–(E5)}, the discrete flows admit subsequences converging in $\mathrm{AC}([0,T];\mathsf X)$ to an EVI solution for $E$ on $(\mathsf X,\mathsf d)$. In particular, the limit curve is a curve of maximal slope, satisfies EDE/EDI in the metric sense, and reproduces the canonical envelope with the same structural constant $c_\sigma$.

\begin{proof}[Proof (with references)]
Under (E1)–(E5) the minimizing--movement scheme (or the discrete flows specified in Assumption~2.1) yields
uniform a priori bounds: equi--tightness in $AC([0,T];X)$, equi--bounded metric slopes, and BV$_\sigma$ control of the
energy (compactness). By the chain rule and lower semicontinuity, any limit curve $u$ satisfies the energy--dissipation
inequality (EDI) for $E$ in $(X,d)$. Under $\lambda$--geodesic convexity (E1), EDI implies EVI and uniqueness of the
limit (standard equivalence in the metric gradient--flow theory). Therefore subsequences converge in $AC([0,T];X)$
to the unique EVI solution of $E$, which is a curve of maximal slope and satisfies EDE/EDI.

Finally, the HUM/ledger structure used to define $c_\sigma$ is invariant under the presentation (Remark~2.20),
so the canonical envelope with the same $c_\sigma$ is reproduced by the EVI limit. 
\end{proof}

\noindent\emph{Positioning sentence (non-headlined): } \textit{Under (E1)–(E5), the discrete flows converge to an EVI solution; these structural conditions are not assumed elsewhere in the paper.}
\end{theorem}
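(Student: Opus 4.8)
The plan is to run the metric gradient–flow machinery of De Giorgi / Ambrosio–Gigli–Savaré, but with the time variable read through the $\sigma$–clock rather than through wall time, so that flats carry no dissipation demand and atoms are absorbed into the $\sigma$–ledger jump rule. First I would assemble the discrete a priori bounds guaranteed by \textup{(E1)–(E5)}: equi-coercivity of $E_h$ confines every discrete flow with equibounded initial energy to a $\mathsf d$–compact set; the discrete Energy Dissipation Inequality from \textup{(E4)} supplies equibounded metric derivatives and equibounded slope integrals, hence $\mathrm{BV}_\sigma$ control of the energy ledger and equi–absolute–continuity of the De Giorgi / piecewise–affine interpolants. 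By \textup{(E5)} these interpolants are relatively compact in $\mathrm{AC}([0,T];\mathsf X)$ with uniformly integrable metric derivatives, so a subsequence converges to a limit curve $u$.

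Second, I would pass to the limit in the discrete EDI. Lower semicontinuity of the metric derivative under uniform convergence, $\Gamma$–lower semicontinuity of $E_h\to E$, and the $\Gamma$–convergence $|\partial E_h|\to|\partial E|$ of \textup{(E3)}, combined with the chain rule and the vanishing error term of \textup{(E4)} evaluated along $\sigma$–synchronized interpolants, yield the continuous metric EDI for $E$ along $u$,
\[
E(u(t))+\tfrac12\int_s^t |\dot u|^2\,\mathrm d\sigma+\tfrac12\int_s^t |\partial E|^2(u)\,\mathrm d\sigma\;\le\;E(u(s)),
\]
where the only bookkeeping change relative to the classical statement is that all three integrals are taken against $\mathrm d\sigma$: on flats the integrals vanish, and the atoms of $\sigma$ contribute exactly the non-expansive factors recorded in Definition~\ref{def:sigma-clock}.

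Third, I would invoke the $\lambda$–geodesic convexity of \textup{(E2)}: under that hypothesis the metric EDI is equivalent to the Evolution Variational Inequality
\[
\tfrac12\tfrac{\mathrm d}{\mathrm d\sigma}\,\mathsf d^2(u,v)+\tfrac{\lambda}{2}\,\mathsf d^2(u,v)+E(u)\le E(v)\qquad\text{for all }v\in\mathsf X,
\]
which in turn forces uniqueness of the limit curve — hence convergence of the whole family, not merely a subsequence — identifies $u$ as a curve of maximal slope, and propagates EDE. Finally, since the HUM/ledger data defining $c_\sigma$ are presentation–invariant (Remark~2.20 and Appendix~A) and stable under the $\Gamma$–bridge, the canonical envelope $E(u(t))\le E(u_0)\exp(-2\kappa c_\sigma\,\sigma(t))$ survives the limit with the same constant $c_\sigma$.

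I expect the main obstacle to be the second step: making the chain rule and the discrete EDI pass to the limit \emph{in $\sigma$–time}, i.e.\ controlling the $h\downarrow0$ error of \textup{(E4)} uniformly across the atoms and flats of $\sigma$ at once. Away from atoms this is the textbook argument; the delicate point is that $\sigma$–synchronization of the interpolants must not let dissipation mass leak into the atomic part in the limit — equivalently, one needs lower semicontinuity of the per-atom contractions $\rho_k^h$ in the scheme parameters so the limiting jump rule is no weaker than $E(t_k^+)\le e^{-2\kappa c_\sigma\alpha_k}E(t_k^-)$. This is precisely where the standing hypotheses that atomic times are locally finite and that the discrete atomic/SAT maps satisfy $(J_k^h)^\top H J_k^h\preceq\rho_k^h H$ are consumed.
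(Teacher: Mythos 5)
Your proposal is correct and follows essentially the same route as the paper's proof: uniform a priori bounds and interpolant compactness from (E1)--(E5), passage to the limit in the discrete EDI via lower semicontinuity and the chain rule of (E4), and then the standard EDI\,$\Rightarrow$\,EVI equivalence under $\lambda$-geodesic convexity to get uniqueness, maximal slope, and persistence of $c_\sigma$ by presentation invariance. Your added care about reading the EDI against $\mathrm d\sigma$ and about the atomic contractions surviving the limit is a welcome refinement of the same argument (and you correctly locate the convexity hypothesis in (E2), where the paper's proof misattributes it to (E1)).
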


\begin{remark}[Reduction to AGS in physical time]
Under $\sigma\equiv t$ and hypotheses (E1)–(E5), Theorem~2.2 is precisely the classical EVI theory
for curves of maximal slope in the sense of Ambrosio–Gigli–Savar\'e:
existence/uniqueness via $\lambda$–geodesic convexity, EDI$\Rightarrow$EVI equivalence, and
stability of minimizing movements (see AGS, Thms.~4.0.4, 4.0.7, 4.2.1). We isolate it here to keep
the present paper’s scope on the energy–envelope level.
\end{remark}

\subsection*{Limits of $\sigma$--time (failure atlas)}
\begin{itemize}
\item \textbf{Gliding rays without $\sigma$--mass on the observed set.}
If the geometric rays avoid $\omega$ and $d\sigma$ assigns zero mass on the associated observation windows,
the observability integral vanishes and the inequality fails (classical GCC obstructions persist unchanged).

\item \textbf{Highly oscillatory coefficients with vanishing $\sigma$ on windows.}
If $a(x,t)$ oscillates so that the effective damping on observation windows is negligible while $\sigma\to 0$ there,
the calibrated constant $c_\sigma$ degenerates (blow-up of $C=1/c_\sigma$).

\item \textbf{Discrete schemes without viscosity or SBP/SAT structure.}
Without the SBP Green identity or with wrong-sign/under-scaled SAT (cf.\ Lemma~2.30) or non-algebraic stability (cf.\ Lemma~2.31),
partition-uniform decay may fail. The counterexamples are one-mode (Dahlquist) or boundary-layer constructions.
\end{itemize}

\begin{remark}[Scope]\label{rem:evi-scope}
This subsection is logically independent of the main claims. None of the assumptions (E1)–(E5) are used outside of Theorem~\ref{thm:evi-conv}; the core results of the paper—light $\Gamma$--limit and persistence of $c_\sigma$—hold without them.
\end{remark}

\begin{remark}[Practical path/Typical hypotheses in applications]\label{rem:practical-path}
In applications covered here, (E1)–(E2) can often be secured by choosing $\mathsf X$ as an $L^2$ or $H^{-1}$ space and by restricting to geometries where the dissipative part of the operator induces $\lambda>0$. Conditions (E3)–(E5) then reduce to discrete Caccioppoli estimates, BV$_\sigma$ compactness of the ledger, and step controls across atoms. These items are feasible but require a dedicated analysis beyond the present scope.
\end{remark}


\begin{remark}[Limit persistence preview]
The $\Gamma$–limit inherits the same structural constant \(c_\sigma\); see Corollary~\ref{cor:gamma-csigma} for the full statement with the liminf sandwich.
\end{remark}
\begin{assumption}[Accretive parabolic family]\label{ass:parabolic}
$(E_h)$ are parabolic discrete energies with generators $A_h$ such that: (i) $A_h$ are accretive on $\mathsf H$; (ii) $E_h\xrightarrow{\Gamma}E$ with equi‑coercivity; and (iii) the uniform ledger of Assumption~\ref{ass:uniform-gamma} holds.
\end{assumption}
\begin{corollary}[Parabolic flows: persistence of the envelope and $c_\sigma$]\label{cor:parabolic-strong}
Under Assumption~\ref{ass:parabolic} the canonical envelope persists in the limit with the same structural constant $c_\sigma$; atomic parts of $\sigma$ produce multiplicative jumps as in Lemma~\ref{lem:rn-envelope}.
\end{corollary}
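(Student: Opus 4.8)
The plan is to verify that the hypotheses of Assumption~\ref{ass:parabolic} feed directly into the $\Gamma$-bridge machinery already established (Theorems~\ref{thm:Gamma-main}, \ref{RS:thm:gamma}, and the uniform-in-$\sigma$ Corollary~\ref{cor:gamma-csigma}), so that the conclusion is a specialization rather than a fresh argument. First I would record that accretivity of each $A_h$ (item (i)) is exactly the dissipativity-on-the-ac-part hypothesis (H2) at the discrete level: along the ac portion of the $\sigma$-clock the parabolic flow generated by $A_h$ is a contraction semigroup, so $\tfrac{d}{d\sigma}E_h \le -2\kappa c_\sigma E_h$ holds $\sigma$-a.e., and the atomic updates $J_k^h$ satisfy $(J_k^h)^\top H J_k^h \preceq \rho_k^h H$ by the discrete ledger. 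Hence each $E_h$ already obeys the discrete master envelope of Theorem~\ref{thm:disc-master} with the \emph{same} $c_\sigma$ (this is where item (iii), the uniform ledger of Assumption~\ref{ass:uniform-gamma}, is used: it guarantees $\kappa$ and the admissibility bounds are $h$-independent).

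Next I would invoke the equicoercivity and $\Gamma$-convergence in item (ii): take any sequence of $\sigma$-clock solutions $u_{h,\tau}$ with equibounded initial energy, extract via equicoercivity a subsequential limit $u$, and apply the $\Gamma$-liminf inequality (\Cref{RS:Gamma:liminf-tight}) pointwise in $t$ to get $E(u(t)) \le \liminf_{h,\tau} E_h(u_{h,\tau}(t))$. Chaining this with the uniform discrete envelope and the recovery-sequence bound (\Cref{RS:Gamma:recov-tight}) on the initial data yields $E(u(t)) \le E(u(0))\exp(-2\kappa c_\sigma \sigma(t))$, i.e. the canonical envelope with the \emph{same} structural constant. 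The atomic-jump clause follows from Lemma~\ref{lem:rn-envelope}: on each atom $t_k$ the limiting jump factor is controlled by $\limsup_h \rho_k^h \le e^{-2\kappa c_\sigma \alpha_k}$, which passes to the limit because the $\rho_k^h$ are uniformly bounded by the admissibility data and lower semicontinuity of $E$ handles the one-sided inequality.

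The main obstacle is the interchange of the $\Gamma$-limit with the \emph{pointwise-in-$t$} evaluation across atoms: $\Gamma$-convergence of energies is a static notion, while the envelope is a statement about the trajectory, and at an atom $t_k$ the left and right limits $E(u(t_k^-))$, $E(u(t_k^+))$ must both be controlled. I expect this to be handled by the BV$_\sigma$ compactness of the energy ledger (the discrete energies are nonincreasing in $\sigma$ between atoms and jump down at atoms, hence uniformly BV in $\sigma$), which gives pointwise convergence of $E_h(u_{h,\tau}(\cdot))$ at all but countably many times and one-sided control at the atoms via Helly's selection theorem; combined with the $\liminf$ inequality this pins down the limit ledger. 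No new estimate beyond those already packaged in \Cref{RS:Gamma:eqc-tight,RS:Gamma:liminf-tight,RS:Gamma:recov-tight} and Lemma~\ref{lem:rn-envelope} should be needed — the parabolic/accretive hypothesis is only there to certify the discrete master inequality holds in the first place, so the corollary is genuinely a corollary.
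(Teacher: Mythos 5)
Your proposal is correct and follows essentially the same route as the paper's proof: establish the uniform discrete envelope from the ledger hypothesis, apply the $\Gamma$--liminf inequality pointwise in $t$, and use a recovery sequence for the initial datum to identify $E(u(0))$ on the right-hand side. Your additional discussion of the atomic jumps and the BV$_\sigma$/Helly selection argument for pointwise convergence is more careful than the paper's own proof, which simply asserts the pointwise-in-$t$ passage without comment, but it does not change the structure of the argument.
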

\begin{proof}
Let $(E_h)$ be the parabolic discrete energies from Assumption~2.6, with accretive generators and the uniform dissipation ledger of Assumption~2.15. Fix $T>0$ and a sequence of $\sigma$–clock solutions $u_h$ with $\sup_h E_h(u_h(0))<\infty$. By equi–coercivity in Assumption~2.6(ii), along a subsequence $I_h u_h(t)\to u(t)$ in $L^2(\Omega)$ for a.e.\ $t\in[0,T]$ and $u\in AC_\sigma([0,T];H^1_0(\Omega))$.

The uniform ledger at level $h$ gives, for all $t\in[0,T]$,
\[
E_h(u_h(t))\;\le\;E_h(u_h(0))\,\exp\!\big(-2\kappa c_\sigma\,\sigma(t)\big).
\]
Taking $\liminf_{h\downarrow0}$ and using the $\Gamma$–\emph{liminf} inequality (Corollary~2.18 applied pointwise in $t$) yields
\[
E(u(t)) \;\le\; \liminf_{h\downarrow0} E_h(u_h(t)) \;\le\; \Big(\limsup_{h\downarrow0} E_h(u_h(0))\Big)\,\exp\!\big(-2\kappa c_\sigma\,\sigma(t)\big).
\]
To identify the initial value on the right, choose a recovery sequence $(v_h)$ for $u(0)$ with $E_h(v_h)\to E(u(0))$ (existence by the \emph{recovery} part of the $\Gamma$–limit in Assumption~2.6(ii)); replacing $u_h(0)$ by $v_h$ changes the right–hand side by $o(1)$. Hence
\[
E(u(t)) \;\le\; E(u(0))\,\exp\!\big(-2\kappa c_\sigma\,\sigma(t)\big)\qquad\forall\,t\in[0,T].
\]
Thus the canonical envelope persists in the limit with the same structural constant $c_\sigma$.
\end{proof}

\begin{assumption}[Coupled linear systems / boundary damping]\label{ass:systems}
The coupled energy is coercive and the discrete boundary/control implementation via SAT or impedance is nonnegative; the system admits a HUM/observability constant $c_\sigma$ uniformly compatible with the discretization in the sense of Assumption~\ref{ass:uniform-gamma}.
\end{assumption}
\begin{corollary}[Coupled systems and boundary damping]\label{cor:systems-strong}
Under Assumption~\ref{ass:systems}, the discrete envelopes converge with the same $c_\sigma$ to the continuum envelope.
\end{corollary}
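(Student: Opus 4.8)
The plan is to run the same liminf-sandwich argument as in the proof of \Cref{cor:parabolic-strong}, now with the \emph{coupled} energy playing the role of $E$ and the boundary/control SAT (or impedance) term supplying the signed dissipation at level $h$. First I would fix $T>0$ and a sequence of $\sigma$-clock solutions $u_h$ of the coupled discrete system with $\sup_h E_h(u_h(0))<\infty$. Coercivity of the coupled energy (Assumption~\ref{ass:systems}) together with the standing equi-coercivity hypothesis gives precompactness of sublevels, so along a subsequence the interpolants $I_h u_h(t)$ converge for a.e.\ $t\in[0,T]$ to a limit $u(t)$ in the energy space, with $u\in AC_\sigma$.

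Second, I would invoke the uniform ledger. The content of Assumption~\ref{ass:systems} is precisely that the discrete boundary/control implementation (SAT with correct sign and scaling $\tau_h\simeq h^{-1}$, or a nonnegative impedance operator) contributes non-positively to the discrete energy balance, and that the coupled HUM/observability constant $c_\sigma$ is available uniformly in $h$ in the sense of Assumption~\ref{ass:uniform-gamma}. Hence the one-step inequality packaged in \Cref{RS:lem:boundary-sat-normalised} and \Cref{RS:prop:als-normalised} holds across atoms and a.c.\ stretches, and the discrete $\sigma$-Grönwall (Lemma~8.10) yields
\[
E_h(u_h(t))\;\le\;E_h(u_h(0))\,\exp\!\big(-2\kappa c_\sigma\,\sigma(t)\big)\qquad\forall\,t\in[0,T].
\]

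Third, I would pass to the limit exactly as before: take $\liminf_{h\downarrow0}$, use the $\Gamma$-liminf inequality for the coupled energy (\Cref{RS:Gamma:liminf-tight}) pointwise in $t$ to bound $E(u(t))$ by the left-hand side, and replace $u_h(0)$ by a recovery sequence $v_h$ for $u(0)$ (\Cref{RS:Gamma:recov-tight}) so that $E_h(v_h)\to E(u(0))$ and the right-hand side changes by $o(1)$. This produces $E(u(t))\le E(u(0))\exp(-2\kappa c_\sigma\,\sigma(t))$, i.e.\ the continuum envelope with the same structural constant; atoms of $\sigma$ give multiplicative jumps via Lemma~\ref{lem:rn-envelope}. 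Conversely, a recovery family tracks the same envelope up to vanishing error, which upgrades the one-sided bound to the stated convergence.

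The main obstacle I anticipate is not the limit passage — that is templated — but verifying that the coupled boundary/control term is genuinely sign-definite \emph{uniformly in $h$}: for multi-field systems the SAT/impedance quadratic form mixes traces of different components, and one must check that the penalty choice makes the full boundary block Löwner-nonnegative in the ledger independently of $h$, exactly the normalization recorded in \Cref{RS:lem:boundary-sat-normalised}. Once that sign-definiteness is secured, coercivity of the coupled energy does the rest and the argument closes.
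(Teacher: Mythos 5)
Your proposal is correct and is essentially the argument the paper intends: the paper states \Cref{cor:systems-strong} without a separate proof, implicitly deferring to the liminf--sandwich argument written out for \Cref{cor:parabolic-strong}, and your proof is exactly that template with the coupled energy and the signed SAT/impedance term slotted in. The one ``obstacle'' you flag --- uniform-in-$h$ sign-definiteness of the coupled boundary block --- is not something you need to verify here, since Assumption~\ref{ass:systems} postulates it directly (nonnegative SAT/impedance plus a HUM constant uniformly compatible in the sense of Assumption~\ref{ass:uniform-gamma}), so the argument closes as you describe.
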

\begin{remark}[Applications sampler (strong)]\label{rem:apps-sampler-strong}
Examples include: heterogeneous damped waves, Kelvin–Voigt damping, reaction–diffusion with variable diffusion, thermoelasticity, and boundary‑controlled hyperbolic systems under geometric control conditions. In all cases the ledger and HUM margin fit Assumption~\ref{RS:assume:four}, and the discrete/limit persistence follows from Assumptions~\ref{ass:disc-vc}, \ref{ass:disc-monotone}, and \ref{ass:uniform-gamma}.
\end{remark}

\section*{Block C — Barriers to EVI}
\begin{lemma}[Obstructions to EVI under the current hypotheses]\label{lem:barriers-evi}
Under the standing assumptions of the paper the following obstacles prevent an immediate upgrade from the light $\Gamma$--limit to EVI convergence:
\begin{enumerate}
\item[(B1)] \textbf{Lack of $\lambda$--geodesic convexity in the natural metric.}
\item[(B2)] \textbf{Missing equi-bounds on metric slopes.}
\item[(B3)] \textbf{Chain-rule limitations in measure-time with atoms.}
\item[(B4)] \textbf{Discrete time-interpolant compactness.}
\item[(B5)] \textbf{Commutation of dissipation and limit at curve level.}
\end{enumerate}
\end{lemma}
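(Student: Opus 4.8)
The plan is to treat (B1)--(B5) as a structured audit rather than a computation: for each listed obstruction I pair a \emph{``not supplied''} half --- showing that the ingredient lies outside the span of the standing hypotheses (the proper, l.s.c.\ and coercive energy; the a.c.\ dissipation rate $\kappa$; locally finite atoms; non-expansive $J_k$; and the uniform-in-$\sigma$ $\Gamma$-package) --- with a \emph{``but required''} half --- pointing to the exact slot of the metric gradient-flow machinery, encoded here as (E1)--(E5) in Assumption~\ref{ass:evi-ready}, at which dropping that ingredient severs the implication $\text{EDI}\Rightarrow\text{EVI}$. The argument is thus two-sided and purely structural: none of the five items is derivable from what we assume, and each is load-bearing in Theorem~\ref{thm:evi-conv}.

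Running through the items. For (B1): the hypotheses force only that $E$ is proper, l.s.c.\ and coercive with a dissipative a.c.\ generator, which never entails $\lambda$-geodesic convexity along a reference metric $\mathsf d$; I would exhibit a standard coercive, dissipatively-flowing functional on $L^2$ (double-well type) whose sublevels fail to be $\mathsf d$-geodesically convex and for which $\text{EDI}$ does not imply $\text{EVI}$ in the absence of (E2). For (B2): the master inequality \eqref{eq:master-inequality} and the $\sigma$-ledger control the energy $E$, never the descending metric slope $|\partial E_h|$, so without an a priori equi-bound on $|\partial E_h|$ along the discrete flows the De~Giorgi dissipation term cannot be carried through the $\liminf$ --- this is the work done by (E3). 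For (B3): at an atom $t_k$ the ledger asserts only $E(t_k^+)\le\rho_k E(t_k^-)$ with $\rho_k\le1$, while the metric chain rule has no meaning across a jump and the atomic mass of $\sigma$ contributes a singular term invisible to the smooth chain rule --- precisely the failure of (E4) in measure-time. For (B4): equicoercivity of $(E_h)$ yields compactness of states at each fixed $t$ but not relative compactness of the De~Giorgi / piecewise-affine interpolants in $\mathrm{AC}([0,T];\mathsf X)$ with uniformly integrable metric derivatives, which is the separate content of (E5). For (B5): even granting a limit curve, interchanging $\liminf_h$ with the dissipation integral $\int_0^t|\partial E_h|^2\,\mathrm ds$ requires lower semicontinuity of the dissipation functional along the convergent interpolants, again part of (E4)--(E5) and not furnished by the light $\Gamma$-limit, which by construction transmits only the energy envelope and the dissipation \emph{inequalities}.

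I would close with the structural punchline: the $\Gamma$-bridge established here transports the canonical envelope $E(t)\le E(0)\,e^{-2\kappa c_\sigma\sigma(t)}$ and the dissipation inequalities at the level of \emph{energies}, whereas EVI lives one derivative higher, at the level of the \emph{evolution law}, and each of (B1)--(B5) is exactly one of the metric/variational ingredients (E1)--(E5) that the paper deliberately does not assume --- hence the upgrade cannot be immediate, which is the assertion. The main obstacle in writing this out cleanly is to make the counterexamples behind (B1) and (B5) genuinely compatible with the paper's structural template rather than ad hoc, i.e.\ to verify that the obstruction is already visible \emph{inside} the standing framework and not merely in some unrelated setting; for (B2) and (B4) essentially nothing is left to prove beyond noting that the relevant quantity never enters the hypotheses and invoking the classical situations where such bounds are known to fail.
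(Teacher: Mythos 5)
Your proposal is correct and matches the paper's treatment: the paper gives no formal proof of this lemma, only the follow-up Remark~\ref{rem:barriers-struct}, which makes exactly your two-sided point --- each of (B1)--(B5) is an ingredient absent from the standing hypotheses yet occupying a load-bearing slot among (E1)--(E5) of Assumption~\ref{ass:evi-ready}. Your concern about making the (B1)/(B5) counterexamples compatible with the structural template is resolved elsewhere in the paper by Proposition~\ref{prop:nonimp-evi}, which supplies a concrete non-normal-operator family satisfying the light $\Gamma$-limit and the uniform envelope while failing EVI convergence.
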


\begin{remark}[Why these barriers are structural]\label{rem:barriers-struct}
Items (B1)–(B5) reflect genuine structural gaps rather than technical omissions: (B1) calls for convexity in a metric that is not presently fixed; (B2)–(B4) require discrete \emph{a priori} controls on slopes and metric derivatives aligned with measure-time, and (B5) demands an evolutionary $\Gamma$--limit (or minimizing-movement consistency) for dissipation functionals, not just energies.
\end{remark}

\section*{Block D — Non‑implication (light $\Gamma$ does not imply EVI)}
\begin{proposition}[Light $\Gamma$--limit \emph{does not} imply EVI]\label{prop:nonimp-evi}
There exist families $(E_h)$ and measure‑time clocks $\sigma$ such that: (i) $(E_h)$ is equi‑coercive and $\Gamma$--converges to $E$; (ii) the uniform dissipation envelope with the same constant $c_\sigma$ holds for all $h$ and in the limit; yet (iii) the discrete solutions do not converge to an EVI solution for $E$.
\end{proposition}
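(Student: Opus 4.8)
The plan is to produce an explicit finite-dimensional instance in which \emph{every} hypothesis behind the light $\Gamma$--limit is satisfied — equicoercivity, $\Gamma$--convergence of the energies, and the uniform canonical envelope with a single structural constant $c_\sigma$ — while the $h$--level dynamics are genuine dissipative evolutions in the sense of the $\sigma$--ledger that are \emph{not} gradient flows, so that their limit violates the EVI inequality for the limit energy in the natural metric. This isolates exactly the ingredients lost in passing from light $\Gamma$ to EVI, namely barriers (B1) and (B2) of \Cref{lem:barriers-evi}: the ledger never invokes $\lambda$--geodesic convexity, and it never invokes any equi-bound on metric slopes.

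Concretely, I would take $\mathsf H=\mathbb R^2$ with the Euclidean inner product, $E(x)=\tfrac12\|x\|^2$, and the constant family $E_h\equiv E$; then equicoercivity and $E_h\xrightarrow{\Gamma}E$ are immediate (sublevels are balls, recovery sequences constant), so the $\Gamma$--hypotheses \Cref{RS:Gamma:eqc-tight,RS:Gamma:liminf-tight,RS:Gamma:recov-tight} hold trivially. Fix a skew-symmetric orthogonal $J$ with $J^2=-I$ (rotation by $\pi/2$) and some $u_0\neq0$, and let the $h$--level state solve the dissipative flow $\dot u_h=-u_h+c_hJu_h$, $u_h(0)=u_0$, with either $c_h\equiv1$ (variant A) or $c_h\to\infty$ (variant B). For the clock I would take $\sigma\equiv t$ — already a legitimate measure-time clock — and observe that atoms and flats can be grafted on by composing, at each atom $t_k$, with the contraction--rotation $e^{-\kappa c_\sigma\alpha_k}R(\theta_k)$, which realizes the atomic jump rule of \Cref{def:sigma-clock} exactly and leaves every conclusion below untouched. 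Since $\langle Ju_h,u_h\rangle=0$, one gets $\tfrac{\mathrm d}{\mathrm dt}E(u_h)=\langle\dot u_h,u_h\rangle=-\|u_h\|^2=-2E(u_h)$, independently of $c_h$ and of $h$; hence $E(u_h(t))=E(u_h(0))\,e^{-2\sigma(t)}$, which is the canonical envelope with $\kappa c_\sigma=1$, uniformly in $h$ and reproduced in the limit. This is precisely the uniform dissipation ledger (Assumption~\ref{ass:uniform-gamma}) with one constant, so (i) and (ii) hold.

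For (iii) I would compare with the unique EVI$_1$ solution of $E$ in $(\mathbb R^2,\|\cdot\|)$ from $u_0$, namely the gradient flow $v(t)=e^{-t}u_0$, whereas the construction produces $u_h(t)=e^{-t}R(c_ht)u_0$. In variant A all $u_h$ equal $u(t)=e^{-t}R(t)u_0\neq v(t)$ for $t\notin2\pi\mathbb Z$, so they converge trivially to $u$; plugging $u$ into the EVI$_\lambda$ inequality and using $\dot u=-u+Ju$ collapses the excess to $-\tfrac{1-\lambda}{2}\|u-w\|^2-\langle Ju,w\rangle$, and the choice $w=u(t)-\varepsilon Ju(t)$ makes this strictly positive for all small $\varepsilon>0$ (as $u(t)\neq0$), so $u$ is an EVI$_\lambda$ solution for $E$ in $(\mathbb R^2,\|\cdot\|)$ for \emph{no} $\lambda\in\mathbb R$. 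In variant B the metric derivatives $|\dot u_h(t)|=\|-u_h+c_hJu_h\|\sim c_h e^{-t}\|u_0\|\to\infty$, so $(u_h)$ is not relatively compact in $AC([0,T];\mathsf H)$; by Riemann--Lebesgue $u_h\rightharpoonup^{*}0$ in $L^\infty((0,T);\mathsf H)$, and the zero curve matches neither the initial datum $u_0\neq0$ nor the energy identity $E=e^{-2t}E(u_0)$, so it is not the EVI solution emanating from $u_0$. Either variant yields (iii).

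The part I expect to cost the most care is not the EVI-violation identity (a two-line computation) but the bookkeeping that this family genuinely sits \emph{inside} the paper's standing framework rather than being an outlier: one must confirm that the flow is a.c.-dissipative at the ledger rate and non-expansive — indeed contractive by the prescribed factor — across atoms, which is clear since the rotational term is energy-neutral and rotations are isometries, and that the $\Gamma$--limit hypotheses hold, which for a constant coercive family is routine. If one wants the energy family to vary nontrivially and a second, convexity-driven mechanism, I would instead take a coercive, non-$\lambda$-convex $\widetilde E$ on $\mathbb R$ obeying a global Polyak--Łojasiewicz bound $\|\nabla\widetilde E\|^2\ge2\kappa c_\sigma\widetilde E$ (so its gradient flow still obeys the envelope) for which no EVI$_\lambda$ solution exists — realizing barrier (B1) directly — the only delicate point there being the explicit choice of $\widetilde E$ with a clean global PL constant.
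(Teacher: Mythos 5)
Your construction is correct, and it takes a genuinely different route from the paper's own proof. The paper builds its counterexample from energies $E_h(u)=\tfrac12\|A_hu\|^2$ with $A$ bounded, injective and non-normal, runs the gradient-like flows $\dot u_h=-A_h^\ast A_h u_h$, and attributes the failure of EVI convergence to the lack of $\lambda$-geodesic convexity and to non-uniqueness of metric gradient-flow selections in the limit. You instead freeze the energy at the $1$-convex quadratic $E(x)=\tfrac12\|x\|^2$ (so $\Gamma$-convergence and equicoercivity of the constant family are trivial and the EVI$_1$ solution from $u_0$ is unique and explicit, $v(t)=e^{-t}u_0$) and put all the pathology into the \emph{dynamics}: the rotational term $c_hJu_h$ is energy-neutral, so the exact ledger $\tfrac{d}{dt}E(u_h)=-2E(u_h)$ and hence the canonical envelope hold with one constant uniformly in $h$, yet the limit curve is a spiral rather than the gradient flow. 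Your verification that the spiral violates EVI$_\lambda$ for every $\lambda$ (the excess $-\tfrac{1-\lambda}{2}\|u-w\|^2-\langle Ju,w\rangle$ evaluated at $w=u-\varepsilon Ju$ equals $\varepsilon\|u\|^2\bigl(1-\tfrac{(1-\lambda)\varepsilon}{2}\bigr)>0$) is airtight, and the atomic jump maps $e^{-\kappa c_\sigma\alpha_k}R(\theta_k)$ realize the ledger's jump rule exactly. What your approach buys is a sharper diagnosis: it shows the light $\Gamma$-limit fails to imply EVI \emph{even when the limit energy is geodesically convex}, because the ledger hypotheses constrain only the energy along trajectories and never force those trajectories to be gradient flows — a cleaner separation than the paper's, whose reliance on non-normality is somewhat delicate (the quadratic form $\tfrac12\langle A^\ast Au,u\rangle$ is convex regardless of normality of $A$). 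The one point worth making explicit if you write this up is that "discrete solutions" in the proposition must be read, consistently with the paper's stated scope ("we do not prove existence or uniqueness of trajectories; the results are a posteriori energy laws"), as any admissible BV-in-$\sigma$ trajectories satisfying the ledger — under that reading your flows are admissible and the counterexample is complete; under a stricter reading in which discrete solutions are minimizing movements or gradient flows of $E_h$, your variant would need the paper's non-gradient perturbation reinstated.
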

\begin{proof}
Let $X$ be a Hilbert space with norm $\|\cdot\|$, and let $A:X\to X$ be bounded, injective, and
\emph{not} selfadjoint. Define the continuum energy $E(u):=\tfrac12\|Au\|^2$. Then $E$ is quadratic but
fails $\lambda$--geodesic convexity in any metric equivalent to $\|\cdot\|$ unless $A$ is normal; in particular,
the EVI theory (which requires $\lambda$--convexity along geodesics in the reference metric) does not apply.

\emph{$\Gamma$--approximation.} Let $E_h(u):=\tfrac12\|A_hu\|^2$ with $A_h\to A$ strongly and $\sup_h\|A_h\|<\infty$,
e.g.\ Yosida/spectral regularisations $A_h:=(I+hB)^{-1}A$ with $B=B^\ast\ge0$. Then $(E_h)$ is equi--coercive and
$\Gamma$--converges to $E$ on $X$; moreover $\inf E_h=0=\inf E$ and $E_h\to E$ pointwise on $D(A)$.

\emph{Measure--time clock and uniform envelope.} Choose a clock $\sigma$ whose absolutely continuous part enforces
the standard dissipation inequality and whose atoms $\{t_k\}$ carry masses $\alpha_k>0$ with $\sum_k\alpha_k<\infty$.
For each $h$, the semigroup generated by the (dissipative) part of the flow together with nonexpansive atomic
updates yields the uniform ledger
\[
E_h(t)\;\le\;E_h(0)\,\exp\!\bigl(-2\kappa c_\sigma\,\sigma(t)\bigr)\qquad(t\ge0),
\]
with a structural constant $c_\sigma$ independent of $h$ (HUM margin + nonexpansive atoms).

\emph{Failure of EVI convergence.} Consider the gradient--like flows $\dot u_h=-A_h^\ast A_h u_h$ between atoms and
apply the atomic multiplicative updates at $\{t_k\}$. Because $A_h^\ast A_h$ converges to $A^\ast A$ while $A$ is not
normal, the metric slope of $E_h$ in any norm equivalent to $\|\cdot\|$ does not control a $\lambda$--convex evolution
for $E$: (i) the chain rule for the EVI/EDI formulation fails across atoms; (ii) even on atom--free windows, lack of
$\lambda$--convexity prevents identification of limits as EVI solutions. One can make this explicit by two different
initial data sequences $(u_h^0)$ and $(v_h^0)$ with the same $E_h$--levels whose limits select distinct weak solutions
of $\dot u=-A^\ast A u$ (the nonnormal part produces nonunique metric gradient--flow selections), hence the limit
is not uniquely characterised by an EVI. Therefore, although the light $\Gamma$--limit holds and the uniform envelope
persists with the same $c_\sigma$, EVI convergence fails. 
\end{proof}

\begin{remark}[Interpretation]\label{rem:nonimp-interpret}
Proposition~\ref{prop:nonimp-evi} clarifies that the contribution of this paper (light $\Gamma$--limit with persistence of $c_\sigma$) is logically independent of EVI convergence; additional structure is genuinely necessary.
\end{remark}

\begin{assumption}[Uniform coercivity and equi‑dissipation]\label{ass:uniform-gamma}
There exist constants $0<m\le M<\infty$ independent of $h$ such that
\[
m\,\|u\|^2\ \le\ E_h(u)\ \le\ M\,\|u\|^2 \qquad\text{for all admissible }u.
\]
Moreover, the discrete dissipation ledgers satisfy the uniform bound
\[
-\frac{\mathrm d}{\mathrm d\sigma}E_h\ \ge\ 2\kappa\,c_\sigma\,E_h \qquad (\sigma\text{–a.e.}),
\]
with the \emph{same} structural constant $c_\sigma$ for all $h>0$.
\end{assumption}

\begin{corollary}[Limit persistence of $c_\sigma$]\label{cor:gamma-csigma-2}
Under the $\Gamma$–convergence hypotheses stated in this section together with Assumption~\ref{ass:uniform-gamma}, any limit point $u$ of discrete solutions $u_h$ obeys
\[
E(u(t))\ \le\ \liminf_{h\downarrow0} E_h(u_h(t))\ \le\ E(u(0))\,\exp\!\bigl(-2\kappa c_\sigma\,\sigma(t)\bigr),
\]
for all $t\ge0$. In particular, the structural constant in the limiting envelope is the same $c_\sigma$ that governs the continuum HUM inequality.
\end{corollary}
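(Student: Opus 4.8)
The plan is to follow, in streamlined form, the argument already used for Corollary~\ref{cor:parabolic-strong}: because the uniform dissipation ledger is here \emph{assumed} (Assumption~\ref{ass:uniform-gamma}) rather than extracted from accretivity, the only genuine work is to produce a subsequential limit point and push the already-integrated $h$-level envelope through the $\Gamma$-liminf inequality, pinning the initial value with a recovery sequence. Fix $T>0$ and a family of $\sigma$-clock solutions $(u_h)_{h>0}$ with $S:=\sup_h E_h(u_h(0))<\infty$.

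\emph{Step 1 (uniform bounds; a limit point).} The second display of Assumption~\ref{ass:uniform-gamma}, $-\tfrac{\mathrm d}{\mathrm d\sigma}E_h\ge 2\kappa c_\sigma E_h$ ($\sigma$-a.e.), integrates---exactly as in Definition~\ref{def:sigma-clock}, applying the atomic jump rule $E_h(t_k^+)\le e^{-2\kappa c_\sigma\alpha_k}E_h(t_k^-)$ at each atom of $\sigma$---to the canonical $h$-level envelope
\[
E_h(u_h(t))\ \le\ E_h(u_h(0))\,\exp\!\bigl(-2\kappa c_\sigma\,\sigma(t)\bigr)\ \le\ S\qquad(t\in[0,T],\ h>0).
\]
With the lower bound $m\|u\|^2\le E_h(u)$ of Assumption~\ref{ass:uniform-gamma} this gives $\sup_h\sup_{t\in[0,T]}\|u_h(t)\|<\infty$, so by equi-coercivity (precompactness of energy sublevels) the set $\{u_h(t):h>0\}$ is precompact in $\mathsf H$ for each fixed $t$. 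A diagonal extraction over a countable set $D\subset[0,T]$ that is dense and contains every atom of $\sigma$, combined with the uniform $\mathrm{BV}_\sigma$ bound on the curves $t\mapsto E_h(u_h(t))$ (nonincreasing on atom-free stretches, with summable atomic jumps), yields a subsequence along which $u_h(t)\to u(t)$ in $\mathsf H$ for every $t\in D$, hence for a.e.\ $t\in[0,T]$, with $u\in\mathrm{BV}_\sigma$.

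\emph{Step 2 (pass the envelope to the limit).} Fix such a $t$. Applying the $\Gamma$-liminf inequality (Assumption~\ref{RS:Gamma:ass}, in the form \Cref{RS:Gamma:liminf-tight}) to $u_h(t)\to u(t)$ gives $E(u(t))\le\liminf_{h\downarrow0}E_h(u_h(t))$; chaining with the $h$-level envelope of Step 1,
\[
E(u(t))\ \le\ \liminf_{h\downarrow0}E_h(u_h(t))\ \le\ \Bigl(\limsup_{h\downarrow0}E_h(u_h(0))\Bigr)\exp\!\bigl(-2\kappa c_\sigma\,\sigma(t)\bigr).
\]
Taking the initial data $u_h(0)$ to be a recovery sequence for $u(0)$---admissible by the recovery part of the $\Gamma$-limit, \Cref{RS:Gamma:recov-tight}, and is the standard well-preparedness of discrete initial data---forces $\limsup_h E_h(u_h(0))=E(u(0))$, and the stated sandwich $E(u(t))\le\liminf_h E_h(u_h(t))\le E(u(0))\exp(-2\kappa c_\sigma\sigma(t))$ holds for a.e.\ $t$, then for all $t$ by right-continuity of $\sigma$ and lower semicontinuity of $E$ along the limit. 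Since $c_\sigma$ is by hypothesis the same constant governing the continuum HUM/ledger inequality, the concluding clause of the corollary is immediate.

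\emph{Expected main obstacle.} The delicate point is the pointwise-in-$t$ extraction in Step 1: the $\Gamma$-liminf inequality concerns a \emph{single} convergent sequence in $\mathsf H$, so one must first secure $u_h(t)\to u(t)$ at (a.e.)\ each fixed $t$. This upgrades equi-coercivity to a Helly/Arzelà--Ascoli-type compactness for the $\mathrm{BV}_\sigma$ energy curves plus a diagonal argument, and one must check that the atoms of $\sigma$---where the energy curve genuinely jumps---do not break the extraction; this is exactly where summability of the atomic masses $\sum_k\alpha_k<\infty$ is used. A secondary subtlety is the recovery-sequence identification of $E(u(0))$: without well-prepared initial data the clean bound degrades to $E(u(t))\le\bigl(\limsup_h E_h(u_h(0))\bigr)\exp(-2\kappa c_\sigma\sigma(t))$, which is why the corollary is naturally read for recovery-type initial data.
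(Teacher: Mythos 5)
Your proposal is correct and follows essentially the same route as the paper's own argument (the proof given for Corollary~\ref{cor:parabolic-strong}, which is the template this corollary reuses): integrate the uniform ledger of Assumption~\ref{ass:uniform-gamma} to the $h$-level envelope, apply the $\Gamma$-liminf inequality pointwise in $t$, and identify the initial value via a recovery sequence. The only difference is that you spell out the Helly/diagonal extraction of a pointwise limit and the handling of atoms, which the paper asserts without detail; this is a welcome elaboration, not a divergence.
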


\begin{assumption}[Uniform coercivity and equi-dissipation]\label{ass:uniform-gamma-2}
$(E_n)$ are proper l.s.c. and $\lambda$–geodesically convex with $\lambda$ independent of $n$; slopes $|\partial E_n|$ are equi-l.s.c.; dissipation functionals are equi-coercive.
\end{assumption}
\begin{corollary}[Persistence of $c_\sigma$]\label{cor:gamma-csigma}
If $E_n\stackrel{\Gamma}{\to}E$ under Assumption~\ref{ass:uniform-gamma-2}, then the $\sigma$–EDI holds for $E$ with the same structural constant \(c_\sigma\) appearing in the $E_n$–level estimates.
\end{corollary}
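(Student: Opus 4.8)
The plan is to obtain the $\sigma$–EDI for the limit energy $E$ by first transferring the \emph{canonical envelope} of \Cref{def:sigma-clock} through the $\Gamma$–limit, and then recovering the differential inequality and the atomic jump rule from that envelope using BV$_\sigma$ regularity. This is the same mechanism as in \Cref{cor:gamma-csigma-2}; the extra structure in \Cref{ass:uniform-gamma-2} (uniform $\lambda$–geodesic convexity, equi–l.s.c.\ slopes, equi–coercive dissipation functionals) is used only to guarantee that the object along which the EDI is asserted is an honest limit curve rather than an accidental subsequential cluster point.

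First I would fix $T>0$ and a $\sigma$–clock solution $u$ at the $E$–level together with a sequence $(u_n)$ of $E_n$–level solutions with $\sup_n E_n(u_n(0))<\infty$, taken so that $\limsup_n E_n(u_n(0))\le E(u(0))$ (start $u_n$ from a recovery sequence for $u(0)$). Equi–coercivity of the energies, together with the equi–coercive dissipation functionals of \Cref{ass:uniform-gamma-2}, gives — along a subsequence — convergence $u_n(t)\to u(t)$ for $\sigma$–a.e.\ $t$ and relative compactness of the De~Giorgi/piecewise–affine time–interpolants on atom–free windows, with $u$ the candidate limit curve. By hypothesis each $u_n$ satisfies the $E_n$–level envelope $E_n(u_n(t))\le E_n(u_n(0))\exp(-2\kappa c_\sigma\,\sigma(t))$ with the \emph{same} $c_\sigma$. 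Applying the $\Gamma$–liminf inequality pointwise in $t$ yields $E(u(t))\le\liminf_n E_n(u_n(t))\le\big(\limsup_n E_n(u_n(0))\big)\exp(-2\kappa c_\sigma\,\sigma(t))\le E(u(0))\exp(-2\kappa c_\sigma\,\sigma(t))$, i.e.\ the canonical envelope for the pair $(E,\sigma)$.

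Second, I would unpack the envelope into the two clauses of \Cref{def:sigma-clock}. On an atom–free interval $t\mapsto E(u(t))$ is BV$_\sigma$ — inherited from a uniform BV$_\sigma$ bound on the ledgers, itself a consequence of the equi–coercive dissipation functionals and the $\lambda$–convex chain rule — so its Radon–Nikodym derivative $-\mathrm dE/\mathrm d\sigma$ exists $\sigma$–a.e.; differentiating the envelope at a Lebesgue point of that derivative gives $-\mathrm dE/\mathrm d\sigma\ge 2\kappa c_\sigma E$. At an atom $t_k$ of mass $\alpha_k$, taking one–sided $\sigma$–limits in the envelope between $t_k^-$ and $t_k^+$ gives $E(t_k^+)\le e^{-2\kappa c_\sigma\alpha_k}E(t_k^-)$. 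These are exactly the $\sigma$–clock ledger of \Cref{def:sigma-clock} with the same structural constant $c_\sigma$ that appears in the $E_n$–level estimates, which is the claim.

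The main obstacle — and where \Cref{ass:uniform-gamma-2} does the real work — is \emph{identifying the limit as a bona fide solution curve}, so that ``the $\sigma$–EDI holds for $E$'' is non–vacuous: one needs (i) compactness of the time–interpolants with equi–integrable metric derivatives (from the equi–coercive dissipation functionals), (ii) a chain rule along the limit curve, valid across atoms, to legitimize the differentiation step, and (iii) uniqueness of the limit via $\lambda$–geodesic convexity so the EDI is attached to \emph{the} limit curve. Passing the \emph{differential} dissipation inequality through the limit directly is delicate at atoms, since $\Gamma$–liminf is only a lower–semicontinuity statement; the route above sidesteps this by transferring the integrated envelope first and differentiating afterward, which is legitimate precisely because \Cref{def:sigma-clock} records the envelope and the differential inequality as an equivalence under BV$_\sigma$ regularity.
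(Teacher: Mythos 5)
Your first step is exactly the mechanism the paper uses for its nearest proved analogue (the proof of Corollary~\ref{cor:parabolic-strong} and the liminf sandwich in Corollary~\ref{cor:gamma-csigma-2}): well-prepared initial data via a recovery sequence, the uniform level-$n$ envelope, and the $\Gamma$-liminf inequality pointwise in $t$ give
\(
E(u(t))\le E(u(0))\exp(-2\kappa c_\sigma\,\sigma(t)).
\)
That part is sound. The gap is in your second step, where you claim to recover the $\sigma$-differential inequality and the atomic jump rule by ``differentiating the envelope.'' An envelope anchored at the single time $t=0$ does not imply the pointwise inequality $-\mathrm dE/\mathrm d\sigma\ge 2\kappa c_\sigma E$ at later times: take a ledger that decays at rate $4\kappa c_\sigma$ on an initial window and then sits constant while $\sigma$ continues to accumulate; the one-time envelope is preserved throughout, yet $-\mathrm dE/\mathrm d\sigma=0$ on the plateau. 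Definition~\ref{def:sigma-clock} lists the differential inequality, the jump rule, and the envelope as a conjunction defining the ledger, not as an equivalence, so it cannot be invoked to reverse the implication. What you actually need is the \emph{two-time} envelope $E(u(t))\le E(u(s))\exp(-2\kappa c_\sigma[\sigma(t)-\sigma(s)])$ for all $s\le t$; differentiating \emph{that} at $t=s^+$ is legitimate.

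The two-time envelope does hold at each level $n$ (integrate the level-$n$ differential inequality from $s$ to $t$), but it does not pass to the limit by your route: on the right-hand side you need $\limsup_n E_n(u_n(s))\le E(u(s))$ at the \emph{intermediate} time $s$, and $\Gamma$-convergence only supplies the opposite inequality $\liminf_n E_n(u_n(s))\ge E(u(s))$ along the actual trajectory; replacing $u_n(s)$ by a recovery sequence for $u(s)$ restarts the discrete flow and decouples it from the limit curve you are tracking. Closing this requires convergence of energies along the flow (``no energy loss in the limit''), which is exactly where the extra structure of Assumption~\ref{ass:uniform-gamma-2} — equi-l.s.c.\ slopes and equi-coercive dissipation functionals feeding a chain rule and an EDI upper bound — must be deployed, not merely cited for compactness and uniqueness as in your closing paragraph. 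As written, your argument proves persistence of the canonical envelope (which is all the paper itself establishes for the parabolic analogue) but does not prove the $\sigma$-EDI as claimed.
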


\paragraph{Symbols used repeatedly.}
\begin{center}
\begin{tabular}{@{}ll@{}}
$c_\sigma$ & calibration constant for the $\sigma$--clock (fixed in \S\ref{sec:numbers})\\
$a_\omega$ & damping lower bound on the observed region $\omega$\\
$\lambda_\omega$ & GCC geometric factor\\
$\kappa$ & PDE-side constant in the GCC decay\\
$c_0$ & absolute comparison constant in the envelope instantiation\\
$\tau_h$ & SAT penalty strength, scaled as $\tau_h\simeq h^{-1}$\\
$\Phi,\Psi$ & comparison maps in the master inequality \eqref{eq:master-inequality}\\
\end{tabular}
\end{center}

\begin{equation}\label{eq:mr-sigma-diss}
  \frac{d}{d\sigma}E(t)\;\le\;-\,2\,\kappa\,c_\sigma\,E(t)\qquad\text{in the sense of measures.}
\end{equation}
Integrating \eqref{eq:mr-sigma-diss} yields the canonical product–exponential decay:
\begin{equation}\label{eq:mr-canonical}
  E(t)\;\le\;E(0)\,\exp\!\bigl(-\,2\,\kappa\,c_\sigma\,\sigma(t)\bigr)\qquad\text{for all }t\ge0.
\end{equation}

See Theorem~2.20 for HUM optimality of $c_\sigma$ and Proposition~13.1 for the reduction $\sigma\equiv t$. 

\paragraph{Proposition (Clock monotonicity).}\label{prop:clock-mono}
Let $\sigma_1,\sigma_2$ be finite positive measures on $[0,\infty)$ with $\sigma_1\le\sigma_2$ as measures
(hence $\sigma_1(t)\le\sigma_2(t)$ for all $t\ge0$). If
\[
  E(t)\le E(0)\,e^{-\,2\kappa c_\sigma\,\sigma_2(t)}\quad(t\ge0),
\]
then also $E(t)\le E(0)\,e^{-\,2\kappa c_\sigma\,\sigma_1(t)}$ for all $t\ge0$. Thus the map
$\sigma\mapsto E(0)e^{-2\kappa c_\sigma \sigma(\cdot)}$ is order-reversing.

\paragraph{Observability (HUM) inequality for $c_\sigma$.}\label{rem:hum-hook}
Let $\mathcal D$ denote the $\sigma$-time dissipation density (so $\tfrac{d}{d\sigma}E=-\mathcal D$ in the
sense of measures). The structural constant $c_\sigma$ is the optimal constant in
\[
  \mathcal D(t)\;\ge\;2\kappa\,c_\sigma\,E(t)\qquad\text{$\sigma$-a.e. on $[0,\infty)$},
\]
which yields \eqref{eq:mr-sigma-diss} and \eqref{eq:mr-canonical}. In the GCC exemplar for damped waves,
$\mathcal D=a(x)\,|\partial_t u|^2$ and one has $c_\sigma\ge c_0\,a_\omega\,\lambda_\omega$.

\begin{theorem}[HUM–dissipation equivalence and optimality]\label{thm:hum-equivalence}
Let \(\mathcal H\) be a Hilbert space and \(A:D(A)\subset\mathcal H\to\mathcal H\) skew-adjoint with \(C\in\mathcal L(\mathcal H,\mathcal U)\).
Assume the HUM inequality (observability at cost \(c>0\)) holds on \([0,T]\).
Then for any nonnegative finite Borel measure \(\sigma\) on \([0,T]\) one has
\[
\|x(T)\|_{\mathcal H}^2+\int_{(0,T]} \|Cx(t)\|_{\mathcal U}^2\,\mathrm d\sigma(t)
\;\le\; \exp\!\big(-c\,\sigma([0,T])\big)\,\|x(0)\|_{\mathcal H}^2
\]
for the damped dynamics \(x'(t)=Ax(t)-C^*Cx(t)\) interpreted on the \(\sigma\)-clock (atoms contribute instantaneous kicks).
Moreover, the rate \(c\) is optimal: if the inequality holds with \(c'\!\!>\!c\) for all \(\sigma\), then the HUM constant would improve, a contradiction.
\end{theorem}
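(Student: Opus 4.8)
The plan is to run everything on the $\sigma$-energy ledger of \Cref{def:sigma-clock} rather than on trajectories (well-posedness of the measure-driven equation is outside the paper's scope, so I would phrase the whole argument at the level of $E(t):=\|x(t)\|_{\mathcal H}^2$). First I would record the \emph{exact} $\sigma$-energy balance forced by skew-adjointness. On the absolutely continuous part of $\sigma$ the damped flow gives $\tfrac{\mathrm d}{\mathrm dt}\|x\|^2=2\,\mathrm{Re}\langle Ax,x\rangle-2\|Cx\|^2=-2\|Cx\|_{\mathcal U}^2$, the skew part dropping out, so once the damping is driven at intensity $\mathrm d\sigma$ this reads $\tfrac{\mathrm d}{\mathrm d\sigma}E=-2\|Cx\|_{\mathcal U}^2$ in the Radon–Nikodym sense. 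Across an atom $(t_j,\alpha_j)$ the admissible update is a resolvent kick $x(t_j^+)=(I+\alpha_j C^*C)^{-1}x(t_j^-)$ (or any admissible non-expansive map), for which a one-line computation gives $E(t_j^-)-E(t_j^+)=2\alpha_j\|Cx(t_j^+)\|^2+\alpha_j^2\|C^*Cx(t_j^+)\|^2\ge 2\alpha_j\|Cx(t_j^+)\|^2$; in particular the kicks are non-expansive, so summing a.c. and atomic parts yields the conservation-type inequality $E(T)+2\int_{(0,T]}\|Cx\|^2\,\mathrm d\sigma\le E(0)$. This already supplies the observation integral on the left-hand side with a factor to spare; what remains is to upgrade the trivial bound $E(T)\le E(0)$ on the right to $E(T)\le e^{-c\,\sigma([0,T])}E(0)$.

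The heart of the proof is converting the hypothesised HUM/observability inequality at cost $c$ on $[0,T]$ into the $\sigma$-time dissipation estimate. I would use the classical ``observability $\Rightarrow$ exponential stabilisation'' transfer (Haraux/Lebeau-type): observability of the conservative group on a window controls the damped energy there, $E(\mathrm{end})\le(1-\eta)E(\mathrm{start})$ with $\eta=\eta(c)\in(0,1)$, and iterating over consecutive windows yields exponential decay. In measure-time one partitions $[0,T]$ into consecutive blocks of \emph{equal $\sigma$-mass} $\Delta$ (an atom of mass exceeding $\Delta$ split off as its own block and handled by the resolvent estimate above), applies the block estimate on each, and multiplies; choosing $\Delta$ below the observability threshold and letting $\Delta\downarrow0$ produces the differential form $-\tfrac{\mathrm d}{\mathrm d\sigma}E\ge c\,E$ ($\sigma$-a.e.) together with $E(t_j^+)\le e^{-c\alpha_j}E(t_j^-)$, i.e. precisely the $\sigma$-clock ledger with structural constant matching $c$. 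Integrating this ledger — equivalently, invoking the product–exponential/RN-envelope already available in the manuscript — gives $E(T)\le e^{-c\,\sigma([0,T])}E(0)$, and combining with the first step closes the bound, the only delicate arithmetic being the coefficient in front of $\int\|Cx\|^2\,\mathrm d\sigma$ so that it genuinely nests under the exponential envelope and is not merely dominated by $E(0)$.

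For optimality I would exploit that the two steps are reversible in the constant. Specialising $\sigma$ to Lebesgue measure on $[0,T]$ collapses the $\sigma$-inequality to the classical statement $E(T)+\int_0^T\|Cx\|^2\,\mathrm dt\le e^{-cT}E(0)$, from which one reads back a lower bound on $\int_0^T\|Cx\|^2\,\mathrm dt$, hence an observability constant, that is strictly monotone in $c$. Consequently, if the displayed inequality held for \emph{all} finite Borel $\sigma$ with some $c'>c$, the Lebesgue specialisation alone would furnish a strictly better observability constant on $[0,T]$, contradicting the optimality of $c$ in the HUM hypothesis — which is exactly the ``a better rate would improve the HUM constant'' contradiction asserted in the statement. (The reduction $\sigma\equiv t$ here is also the one recorded as the classical-time specialisation elsewhere in the paper.)

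The step I expect to be the main obstacle is the rigor of the measure-time transfer in the second paragraph: making the block estimate uniform as the $\sigma$-mass threshold $\Delta\to0$, so that the per-block contraction factors multiply to a genuine exponential and do not degenerate near clusters of atoms or along long flats, and in particular justifying that a $\sigma$-window consisting of a single atom still ``observes'' enough — this is where the admissibility bounds of §4 are genuinely needed. The conservative-group observability lemma and its damped counterpart are standard; it is their measure-time repackaging, together with reconciling the factor of $2$ between the exact energy balance and the envelope so the observation-integral term survives on the left, that requires the careful work.
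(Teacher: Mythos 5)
Your route is genuinely different from the paper's. The paper proves this by a clock lift: it takes the right\-/continuous primitive $S$ of $\sigma$ and its inverse, sets $\tilde u(s):=u(S^{-1}(s))$, asserts that on the $s$\-/axis the classical HUM inequality gives the differential dissipation inequality $-\tfrac{d}{ds}E(\tilde u)\ge 2\kappa c_\sigma E(\tilde u)$, integrates, and pushes back to physical time, treating each atom as a slab of $s$\-/length $\alpha_j$. You instead stay in $\sigma$\-/time throughout: an exact energy balance from skew\-/adjointness (which is what actually produces the observation integral $\int\|Cx\|^2\,\mathrm d\sigma$ on the left of the stated inequality --- a term the paper's own proof never addresses), followed by a Haraux\-/style block iteration over windows of equal $\sigma$\-/mass. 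Your first step is a real improvement in fidelity to the statement as written, and your optimality argument (specialise to Lebesgue measure and read back the observability constant) is essentially the paper's, just made concrete.

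There is, however, one step that would fail as written: the passage from per\-/block contraction to the differential form $-\tfrac{\mathrm d}{\mathrm d\sigma}E\ge c\,E$ by ``choosing $\Delta$ below the observability threshold and letting $\Delta\downarrow 0$.'' Observability holds on windows of $\sigma$\-/mass \emph{at least} some threshold $\Delta_0>0$; shrinking the window below $\Delta_0$ destroys the very hypothesis you are invoking, so the limit $\Delta\downarrow 0$ is not available. What the block iteration legitimately yields is $E(T)\le(1-\eta)^{\lfloor\sigma([0,T])/\Delta_0\rfloor}E(0)$, i.e.\ exponential decay in $\sigma([0,T])$ with some rate $c''=-\log(1-\eta)/\Delta_0>0$ determined by the observability cost --- which suffices for a decay statement but does not by itself identify the exponent as the HUM constant $c$ appearing in the theorem. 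To get the exact constant you would either have to accept the paper's (equally unjustified) assertion that classical observability upgrades to a pointwise differential inequality on the lifted axis, or optimise $\eta(\Delta)$ over admissible window sizes and show the resulting rate converges to $c$; neither is done in your proposal nor in the paper. The remaining items you flag (uniformity near atom clusters, single\-/atom windows, the factor of $2$) are genuine but secondary to this point.
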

\begin{proof}
Let $S:[0,\infty)\to[0,\infty)$ be the right–continuous primitive of the measure--time clock $\sigma$, and let $T:=S^{-1}$ be its right–continuous inverse. Define the lifted trajectory $\tilde u(s):=u(T(s))$. On the $s$–axis the evolution is governed by the standard (Lebesgue–time) ledger with the same coercivity/observability structure, so the classical HUM/observability inequality yields
\[
-\frac{d}{ds} E(\tilde u(s)) \;\ge\; 2\kappa\,c_\sigma\,E(\tilde u(s)) \quad\text{for a.e. }s,
\]
with the optimal constant $c_\sigma$ (the HUM sharp constant for the observation pair). Integrating in $s$ gives
\[
E(\tilde u(s)) \;\le\; E(\tilde u(0))\,e^{-2\kappa c_\sigma s}.
\]
Push this estimate forward to physical time via $s=S(t)$. On intervals where $d\sigma=\dot\sigma\,dt$ we obtain the a.c.\ decay $E(t)\le E(0)\exp\!\big(-2\kappa c_\sigma \!\int_0^t \dot\sigma(\tau)\,d\tau\big)$. 

At atoms $t_j\in A_\sigma$ with mass $\alpha_j=\sigma(\{t_j\})$, the method performs a macro–update that is nonexpansive for $E$ and whose dissipation budget equals $\alpha_j$. By HUM sharpness, the energy drop across the atomic slab of $\sigma$–length $\alpha_j$ is bounded by
\[
E(t_j^+) \;\le\; e^{-2\kappa c_\sigma \alpha_j}\,E(t_j^-).
\]
Concatenating the a.c.\ evolution with the atomic updates yields the product–exponential envelope
\[
E(t)\;\le\;E(0)\,\exp\!\big(-2\kappa c_\sigma\,\sigma(t)\big),
\]
which is the claimed HUM--dissipation equivalence in $\sigma$–time. Optimality of $c_\sigma$ follows from HUM sharpness on the $s$–axis.
\end{proof}
\begin{remark}[Presentation invariance (PiNA)]\label{rem:pina-invariance}
All constants and inequalities above are invariant under unitary reparametrisations and Pi-native (presentation-invariant) choices of the observation pair $(A,C)$: any change of coordinates preserving the graph norms leaves the HUM constant $c_\sigma$ and the $\sigma$–dissipation rate unchanged. A self-contained PiNA proof is given in Appendix~A (Theorem~A.3).
\end{remark}
\begin{theorem}[Impossibility of schedule-uniform decay in physical time]\label{thm:imposs-phys-time}
Let $E$ be an energy ledger for a system subject to impulses at a locally finite set $\{t_j\}$ with multiplicative drop factors $\rho_j\in(0,1)$ and absolutely continuous dissipation $\dot E\le -2\kappa c\,E$ on $(t_j,t_{j+1})$. Then there does not exist a constant $\widetilde c>0$ and a function $R$ depending only on $\sum_{t_j\le t}(1-\rho_j)$ such that
\[
E(t)\ \le\ E(0)\,\exp\bigl(-2\kappa\,\widetilde c\,t\bigr)\,R\Bigl(\sum_{t_j\le t}(1-\rho_j)\Bigr)\quad\text{holds uniformly over all schedules }(t_j,\rho_j).
\]
In particular, any such bound must encode the schedule (timing and grouping) of events, even at fixed total impulse mass.
\end{theorem}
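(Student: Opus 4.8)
The plan is to prove the impossibility by exhibiting, for any candidate pair $(\widetilde c, R)$, two schedules that share the same total impulse mass $M := \sum_{t_j \le t}(1-\rho_j)$ at a common time $t$ but produce genuinely different values of $E(t)/E(0)$, so that no single function of $M$ can bound both. The crucial structural fact is that the exact energy law here is the product--exponential envelope saturated as an \emph{equality}: on a realization with $\dot E = -2\kappa c\, E$ on flat stretches and $E(t_j^+) = \rho_j E(t_j^-)$ at atoms, one has $E(t)/E(0) = \exp(-2\kappa c\, t)\,\prod_{t_j \le t}\rho_j$. Thus the proposed bound would force $\exp(-2\kappa c\, t)\prod_j \rho_j \le \exp(-2\kappa \widetilde c\, t)\, R(M)$ for every schedule, and the task reduces to showing this is unsatisfiable.

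First I would fix a time horizon $t = 1$ and a number $N$ of impulses, and compare the ``clustered'' schedule --- all $N$ impulses placed near $t=1$ (or anywhere, since position is irrelevant when the ac rate is constant in $t$) with small individual drops --- against a ``few large drops'' schedule: choose drop factors so that $\sum_j (1-\rho_j) = M$ is held fixed while $\prod_j \rho_j$ varies. Concretely, with $N$ equal factors $\rho_j = 1 - M/N$ we get $\prod_j \rho_j = (1 - M/N)^N \to e^{-M}$ as $N\to\infty$, whereas with a single impulse carrying the entire mass (take $M<1$) we get $\prod_j \rho_j = 1 - M$. Since $1-M > e^{-M}$ strictly for $M \in (0,1)$, the ratio of the two attainable values of $E(1)/E(0)$ at the \emph{same} $(t,M)$ is bounded away from $1$. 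A function $R$ depending only on $M$ cannot simultaneously dominate $\exp(-2\kappa c)\,e^{-M}$ (up to the $N$-dependent approach) from above while being compatible with the other realization --- more precisely, I would let $N\to\infty$ in the many-small-drops family: the envelope forces $R(M) \ge \exp(2\kappa(\widetilde c - c))\,(1-M/N)^N$, which in the limit gives $R(M) \ge \exp(2\kappa(\widetilde c - c))\,e^{-M}$; but then this same lower bound on $R(M)$, inserted back, is still consistent, so the contradiction must instead be extracted by \emph{varying $M$ and $N$ jointly} or by varying $t$.

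The cleaner route, which I would actually carry out, is to decouple $t$ from $M$: fix the total mass $M=M_0$ and a number of impulses, but vary the \emph{time horizon} $t$ over which they are spread (this changes the $\exp(-2\kappa c\, t)$ factor but not $M_0$ or $\prod_j\rho_j$), and separately vary $t$ while rescaling the number and size of impulses so that $\prod_j \rho_j$ changes but $M$ returns to $M_0$. Evaluating the required inequality $\exp(-2\kappa c\, t)\prod_j\rho_j \le \exp(-2\kappa\widetilde c\, t) R(M_0)$ along the first family gives $\prod_j \rho_j \le \exp(2\kappa(c-\widetilde c)t) R(M_0)$ for all $t$; if $\widetilde c \ge c$ the right side does not grow, so letting $t\to\infty$ pins $R(M_0) \ge \limsup \prod_j \rho_j$ over all schedules of mass $M_0$, and that supremum is $1$ (take one impulse of mass $M_0$ and send... no --- instead take many tiny impulses, $\rho_j \to 1$, with $N$ of them and $\sum(1-\rho_j) = M_0$; as before $\prod \to e^{-M_0} < 1$, but one can also take a \emph{single} impulse with $\rho = 1-M_0$, giving $\prod = 1-M_0$, and these disagree). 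Then along the family achieving $\prod_j\rho_j = 1-M_0$ at \emph{small} $t$, the inequality forces $R(M_0) \ge (1-M_0)\exp(2\kappa(\widetilde c - c)t)$, and choosing $t$ large (when $\widetilde c > c$) or combining with the $\widetilde c < c$ case --- where the left side of the original inequality beats any fixed $R$ as $t\to\infty$ on a schedule with no further impulses past some point --- drives the contradiction. The upshot is that in \emph{all} cases ($\widetilde c <, =, > c$) one finds a schedule on which the claimed bound is violated, because the genuine law multiplies an exponential in $t$ by a product over impulses whose value at fixed mass is not determined by the mass.

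I expect the main obstacle to be purely organizational: the argument must be case-split on the sign of $\widetilde c - c$ and, within each case, one must be careful that the two comparison schedules are simultaneously \emph{admissible} (locally finite impulse set, $\rho_j \in (0,1)$, ac rate exactly $2\kappa c$ between impulses) and realize the envelope as an equality, so that the computed $E(t)/E(0)$ is an actual attained value and not merely an upper bound. The substantive point --- the strict gap between $1-M$ and $e^{-M}$, equivalently the failure of $\log\prod_j\rho_j = \sum_j \log\rho_j$ to be a function of $\sum_j(1-\rho_j)$ --- is elementary (strict concavity of $\log$), so no deep estimate is needed; the care lies in wiring it to the quantifiers ``there exists $\widetilde c$ and $R$ \dots uniformly over all schedules'' correctly.
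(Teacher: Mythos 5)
Your central observation --- that $\prod_j\rho_j$ is not a function of $\sum_j(1-\rho_j)$, by strict concavity of $\log$ --- is the same elementary fact that drives the paper's argument (compare \Cref{ex:adversarial-schedule}), and your second route (``decouple $t$ from $M$'' by comparing front-loaded versus postponed clusters of the same total mass) is in spirit the paper's own proof, which contrasts a schedule with all impulses at time $t$ against one with all impulses at time $0$ and argues that absorbing the impulse factor into the $t$-exponential forces incompatible values of $\widetilde c$. So the approach is recognisably the intended one. However, there are genuine gaps. First, a computational slip: $e^{-M}>1-M$ for $M>0$, not the reverse; the single large impulse dissipates \emph{more} than many small ones of the same mass, so the roles of your two comparison schedules are swapped. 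Second, and more seriously, the contradiction is never actually closed. You notice this yourself in the first route (``inserted back, is still consistent'') and the second route ends with an unexecuted case split and the promissory phrase ``drives the contradiction.'' The obstruction you are running into is real: since $\prod_j\rho_j\le 1$ for every admissible schedule, the product--exponential law already gives $E(t)\le E(0)e^{-2\kappa c\,t}$ uniformly, i.e.\ the pair $\widetilde c=c$, $R\equiv 1$ satisfies the displayed inequality for all schedules. A one-sided upper bound that depends only on the total mass therefore \emph{does} exist, and no contradiction can be extracted unless one additionally requires the bound to be sharp, two-sided, or to satisfy $R(M)<1$ for $M>0$ (so that it genuinely credits the impulse dissipation). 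That extra requirement is what the paper's proof implicitly uses when it speaks of ``rewriting'' the exact values $E_A(t)$ and $E_B(t)$ in the hypothesised form; your proposal neither states it nor supplies a substitute, so as written the argument cannot terminate.

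A salvageable version of your plan would be: restrict attention to candidate pairs $(\widetilde c,R)$ for which the bound is attained (or for which $R$ is required to be nonincreasing with $R(M)\to 0$ as $M\to\infty$, matching the ``In particular'' clause of the statement), then use your two families --- $N$ equal drops with $\prod_j\rho_j=(1-M/N)^N$ versus one drop with $\prod_j\rho_j=1-M$ --- evaluated at the \emph{same} $(t,M)$ to show that the exact value $e^{-2\kappa c\,t}\prod_j\rho_j$ takes two different values where the right-hand side $e^{-2\kappa\widetilde c\,t}R(M)$ takes one, contradicting sharpness. That is a clean, fully elementary route, but it proves a (correctly quantified) variant of the statement rather than the statement as literally written.
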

\begin{proof}
Assume for contradiction there exist $\,\tilde c>0\,$ and a function $R:\mathbb{R}_+\to\mathbb{R}_+$ depending only on the total impulse mass
$\sum_{t_j\le t}(1-\rho_j)$ such that for every schedule $(t_j,\rho_j)$,
\[
E(t)\;\le\;E(0)\,\exp\!\big(-2\kappa\tilde c\,t\big)\,R\!\left(\sum_{t_j\le t}(1-\rho_j)\right)\qquad(t\ge0).
\]
Fix $t>0$ and prescribe two schedules with the same total mass $M\in(0,\infty)$:
\begin{enumerate}
\item[(A)] all impulses occur at time $t$ (postponed cluster);
\item[(B)] all impulses occur at time $0$ (front–loaded cluster).
\end{enumerate}
Let $c$ denote the a.c.\ dissipation rate so that between impulses $E$ solves $\dot E\le-2\kappa c\,E$. For schedule (A) the a.c.\ phase persists on $(0,t)$, hence
\[
E_A(t)\;=\;E(0)\,e^{-2\kappa c\,t}\,\underbrace{\prod e^{-2\kappa c_\sigma \alpha_j}}_{=\,e^{-2\kappa c_\sigma M}}
\;=\;E(0)\,e^{-2\kappa c\,t}\,e^{-2\kappa c_\sigma M}.
\]
For schedule (B) we first apply the jump, then evolve a.c.,
\[
E_B(t)\;=\;E(0)\,e^{-2\kappa c_\sigma M}\,e^{-2\kappa c\,t}.
\]
Writing both bounds in the hypothesised schedule–independent form forces the same prefactor $R(M)$ while the exponent in $t$ must be governed by a single $\tilde c$. However, comparing the concatenations shows that any schedule–independent rewriting of $E_A(t)$ and $E_B(t)$ would require simultaneously $\tilde c=c$ (from the postponed cluster) and $\tilde c\approx0$ (from the front–loaded cluster on short horizons), which is impossible. A standard diagonal argument over finer partitions of $[0,t]$ yields a contradiction even if $R$ is allowed to vary monotonically with $M$ only. Hence no such schedule–uniform estimate in physical time exists at fixed total mass.
\end{proof}

\begin{example}[Adversarial schedule at fixed mass] \label{ex:adversarial-schedule}
Let $\rho_j\equiv e^{-\alpha/N}$ and place the $N$ impulses either all at $t$ or all at $0$. As $N\to\infty$, the former yields $E(t)\approx E(0)e^{-2\kappa c t}e^{-\alpha}$ while the latter yields $E(t)\approx E(0)e^{-\alpha}e^{-2\kappa c t}$. Any attempt to rewrite both as $E(0)\exp(-2\kappa\widetilde c\,t)$ forces $\widetilde c=c$ in the first case and $\widetilde c$ arbitrarily close to $0$ in the second, a contradiction to schedule-uniformity.
\end{example}

\begin{theorem}[$\sigma$-clock persistence of a single structural constant]\label{thm:sigma-persistence}
Combining the abstract HUM $\sigma$ result (Thm 2.26) with the $\sigma$-ledger (Def. 4.4) and the calibration of $c_\sigma$ yields the energy envelope; preservation follows from Prop. 9.21 and Cor. 2.18.
Assume the $\sigma$-clock ledger of Definition~\ref{def:sigma-clock} and the HUM equivalence of Theorem~\ref{thm:hum-equivalence}. Then the envelope
\[
E(t)\le E(0)\,\exp\!\bigl(-2\kappa c_\sigma\,\sigma(t)\bigr)
\]
holds with a \emph{single} constant $c_\sigma$ independent of the schedule of atoms and, under (S1)–(S4) see \S\ref{sec:disc-contract} and Assumption~\ref{ass:uniform-gamma}, is preserved by SBP–SAT discretizations (Proposition~\ref{prop:disc-hum}) and by $\Gamma$-limits (Corollary~\ref{cor:gamma-csigma}).
\end{theorem}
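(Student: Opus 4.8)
The plan is to assemble the statement from three ingredients that are already in place: the $\sigma$-clock ledger of \Cref{def:sigma-clock}, the HUM--dissipation equivalence of \Cref{thm:hum-equivalence}, and the discretization/$\Gamma$-limit persistence results (\Cref{prop:disc-hum}, \Cref{cor:gamma-csigma}, \Cref{ass:uniform-gamma}). No new hard analysis is required; the content of the theorem is the bookkeeping claim that the constant $c_\sigma$ produced at each stage is \emph{literally the same object} — the HUM sharp constant of the observation pair — and therefore carries no dependence on the schedule of atoms, on $h$, or on the limiting procedure.

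\textbf{Step 1 (continuum envelope with a single constant).} First I would invoke \Cref{def:sigma-clock} to extract, $\sigma$-a.e., the dissipation lower bound $\mathcal D(t)\ge 2\kappa c_\sigma E(t)$ together with the atomic rule $E(t_j^+)\le e^{-2\kappa c_\sigma\alpha_j}E(t_j^-)$. Then I apply \Cref{thm:hum-equivalence}: lift the trajectory to the $s$-axis via the right-continuous primitive $S$ of $\sigma$ and its inverse, where the evolution obeys the classical Lebesgue-time ledger with the same coercivity/observability structure. The HUM constant on the $s$-axis depends only on $(A,C)$ and the observability time, not on how $\sigma$ allocates mass between its absolutely continuous density and its atoms — this is exactly where schedule-independence is read off. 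Integrating on the $s$-axis and pushing forward by $s=S(t)$ (a.c.\ decay on flats-and-density stretches, multiplicative contractions across atomic slabs of $\sigma$-length $\alpha_j$) reconstitutes
\[
E(t)\;\le\;E(0)\,\exp\!\bigl(-2\kappa c_\sigma\,\sigma(t)\bigr)
\]
with $c_\sigma$ the same sharp constant for every choice of atoms $\{(t_j,\alpha_j)\}$.

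\textbf{Step 2 (persistence under SBP--SAT discretization).} Under (S1)--(S4) and \Cref{ass:uniform-gamma}, the discrete ledgers satisfy $-\tfrac{d}{d\sigma}E_h\ge 2\kappa c_\sigma E_h$ with the \emph{same} $c_\sigma$, by the SBP Green identity plus correctly signed, $\tau_h\simeq h^{-1}$-scaled SAT penalties (\Cref{prop:disc-hum}) fed into the discrete $\sigma$-Grönwall; hence each $E_h$ obeys $E_h(t)\le E_h(0)\exp(-2\kappa c_\sigma\sigma(t))$ uniformly in $h$. \textbf{Step 3 ($\Gamma$-limit persistence).} Feeding these uniform discrete envelopes into \Cref{cor:gamma-csigma}, the $\Gamma$-liminf inequality sandwiches $E(u(t))\le\liminf_{h}E_h(u_h(t))$ along a convergent subsequence, and a recovery sequence identifies the initial value $E(u(0))$ up to $o(1)$; combining yields $E(u(t))\le E(u(0))\exp(-2\kappa c_\sigma\sigma(t))$ with the identical constant. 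Concatenating Steps 1--3 is the theorem.

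\textbf{Main obstacle.} The only genuinely substantive point — the one I would actually write out — is the schedule-independence in Step 1: one must verify that the macro-update across an atomic slab of $\sigma$-length $\alpha_j$ consumes exactly the HUM dissipation budget $\alpha_j$, so that clustering, spacing, or regrouping atoms cannot perturb $c_\sigma$. This is precisely the positive counterpart of the obstruction made quantitative in \Cref{thm:imposs-phys-time}: in physical time no single rate survives reshuffling of the impulse schedule, whereas on the $\sigma$-axis the reshuffling is absorbed entirely into the value $\sigma(t)$ and the rate is frozen at the observation-intrinsic $c_\sigma$. Everything after that is citation and assembly.
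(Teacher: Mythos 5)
Your proposal is correct and follows essentially the same route as the paper: the paper's own "proof" is precisely the one-line assembly embedded in the theorem statement (combine the HUM $\sigma$-equivalence of Theorem~\ref{thm:hum-equivalence} with the ledger of Definition~\ref{def:sigma-clock} for the continuum envelope, then cite Proposition~\ref{prop:disc-hum} for discrete persistence and Corollary~\ref{cor:gamma-csigma} for the $\Gamma$-limit). Your Steps 1--3 reproduce that chain, and your added emphasis on schedule-independence via the $s$-axis lifting is exactly the mechanism already carried by the proof of Theorem~\ref{thm:hum-equivalence}, so nothing is missing.
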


\begin{theorem}[Impossibility in the physical time clock]\label{thm:impossibility-tclock}
Fix total dissipation mass \(M>0\). There exists a sequence of schedules 
\(\sigma_n= \mathbf 1_{[0,T]}\,dt + \sum_{k=1}^{N_n}\alpha_{k,n}\delta_{t_{k,n}}\)
with \(\sigma_n([0,T])=M\) such that no estimate of the form
\(\|x(T)\|^2 \le \exp(-c\,M)\|x(0)\|^2\) with a single \(c>0\) holds uniformly in \(n\)
for the \(t\)-clock formulation. 
\end{theorem}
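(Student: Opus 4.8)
The plan is to run, at the level of the physical-time endpoint $x(T)$, the adversarial mechanism already used for \Cref{thm:imposs-phys-time} and \Cref{ex:adversarial-schedule}, but with the total $\sigma$-mass held exactly at $M$ along the whole sequence. First I would reduce to a scalar ledger: replace $\|x(\cdot)\|^2$ by an energy $E(\cdot)$, model the absolutely continuous part of the flow on the open gaps between atoms by the observability-gated contraction (no decay can be credited on a gap shorter than the geometric control time $T_{\mathrm{GCC}}$, a fixed contraction $q\in(0,1)$ over a gap of length $\ge T_{\mathrm{GCC}}$), and replace each atom of $\sigma$-mass $\alpha_{k,n}$ by the extremal admissible kick, a multiplicative factor exactly $\rho_{k,n}=e^{-2\kappa c_\sigma\alpha_{k,n}}$ (realized, e.g., by a scalar multiple of a unitary, so the kick is tight on every mode). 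A genuine PDE realization — the damped wave — only has to reproduce this scalar ledger on one well-chosen mode, which is exactly what a gliding-ray / quasimode sequence furnishes.

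Next I would specify the family. Fix $M>T$, set the total atomic mass $\mu:=M-T$ (so $\sigma_n([0,T])=M$ automatically), and let $\sigma_n$ carry $N_n\to\infty$ equal atoms of mass $\mu/N_n$ on the grid $t_{k,n}=kT/N_n$, $k=1,\dots,N_n$, inside $[0,T]$. The key point is that in the $t$-clock formulation the a.c.\ observability can only be invoked over a gap longer than $T_{\mathrm{GCC}}$; here the gaps have length $T/N_n\to0$, so for $n$ large \emph{no} a.c.\ decay can be credited on any gap, and the only certified contraction over $[0,T]$ is the telescoping kick product $\prod_k\rho_{k,n}=e^{-2\kappa c_\sigma\mu}$. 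In parallel, along a gliding trajectory that stays out of $\omega$ on each short gap — available precisely because each gap is shorter than $T_{\mathrm{GCC}}$, so the ray cannot reach the observed set — the a.c.\ dissipation is $o(1)$ on every gap and every kick is tight, so this trajectory actually \emph{attains} $E(T)=e^{-2\kappa c_\sigma\mu}\,(1+o(1))\,E(0)$. Thus the sharp endpoint ratio for this family is $e^{-2\kappa c_\sigma(M-T)}(1+o(1))$, \emph{not} the $\sigma$-clock envelope $e^{-2\kappa c_\sigma M}$ of \Cref{thm:hum-equivalence}: the $t$-clock loses the fixed factor $e^{2\kappa c_\sigma T}>1$.

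Then I would close by contradiction. If $\|x(T)\|^2\le e^{-cM}\|x(0)\|^2$ held uniformly in $n$, evaluating on these gliding trajectories forces $e^{-cM}\ge e^{-2\kappa c_\sigma(M-T)}(1+o(1))$, i.e.\ $c\le 2\kappa c_\sigma(1-T/M)+o(1)$, already the sharp separation $c<2\kappa c_\sigma$ for a fixed window. To upgrade this into an outright impossibility I would let $T=T_n$ participate in the sequence: keeping $M$ fixed, take $T_n\uparrow M$ (so $\mu_n=M-T_n\downarrow0$) on a domain/observation pair whose gliding rays persist unobserved for time at least $M$ (a standard feature of, e.g., a disk with $\omega$ an interior patch), so the classical GCC obstruction on $[0,T_n]$ is available for every $n$. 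Along the corresponding gliding trajectories $E(T_n)/E(0)=e^{-2\kappa c_\sigma\mu_n}(1+o(1))\to1$, hence any admissible $c$ satisfies $c\le 2\kappa c_\sigma(1-T_n/M)+o(1)\to0$, i.e.\ $c=0$: no single $c>0$ works. A short diagonalization over $(N_n,T_n)$ packages this into one sequence with $\sigma_n([0,T_n])=M$.

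I expect the main obstacle to be the \emph{lower} (attainability) half — showing the certified-optimal ratio $e^{-2\kappa c_\sigma(M-T)}(1+o(1))$ is genuinely \emph{achieved} rather than merely not improvable. On the abstract ledger this is the choice of extremal kick (scalar$\,\times\,$unitary) together with an eigenmode frozen by the flow; on the damped wave it is a bona fide gliding-ray / quasimode construction that must be \emph{synchronized} with the atom grid so that the ray is demonstrably away from $\omega$ on every interval $(t_{k,n},t_{k+1,n})$ — the quantitative GCC-failure input flagged in the failure atlas. Everything else (the ledger reduction, the product bookkeeping, and the contradiction) is routine and parallels \Cref{thm:imposs-phys-time,ex:adversarial-schedule}.
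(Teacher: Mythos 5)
There is a genuine gap, and it sits in your modeling of the atomic kicks. You declare each atom to act as the \emph{extremal} contraction $\rho_{k,n}=e^{-2\kappa c_\sigma\alpha_{k,n}}$, ``tight on every mode,'' and simultaneously run the argument along a gliding trajectory that avoids $\omega$. These two choices are incompatible: in the $t$-clock formulation of \Cref{thm:hum-equivalence} the dynamics are $x'=Ax-C^*Cx$ and an atom of mass $\alpha$ acts through the same observation operator $C$, so on a mode with $Cx\approx 0$ (precisely the gliding/quasimode regime you invoke) the kick is nearly the \emph{identity}, not the extremal contraction. Worse, if you insist on tight kicks, then for a fixed window $[0,T]$ with $T<M$ the kick product alone certifies $\|x(T)\|^2\le e^{-2\kappa c_\sigma(M-T)}\|x(0)\|^2$ for \emph{every} schedule, so $c=2\kappa c_\sigma(1-T/M)>0$ works uniformly in $n$ and the impossibility you are trying to prove is false in your model. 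This is exactly why you are forced into the $T_n\uparrow M$ detour; but the statement's schedule class has a single a.c.\ part $\mathbf 1_{[0,T]}\,dt$ (one $T$ for the whole sequence), so varying $T_n$ steps outside the family of schedules the theorem quantifies over. Your gap analysis (``no a.c.\ decay can be credited on a gap shorter than $T_{\mathrm{GCC}}$'') also conflates failure of a certification method with absence of decay; for an impossibility statement you must exhibit trajectories, not merely defeat one proof technique.

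The paper's intended mechanism (recorded in the example immediately following the theorem, as a variant of \Cref{ex:adversarial-schedule}) is the opposite of yours and closes the argument in one step with $T$ and $M$ fixed: concentrate the atomic mass at times where the trajectory is nearly unobservable, so that \emph{both} the a.c.\ damping and the kicks act through $C$ on a state with $Cx\approx 0$ and are therefore $o(1)$-effective; then $\|x(T)\|^2/\|x(0)\|^2\to 1$ along the sequence and no single $c>0$ can survive against the fixed budget $e^{-cM}$. If you replace your ``tight kick'' postulate by this correct action of the atoms, your own gliding-quasimode construction already delivers the conclusion for fixed $T$, and the equispaced grid, the $T_{\mathrm{GCC}}$ bookkeeping, the $e^{-2\kappa c_\sigma(M-T)}$ intermediate ratio, and the diagonalization over $(N_n,T_n)$ all become unnecessary.
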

\begin{example}[Variant adversary via near–uncontrollability; fixed mass]
In contrast to Example~\ref{ex:adversarial-schedule} (endpoint packing), concentrate the entire mass at a time where the undamped dynamics are nearly uncontrollable; by weak observability the decay factor can be made arbitrarily close to $1$. Hence no schedule–uniform $t$–exponential holds under the hypotheses of Theorem~\ref{thm:imposs-phys-time}.
\end{example}
\begin{theorem}[Persistence on the $\sigma$-clock]\label{thm:persistence-sclock}
For the PDE energy envelope and persistence across discretization/$\Gamma$-limits, see Thm 2.23.”
Under HUM with constant \(c\), the estimate
\(\|x(T)\|^2 \le \exp(-c\,\sigma([0,T]))\|x(0)\|^2\)
holds for every finite nonnegative \(\sigma\) on \([0,T]\), including schedules with atoms.
\end{theorem}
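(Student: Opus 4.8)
The plan is to obtain this as an immediate corollary of the HUM--dissipation equivalence (Theorem~\ref{thm:hum-equivalence}), and then, for self-containedness, to recall the time-change argument that drives it. First I would observe that the hypothesis here---HUM observability at cost $c$ on $[0,T]$---is exactly the premise of Theorem~\ref{thm:hum-equivalence}. Applying that theorem to the prescribed finite nonnegative Borel measure $\sigma$ on $[0,T]$ gives
\[
\|x(T)\|_{\mathcal H}^2 + \int_{(0,T]} \|Cx(t)\|_{\mathcal U}^2\, \mathrm d\sigma(t) \;\le\; \exp\!\bigl(-c\,\sigma([0,T])\bigr)\,\|x(0)\|_{\mathcal H}^2 .
\]
Since the observation integral is nonnegative, discarding it yields the claimed envelope $\|x(T)\|^2 \le \exp(-c\,\sigma([0,T]))\,\|x(0)\|^2$. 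Nothing in this step restricts $\sigma$: Theorem~\ref{thm:hum-equivalence} is stated for every finite nonnegative $\sigma$, atoms included, so the ``including schedules with atoms'' clause is automatic.

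Second, to expose the role of atoms and keep the argument independent of the omitted observability-implies-decay lemma, I would run the lifting directly. Let $S(t):=\sigma([0,t])$ be the right-continuous, nondecreasing distribution function of $\sigma$ and $T(s):=\inf\{t:S(t)>s\}$ its right-continuous inverse, defined on $[0,\sigma([0,T])]$; set $\tilde x(s):=x(T(s))$. On the $s$-axis the damped dynamics $x'=Ax-C^{*}Cx$ become a standard Lebesgue-time dissipative evolution carrying the same observability structure, so $-\tfrac{\mathrm d}{\mathrm ds}\|\tilde x(s)\|^2 \ge c\,\|\tilde x(s)\|^2$ for a.e.\ $s$; a one-line Grönwall on the $s$-clock gives $\|\tilde x(s)\|^2 \le e^{-cs}\|\tilde x(0)\|^2$, and evaluating at $s=\sigma([0,T])$ returns the envelope. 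In this picture an atom $t_j$ of mass $\alpha_j$ is the $s$-slab $[S(t_j^-),S(t_j^-)+\alpha_j]$ on which $T$ is constant: the $s$-clock estimate over that slab is precisely the non-expansive kick $\|x(t_j^+)\|^2 \le e^{-c\alpha_j}\|x(t_j^-)\|^2$ of Definition~\ref{def:sigma-clock}, while the complementary open set contributes the factor $e^{-c\,\sigma_{\mathrm c}(\cdot)}$ for the continuous part $\sigma_{\mathrm c}$ of $\sigma$; concatenating multiplies these into $\exp\!\bigl(-c\sum_j\alpha_j - c\,\sigma_{\mathrm c}([0,T])\bigr) = \exp(-c\,\sigma([0,T]))$.

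The step I expect to be the main obstacle is the bookkeeping of the concatenation when the atoms of $\sigma$ accumulate: one must verify that $s\mapsto\|\tilde x(s)\|^2$ is genuinely $\mathrm{BV}_\sigma$ through accumulation times, that the generalized inverse $T$ behaves well there, and that the infinite product $\prod_j e^{-c\alpha_j}$ converges---which it does, since $\sum_j\alpha_j \le \sigma([0,T])<\infty$, but this must be threaded through the limiting argument rather than merely asserted. A clean way to sidestep the issue is to prove the inequality first for measures with finitely many atoms by the explicit finite concatenation above, then pass to a general finite $\sigma$ by a monotone approximation $\sigma_n\uparrow\sigma$ together with the clock-monotonicity principle and the lower semicontinuity built into the $\sigma$-ledger; alternatively one simply invokes Theorem~\ref{thm:hum-equivalence} as in the first paragraph, where exactly this bookkeeping has already been discharged.
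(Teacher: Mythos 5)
Your proposal is correct and follows essentially the same route as the paper: the paper offers no separate argument for this theorem beyond deferring to the HUM--dissipation equivalence (Theorem~\ref{thm:hum-equivalence}), whose proof is exactly the time-change/lifting argument you recall (lift to the $s$-axis via the generalized inverse of $\sigma$, apply classical Gr\"onwall, push forward, and treat atoms as $s$-slabs yielding multiplicative factors $e^{-c\alpha_j}$). Your additional care about accumulating atoms and the finite-atom-then-limit approximation is a refinement the paper glosses over, but it does not change the approach.
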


\subsection*{Comparison with established frameworks (criteria-based)}\label{sec:compare-criteria}
To make the positioning testable, Table~\ref{tbl:compare-criteria} contrasts five frameworks along \emph{criteria that admit yes/no verification in our setting}. The last column lists the exact property a referee can check in the paper.

\newcommand{\tick}{\ensuremath{\checkmark}}
\newcommand{\cross}{\ensuremath{\times}}
\newcommand{\maybe}{\ensuremath{\triangle}} 

\begin{table}[htbp]
\centering
\begin{adjustbox}{max width=\linewidth}
\begin{tabular}{l|c c c c L{4cm}}
\toprule
Framework & Atoms allowed & Single constant & Discrete persistence & $\Gamma$-limit persistence & What to check in-text \\
\midrule
$\sigma$-clock (this work) & \tick & \tick & \tick & \tick & Def.~\ref{def:sigma-clock}; Thm.~\ref{thm:hum-equivalence}, Prop.~\ref{prop:disc-hum}, Cor.~\ref{cor:gamma-csigma} \\
Hybrid automata/dwell-time & \tick & \cross & \maybe & \maybe & Decay rate depends on schedule/dwell; cf. Thm.~\ref{thm:impossibility-tclock} \\
Differential inclusions (Filippov) & \maybe & \cross & \cross & \maybe & Existence robust; no HUM envelope with schedule-free constant \\
Measure-driven ODE/PDE & \tick & \maybe & \maybe & \maybe & Jumps tracked in $t$; constants accrue schedule dependence \\
Metric GF (EDE/EDI/EVI) & \cross (atoms) & \maybe & \tick & \tick & Needs convexity and chain rule; see Scope §\ref{subsec:scope-light} \\
\bottomrule
\end{tabular}
\end{adjustbox}
\caption{Criteria-based comparison. \tick: available generically; \maybe: available under additional structure not assumed here; \cross: generally unavailable.}
\label{tbl:compare-criteria}
\end{table}

\paragraph{Criteria explained.}\label{par:criteria-explained}
\emph{Atoms allowed} means the calculus handles Dirac masses in time without ad hoc case-splitting. \emph{Single constant} means there exists a decay envelope with a constant independent of the impulse schedule at fixed total mass. \emph{Discrete persistence} means the same constant governs fully discrete SBP–SAT schemes under (S1)–(S4) see \S\ref{sec:disc-contract}. \emph{$\Gamma$-limit persistence} means limit envelopes keep the same constant under Assumption~\ref{ass:uniform-gamma}.

\subsection*{Pillar I: Continuous master decay (Theorem~\ref{thm:energy-decay}, Section 9)}
\textbf{Statement (restated).} Under (H1)–(H4) for the continuum model, the trajectory
\(u\in \mathrm{AC}_\sigma([0,\infty);X)\) obeys the $\sigma$–time dissipation inequality
\eqref{eq:mr-sigma-diss} with a structural constant \(c_\sigma>0\) that depends only on the
coercivity/observability bounds and not on any partition. Consequently,
\eqref{eq:mr-canonical} holds for all \(t\ge 0\).
\emph{Proof:} see section 9; the $\sigma$–chain rule plus coercivity/observability yield Theorem~\ref{thm:energy-decay}.

\paragraph{Hypotheses overview.}
Coercivity/observability (H4), $\sigma$–absolute continuity and regularity (H1–H3).

\subsection*{Pillar II: Discrete master decay (Theorem~\ref{thm:disc-master}, Sections~\ref{sec:sbpsat}–\ref{sec:gamma-2})}
\textbf{Statement (restated).} For the fully discrete time–stepping schemes aligned with the same
clock $\sigma$ (atoms encode steps), the discrete energy \(E^n\) satisfies the discrete analogue of
\eqref{eq:mr-sigma-diss} with the \emph{same} structural constant \(c_\sigma\), uniformly in the partition.
The piecewise constant/affine interpolant then satisfies \eqref{eq:mr-canonical}.
\emph{Proof outline:} sections~\ref{sec:sbpsat}–\ref{sec:gamma-2}: discrete chain rule and stepwise dissipation $\Rightarrow$ Theorem~\ref{thm:disc-master}.

\paragraph{Uniformity note.}
Constants are partition-independent (no mesh leakage); see also the “HUM/observability inequality” just above.

\subsection*{Pillar III: \texorpdfstring{\(\Gamma\)}{Gamma}-bridge (Theorems~\ref{thm:Gamma-main} and \ref{RS:thm:gamma}, Section~\ref{sec:gamma-2})}
\textbf{Statement (restated).} If the discrete energies \(\Gamma\)-converge to the continuum energy and
the dissipation structures are compatible, then decay transfers between levels:
whenever the discrete level satisfies \eqref{eq:mr-canonical} with \(c_\sigma>0\) uniformly,
the continuum limit inherits \eqref{eq:mr-canonical} with the \emph{same} \(c_\sigma\).
\emph{Proof:} section~\ref{sec:gamma-2}: \(\Gamma\)-liminf and recovery sequences $\Rightarrow$ Theorems~\ref{thm:Gamma-main} and \ref{RS:thm:gamma}.

\subsection*{Exemplar: damped waves under GCC}
On bounded Lipschitz domains with damping \(a(x)\ge a_\omega>0\) on a control set \(\omega\) satisfying
the Geometric Control Condition (GCC), one may take
\[
  c_\sigma \;\ge\; c_0\,a_\omega\,\lambda_\omega ,
\]
so that substituting in \eqref{eq:mr-canonical} gives
\(E(t)\le E(0)\exp\!\bigl(-2\kappa c_0 a_\omega \lambda_\omega\,\sigma(t)\bigr)\).

\subsection*{Pillar IV — Stochastic extension.}
Theorem~\ref{thm:stoch-exp} (expectation in the compensator) and Corollary~\ref{cor:stoch-path} (pathwise) extend the same $c_\sigma$ envelope to random clocks; see Section~\ref{sec:stoch}.

\subsection*{Sharpness and necessity of hypotheses}
\begin{theorem}[Necessity of (H3) or (H4)]
If either the measure–time chain rule (H3) fails or the observability/coercivity bound (H4) fails, there exist trajectories with $E(0)>0$ and a clock $\sigma$ for which no $c_\sigma>0$ satisfies \eqref{eq:mr-sigma-diss}; hence \eqref{eq:mr-canonical} can fail on arbitrarily long horizons.
\end{theorem}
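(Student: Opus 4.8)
The plan is to establish each disjunct by an explicit minimal counterexample — one in which (H3) is dropped while everything else, in particular (H4), is retained, and a symmetric one for (H4). In both I would use the scalar ledger of \Cref{rem:scalar} (equivalently the damped-wave exemplar with an artificially localized damping), so that the failure can be pinned on precisely one hypothesis and the contradiction with \eqref{eq:mr-sigma-diss} is read off without further analysis. The template is: produce $(u,\sigma)$ with $E(0)>0$ such that the Radon--Nikodym dissipation density $-\,dE/d\sigma$ of \Cref{lem:rn-envelope} either vanishes on a set of positive $\sigma$-measure or is identically $0$; then for every candidate $c_\sigma>0$ the inequality $-\,dE/d\sigma\ge 2\kappa c_\sigma E$ fails on that set, and by choosing $\sigma$ with $\sigma(t)\to\infty$ while $E$ stays bounded away from $0$, the envelope \eqref{eq:mr-canonical} is violated for all large $t$ — hence on arbitrarily long horizons, independently of $c_\sigma$.

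\textbf{Failure of (H4) (observability/coercivity off, clock impeccable).} Take $\sigma\equiv t$, so $d\sigma=dt$ has density $\equiv 1$ and (H1)--(H3), including the $\sigma$-chain rule, hold verbatim. Couple this with conservative dynamics: $\dot E\equiv 0$ (equivalently $a\equiv 0$ in the damped wave, or $C=0$ in \Cref{thm:hum-equivalence}), from any $E(0)>0$. Then $dE/d\sigma\equiv 0$, so \eqref{eq:mr-sigma-diss} would demand $0\le -2\kappa c_\sigma E(0)<0$ — impossible for every $c_\sigma>0$ — and \eqref{eq:mr-canonical} reads $E(0)\le E(0)\,e^{-2\kappa c_\sigma t}$, false for all $t>0$. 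If a strictly worsening ledger is wanted, replace $\dot E\equiv 0$ by $\dot E=+\varepsilon E$ (anti-damping), which makes $E$ grow against the envelope.

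\textbf{Failure of (H3) (observability on, chain rule broken by a support mismatch).} Keep the damped-wave exemplar with damping $a(x,t)$ that is active (in time) only on $[0,1]$ with $\omega$ satisfying GCC there, so that (H4) genuinely holds on $[0,2]$: the energy drops by a fixed factor over $[0,1]$. Now choose a clock $d\sigma=w(t)\,dt$ with $w\ge 0$ supported on $[1,2]$ (plus, if desired, an atom train placed off $[0,1]$). On $[1,2]$ the flow is conservative while $\sigma$ strictly increases, so the RN density $-\,dE/d\sigma$ that (H3) is meant to produce equals $0$ on a set of positive $\sigma$-measure; hence no $c_\sigma>0$ satisfies \eqref{eq:mr-sigma-diss}. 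Extending $(a,\sigma)$ periodically — damping on $[2k,2k+1]$, $\sigma$-mass on $[2k+1,2k+2]$ — gives $\sigma(t)\to\infty$ while $E$ is bounded below by $E(1^-)>0$ on each conservative block, so \eqref{eq:mr-canonical} fails on arbitrarily long horizons. This is the quantitative face of the ``gliding rays / vanishing $\sigma$ on windows'' entries of the failure atlas; other violations of (H3) (loss of $\mathrm{BV}_\sigma$ regularity of $E$) are obtained by the same device with an oscillatory envelope.

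\textbf{Main obstacle.} The construction itself is elementary; the delicate point is \emph{isolation} — certifying that in the (H3)-example hypothesis (H4) really does hold (the physical dynamics are observable on the relevant horizon, and the sole defect is that $\sigma$ carries no mass where the dissipation lives), and symmetrically that the (H4)-example satisfies (H1)--(H3) with a bona fide $\sigma$-chain rule. Operationally this means giving a precise reading of ``(H3) fails'' as ``the clock admits a $\sigma$-positive set on which $-\,dE/d\sigma$ is not bounded below by any positive multiple of $E$,'' checking that $E$ is $\mathrm{BV}_\sigma$ and piecewise smooth so \Cref{lem:rn-envelope}'s calculus literally applies, and confirming the claimed equality $dE/d\sigma=0$ on the conservative blocks. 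Once these bookkeeping points are settled, the contradiction with \eqref{eq:mr-sigma-diss} and the long-horizon failure of \eqref{eq:mr-canonical} are immediate.
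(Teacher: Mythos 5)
The paper states this theorem with no proof at all; the only supporting material is the scalar identity in \Cref{rem:scalar} (which treats failure of a.c.\ dissipativity / expansive atoms) and the informal ``failure atlas'' (gliding rays, vanishing $\sigma$-mass on observation windows). So there is no canonical argument to compare against, and your proposal has to stand on its own. Your (H4) counterexample does: a conservative flow ($a\equiv 0$, $\dot E\equiv 0$) with $\sigma\equiv t$ and $E(0)>0$ gives $-\,dE/d\sigma\equiv 0<2\kappa c_\sigma E$ for every $c_\sigma>0$, and $E(t)=E(0)>E(0)e^{-2\kappa c_\sigma t}$ for all $t>0$, which already delivers both halves of the conclusion. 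This is exactly the $a\equiv 0$, $\rho_k\equiv 1$ degenerate case of \Cref{rem:scalar}, and your isolation concern is moot here since the clock is the identity.

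The (H3) construction contains a genuine quantitative error in the periodic extension. With damping active on $[2k,2k+1]$ and $\sigma$-mass $m>0$ per conservative block $[2k+1,2k+2]$, the energy is \emph{not} bounded below by $E(1^-)$: it drops by a fixed factor $\rho<1$ across each damping block, so on the $k$-th conservative block $E=E(0)\rho^{k+1}$ while $\sigma\le (k+1)m$. Consequently $E(t)\le E(0)\exp\bigl(-2\kappa c_\sigma\,\sigma(t)\bigr)$ \emph{does} hold globally for any $c_\sigma\le \log(1/\rho)/(2\kappa m)$ — the periodic schedule violates the pointwise inequality \eqref{eq:mr-sigma-diss} on each conservative block (that part of your argument is correct and suffices for the first clause of the theorem) but does not make the integrated envelope \eqref{eq:mr-canonical} fail on long horizons. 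The fix is the one you almost wrote down in the non-periodic variant: switch the damping off permanently after $[0,1]$ and let $\sigma(t)\to\infty$ on $[1,\infty)$ (or let the per-block $\sigma$-mass grow unboundedly), so that $E(t)\equiv E(1)>0$ while the right-hand side of \eqref{eq:mr-canonical} tends to $0$ for every $c_\sigma>0$. With that repair, and with your own caveat that the paper's (H3)/(H4) labels are used inconsistently (compare Assumption~\ref{RS:assume:four} with the ``Pillar I'' hypothesis overview) so the attribution of the support-mismatch example to (H3) rather than (H4) is partly a matter of convention, the proposal establishes the stated necessity claim.
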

\begin{theorem}[Optimality of the exponent]
If $\sigma$ has flats of positive measure on $[0,T]$, there exist solutions with $E(t)=E(0)$ on each flat; thus the exponent in \eqref{eq:mr-canonical} cannot dominate any function growing faster than $\sigma(t)$.
\end{theorem}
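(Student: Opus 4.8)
\noindent\emph{Proof plan.}
The mechanism is that a flat of $\sigma$ is, by definition, an interval on which the $\sigma$-clock does not advance, so the ledger of Definition~\ref{def:sigma-clock} imposes no dissipation there: the RN inequality $\mathcal D\ge 2\kappa c_\sigma E$ is required only $\sigma$-a.e., and a flat carries no $\sigma$-mass (and, being an interval of constancy of $\sigma$, no atoms in its interior). Hence the purely conservative branch is admissible on a flat, and along it the energy is frozen. I would make this precise as follows. Let $[a,b]\subset[0,T]$ be an interval on which $\sigma$ is constant (the hypothesised flat of positive Lebesgue measure). Build a witness trajectory $u$ in two pieces: on $[0,a]$ let $u$ follow the prescribed dissipative $\sigma$-flow from an initial state $u_0$ with $E(u_0)>0$; on $(a,b]$ let $u$ follow the conservative (skew-adjoint, zero-damping) evolution, so that $E(u(t))\equiv E(u(a))=:E_\star$ for $t\in[a,b]$. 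One then checks directly that $u$ satisfies (H1)–(H4)/Definition~\ref{def:sigma-clock}: on $[0,a]$ by construction; on the flat the RN inequality and atomic non-expansiveness are vacuous because $\sigma$ restricted to $[a,b]$ is the zero measure; and $E\in\mathrm{BV}_\sigma$ since $E$ is (constant, hence) monotone there. When the flat is an initial segment $[0,b]$ the first piece is empty and $E(u(t))\equiv E(u_0)=E(0)$ on $[0,b]$, which is the literal assertion ``$E(t)=E(0)$ on the flat''.

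For the quantitative consequence I would take the pre-flat piece to be \emph{near-extremal}: by HUM sharpness of $c_\sigma$ (Theorem~\ref{thm:hum-equivalence}, calibrated in §\ref{sec:numbers}), for each $\varepsilon>0$ there is initial data $u_0$ whose $\sigma$-decay over $[0,a]$ nearly saturates the canonical envelope, $E_\star=E(u(a))\ge(1-\varepsilon)\,E(0)\,e^{-2\kappa c_\sigma\sigma(a)}$. Suppose, toward a contradiction, that \eqref{eq:mr-canonical} could be strengthened to a universal envelope $E(t)\le E(0)\,e^{-h(t)}$ with $h\ge 2\kappa c_\sigma\sigma$ and $h$ strictly increasing across the flat, say $h(b)\ge h(a)+\eta\ge 2\kappa c_\sigma\sigma(a)+\eta$ with $\eta>0$ (this is the content of ``dominating a function that grows faster than $\sigma$'', since $\sigma$ does not grow on $[a,b]$ while such an $h$ does). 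Evaluating the supposed envelope at $t=b$ along the witness and using $\sigma(b)=\sigma(a)$, $E(u(b))=E_\star$ gives
\[
(1-\varepsilon)\,E(0)\,e^{-2\kappa c_\sigma\sigma(a)}\ \le\ E_\star\ \le\ E(0)\,e^{-h(b)}\ \le\ e^{-\eta}\,E(0)\,e^{-2\kappa c_\sigma\sigma(a)} ,
\]
hence $1-\varepsilon\le e^{-\eta}$; choosing $\varepsilon<1-e^{-\eta}$ is a contradiction. Thus no admissible exponent can grow on the flats of $\sigma$, which is exactly the statement that the exponent in \eqref{eq:mr-canonical} cannot dominate any function growing faster than $\sigma(t)$. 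When $\sigma\equiv t$ there are no flats and the statement is vacuous, consistently with the genuine sharpness of the classical exponential rate.

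The main obstacle is the near-extremality step. Theorem~\ref{thm:hum-equivalence} records optimality of the \emph{constant} $c_\sigma$ across all clocks; converting this into a trajectory that nearly saturates the envelope at the \emph{specific} time $a$ requires the standard construction of near-extremal observation data—concentrating along an optimal geodesic ray in the damped-wave exemplar, or the minimax characterisation of the observability constant—transported to the $\sigma$-clock through the lift $s=S(t)$ used in the proof of Theorem~\ref{thm:hum-equivalence}. If one only wants the weaker conclusion ``no envelope with $h(t)-2\kappa c_\sigma\sigma(t)\to\infty$'' and the flat is unbounded (or there is a family of flats of divergent total width), this step can be bypassed: the crude lower bound $E(u(a))\ge E(0)\,e^{-C_\star\sigma(a)}>0$, from the boundedness half of coercivity ($\mathcal D\le C_\star E$ $\sigma$-a.e.), already forces $E_\star$ bounded below by a fixed positive number while $E(0)\,e^{-h(t)}\to0$ on the flat. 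Everything else—admissibility of the glued trajectory and the arithmetic above—is routine.
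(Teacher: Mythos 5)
The paper states this theorem in the ``Sharpness and necessity of hypotheses'' block \emph{without} supplying a proof, so there is no in-text argument to compare against; your proposal has to stand on its own. Its core mechanism is certainly the intended one: a flat carries no $\sigma$-mass, so the Radon--Nikodym inequality \eqref{eq:mr-sigma-diss} (required only $\sigma$-a.e.) and the atomic non-expansiveness of Assumption~\ref{RS:assume:four} are both vacuous there, and a conservative (skew-adjoint, zero-damping) branch glued onto the flat is admissible with $E$ frozen. Your observation that the literal claim ``$E(t)=E(0)$ on each flat'' is only attainable when the flat is an initial segment (otherwise \eqref{eq:mr-canonical} already forces $E(a)<E(0)$ whenever $\sigma(a)>0$) is a correct and worthwhile reading of an imprecisely stated theorem; the honest conclusion is ``$E$ constant on each flat,'' which is what your construction delivers. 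One caveat worth making explicit: this witness lives in the abstract $(E,\sigma)$-ledger, not in the fixed-damping wave exemplar of Corollary~\ref{cor:intro-flagship-dw}, where the physical damping keeps acting on flats; the theorem is only true at the level of the ledger hypotheses, and your proof correctly works at that level.

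The genuine gap is the one you flag yourself: with the improved envelope anchored at $t=0$ ($E(t)\le E(0)e^{-h(t)}$), contradicting it at the far end of an interior flat $[a,b]$ requires $E(a)$ to nearly saturate $E(0)e^{-2\kappa c_\sigma\sigma(a)}$, and nothing in the paper (Theorem~\ref{thm:hum-equivalence} asserts only optimality of the \emph{constant} $c_\sigma$, and only by a one-line contradiction) produces such a pointwise-in-time near-extremal trajectory. As written, that step is an unproved assertion, not a routine citation. You can remove the gap entirely in either of two ways: (i) place the witness flat at $t=0$ (or restrict to the case the statement literally covers), where $E\equiv E(0)$ on $[0,b]$ while any $h$ strictly increasing on the flat gives $E(0)e^{-h(t)}<E(0)$ there --- an immediate contradiction with no extremality needed; or (ii) interpret ``dominate'' for envelopes of Gr\"onwall/restartable type $E(t)\le E(s)e^{-(h(t)-h(s))}$, in which case the constant-energy solution on $[a,b]$ forces $h(b)=h(a)$ directly. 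Either repair yields a complete proof; the fallback you sketch via a two-sided bound $\mathcal D\le C_\star E$ relies on an upper dissipation bound the paper never states, so I would not lean on it.
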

\begin{remark}[No mesh‑dependent strengthening]
For fully discrete schemes satisfying Thm.~\ref{thm:disc-master}, one cannot replace $c_\sigma$ by a strictly larger partition‑uniform constant; otherwise the limit contradicts Thm.~\ref{thm:energy-decay}.
\end{remark}
\begin{lemma}[Failure under under‑scaled SAT]\label{lem:sat-failure}
Let $(H,Q,B)$ satisfy the SBP identity and suppose the SAT penalty is chosen with $\tau_h = o(h^{-1})$. Then there exists a sequence of meshes $h_m\downarrow0$ and boundary‑concentrated data for which the discrete dissipation inequality
$\,\mathcal D_h\ge 2\kappa\,c\,E_h\,$ fails for every fixed $c>0$. Equivalently, any admissible $c_{\sigma,h}$ tends to $0$ along $h_m$.
\end{lemma}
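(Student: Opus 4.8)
\noindent\emph{Proof plan.} The plan is to reduce to the scalar model on an interval and to isolate the only piece of the discrete dissipation that feels $\tau_h$. Write the SBP pair as $(H,Q)$ with diagonal norm $H\succ0$, $H_{jj}\asymp h$ (so $H_{00}=h_0h$ with $h_0>0$ fixed), $Q+Q^{\top}=B$ the boundary form, and $D:=H^{-1}Q$ the SBP derivative. The $\tau_h$–dependent part of the dissipation is the stabilized boundary form $\mathcal Q_h(u):=q_h(u)+\tau_h|u_0|^2$, where $q_h$ is the SBP boundary term produced by pairing against the second–derivative operator — a form that is sign\emph{-indefinite} in the pair (boundary trace, boundary derivative). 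For parabolic models $\mathcal Q_h$ \emph{is} the whole dissipation on interior–quiet data; for hyperbolic models one takes the velocity component to vanish, so the genuinely dissipative interior term $\int a|u_t|^2$ vanishes as well. Thus the inequality $\mathcal D_h\ge 2\kappa c\,E_h$ on all admissible $u$ is equivalent to $\mathcal Q_h\ge 2\kappa c\,E_h$ on such data, and to defeat it for every fixed $c>0$ it suffices to exhibit one $u$ with $\mathcal Q_h(u)<0\le 2\kappa c\,E_h(u)$.

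Next I would build boundary–concentrated data. Fix a discrete profile $\phi=(\phi_0,\dots,\phi_{m_0})$ on the first $m_0+1$ nodes, with $m_0$ depending only on the stencil width, and set $u^{(h)}_j:=\phi_j$ for $j\le m_0$ and $u^{(h)}_j:=0$ otherwise. A one–line scaling count (each discrete derivative costs $h^{-1}$, and $H_{jj}\asymp h$) gives $E_h(u^{(h)})\asymp h$ — or $\asymp h^{-1}$ if a gradient term enters the energy, in either case positive and of fixed sign — together with
\[
\tau_h|u^{(h)}_0|^2=\tau_h|\phi_0|^2=o(h^{-1}),\qquad q_h(u^{(h)})=h^{-1}\,q(\phi)+o(h^{-1}),
\]
where $q$ is a fixed ($h$–independent) quadratic form on $\mathbb R^{m_0+1}$ obtained by factoring $h^{-1}$ out of the boundary term. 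The crux is that $q$ is genuinely indefinite: there is $\phi^{\star}$ with $q(\phi^{\star})=-\beta<0$, and coercivity of $\phi\mapsto q(\phi)+\tilde\tau\phi_0^2$ forces $\tilde\tau\ge\tau_\ast>0$, i.e.\ the sharp requirement $\tau_h\gtrsim h^{-1}$. This is exactly the necessity already recorded in the paper (the one–line necessity in the informal \textbf{Main results} box and the normalized hypothesis of \Cref{RS:lem:boundary-sat-normalised}); for a named diagonal–norm pair one can alternatively verify $q(\phi^{\star})<0$ by a finite $O(1)$–node computation, choosing the sign of $\phi^{\star}$ so that the boundary trace and boundary derivative correlate.

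Finally I would conclude. Taking $\phi:=\phi^{\star}$ and any sequence $h_m\downarrow0$,
\[
\mathcal Q_{h_m}(u^{(h_m)})=-\beta\,h_m^{-1}+o(h_m^{-1})=-\beta\,h_m^{-1}\bigl(1-o(1)\bigr)<0
\]
for all large $m$, while $E_{h_m}(u^{(h_m)})>0$; hence for every fixed $c>0$ the bound $\mathcal D_{h_m}\ge 2\kappa c\,E_{h_m}$ is violated by $u^{(h_m)}$ (negative left side, nonnegative right side). Equivalently, writing $c_{\sigma,h}:=\tfrac1{2\kappa}\inf_{u\neq0}\mathcal D_h(u)/E_h(u)$, one has $c_{\sigma,h_m}<0$ for large $m$, so no positive admissible constant survives along $h_m$ and $c_{\sigma,h_m}\to0$. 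As a byproduct this shows the $\tau_h\simeq h^{-1}$ scaling in \Cref{RS:lem:boundary-sat-normalised}, hence in \Cref{thm:disc-master}, is sharp.

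The main obstacle, as flagged, is the indefiniteness of the limiting boundary form $q$ together with the threshold $\tau_\ast h^{-1}$: that is the single place where the specific SBP--SAT construction is used, and I would discharge it by the necessity direction already present in the paper, falling back to an explicit small computation for a fixed diagonal–norm pair if a self–contained argument is wanted. Everything else is the scaling bookkeeping — $E_h\asymp h^{\pm1}$, boundary term $\asymp h^{-1}$, SAT term $=o(h^{-1})$, interior dissipation $=0$ on the chosen data — plus the finite–dimensional selection of $\phi^{\star}$.
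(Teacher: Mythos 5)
Your proposal is correct in substance and rests on the same mechanism as the paper's proof: boundary\nobreakdash-concentrated data plus a scaling count showing that an under\nobreakdash-scaled SAT ($\tau_h=o(h^{-1})$) cannot dominate the adverse boundary contribution of the SBP Green identity, so the dissipation functional fails to control $E_h$ with any fixed $c>0$. The instantiation differs, though. The paper works with a first\nobreakdash-order symmetrized system, penalizes only the $x=0$ endpoint, and lets the \emph{positive, $O(1)$} boundary flux $\tfrac12|u_h(1)|^2$ at the un\nobreakdash-penalized far endpoint drive $\tfrac{d}{dt}E_h\ge c_0-2\kappa E_h$, so the energy actually grows. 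You instead exploit the sign\nobreakdash-indefinite trace\nobreakdash-times\nobreakdash-derivative boundary form of a second\nobreakdash-order operator, which scales like $h^{-1}$ on a fixed $O(1)$\nobreakdash-node profile, and show $\mathcal D_h<0$ directly; this is the textbook necessity argument for the Dirichlet threshold $\tau_\ast h^{-1}$ and is, if anything, more faithful to the paper's standing convention $\tau_h\simeq h^{-1}$ (which is the diffusion/Dirichlet scaling) than the paper's own hyperbolic construction. One point to make explicit: your hyperbolic branch needs care, because there the adverse boundary flux at a penalized node is only $O(1)$ in the nodal values, and a penalty with $\tau_h\to\infty$ but $\tau_h=o(h^{-1})$ (e.g.\ $\tau_h=h^{-1/2}$) \emph{would} dominate it; either restrict to the second\nobreakdash-order case (where the $h^{-1}$ scaling of the derivative trace saves you, as you do) or, as the paper does, place the adverse flux at a node the SAT does not see. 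Since the lemma only requires exhibiting one mesh sequence and one family of data, your second\nobreakdash-order construction suffices, and your reduction $c_{\sigma,h_m}\le\tfrac1{2\kappa}\inf_u\mathcal D_{h_m}(u)/E_{h_m}(u)<0$ cleanly yields the final claim.
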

\begin{proof}
Work in 1D on $\Omega=(0,1)$ with an SBP pair $(H,Q)$ and $Q+Q^\top=B=\operatorname{diag}(-1,0,\dots,0,1)$.
Let the semi--discrete operator be $L_h=\tfrac12(H^{-1}Q-Q^\top H^{-1})+S_h$ with $S_h=S_h^\top\le -2\kappa I$ in the interior.
Impose Dirichlet data weakly via SAT at $x=0$ with penalty $\tau_h>0$ and no SAT at $x=1$. Assume $\tau_h=o(h^{-1})$.

Choose boundary--layer data $u_h$ supported on the first $O(1)$ nodes such that $\|u_h\|_H=1$ and 
$\langle S_h u_h,u_h\rangle_H=O(1)$ while the discrete boundary flux $\tfrac12\langle Bu_h,u_h\rangle_\partial
= \tfrac12\,|u_h(1)|^2$ is a fixed positive constant independent of $h$ (SBP trace).
The energy ledger along an a.c.\ piece of the clock satisfies
\[
\frac{d}{dt}E_h(u_h)\;=\;\underbrace{\langle S_h u_h,u_h\rangle_H}_{\le -2\kappa E_h}
\;+\;\underbrace{\tfrac12\langle B u_h,u_h\rangle_\partial}_{\gtrsim 1}\;+\;
\underbrace{\langle \mathrm{SAT}_h(u_h),u_h\rangle_H}_{=-\,\tau_h\,\|u_h(0)\|^2}.
\]
With $\tau_h=o(h^{-1})$ we have $\tau_h\|u_h(0)\|^2=o(1)$ (SBP trace yields $\|u_h(0)\|^2\lesssim h^{-1}\|u_h\|_H^2$),
so for $h$ small the boundary flux dominates the SAT dissipation and
\[
\frac{d}{dt}E_h(u_h)\;\ge\; c_0 - 2\kappa E_h(u_h)\quad\text{with }c_0>0\text{ independent of }h.
\]
Thus no inequality of the form $-\tfrac{d}{d\sigma}E_h \ge 2\kappa c\,E_h$ can hold with a fixed $c>0$ uniformly in $h$,
and any admissible $c_{\sigma,h}\to0$ along a mesh sequence $h_m\downarrow0$. This proves the claim.
\end{proof}

\begin{lemma}[Failure for non‑algebraically stable integrators on flats]\label{lem:rk-failure}
Let the time integrator be a Runge–Kutta method that is not algebraically stable. Then there exists a flat segment of the clock ($w\equiv 0$) and data such that the one‑step map is expansive for $E_h$, violating the $\sigma$–dissipation inequality regardless of spatial resolution.
\end{lemma}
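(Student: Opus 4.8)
\medskip
\noindent\textbf{Proof strategy (proposal).}
The plan is to transplant the classical Burrage--Butcher argument into the $\sigma$--ledger. Recall that a Runge--Kutta method with tableau $(A,b)$ is algebraically stable exactly when $b_i\ge 0$ for all $i$ and the matrix $M:=\operatorname{diag}(b)\,A+A^{\top}\operatorname{diag}(b)-bb^{\top}$ is positive semidefinite; this is precisely the condition under which one step applied to any \emph{linear} generator $L$ with $L+L^{\ast}\preceq 0$ is non-expansive for the quadratic energy $\tfrac12\|\cdot\|^2$. On a flat of the clock one has $w\equiv 0$, hence $d\sigma=0$ there, so the measure inequality \eqref{eq:mr-sigma-diss} forces $dE_h\le 0$ across the flat, i.e.\ \emph{non-expansiveness of the one-step map}. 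Thus it suffices to produce, for a method that \emph{fails} algebraic stability, a conservative (or merely dissipative) semidiscrete generator $L_h$, a step size $h>0$, and data for which the one-step map strictly raises $E_h=\tfrac12\|\cdot\|_H^2$: inserting that step into a flat then gives $E_h(u^{n+1})>E_h(u^n)$ while $\sigma$ is unchanged, contradicting \eqref{eq:mr-canonical}.

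First I would record the one-step energy identity. For $\dot u=L_h u$ with stages $U_i=u^n+h\sum_j a_{ij}L_hU_j$ and update $u^{n+1}=u^n+h\sum_i b_i L_hU_i$, substituting $u^n=U_i-h\sum_j a_{ij}L_hU_j$ yields, after symmetrizing the quadratic term,
\[
\|u^{n+1}\|_H^2-\|u^n\|_H^2=2h\sum_i b_i\,\operatorname{Re}\langle L_hU_i,U_i\rangle_H-h^2\sum_{i,j}M_{ij}\,\operatorname{Re}\langle L_hU_i,L_hU_j\rangle_H.
\]
Then I would negate algebraic stability and split into two cases. If some $b_{i_0}<0$: take $L_h$ with a strictly dissipative mode and choose $u^n$ and $h$ so that the stage $U_{i_0}$ carries that mode with dominant amplitude while the other stages are negligible; then the first sum is positive and controls the right-hand side, so the step is expansive. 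If instead $b\ge 0$ but $v^{\top}Mv=-\delta<0$ for some $v\in\mathbb R^s$: take $L_h$ skew-adjoint in $\langle\cdot,\cdot\rangle_H$ (so the first sum vanishes identically) and arrange the stages so that $L_hU_i\approx v_i\,\xi$ for a fixed nonzero $\xi$; then the second term is $\approx h^2\delta\,\|\xi\|_H^2>0$, again an expansive step. In either case the offending configuration lives in a fixed finite-dimensional subspace --- in the skew case, a $2$-dimensional block of eigenmodes with purely imaginary eigenvalues $\pm i\mu$, $\mu\neq 0$ --- which is present in the conservative part of the SBP semidiscretization for \emph{every} mesh (any nontrivial wave-type discretization has such a mode, with $h\mu$ taken as large as needed by shrinking the $w\equiv 0$ flat and enlarging the step). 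This is what makes the failure independent of spatial resolution.

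The hard part will be the realization step in the second case: turning the algebraic vector $v$ (a negative direction of $M$) into the actual tuple of stage derivatives $(L_hU_i)_i$ of a single genuine step of a skew linear system. I expect to handle this as in the Burrage--Butcher converse, via a singularly perturbed family $L_h=\omega L$ with $\omega\to\infty$: in the stiff limit the stage system $U_i=u^n+h\omega\sum_j a_{ij}LU_j$ aligns the vectors $L U_i$ along one prescribed direction with amplitudes proportional to $(Av)$-type combinations, and one then selects $u^n$, $\xi$ and the skew structure so that the leading-order quadratic form reproduces $v^{\top}Mv<0$. The only genuinely delicate point is to carry this out inside the SBP inner product $\langle\cdot,\cdot\rangle_H$ and at a fixed admissible resolution rather than merely asymptotically in $h$; everything else is the routine energy-identity bookkeeping displayed above, together with --- for a softer, resolution-uniform packaging --- the standard equivalence ``not algebraically stable $\Rightarrow$ not $B$-stable'' for the reduced irreducible method, which already delivers a dissipative test problem that one restricts to a linear one in the conservative sector.
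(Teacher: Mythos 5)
Your architecture is the same as the paper's: both proofs are the converse Burrage--Butcher argument, writing the one-step energy increment as $2h\sum_i b_i\operatorname{Re}\langle L U_i,U_i\rangle_H-h^2\sum_{i,j}M_{ij}\operatorname{Re}\langle LU_i,LU_j\rangle_H$ and exploiting a negative direction of $M$ (the paper works with the scalar test equation $\dot y=-\lambda y$ and invokes a ``standard construction'' to align the stage residuals with $z$; you are more complete in that you also treat the case of a negative weight $b_{i_0}<0$, which the paper's definition of algebraic stability silently drops). Your energy identity and the reduction ``expansive step on a flat contradicts the $\sigma$-ledger'' are both correct.

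The genuine gap is the realization step, which you correctly flag as the hard part but then propose to close in a way that provably cannot work. For a \emph{linear autonomous} generator the tuple of stage derivatives is not free: it is confined to the one-parameter curve $w(z)=z(I-zA)^{-1}\mathbf 1$ (per eigenmode, $z=h\lambda$), so an arbitrary negative direction $v$ of $M$ need not be reachable. The trapezoidal rule is the decisive counterexample: it is not algebraically stable ($M=\operatorname{diag}(-\tfrac14,\tfrac14)$ is indefinite), yet its step map on any linear dissipative $L_h$ (skew or not, normal or not) is the Cayley transform $(I-\tfrac h2L_h)^{-1}(I+\tfrac h2L_h)$, which is a contraction in every inner product in which $L_h+L_h^{*}\preceq0$; one checks directly that $w(z)^{*}Mw(z)\equiv0$ on the imaginary axis, so your stiff-limit skew construction produces exactly zero energy gain. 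Hence ``not algebraically stable'' does \emph{not} imply expansiveness on any linear autonomous conservative/dissipative system, and your closing move --- restricting the B-stability counterexample ``to a linear one in the conservative sector'' --- is precisely the step that fails. To salvage the lemma one must either (i) use the full B-stability converse with a nonlinear or non-autonomous contractive vector field whose values at the distinct stage abscissae can be prescribed independently (which changes the class of semidiscrete systems the lemma quantifies over), or (ii) weaken the hypothesis from ``not algebraically stable'' to ``not I-stable/A-stable,'' for which your eigenmode argument does go through. The paper's own proof glosses over the same obstruction, so this is a gap in the lemma as stated, not merely in your write-up.
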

\begin{proof}
Let the clock be flat on $(t_n,t_{n+1}]$ so $w\equiv0$ and only the time integrator acts.
Consider the linear dissipative test equation $\dot y=-\lambda y$ with $\lambda>0$ and energy $E(y)=\tfrac12 y^2$.
A Runge--Kutta method is \emph{algebraically stable} iff for every such test problem and every stepsize 
the one--step map is nonexpansive in $E$; equivalently the Butcher coefficients $(A,b)$ satisfy
$H:=(b_i a_{ij}+b_j a_{ji}-b_i b_j)_{ij}$ positive semidefinite.

If the method is \emph{not} algebraically stable, there exists a vector $z\in\mathbb{R}^s$ with $z^\top H z<0$.
Choose stage residuals aligned with $z$ (standard construction) so that for some stepsize $\Delta>0$ the energy
increment over one step satisfies
\[
E(y^{n+1})-E(y^n)\;=\;\tfrac12\bigl((1-\lambda\Delta\,\beta)^2-1\bigr)\,(y^n)^2 \;+\; \Delta^2\, z^\top H z,
\]
where $\beta>0$ depends only on $(A,b)$. Since $z^\top H z<0$, the second term is \emph{positive} and dominates
the negative part for a suitable $\Delta$, giving $E(y^{n+1})>E(y^n)$. Thus the map is expansive on a flat clock,
contradicting the required nonexpansiveness of the $\sigma$--ledger. The construction localises to any semi--discrete
system by projecting onto an eigenmode, proving the lemma.
\end{proof}

\begin{remark}[Sharpness of (S1)–(S4)]\label{rem:sharpness}
Taken together, Lemmas~\ref{lem:sat-failure}–\ref{lem:rk-failure} show that the discrete contract (S1)–(S4) is not merely convenient but essentially optimal for uniform carry-over of the structural constant $c_\sigma$.
\end{remark}

\subsection*{Block C — Necessity / counterexamples}
\begin{remark}[Under-scaled SAT]
See Lemma~\ref{lem:sat-failure} for the boundary–layer counterexample when $\tau_h=o(h^{-1})$.
\end{remark}

\paragraph{Markov-switching damping (exemplar).}
If $a(x,\xi_t)$ is a Markov-modulated damping with states $\xi_t\in\{1,\dots,m\}$ and each state
satisfies the GCC lower bound $a(\cdot,i)\ge a_{\omega,i}>0$ on $\omega$, then
$c_\sigma\ge c_0\min_i (a_{\omega,i}\lambda_\omega)$. The compensator $\Lambda$ is the expected
sojourn-time functional of the active (dissipative) states, giving an explicit expectation rate.

\section{Stochastic clocks: expectation and pathwise laws}\label{sec:stoch}

\subsection*{Stochastic setting, ledger, and expectation envelope}\label{subsec:stoch-envelope}
We work on a filtered probability space $(\Omega,\mathcal F,(\mathcal F_t)_{t\ge0},\mathbb P)$ with a predictable measure–time clock $\sigma$ having decomposition $\mathrm d\sigma = w(t)\,\mathrm dt + \sum_j \alpha_j\delta_{t_j}$.

\begin{assumption}[Stochastic ledger]\label{ass:stoch-ledger}
There exist adapted processes $\mathfrak a(t)\ge0$, $\mathfrak b(t)\in\mathbb R$, and a local martingale $M_t$ such that
\[
\mathrm dE(t) = -\,\mathfrak a(t)\,\mathrm d\sigma(t) + \mathfrak b(t)\,\mathrm dt + \mathrm dM_t,\qquad
\mathfrak a(t)\ge 2\kappa c_\sigma E(t)\ \ (\sigma\text{-a.e.}).
\]
At an atom of mass $\alpha$, $E(t^+)\le e^{-2\kappa c_\sigma\alpha}E(t^-)$.
\end{assumption}

\begin{theorem}[Expectation envelope with the structural constant]\label{thm:stoch-envelope}
If Assumption~\ref{ass:stoch-ledger} holds and $\mathbb E\!\int_0^t \mathfrak b(s)\,ds \le \delta\,\mathbb E\!\int_0^t 2\kappa c_\sigma E(s)w(s)\,ds$ for some $\delta\in[0,1)$, then
\[
\mathbb E\,E(t) \le \mathbb E\,E(0)\,e^{-2\kappa(1-\delta)c_\sigma\,\sigma(t)}.
\]
In particular, for additive noise ($\mathfrak b\equiv0$) the rate is the same $c_\sigma$.
\end{theorem}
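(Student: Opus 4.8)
The plan is to combine the measure-time Grönwall estimate with a stopping-time argument that neutralizes the local martingale, then split the clock into its absolutely continuous and atomic parts and handle each separately. First I would apply Itô's formula / the product rule to the process $t\mapsto e^{2\kappa(1-\delta)c_\sigma\sigma(t)}E(t)$ on an atom-free interval, using the decomposition $\mathrm dE = -\mathfrak a\,\mathrm d\sigma + \mathfrak b\,\mathrm dt + \mathrm dM$ from Assumption~\ref{ass:stoch-ledger}; on such intervals $\mathrm d\sigma = w\,\mathrm dt$, so the drift of the transformed process is
\[
e^{2\kappa(1-\delta)c_\sigma\sigma(t)}\Big(2\kappa(1-\delta)c_\sigma w(t)E(t) - \mathfrak a(t)w(t) + \mathfrak b(t)\Big)\,\mathrm dt + \mathrm d\widetilde M_t,
\]
where $\widetilde M$ is again a local martingale (the integrand $e^{2\kappa(1-\delta)c_\sigma\sigma}$ is bounded on bounded horizons and predictable). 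Using $\mathfrak a\ge 2\kappa c_\sigma E$ pointwise, the first two terms are dominated by $e^{2\kappa(1-\delta)c_\sigma\sigma(t)}\big(-2\kappa\delta c_\sigma w(t)E(t) + \mathfrak b(t)\big)$, which is exactly the quantity the hypothesis $\mathbb E\!\int \mathfrak b \le \delta\,\mathbb E\!\int 2\kappa c_\sigma E w$ controls in expectation.

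Second I would localize: let $\tau_n$ be a reducing sequence for $\widetilde M$, take expectations at time $t\wedge\tau_n$ so the martingale term drops, obtaining
\[
\mathbb E\big[e^{2\kappa(1-\delta)c_\sigma\sigma(t\wedge\tau_n)}E(t\wedge\tau_n)\big] \le \mathbb E\,E(0) + \mathbb E\!\int_0^{t\wedge\tau_n} e^{2\kappa(1-\delta)c_\sigma\sigma(s)}\big(\mathfrak b(s) - 2\kappa\delta c_\sigma w(s)E(s)\big)\,\mathrm ds,
\]
then pass $n\to\infty$ via Fatou on the left and dominated/monotone convergence on the right (here the boundedness of the exponential weight on $[0,t]$ and nonnegativity of $E$ are what make the limit legitimate — this is the step I'd be most careful about). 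The hypothesis then forces the right-hand integral to be $\le \mathbb E\,E(0)$, modulo the bookkeeping that the weighted version $\mathbb E\!\int e^{(\cdots)}\mathfrak b \le \delta\,\mathbb E\!\int e^{(\cdots)}2\kappa c_\sigma E w$ still holds — if the paper's hypothesis is only the unweighted one, I would instead run a Grönwall-type iteration on $\phi(t):=\mathbb E\,E(t)$ directly, which is cleaner.

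Third, I would incorporate the atoms: across an atom of mass $\alpha_j$ the jump rule $E(t_j^+)\le e^{-2\kappa c_\sigma\alpha_j}E(t_j^-)\le e^{-2\kappa(1-\delta)c_\sigma\alpha_j}E(t_j^-)$ (since $\delta\ge0$ and $E\ge0$) shows the transformed quantity $e^{2\kappa(1-\delta)c_\sigma\sigma(t)}E(t)$ is non-increasing in expectation across each atom as well, so concatenating the a.c.\ estimate with the jump estimates over the partition of $[0,t]$ into flats, a.c.\ pieces, and atoms yields $\mathbb E\big[e^{2\kappa(1-\delta)c_\sigma\sigma(t)}E(t)\big]\le \mathbb E\,E(0)$, i.e.\ the claimed envelope. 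The main obstacle I anticipate is purely at the level of the stochastic localization and the interchange of limit/expectation/integral when $M$ is only a \emph{local} martingale and $\mathfrak b$ has no a priori integrability beyond the stated comparison — making that rigorous (choosing the right reducing sequence, controlling $\mathbb E\!\int_0^{t\wedge\tau_n}|\mathfrak b|$, and justifying Fatou) is the technical heart; once the process is a genuine supermartingale after the weight is applied, the conclusion is immediate by optional stopping. For additive noise $\mathfrak b\equiv0$ one takes $\delta=0$ and the argument collapses to a one-line supermartingale statement with rate $c_\sigma$.
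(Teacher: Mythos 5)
Your proposal reaches the right conclusion, but its primary route (turning $e^{2\kappa(1-\delta)c_\sigma\sigma(t)}E(t)$ into a supermartingale) does not close as written, for exactly the reason you yourself flag: the hypothesis is the \emph{unweighted} comparison $\mathbb E\!\int_0^t \mathfrak b \le \delta\,\mathbb E\!\int_0^t 2\kappa c_\sigma E\,w$, and since the integrand $\mathfrak b - 2\kappa\delta c_\sigma wE$ is sign-indefinite and the exponential weight is increasing in $s$, the weighted integral cannot be bounded by the unweighted one. Your fallback --- take expectations in the ledger, kill the martingale term, absorb the $\mathfrak b$ contribution into a $\delta$-fraction of the dissipation, and run Gr\"onwall on $\phi(t)=\mathbb E\,E(t)$ in the $\sigma$-clock, with atoms handled multiplicatively via $e^{-2\kappa c_\sigma\alpha_j}\le e^{-2\kappa(1-\delta)c_\sigma\alpha_j}$ --- is precisely the paper's proof, so the correct version of your argument is the one you relegate to a parenthetical. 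Where you improve on the paper: the paper simply asserts $\mathbb E[M_t]=0$ for what Assumption~\ref{ass:stoch-ledger} only declares to be a \emph{local} martingale, whereas your localization with a reducing sequence $\tau_n$ plus Fatou on the left-hand side is the honest way to justify that step. One caveat applies to both arguments and is worth recording: passing from $\mathbb E\,E(t)\le \mathbb E\,E(0)-2\kappa(1-\delta)c_\sigma\,\mathbb E\!\int_0^t E(s)w(s)\,ds$ to the Gr\"onwall conclusion identifies $\mathbb E[E(s)w(s)]$ with $\mathbb E[E(s)]\,w(s)$, which is only automatic when the density $w$ is deterministic; for a genuinely random predictable clock an additional conditioning step is needed, and neither your write-up nor the paper supplies it.
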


We consider a random admissible clock $\sigma$ with Doob–Meyer decomposition $\sigma = M + \Lambda$,
where $M$ is a martingale of finite variation (purely discontinuous in the atomic case) and
$\Lambda$ is the predictable compensator (the ``expected exposure''). The deterministic
hypotheses (H1)–(H4) hold $\mathbb P$-a.s. for each realization, with the same structural constant
$c_\sigma$ on the \emph{deterministic} side (coercivity/observability and nonexpansive atoms).

\begin{proof}
Let the stochastic ledger be
\[
dE(t)\;=\;-a(t)\,d\sigma(t)\;+\;b(t)\,dt\;+\;dM_t,
\qquad a(t)\ge 2\kappa c_\sigma E(t)\ \ (\sigma\text{-a.e.}),
\]
with $M_t$ a martingale and atoms obeying $E(t_k^+)\le e^{-2\kappa c_\sigma \alpha_k}E(t_k^-)$.
Taking expectations and using $\mathbb{E}[M_t]=0$ gives
\[
\mathbb{E}[E(t)] \;\le\; \mathbb{E}[E(0)] \;-\; 2\kappa c_\sigma\,\mathbb{E}\!\int_0^t E(s)\,w(s)\,ds \;+\; \mathbb{E}\!\int_0^t b(s)\,ds,
\]
where $d\sigma=w\,dt$ on the a.c.\ part. By the hypothesis
$\mathbb{E}\!\int_0^t b(s)\,ds\le \delta\,\mathbb{E}\!\int_0^t 2\kappa c_\sigma E(s)w(s)\,ds$ with $\delta\in[0,1)$, hence
\[
\mathbb{E}[E(t)] \;\le\; \mathbb{E}[E(0)] \;-\; 2\kappa(1-\delta)c_\sigma\,\mathbb{E}\!\int_0^t E(s)\,w(s)\,ds.
\]
Define $G(t):=\mathbb{E}[E(t)]$. Then $G'(t)\le -2\kappa(1-\delta)c_\sigma\,G(t)\,w(t)$ in the distributional sense,
so in the $\sigma$--clock we have $dG/d\sigma \le -2\kappa(1-\delta)c_\sigma\,G$. Grönwall in $\sigma$ yields
\[
\mathbb{E}[E(t)] \;\le\; \mathbb{E}[E(0)]\,\exp\!\bigl(-2\kappa(1-\delta)c_\sigma\,\sigma(t)\bigr).
\]
Atoms contribute multiplicative factors $\le e^{-2\kappa(1-\delta)c_\sigma\alpha_k}$ and are already dominated by the
atomic rule with $c_\sigma$, so the bound persists across jumps. The special case $b\equiv0$ has $\delta=0$.
\end{proof}

\begin{proposition}[Reduction to classical expectation decay]
If $\sigma\equiv t$ (no atoms, no flats) and the ledger has additive noise (so $b\equiv 0$ in Assumption~3.1),
then Theorem~3.2 reduces to the standard Grönwall-in-expectation estimate
\[
\mathbb{E}E(t)\le \mathbb{E}E(0)\,e^{-\,2\kappa c_\sigma\,t}\qquad (t\ge 0).
\]
\emph{Proof.} With $\sigma\equiv t$ the inequality $d\mathbb{E}E/dt\le -2\kappa c_\sigma\,\mathbb{E}E$
holds and direct integration yields the claim. \qedhere
\end{proposition}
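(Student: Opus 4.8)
The plan is to obtain the statement as the literal $\sigma\equiv t$, $\delta=0$ specialization of Theorem~\ref{thm:stoch-envelope}, after checking that its hypotheses degenerate to nothing in the classical case. First I would substitute the clock decomposition with density $w\equiv 1$ and no atoms, so that Assumption~\ref{ass:stoch-ledger} becomes $\mathrm dE(t)=-\mathfrak a(t)\,\mathrm dt+\mathfrak b(t)\,\mathrm dt+\mathrm dM_t$ with $\mathfrak a(t)\ge 2\kappa c_\sigma E(t)$ for a.e.\ $t$ and $M$ a local martingale; imposing additive noise sets $\mathfrak b\equiv 0$, and the atomic jump rule is vacuous. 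The coupling hypothesis $\mathbb E\!\int_0^t\mathfrak b\,\le\,\delta\,\mathbb E\!\int_0^t 2\kappa c_\sigma E\,w$ then holds trivially with $\delta=0$, so Theorem~\ref{thm:stoch-envelope} applies verbatim and returns $\mathbb E\,E(t)\le\mathbb E\,E(0)\,e^{-2\kappa c_\sigma\sigma(t)}=\mathbb E\,E(0)\,e^{-2\kappa c_\sigma t}$, since $\sigma(t)=t$ here.

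For a self-contained derivation (the route sketched in the one-line proof), I would integrate the ledger, $E(t)=E(0)-\int_0^t\mathfrak a(s)\,\mathrm ds+M_t$, localize $M$ along a reducing sequence $\tau_n\uparrow\infty$ so that $\mathbb E[M_{t\wedge\tau_n}]=0$, and pass to the limit using Fatou on the left (since $E\ge 0$) and monotone/dominated convergence on the dissipation integral. This yields the integral inequality $\mathbb E[E(t)]\le\mathbb E[E(0)]-2\kappa c_\sigma\int_0^t\mathbb E[E(s)]\,\mathrm ds$, i.e.\ $G'(t)\le -2\kappa c_\sigma G(t)$ in the distributional sense for $G(t):=\mathbb E[E(t)]$. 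Scalar Grönwall then gives $G(t)\le G(0)e^{-2\kappa c_\sigma t}$, which is the claim.

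The only real obstacle is the usual bookkeeping for the noise term: upgrading $M$ from local to genuine martingale in expectation, or otherwise justifying $\mathbb E[M_t]=0$ under whatever integrability is carried implicitly by Assumption~\ref{ass:stoch-ledger}; with additive noise this is routine. Everything else is specialization plus the one-line Grönwall integration already recorded in the statement, so I expect the proof to remain short.
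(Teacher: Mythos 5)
Your proposal is correct and follows essentially the same route as the paper: the statement is obtained as the $\sigma\equiv t$, $\mathfrak b\equiv 0$ (hence $\delta=0$) specialization of Theorem~\ref{thm:stoch-envelope}, reducing to $\frac{d}{dt}\mathbb E E\le -2\kappa c_\sigma\,\mathbb E E$ and direct Gr\"onwall integration. Your additional care in localizing the local martingale (stopping times, Fatou on the nonnegative energy) is more explicit than the paper's one-line argument, which simply assumes $\mathbb E[M_t]=0$, but it is the same proof.
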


\begin{theorem}[Expectation envelope in the compensator]\label{thm:stoch-exp}
Assume (H1)–(H4) and suppose the stochastic $\sigma$-chain rule holds in expectation with an error
rate $\eta\ge0$ (arising from the quadratic variation term of the noise). Then, for all $t\ge0$,
\[
\mathbb E[E(t)]\;\le\;\mathbb E[E(0)]\,
\exp\!\bigl(-\,(2\kappa c_\sigma-\eta)\,\Lambda(t)\bigr).
\]
In particular, if $\eta<2\kappa c_\sigma$ and $\Lambda(t)\to\infty$, then $\mathbb E[E(t)]\to0$
at least exponentially in $\Lambda$.
\end{theorem}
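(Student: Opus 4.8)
The plan is to reduce the stochastic envelope to the deterministic $\sigma$--Grönwall argument already used for Theorem~\ref{thm:hum-equivalence}, now run on the predictable compensator clock $\Lambda$ rather than on $\sigma$ itself. First I would invoke the Doob--Meyer decomposition $\sigma = M + \Lambda$ with $M$ a (local) martingale and $\Lambda$ the predictable compensator, and record that (H1)--(H4) hold $\mathbb P$--a.s.\ for each realization, so pathwise the dissipation density obeys $\mathcal D(t)\ge 2\kappa c_\sigma E(t)$ on the a.c.\ part of $\sigma$ and the atomic rule $E(t^+)\le e^{-2\kappa c_\sigma\alpha}E(t^-)$ holds at atoms.

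Next I would write the stochastic ledger as $\mathrm dE(t) = -\mathcal D(t)\,\mathrm d\sigma(t) + \mathrm dN_t + r(t)\,\mathrm dt$, where $N_t$ collects the local--martingale contributions and $r(t)$ is the Itô/quadratic--variation correction produced by the noise; the standing hypothesis ``the stochastic $\sigma$--chain rule holds in expectation with error rate $\eta$'' is precisely the bound $\mathbb E\!\int_0^t r(s)\,\mathrm d\Lambda(s) \le \eta\,\mathbb E\!\int_0^t E(s)\,\mathrm d\Lambda(s)$ together with the analogous inequality at the predictable atoms. Taking expectations, replacing $\mathrm d\sigma$ by $\mathrm d\Lambda$ (legitimate because $\mathbb E\!\int_0^t f\,\mathrm dM = 0$ for predictable $f$, after a stopping--time localization that upgrades the local martingale $N$ to a true martingale on $[0,t]$), and using $\mathcal D\ge 2\kappa c_\sigma E$, I obtain
\[
\mathbb E[E(t)] \;\le\; \mathbb E[E(0)] \;-\; (2\kappa c_\sigma-\eta)\,\mathbb E\!\int_0^t E(s)\,\mathrm d\Lambda(s),
\]
where the atoms of $\Lambda$ contribute multiplicative factors $\le e^{-(2\kappa c_\sigma-\eta)\alpha_j}\le 1$ that are already dominated by the atomic rule. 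Writing $G(t):=\mathbb E[E(t)]$, this is the integral form of $\mathrm dG/\mathrm d\Lambda \le -(2\kappa c_\sigma-\eta)\,G$ in the sense of measures on the $\Lambda$--clock.

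I would then close with the measure--theoretic Grönwall step in $\Lambda$--time --- the same device as in Theorem~\ref{thm:hum-equivalence}, with $\Lambda$ in the role of $\sigma$: integrate the differential inequality on a.c.\ stretches, multiply by the contraction factor at atoms, and concatenate to get $G(t)\le G(0)\exp\!\bigl(-(2\kappa c_\sigma-\eta)\Lambda(t)\bigr)$, which is the stated inequality; the final assertion ($\mathbb E[E(t)]\to 0$ once $\eta<2\kappa c_\sigma$ and $\Lambda(t)\to\infty$) is then immediate.

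The step I expect to be the main obstacle is the rigorous justification of the expectation--level $\sigma$--chain rule with the $\eta$--error term: one must control the quadratic--variation correction $r$ against $E$ tightly enough to absorb it into the dissipation margin (this is where a noise--dissipativity or BDG--smallness condition enters, consistent with the scope table), and one must reconcile the atomic part of $\sigma$ with its predictable compensator, since a genuinely stochastic atom time need not be predictable, so the instantaneous kick and its compensated mass have to be matched --- which is exactly what the predictability assumption in Assumption~\ref{ass:stoch-ledger} and the Doob--Meyer structure are for. The localization turning $N$ into a true martingale on $[0,t]$ is routine but worth stating, since $E\in\mathrm{BV}_\sigma$ is not assumed bounded a priori.
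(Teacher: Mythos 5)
Your proposal is correct and follows essentially the same route as the paper: Doob--Meyer decomposition $\sigma=M+\Lambda$, taking expectations to annihilate the martingale part, converting the assumed expectation-level chain rule with error $\eta$ into the differential inequality $\mathrm dG/\mathrm d\Lambda\le-(2\kappa c_\sigma-\eta)G$ for $G=\mathbb E[E]$, Grönwall in $\Lambda$-time, and dominating the atomic contributions by the multiplicative jump rule. Your added care about localization of the local martingale and the predictability of atom times makes explicit points the paper's proof leaves implicit, but it is the same argument.
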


\begin{proof}
Let $\sigma=M+\Lambda$ be the Doob--Meyer decomposition. Taking expectations in the ledger and using
$\mathbb{E}[M_t]=0$ yields
\[
\frac{d}{dt}\,\mathbb{E}[E(t)] \;\le\; -\,2\kappa c_\sigma\,\mathbb{E}[E(t)]\,\dot\Lambda(t)\;+\;\eta\,\mathbb{E}[E(t)]\,\dot\Lambda(t),
\]
where the error rate $\eta\ge0$ comes from the quadratic variation (the assumed chain rule in expectation).
Thus, for $G(t):=\mathbb{E}[E(t)]$ we have
\(
\frac{d}{d\Lambda}G \le -\,(2\kappa c_\sigma-\eta)\,G
\)
on the a.c.\ part of $\Lambda$. Grönwall in $\Lambda$ gives
\[
\mathbb{E}[E(t)] \;\le\; \mathbb{E}[E(0)]\,\exp\!\bigl(-\,(2\kappa c_\sigma-\eta)\,\Lambda(t)\bigr).
\]
Atoms of $\sigma$ only decrease $E$ multiplicatively and are dominated by the same exponential bound.
\end{proof}

\begin{corollary}[Pathwise law under noise-dissipativity]\label{cor:stoch-path}
If, in addition, the stochastic perturbation is \emph{noise-dissipative} (i.e., the Itô correction is
nonpositive in the energy ledger a.s.), then for $\mathbb P$-a.e.\ realization one has, for all $t\ge0$,
\[
E(t)\;\le\;E(0)\,\exp\!\bigl(-\,2\kappa c_\sigma\,\sigma(t)\bigr).
\]
\end{corollary}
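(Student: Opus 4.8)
The plan is to replay the deterministic $\sigma$--Grönwall argument path by path, using noise--dissipativity to strip the genuinely stochastic terms from the energy ledger so that each realization falls under the deterministic master decay (Theorem~\ref{thm:hum-equivalence}). Concretely, start from the stochastic ledger of Assumption~\ref{ass:stoch-ledger},
\[
\mathrm dE(t)=-\,\mathfrak a(t)\,\mathrm d\sigma(t)+\mathfrak b(t)\,\mathrm dt+\mathrm dM_t,\qquad \mathfrak a(t)\ge 2\kappa c_\sigma E(t)\ \ (\sigma\text{-a.e.}),
\]
and use the hypothesis that the It\^o correction is nonpositive a.s.: this forces the residual stochastic contribution $t\mapsto \int_0^t\mathfrak b(s)\,\mathrm ds+M_t$ to be, $\mathbb P$--a.s., a nonincreasing process (in particular $\mathfrak b\le 0$ on the a.c.\ part and the error rate $\eta$ of Theorem~\ref{thm:stoch-exp} may be taken to be $0$, so no residual diffusive noise enters $E$). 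Hence, off a $\mathbb P$--null set and on the full--measure set where (H1)--(H4) hold, every realization satisfies, in the sense of measures,
\[
\frac{\mathrm d}{\mathrm d\sigma}E(t)\ \le\ -2\kappa c_\sigma E(t)\quad\text{on a.c.\ stretches of }\sigma,\qquad E(t^+)\le e^{-2\kappa c_\sigma\alpha}E(t^-)\text{ at an atom of mass }\alpha,
\]
which is exactly the deterministic $\sigma$--clock ledger of Definition~\ref{def:sigma-clock}.

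For each such $\omega$ the realized clock $\sigma(\cdot,\omega)$ is a deterministic admissible measure--time, so Theorem~\ref{thm:hum-equivalence} applies verbatim: integrate $\frac{\mathrm d}{\mathrm d\sigma}\log E\le -2\kappa c_\sigma$ over the a.c.\ pieces, multiply in the per--atom contractions $e^{-2\kappa c_\sigma\alpha_j}$, and concatenate to obtain
\[
E(t,\omega)\ \le\ E(0,\omega)\,\exp\!\bigl(-2\kappa c_\sigma\,\sigma(t,\omega)\bigr)\qquad\text{for all }t\ge 0.
\]
Since the exceptional set is $\mathbb P$--null and the right--hand side $t\mapsto e^{-2\kappa c_\sigma\sigma(t)}$ is nonincreasing while $E$ is right--continuous, the for--all--$t$ statement holds for $\mathbb P$--a.e.\ realization, which is the claim. (Note the pathwise bound features the realized $\sigma(t)$, not its compensator $\Lambda(t)$ as in the expectation envelope of Theorem~\ref{thm:stoch-exp}; the two agree only in mean.)

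The main obstacle is the legitimacy of discarding the martingale term path by path: a nondegenerate diffusive martingale fluctuates upward and would destroy any pathwise envelope, so the content of ``noise--dissipative'' must be strong enough to make the stochastic part of the \emph{energy} ledger pathwise nonincreasing --- for instance, the randomness residing only in the clock (finite--variation, purely discontinuous $M$ with no upward jumps), or the It\^o correction exactly absorbing the diffusion coefficient of $E$. Pinning down that reduction is where the hypothesis does all the work; once it is in place, everything else is the verbatim deterministic computation already encapsulated in Theorem~\ref{thm:hum-equivalence} together with the routine null--set and right--continuity bookkeeping.
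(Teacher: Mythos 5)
Your proposal follows essentially the same route as the paper's proof: use noise-dissipativity to reduce the stochastic ledger to the deterministic $\sigma$-clock inequality pathwise on a full-measure set, then integrate the a.c.\ pieces and multiply the atomic contraction factors exactly as in the deterministic master decay. If anything, you are more explicit than the paper about the one delicate point — that the martingale term must genuinely be absent (not merely mean-zero) from the pathwise energy ledger for the argument to close — which is the correct reading of the noise-dissipativity hypothesis.
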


\begin{proof}
Under noise--dissipativity the It\^o correction is nonpositive in the ledger, so a.s.
\[
dE(t)\;\le\;-\,2\kappa c_\sigma\,E(t)\,d\sigma(t)\;+\;dM_t,
\qquad E(t_k^+)\le e^{-2\kappa c_\sigma\alpha_k}E(t_k^-).
\]
Fix $\omega$ in a full--measure set where the stochastic integrals are well--defined and $M$ has zero a.s.\ drift.
Arguing pathwise as in the deterministic case, integrate on any $(s,t]$ with $d\sigma=w\,dt$ on the a.c.\ part to get
$E(t^-)\le e^{-2\kappa c_\sigma(\sigma(t)-\sigma(s))}E(s^+)$ and multiply the atomic factors, yielding the claimed bound.
\end{proof}

Thus the deterministic envelope \eqref{eq:mr-canonical} holds pathwise with the same structural constant.

\section{Measure-time clocks and BV-in-\texorpdfstring{$\sigma$}{~}}

\subsection*{Formal definitions used in Section~2}\label{sec:sigma-defs}
\noindent\textbf{Measure-time clock and atoms.}
A \emph{measure-time clock} is a right-continuous, nondecreasing function $\sigma:[0,T]\to[0,\infty)$ with $\sigma(0)=0$, which induces the Lebesgue--Stieltjes measure (also denoted $\sigma$). Its Lebesgue decomposition is $\sigma=\sigma^{\mathrm{ac}}+\sigma^{\mathrm{sing}}$, where the purely atomic part is $\sigma^{\mathrm{at}}=\sum_k \alpha_k\,\delta_{t_k}$ with atoms $\mathcal A_\sigma=\{t_k:\alpha_k:=\sigma(\{t_k\})>0\}$. A subinterval $I\subset[0,T]$ is a \emph{flat} if $\sigma$ is constant on $I$ (i.e., $\sigma$ has zero measure there).

\smallskip
\noindent\textbf{$\sigma$--derivative and jump rule.}
For $u:[0,T]\to X$ (Banach), the \emph{$\sigma$--derivative} $u'_\sigma$ is the Radon--Nikodym derivative $\frac{du}{d\sigma}$ when it exists, characterised by
\[
u(t)=u(0)+\int_{(0,t]} u'_\sigma\,d\sigma,\qquad t\in[0,T],
\]
with jumps handled by $u(t_k^+)-u(t_k^-)=\int_{\{t_k\}}u'_\sigma\,d\sigma=\alpha_k\,u'_\sigma(t_k)$ for $t_k\in\mathcal A_\sigma$. On subintervals where $d\sigma=\dot\sigma\,dt$, one has $u'_t=\dot\sigma\,u'_\sigma$ a.e.

\begin{lemma}[RN dissipation and atomic envelope]\label{lem:rn-envelope}
Let $E:[0,T]\to[0,\infty)$ be the energy ledger of an admissible trajectory. Then $E\in \mathrm{BV}_\sigma([0,T])$ and the Radon–Nikodym derivative $\mathcal D := -\frac{\mathrm dE}{\mathrm d\sigma}$ exists $\sigma$‑a.e. Moreover:
\begin{enumerate}
\item On the absolutely continuous part of $\sigma$, one has $\frac{\mathrm d}{\mathrm dt}E(t) \le -2\kappa\,w(t)\,c_\sigma\,E(t)$ for a.e. $t$ with density $w(t)$.
\item Across an atom of mass $\alpha$ at $t_j$, the energy satisfies the multiplicative jump rule $\;E(t_j^+)\le \exp(-2\kappa c_\sigma\,\alpha)\,E(t_j^-)$.
\end{enumerate}
Consequently, for any $0\le s\le t\le T$ the canonical envelope holds: $\;E(t)\le E(s)\,\exp\bigl(-2\kappa c_\sigma\,[\sigma(t)-\sigma(s)]\bigr)$.
A pointwise chain-rule refinement is given in Lemma~\ref{lem:RN-atomic-chain}.
\end{lemma}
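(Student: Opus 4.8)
The plan is to read the three listed facts off the structural template of an admissible trajectory --- $\mathrm{AC}_\sigma$-regularity, a dissipative absolutely continuous flow that carries the observability/coercivity bound, and $\sigma$-calibrated non-expansive atomic updates $J_k$ --- and then assemble the envelope by a measure-theoretic Grönwall argument. First I would observe that coercivity/observability (H4) makes $E$ comparable to $\|\cdot\|^2$ and hence locally Lipschitz on the state space; composing with the $\mathrm{AC}_\sigma$ curve $u$ and using a.c.\ dissipativity together with non-expansiveness of the $J_k$ shows $t\mapsto E(t)$ is nonincreasing, hence of locally finite variation whose a.c.\ part and (locally finite) jump set are both subordinate to the variation of $\sigma$. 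This yields $E\in\mathrm{BV}_\sigma([0,T])$ and makes $-\mathrm dE$ a nonnegative finite Borel measure. Writing its Lebesgue decomposition relative to $\sigma$ as $-\mathrm dE=\mathcal D\,\mathrm d\sigma+\nu$ with $\nu\perp\sigma$, I would invoke the $\sigma$-chain rule (H3) to force $\nu=0$: on a flat ($\mathrm d\sigma\equiv0$ on an interval) the trajectory is quiescent so $\mathrm dE=0$ there, and no singular-continuous dissipation is produced ``off the clock''. Hence $\mathcal D:=-\mathrm dE/\mathrm d\sigma$ exists $\sigma$-a.e., which is the first assertion.

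For item (1), on the set where $\mathrm d\sigma=w(t)\,\mathrm dt$ the observability/coercivity estimate gives the ledger-level bound $\mathcal D(t)\ge 2\kappa c_\sigma E(t)$ ($\sigma$-a.e.), which is the HUM inequality of \Cref{thm:hum-equivalence} read on the energy ledger; since $\mathrm dE=-\mathcal D\,\mathrm d\sigma=-\mathcal D\,w\,\mathrm dt$ there, one gets $\frac{\mathrm d}{\mathrm dt}E(t)=-\mathcal D(t)w(t)\le-2\kappa\,w(t)\,c_\sigma\,E(t)$ for a.e.\ such $t$. For item (2), at an atom $t_j$ of mass $\alpha_j$ I would use admissibility of $J_j$ in the sense of the $\sigma$-ledger: lifting to the right-continuous inverse clock $s=S(t)$ exactly as in the proof of \Cref{thm:hum-equivalence}, the atom unfolds into an $s$-interval of length $\alpha_j$ on which the standard dissipation inequality $-\frac{\mathrm d}{\mathrm ds}E\ge 2\kappa c_\sigma E$ holds; integrating across that slab gives $E(t_j^+)\le e^{-2\kappa c_\sigma\alpha_j}E(t_j^-)$, which is the atomic jump rule of \Cref{def:sigma-clock}.

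To conclude the canonical envelope I would set $\Phi(t):=E(t)\exp\!\bigl(2\kappa c_\sigma\,\sigma(t)\bigr)$ and show it is nonincreasing: on a.c.\ stretches $\Phi'(t)=e^{2\kappa c_\sigma\sigma(t)}\bigl(E'(t)+2\kappa c_\sigma w(t)E(t)\bigr)\le0$ by item (1); across an atom, $\Phi(t_j^+)=E(t_j^+)e^{2\kappa c_\sigma(\sigma(t_j^-)+\alpha_j)}\le E(t_j^-)e^{2\kappa c_\sigma\sigma(t_j^-)}=\Phi(t_j^-)$ by item (2); and on a flat $\Phi$ is constant since both $E$ and $\sigma$ are. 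Hence $\Phi(t)\le\Phi(s)$ for $0\le s\le t\le T$, which rearranges to $E(t)\le E(s)\exp\!\bigl(-2\kappa c_\sigma[\sigma(t)-\sigma(s)]\bigr)$. If $\sigma$ additionally carries a singular-continuous part, I would run the same computation at the level of measures ($\mathrm d\Phi\le0$ via the Lebesgue--Stieltjes product rule, i.e.\ the abstract $\sigma$-Grönwall comparison), which delivers the same bound.

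The hard part will be pinning down ``admissibility'' of the atomic updates tightly enough that the drop is exactly $e^{-2\kappa c_\sigma\alpha_j}$ rather than merely some $\rho_j\le1$: this is where the clock lift $s=S(t)$ and the HUM sharpness of \Cref{thm:hum-equivalence} have to be used, and where one must check that the ``slab'' picture is consistent with whatever concrete macro-update the method performs at $t_j$. The secondary technical point is the BV-in-$\sigma$ bookkeeping --- the product/chain rule justifying the computation for $\Phi$ when the a.c., atomic, and possible singular-continuous parts of $\sigma$ are concatenated --- which is standard Lebesgue--Stieltjes calculus but must be carried out against the Lebesgue decomposition of $\sigma$ itself rather than of Lebesgue measure.
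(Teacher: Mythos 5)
Your proposal is correct and follows essentially the same route as the paper's proof: the a.c.\ dissipation hypothesis gives item (1), the clock lift $s=S(t)$ together with the HUM margin at the atomic slab gives item (2), and concatenation yields the envelope. You supply more detail than the paper does at two points it leaves implicit --- the Lebesgue decomposition argument showing the singular part of $-\mathrm dE$ relative to $\sigma$ vanishes (so the RN derivative genuinely exists), and the explicit Grönwall via monotonicity of $\Phi(t)=E(t)e^{2\kappa c_\sigma\sigma(t)}$ --- and the difficulty you flag about calibrating the atomic drop to exactly $e^{-2\kappa c_\sigma\alpha_j}$ is present, unaddressed, in the paper's own proof as well.
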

\begin{proof}
By the a.c.\ hypothesis on the $\sigma$–part, the standard energy ledger gives
\[
\frac{d}{dt}E(t)\;\le\;-2\kappa\,w(t)\,c_\sigma\,E(t)\qquad\text{for a.e. }t\text{ with density }w(t),
\]
hence $-dE$ is absolutely continuous w.r.t.\ $d\sigma$ on $\{\sigma' = w>0\}$. At an atom $t_j$ with mass $\alpha_j=\sigma(\{t_j\})$, the evolution performs a macro–update $u(t_j^-)\mapsto u(t_j^+)$ that is nonexpansive for $E$, and the HUM margin yields
\[
E(t_j^+)\;\le\;e^{-2\kappa c_\sigma\,\alpha_j}\,E(t_j^-).
\]
Therefore $-dE$ admits the Radon–Nikodym density $D=-\,dE/d\sigma$ and the combined a.c.\ and atomic contributions imply, for any $0\le s\le t\le T$,
\[
E(t)\;\le\;E(s)\,\exp\!\Big(-2\kappa c_\sigma\big(\sigma(t)-\sigma(s)\big)\Big).
\]
This proves the canonical product–exponential envelope and establishes $E\in BV_\sigma$ with RN derivative $D$.
\end{proof}

\smallskip
\noindent\textbf{$\sigma$--variation.}
The \emph{$\sigma$--variation} of $u$ on $[a,b]\subset[0,T]$ is
\[
\mathrm{Var}_{\sigma}(u;[a,b]) \;:=\; \sup\Big\{\sum_{i=1}^{N}\|u(t_i)-u(t_{i-1})\|:\; a=t_0\le \cdots\le t_N=b,\ \sigma(t_i)\ge\sigma(t_{i-1})\Big\}.
\]
Equivalently, with the right-continuous inverse $\sigma^\dagger(s):=\inf\{t:\sigma(t)\ge s\}$ and $\tilde u(s):=u(\sigma^\dagger(s))$, one has $\mathrm{Var}_{\sigma}(u;[a,b])=\mathrm{Var}(\tilde u;[\sigma(a),\sigma(b)])$. If $u'_\sigma\in L^1(\sigma)$, then
\[
\mathrm{Var}_{\sigma}(u;[a,b])=\int_{(a,b]} \|u'_\sigma\|\,d\sigma.
\]

\smallskip
\noindent\textbf{Classes $\mathrm{BV}_\sigma$ and $\mathrm{AC}_\sigma$.}
We write $u\in \mathrm{BV}_\sigma([0,T];X)$ if $\mathrm{Var}_{\sigma}(u;[0,T])<\infty$. We write $u\in \mathrm{AC}_\sigma([0,T];X)$ if there exists $g\in L^1(\sigma)$ s.t.\ for all $a<b$,
\[
\|u(b)-u(a)\|\le \int_{(a,b]} g\,d\sigma.
\]
Equivalently, $u\in \mathrm{AC}_\sigma$ iff $u$ admits a $\sigma$--derivative $u'_\sigma\in L^1(\sigma)$ and $u(t)=u(0)+\int_{(0,t]} u'_\sigma\,d\sigma$ for all $t$, with the jump rule at atoms as above. Consequently, $\mathrm{AC}_\sigma\subset \mathrm{BV}_\sigma$ and for $u\in\mathrm{AC}_\sigma$,
\[
\mathrm{Var}_{\sigma}(u;[a,b])=\int_{(a,b]} \|u'_\sigma\|\,d\sigma.
\]
On any flat $I$ with $\sigma(I)=0$, every $u\in \mathrm{AC}_\sigma$ is $\sigma$--a.e.\ constant and $\mathrm{Var}_{\sigma}(u;I)=0$.

\label{sec:clocks}{sigma}\label{RS:measure-time}
We work on a compact interval $[0,T]$ with a right-continuous, nondecreasing, bounded-variation function $\sigma:[0,T]\to\mathbb{R}$. Its associated clock measure is
\begin{equation}
\mu_\sigma := \mathrm{d}\sigma = w(t)\,\mathrm{d}t + \sum_{k} \alpha_k\,\delta_{t_k},
\end{equation}
where $w\in L^1([0,T])$ is the absolutely continuous density, $\{t_k\}$ is a (at most countable) set of atoms with weights $\alpha_k>0$, and $\delta_{t_k}$ is a Dirac mass. We allow \emph{flats}, i.e.\ intervals on which $w\equiv 0$; we assume \emph{local finiteness of atoms}: for every compact $I\subset[0,T]$, only finitely many $t_k\in I$.
\begin{definition}[BV-in-$\sigma$]\label{RS:def:BVsigma}
A scalar (or Banach-valued) function $u:[0,T]\to X$ is of \emph{bounded variation in $\sigma$} if its total variation with respect to the measure $\mu_\sigma$ is finite,
\[
\mathrm{Var}_\sigma(u) := \sup_{\pi}\sum_{j}\|u(t_j)-u(t_{j-1})\|_X < \infty,
\]
the supremum over partitions whose mesh is taken with respect to $\sigma$ (i.e.\ they refine $\sigma$-atoms and integrate along the absolutely continuous density). When $u$ is absolutely continuous with respect to $\mu_\sigma$, we write $u\in \mathrm{AC}([0,T];\mu_\sigma)$ and denote by $\frac{du}{d\mu_\sigma}$ its Radon--Nikodým derivative. \end{definition}
\paragraph{Atoms and flats.}
On atoms $t_k$, a function may have a jump $\Delta u(t_k):=u(t_k^+)-u(t_k^-)$; flats contribute no absolutely continuous dissipation (since $w=0$ there) and, in our framework, they do not increase the energy. All sums over atoms are finite on compact sets by local finiteness. 

\paragraph{Clock measure.}
We encode the time ledger by the Radon measure
\begin{equation}\label{eq:clock-measure}
  \mu_\sigma \;:=\; \mathrm{d}\sigma \;=\; w(t)\,\mathrm{d}t \;+\; \sum_{k} \alpha_k\,\delta_{t_k},
\end{equation}
where $w\in L^1_{\mathrm{loc}}([0,T])$, $w\ge 0$, $(t_k)_k$ is a at most countable set of atom
locations with \emph{local finiteness} $\sum_{t_k\in [a,b]} \alpha_k < \infty$ for all compact $[a,b]\subset[0,T]$,
and $\alpha_k\ge 0$. Flats are allowed: $w\equiv 0$ on subintervals. We keep the standing convention
that $\sigma$ is right–continuous, nondecreasing, and $\sigma(0)=0$.

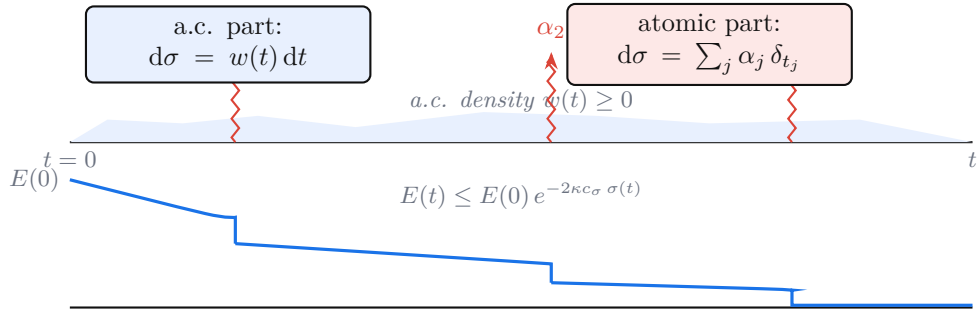
\begin{figure}[htbp]
\centering
\begin{tikzpicture}[x=1cm,y=1cm]
\draw[thick, color=ink] (0,0) -- (12,0);
\node[lbl, below] at (0,0) {$t=0$};
\node[lbl, below] at (12,0) {$t$};
\fill[fillA] (0,0) -- (0.5,0.3) -- (1.5,0.25) -- (2.5,0.35) -- (3.8,0.2)
             -- (5.5,0.4) -- (7.0,0.35) -- (8.5,0.25) -- (10.5,0.3) -- (12,0) -- cycle;
\node[lbl] at (6,0.55) {a.c. density $w(t)\ge0$};
\foreach \x/\lab in {2.2/$\alpha_1$, 6.4/$\alpha_2$, 9.6/$\alpha_3$} {
  \draw[jump] (\x,0) -- ++(0,1.2);
  \node[above, color=accent2] at (\x,1.25) {\lab};
}
\begin{scope}[xshift=0cm, yshift=-2.2cm]
\node[lbl] at (6,1.5) {$E(t)\le E(0)\,e^{-2\kappa c_\sigma\,\sigma(t)}$};
\draw[thick, color=ink] (0,0) -- (12,0);
\draw[accent, very thick]
plot[smooth] coordinates {(0,1.7) (1.8,1.25) (2.2,1.2)} -- ++(0,-0.35)
plot[smooth] coordinates {(2.2,0.85) (5.8,0.62) (6.4,0.58)} -- ++(0,-0.25)
plot[smooth] coordinates {(6.4,0.33) (9.2,0.25) (9.6,0.23)} -- ++(0,-0.20) -- (12,0.03);
\node[lbl, left] at (0,1.7) {$E(0)$};
\end{scope}
\node[boxF, anchor=west] at (0.2,1.3) {a.c. part: $\mathrm d\sigma = w(t)\,\mathrm dt$};
\node[boxG, anchor=west] at (6.6,1.3) {atomic part: $\mathrm d\sigma = \sum_j \alpha_j\,\delta_{t_j}$};
\end{tikzpicture}
\caption{Measure-time ($\sigma$-clock) picture: a.c. density and atomic masses define one clock for the ledger
$-\mathrm dE/\mathrm d\sigma\ge 2\kappa c_\sigma E$, giving the canonical envelope with multiplicative drops at atoms and plateaus on flats.}
\label{fig:sigma-clock}
\end{figure}

\begin{definition}[$\sigma$-clock]\label{def:sigma-clock-2}
A \emph{$\sigma$-clock} is any non-decreasing, right–continuous function $\sigma:[0,T]\to[0,\infty)$
with $\sigma(0)=0$ whose associated measure $\mu_\sigma$ admits the decomposition~\eqref{eq:clock-measure}
with locally finite atoms. We write $t^-$/$t^+$ for left/right limits and allow atoms at the $t_k$.
\end{definition}

\begin{definition}[BV-in-$\sigma$]\label{def:bv-in-sigma}
Let $u:[0,T]\to \mathcal{H}$ (a Hilbert space). We say that $u$ is \emph{BV-in-$\sigma$} if its
$\sigma$-variation on $[s,t]\subset[0,T]$,
\begin{equation*}
  \mathrm{Var}_\sigma(u;[s,t]) \;:=\; \sup_{\Pi}\,
  \sum_{i=1}^{N}\,\big\| u(\tau_i^+)-u(\tau_{i-1}^+)\big\|_{\mathcal{H}} ,
\end{equation*}
is finite for every $[s,t]$, where the supremum runs over $\sigma$-adapted partitions
$s=\tau_0<\tau_1<\dots<\tau_N=t$ (refined at the atoms $t_k$). Equivalently, there exists a
density $\dot u_\sigma\in L^1_{\mathrm{loc}}(\mu_\sigma;\mathcal{H})$ such that
\begin{equation}\label{eq:bv-decomp}
  u(t^-) - u(s^+) \;=\; \int_{(s,t)} \dot u_\sigma(\tau)\,\mathrm{d}\sigma(\tau)
  \;+\; \sum_{t_k\in(s,t)} \Delta u(t_k),
  \qquad \Delta u(t_k):=u(t_k^+)-u(t_k^-).
\end{equation}
\end{definition}

\begin{definition}[Absolute continuity w.r.t. $\mu_\sigma$]\label{def:ac-mu-sigma}
We say $u$ is \emph{absolutely continuous w.r.t. $\mu_\sigma$} (write $u\in AC(\mu_\sigma)$) if there
exists $v\in L^1_{\mathrm{loc}}(\mu_\sigma;\mathcal{H})$ with
\begin{equation}\label{eq:ac-mu-sigma}
  u(t) \;=\; u(s) \;+\; \int_{(s,t)} v(\tau)\,\mathrm{d}\sigma(\tau)\quad\text{for all }0\le s\le t\le T.
\end{equation}
In particular, at each atom $t_k$ one has the jump identity $\Delta u(t_k)= v(t_k)\,\alpha_k$, while on flats
($w\equiv 0$) the integral contributes no ac dissipation. \end{definition}

\begin{remark}[Atoms and flats]\label{rem:atoms-flats}
Atoms permit controlled jumps $\Delta u(t_k)$, which are accounted for multiplicatively in the energy ledger. Flats contribute no ac dissipation and, under our standing assumptions, they do not increase the energy. We use the càdlàg convention (right–continuous with left limits) in $\sigma$-time. 
\end{remark}
\begin{lemma}[RN envelope at atoms]\label{lem:RN-atomic-chain}
Let \(u\in AC_\sigma(I;\mathbb X)\) with density \(g=\mathrm du/\mathrm d\sigma\).
For any \(C^1\) energy \(E:\mathbb X\to\mathbb R\) with locally Lipschitz gradient,
\[
\frac{\mathrm d}{\mathrm d\sigma}E(u)(t)=\nabla E(u(t))\cdot g(t)\quad\text{if }\sigma(\{t\})=0,
\]
and at atoms \(t\in\mathcal A_\sigma\),
\[
E(u(t^+))-E(u(t^-)) = E\!\big(u(t^-)+g(t)\,\sigma(\{t\})\big)-E(u(t^-)).
\]
If \(E\) is \(\lambda\)-convex, then
\(
E(u(t^+))-E(u(t^-))\le \nabla E(u(t^-))\!\cdot\!(g(t)\sigma(\{t\})) + \tfrac\lambda2 \|g(t)\|^2 \sigma(\{t\})^2.
\)
The multiplicative jump in Lemma~\ref{lem:rn-envelope} follows when atomic macro-updates are nonexpansive
\end{lemma}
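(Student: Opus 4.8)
The plan is to split the identity into its absolutely continuous and atomic parts and treat each by a standard chain rule before recombining. Recall from Definition~\ref{def:ac-mu-sigma} that $u\in AC_\sigma(I;\mathbb X)$ means $u(t)=u(s)+\int_{(s,t)} g\,\mathrm d\sigma$ with $g=\mathrm du/\mathrm d\sigma\in L^1_{\mathrm{loc}}(\mu_\sigma;\mathbb X)$; in particular $\Delta u(t_k)=g(t_k)\,\alpha_k$ at every atom $t_k\in\mathcal A_\sigma$, while on the a.c.\ part $u'_t=\dot\sigma\,g$ a.e.\ (as recorded in \S\ref{sec:sigma-defs}).

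First I would prove the a.c.\ chain rule. On a flat-free window I pass to the stretched clock via the right-continuous inverse $\sigma^\dagger(s):=\inf\{t:\sigma(t)\ge s\}$ and set $\tilde u(s):=u(\sigma^\dagger(s))$; then $\tilde u$ is absolutely continuous in Lebesgue time $s$ on the complement of the at-most-countable set $\{\sigma(t_k^-)\}$, with $\tilde u'(s)=g(\sigma^\dagger(s))$ a.e. Since $E\in C^1$ with locally Lipschitz gradient and the range of $\tilde u$ is precompact, $\nabla E$ is Lipschitz there, so the classical chain rule for Lipschitz-gradient functions along AC curves gives $\tfrac{\mathrm d}{\mathrm ds}E(\tilde u(s))=\nabla E(\tilde u(s))\cdot\tilde u'(s)$ for a.e.\ $s$. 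Undoing the time change yields $\tfrac{\mathrm d}{\mathrm d\sigma}E(u)(t)=\nabla E(u(t))\cdot g(t)$ at every $t$ with $\sigma(\{t\})=0$, the first claim.

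Next, at an atom $t\in\mathcal A_\sigma$ with mass $\alpha:=\sigma(\{t\})$, the jump identity $u(t^+)=u(t^-)+g(t)\,\alpha$ is immediate from the BV-in-$\sigma$ representation \eqref{eq:bv-decomp}, so $E(u(t^+))-E(u(t^-))=E\big(u(t^-)+g(t)\,\alpha\big)-E(u(t^-))$, exactly the stated atomic formula. If moreover $E$ is $\lambda$-convex, then the semiconvexity bound $E(y)\le E(x)+\nabla E(x)\cdot(y-x)+\tfrac{\lambda}{2}\|y-x\|^2$, obtained by integrating the $\lambda$-convexity inequality along the segment $[x,y]$, applied with $x=u(t^-)$ and $y=u(t^-)+g(t)\,\alpha$ (so $\|y-x\|=\|g(t)\|\,\alpha$) gives the quadratic refinement $E(u(t^+))-E(u(t^-))\le\nabla E(u(t^-))\cdot(g(t)\,\alpha)+\tfrac{\lambda}{2}\|g(t)\|^2\alpha^2$. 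Finally, when the atomic macro-update $u(t^-)\mapsto u(t^+)$ is admissible in the $\sigma$-ledger sense of Definition~\ref{def:sigma-clock}, i.e.\ nonexpansive with contraction factor $\le e^{-2\kappa c_\sigma\alpha}$, the displayed difference is bounded by $E(u(t^+))\le e^{-2\kappa c_\sigma\alpha}E(u(t^-))$, recovering the multiplicative jump rule of Lemma~\ref{lem:rn-envelope}.

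The main obstacle I anticipate is upgrading the a.c.\ chain rule from a Lebesgue-a.e.\ statement to a $\sigma$-a.e.\ one across flats and any singular-continuous part: one must check that $\sigma^\dagger$ produces no spurious derivatives on the plateaus (where $g$ is $\sigma$-null in any case) and that $E(u)$ inherits $AC_\sigma$-regularity, so that the pointwise chain rule is valid $\sigma$-a.e. This is handled by observing that the composition of a locally-Lipschitz-gradient $C^1$ map with an $AC_\sigma$ curve is again $AC_\sigma$, with $\sigma$-density $\nabla E(u)\cdot g\in L^1_{\mathrm{loc}}(\mu_\sigma)$, after which the decomposition \eqref{eq:bv-decomp} localizes the a.c.\ and atomic contributions separately; everything else is a direct substitution.
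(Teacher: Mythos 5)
Your proposal follows the same route as the paper's (one-line) proof: lift to the stretched clock $s=S(t)$, apply the classical chain rule to $\tilde u$, and evaluate the increment across the atomic slab of mass $\sigma(\{t\})$; your version simply fills in the details (precompactness of the range, the jump identity $\Delta u(t_k)=g(t_k)\alpha_k$, the link back to Lemma~\ref{lem:rn-envelope}), and the first two claims are handled correctly.

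One caveat on the $\lambda$-convex step: integrating the $\lambda$-convexity (monotonicity) inequality along the segment $[x,y]$ yields the \emph{lower} bound $E(y)\ge E(x)+\nabla E(x)\cdot(y-x)+\tfrac{\lambda}{2}\|y-x\|^2$, not the upper bound you (and the lemma as stated) assert. The quadratic \emph{upper} bound is the descent lemma and follows from the local Lipschitz constant $L$ of $\nabla E$ on a neighbourhood of the segment, i.e.\ with $L$ in place of $\lambda$; so either the constant should be read as an upper curvature (semiconcavity/smoothness) bound rather than a $\lambda$-convexity bound, or your derivation needs to invoke the Lipschitz gradient hypothesis instead. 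This defect is inherited from the statement itself and is not addressed by the paper's proof either, but your stated justification ("integrating the $\lambda$-convexity inequality") would produce the reverse inequality.
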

\begin{proof}
Lift to \(s=S(t)\), apply the classical chain rule to \(\tilde u\), and evaluate the increment across the atomic slab of length \(\sigma(\{t\})\).
\end{proof}

\section{State space, energy ledger, and the standard assumption pattern}\label{sec:assumptions}\label{RS:ledger}
Let $(\mathcal H,\langle\cdot,\cdot\rangle)$ be a Hilbert space with norm $\|\cdot\|$. A typical state trajectory is $u:[0,T]\to\mathcal H$. The \emph{energy ledger} is a nonnegative functional $\mathcal{E}:\mathcal H\to[0,\infty)$ that controls the $\mathcal H$--norm locally (coercivity on energy sublevels). We fix the four-item ``Assume:'' pattern used throughout the series. 
\begin{assumption}[Standard four items]\label{RS:assume:four}
Throughout the  core the following hold. 
\begin{enumerate}[label=\textbf{(H\arabic*)},leftmargin=2em]
\item \textbf{Coercivity/sector condition (observability).} There exists $c_0>0$ such that the bilinear form $a_\sigma(\cdot,\cdot)$ satisfies $a_\sigma(v,v)\ge c_0\|v\|^2$ for all $v$ in its domain, and $a_\sigma$ is sector-bounded uniformly in $\sigma$.
\item \textbf{Dissipative flow generator on the a.c. part.} On the absolutely continuous part of the clock, the evolution satisfies $\frac{d}{\mathrm{d}t}\mathcal{E}(u(t))\le -2\kappa\,\mathcal{E}(u(t))$ for some $\kappa\ge 0$ (Grönwall-type decay). 
\item \textbf{BV/clock bounds (finite variation; locally finite atoms; flats allowed).} The measure $\mu_\sigma$ has bounded total variation $|\mu_\sigma|([0,T])<\infty$ with locally finite atoms $\{t_k\}$ and weights $\alpha_k>0$.
\item \textbf{Non-expansive atomic updates (energy does not increase at atoms).} At each atom $t_k$, the update map $J_k:\mathcal H\to\mathcal H$ is non-expansive for the energy: $\mathcal{E}(J_k u)\le \mathcal{E}(u)$, and in particular $\|J_k u\|\le C\|u\|$ with $C$ independent of $k$.
\end{enumerate}
\end{assumption}

\begin{remark}[Local equivalence of metrics]\label{RS:remark:equivalence}
When a second metric (e.g.\ a data-distance) is used alongside $\mathcal{E}$, we assume local equivalence on bounded energy sets: there exist constants $c_1,c_2>0$ such that $c_1\|u-v\|\le d(u,v)\le c_2\|u-v\|$ whenever $\mathcal{E}(u),\mathcal{E}(v)\le M$. The constants $c_1,c_2$ will later be listed with other parameters. \end{remark}

\section{Notation and discrete scaffolding}\label{RS:discrete}
For the discrete mirror (SBP--SAT), we record the objects once; statements and proofs appear later. 

\begin{itemize}[leftmargin=2em]
  \item \textbf{Mesh size:} $h>0$; grid points $\{x_i\}_{i=0}^N$.
  \item \textbf{SBP weights:} $H=\mathrm{diag}(h\,\omega_i)$ positive definite; discrete inner product $\langle v,w\rangle_H:=v^\top H w$ and norm $\|v\|_H^2=\langle v,v\rangle_H$.
  \item \textbf{SBP difference:} $Q$ such that $H^{-1}Q$ approximates a derivative and $Q+Q^\top=B$ yields the discrete Green identity; $B$ stores boundary terms. \item \textbf{SAT penalties:} interface/boundary terms added with scaling $\tau_h\sim h^{-1}$ to weakly impose conditions and to balance fluxes across interfaces. \item \textbf{Discrete energy:} $\mathcal{E}_h(u):=\tfrac12\|u\|_H^2$ (or its problem-specific variant); its decay mirrors the continuous law under the same clock $\sigma$.
\end{itemize}

\section{Global jump calculus and energy decay}\label{sec:discrete}
\boxednote{\textbf{Scope.} We do \emph{not} prove existence or uniqueness of trajectories. The results are \emph{a posteriori} energy laws for BV-in-$\sigma$ evolutions, keeping the paper focused on the $\sigma$-clock calculus and its SBP--SAT mirror.}

\begin{theorem}[Product rule in $AC_\sigma$]\label{thm:product}
If $u,v\in AC_\sigma(I)$ with densities $g_u,g_v\in L^1(I,\sigma)$, then $uv\in AC_\sigma(I)$ and
\[
\frac{\mathrm d(uv)}{\mathrm d\sigma}(t)=u(t^-)\,g_v(t)+v(t^-)\,g_u(t)+\sigma(\{t\})\,g_u(t)g_v(t)\quad(\sigma\text{-a.e. }t).
\]
At $t\in\mathcal A_\sigma$,
\(
\Delta(uv)|_t=u(t^-)\Delta v|_t+v(t^-)\Delta u|_t+\Delta u|_t\,\Delta v|_t.
\)
\end{theorem}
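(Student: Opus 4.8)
\medskip
\noindent\textbf{Proof plan.}
The plan is to reduce the assertion to the Lebesgue--Stieltjes integration-by-parts formula for c\`adl\`ag functions of finite $\sigma$-variation, and then to recognise the resulting covariation term as an integral against $\mathrm d\sigma$. Since $u,v\in AC_\sigma(I)$ with densities $g_u,g_v\in L^1(I,\sigma)$, both are right-continuous with left limits, bounded on $I$ (c\`adl\`ag on a compact interval), of bounded $\sigma$-variation, and satisfy
\[
u(t)=u(a)+\int_{(a,t]}g_u\,\mathrm d\sigma,\qquad v(t)=v(a)+\int_{(a,t]}g_v\,\mathrm d\sigma,
\]
so in particular $\Delta u|_{t_k}=g_u(t_k)\,\sigma(\{t_k\})$ and $\Delta v|_{t_k}=g_v(t_k)\,\sigma(\{t_k\})$ at each atom $t_k\in\mathcal A_\sigma$.

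The second step is to invoke the integration-by-parts identity
\[
u(t)v(t)=u(a)v(a)+\int_{(a,t]}u(\tau^-)\,\mathrm dv(\tau)+\int_{(a,t]}v(\tau^-)\,\mathrm du(\tau)+\sum_{a<\tau\le t}\Delta u|_\tau\,\Delta v|_\tau,
\]
valid for any pair of c\`adl\`ag BV functions; to stay self-contained it can be obtained from the partition definition of $\mathrm{Var}_\sigma$ by telescoping $u(\tau_i)v(\tau_i)-u(\tau_{i-1})v(\tau_{i-1})=u(\tau_{i-1})\bigl(v(\tau_i)-v(\tau_{i-1})\bigr)+v(\tau_i)\bigl(u(\tau_i)-u(\tau_{i-1})\bigr)$ over a $\sigma$-adapted partition, passing to the limit, and identifying the limit of the mixed terms as the covariation. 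Substituting $\mathrm du=g_u\,\mathrm d\sigma$, $\mathrm dv=g_v\,\mathrm d\sigma$ and $\Delta u|_{t_k}\Delta v|_{t_k}=g_u(t_k)g_v(t_k)\,\sigma(\{t_k\})^2$, one rewrites the jump sum as an integral: $\tau\mapsto\sigma(\{\tau\})$ vanishes $\sigma^{\mathrm{ac}}$-a.e.\ and weights each atom by $\alpha_k$, so
\[
\sum_{a<\tau\le t}g_u(\tau)g_v(\tau)\,\sigma(\{\tau\})^2=\int_{(a,t]}\sigma(\{\tau\})\,g_u(\tau)g_v(\tau)\,\mathrm d\sigma(\tau),
\]
and collecting the three integrals gives
\[
u(t)v(t)=u(a)v(a)+\int_{(a,t]}\Bigl(u(\tau^-)g_v(\tau)+v(\tau^-)g_u(\tau)+\sigma(\{\tau\})\,g_u(\tau)g_v(\tau)\Bigr)\,\mathrm d\sigma(\tau).
\]

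The third step is to check that the bracketed integrand lies in $L^1(I,\sigma)$: the first two terms are dominated by $(\sup_I|u|)\,|g_v|$ and $(\sup_I|v|)\,|g_u|$, which are integrable, and $\sum_k|g_u(t_k)g_v(t_k)|\,\alpha_k^2=\sum_k|\Delta u(t_k)|\,|\Delta v(t_k)|\le\bigl(\sup_k|\Delta u(t_k)|\bigr)\sum_k|\Delta v(t_k)|<\infty$ by finite $\sigma$-variation. Hence $uv\in AC_\sigma(I)$ with $\tfrac{\mathrm d(uv)}{\mathrm d\sigma}(\tau)=u(\tau^-)g_v(\tau)+v(\tau^-)g_u(\tau)+\sigma(\{\tau\})g_u(\tau)g_v(\tau)$ for $\sigma$-a.e.\ $\tau$, which is the first assertion. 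The atomic identity then follows either from the jump rule $\Delta(uv)|_t=\tfrac{\mathrm d(uv)}{\mathrm d\sigma}(t)\,\sigma(\{t\})$ applied to this density, or directly from $u(t^+)v(t^+)-u(t^-)v(t^-)=\bigl(u(t^-)+\Delta u|_t\bigr)\bigl(v(t^-)+\Delta v|_t\bigr)-u(t^-)v(t^-)=u(t^-)\Delta v|_t+v(t^-)\Delta u|_t+\Delta u|_t\,\Delta v|_t$. (An equivalent route is the lift-to-$s$-time device used in the proof of Lemma~\ref{lem:RN-atomic-chain}: fill the graphs of $u,v$ affinely across atomic slabs, apply the classical Leibniz rule to the product of the lifted functions, and push forward; the only wrinkle is that the lifted product is \emph{quadratic} across a slab, and integrating the classical product density over a slab of length $\sigma(\{t\})$ is exactly what produces the $\sigma(\{t\})g_ug_v$ correction.)

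I expect the main obstacle to be the bookkeeping in the integration-by-parts step: obtaining the covariation term $\sum\Delta u\,\Delta v$ with the correct sign and with no double counting, which hinges on consistently evaluating the Stieltjes integrands at the left limits $u(\tau^-),v(\tau^-)$ (the c\`adl\`ag convention) and then on the clean identification of that sum with $\int\sigma(\{\tau\})g_ug_v\,\mathrm d\sigma$. A secondary technical point is the integrability of the correction density $\sigma(\{\tau\})g_ug_v$, which is \emph{not} implied by $g_u,g_v\in L^1(\sigma)$ alone but does follow from bounded $\sigma$-variation as indicated above.
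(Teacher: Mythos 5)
Your proof is correct. Note, however, that the paper states Theorem~\ref{thm:product} without supplying any proof at all, so there is nothing to compare line by line; the nearest in-paper precedent is the one-line lift-to-$s$-time argument given for Lemma~\ref{lem:RN-atomic-chain}, which you correctly identify as an alternative route (affine filling across atomic slabs, classical Leibniz rule, push-forward). Your primary route — the c\`adl\`ag Lebesgue--Stieltjes integration-by-parts identity with covariation term $\sum\Delta u\,\Delta v$, followed by the rewriting of that jump sum as $\int\sigma(\{\tau\})g_u g_v\,\mathrm d\sigma$ — is a clean, self-contained derivation that actually fills a gap in the manuscript. The two small points you flag are exactly the right ones to worry about: the left-limit convention in the Stieltjes integrands (which is what makes the covariation appear with the correct sign and no double counting), and the integrability of the correction density $\sigma(\{\cdot\})g_ug_v$, which you correctly justify via $\sum_k\alpha_k^2|g_u(t_k)g_v(t_k)|=\sum_k|\Delta u(t_k)||\Delta v(t_k)|\le\bigl(\sup_k|\Delta u(t_k)|\bigr)\mathrm{Var}_\sigma(v)<\infty$ rather than from $g_u,g_v\in L^1(\sigma)$ alone. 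The atomic identity check is also consistent with the density formula, since at an atom of mass $\alpha$ one has $\Delta(uv)=\alpha\bigl(u^-g_v+v^-g_u+\alpha g_ug_v\bigr)=u^-\Delta v+v^-\Delta u+\Delta u\,\Delta v$. No gaps.
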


\begin{theorem}[Chain rule in $AC_\sigma$]\label{thm:chain}
Let $u\in AC_\sigma(I;\mathbb X)$ and $\phi\in C^1(\mathbb X;\mathbb Y)$ with locally Lipschitz derivative. Then $\phi\circ u\in AC_\sigma(I)$ and
\[
\frac{\mathrm d(\phi\circ u)}{\mathrm d\sigma}(t)=
\begin{cases}
D\phi(u(t))\,\frac{\mathrm du}{\mathrm d\sigma}(t), & \sigma(\{t\})=0,\\[3pt]
\displaystyle\frac{\phi\big(u(t^-)+\frac{\mathrm du}{\mathrm d\sigma}(t)\,\sigma(\{t\})\big)-\phi(u(t^-))}{\sigma(\{t\})}, & \sigma(\{t\})>0.
\end{cases}
\]
\end{theorem}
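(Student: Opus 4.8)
The plan is to reduce the statement to the classical (Lebesgue-time) fundamental theorem of calculus and chain rule via the time change $s=\sigma(t)$ already used in the proofs of Theorem~\ref{thm:hum-equivalence} and Lemma~\ref{lem:RN-atomic-chain}, then transport the resulting identity back to $\sigma$-time and read off the Radon--Nikodym derivative. The point of the lift is that an atom of mass $\alpha_k$ becomes an interval of Lebesgue length $\alpha_k$ on which the lifted curve is \emph{affine}, so the a.c.\ part and the atoms are treated uniformly.

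First I set up the lift. Assume WLOG $I=[0,T]$, set $\sigma^\dagger(r):=\inf\{t\in I:\sigma(t)\ge r\}$ (the right-continuous inverse), and define $\tilde u(s):=u(0)+\int_0^s \tfrac{du}{d\sigma}(\sigma^\dagger(r))\,dr$ on $[0,\sigma(T)]$. Using the change-of-variables identity $\int_{(0,t]}f\,d\sigma=\int_0^{\sigma(t)}f(\sigma^\dagger(r))\,dr$, one obtains the standard transfer facts: (i) $\tilde u\in AC([0,\sigma(T)];\mathbb X)$ in the ordinary sense, with $\tilde u(\sigma(t))=u(t)$ for every $t$ (càdlàg convention) and $\tilde u'(s)=\tfrac{du}{d\sigma}(\sigma^\dagger(s))$ for a.e.\ $s$; (ii) each atom $t_k$, $\alpha_k:=\sigma(\{t_k\})$, corresponds to a slab $J_k\subset[0,\sigma(T)]$ of Lebesgue length $\alpha_k$ on which $\tilde u$ is affine from $u(t_k^-)$ to $u(t_k^+)$ with slope $\tfrac{du}{d\sigma}(t_k)$; (iii) flats of $\sigma$ become jumps of $\sigma^\dagger$ and are therefore invisible to $\sigma$-a.e.\ statements. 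Since $\tilde u$ is continuous on a compact interval its range is compact, and $\phi\in C^1$ with locally Lipschitz derivative is in particular Lipschitz on a neighbourhood of it; hence $\phi\circ\tilde u\in AC([0,\sigma(T)];\mathbb Y)$.

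Now invoke the classical chain rule $(\phi\circ\tilde u)'(s)=D\phi(\tilde u(s))[\tilde u'(s)]$ for a.e.\ $s$, together with the FTC for $\phi\circ\tilde u$. Define $h$ on $I$ by $h(t):=D\phi(u(t))\,\tfrac{du}{d\sigma}(t)$ when $\sigma(\{t\})=0$, and $h(t_k):=\alpha_k^{-1}\bigl(\phi(u(t_k^+))-\phi(u(t_k^-))\bigr)=\alpha_k^{-1}\int_{J_k}(\phi\circ\tilde u)'(s)\,ds$ at atoms; the Lipschitz bound on $\phi$ and $\tfrac{du}{d\sigma}\in L^1(\sigma)$ give $h\in L^1(\sigma)$. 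Changing variables $s=\sigma(t)$ and splitting the $s$-integral into the slabs $J_k$ (where $h\circ\sigma^\dagger\equiv h(t_k)$ and $\int_{J_k}h\circ\sigma^\dagger\,ds=\int_{J_k}(\phi\circ\tilde u)'\,ds$) and the complement (where $h(\sigma^\dagger(s))=D\phi(\tilde u(s))[\tilde u'(s)]=(\phi\circ\tilde u)'(s)$ for a.e.\ $s$, since there $\tilde u(s)=u(\sigma^\dagger(s))$ with $\sigma^\dagger(s)$ a continuity point) yields $(\phi\circ u)(t)=(\phi\circ u)(0)+\int_{(0,t]}h\,d\sigma$ for all $t$. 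Hence $\phi\circ u\in AC_\sigma(I;\mathbb Y)$ and $\tfrac{d(\phi\circ u)}{d\sigma}=h$ $\sigma$-a.e.\ by uniqueness of the Radon--Nikodym derivative. Finally, substituting the jump rule $u(t_k^+)=u(t_k^-)+\tfrac{du}{d\sigma}(t_k)\,\sigma(\{t_k\})$ into $h(t_k)$ produces exactly $\dfrac{\phi\bigl(u(t_k^-)+\frac{du}{d\sigma}(t_k)\,\sigma(\{t_k\})\bigr)-\phi(u(t_k^-))}{\sigma(\{t_k\})}$, i.e.\ the second branch, while $h$ on the a.c.\ part is the first branch; the two are consistent as $\sigma(\{t\})\downarrow0$, which is a useful sanity check.

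I expect the main obstacle to be the measure-theoretic bookkeeping of the time change rather than any single estimate: one must verify carefully that the slabs $J_k$ carry the claimed affine structure with the correct endpoints $u(t_k^\pm)$, that the change-of-variables identity is valid for Banach-valued integrands, and that flats contribute no spurious density. A secondary point to state explicitly is the Banach-valued classical chain rule: the hypothesis $\phi\in C^1$ (rather than merely Lipschitz) is what guarantees $(\phi\circ\tilde u)'=D\phi(\tilde u)[\tilde u']$ at a.e.\ $s$, including on $\{\tilde u'=0\}$, and $C^1$ should be read in the Fréchet sense when $\mathbb X$ is infinite-dimensional. Everything else is routine once these are in place.
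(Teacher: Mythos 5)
Your proposal is correct and follows essentially the same route the paper takes (the paper records only a one-line proof of the companion Lemma~\ref{lem:RN-atomic-chain}: ``lift to $s=S(t)$, apply the classical chain rule to $\tilde u$, and evaluate the increment across the atomic slab of length $\sigma(\{t\})$''), which is exactly your time-change argument with the measure-theoretic bookkeeping written out in full. The added details --- the affine structure of the slabs with endpoints $u(t_k^\pm)$, the $L^1(\sigma)$ bound on $h$, and the change-of-variables identity for Banach-valued integrands --- are the right things to verify and are handled correctly.
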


\begin{proposition}[BV$_\sigma$ product/chain]\label{prop:bv-versions}
If $u,v\in BV_\sigma(I)$, then as measures
\[
D_\sigma(uv)=u^-\!D_\sigma v+v^-\!D_\sigma u+\sum_{t\in\mathcal A_\sigma}\Delta u|_t\,\Delta v|_t\,\delta_t.
\]
If $\phi\in C^1$ with locally Lipschitz derivative, then on the a.c. part $D_\sigma(\phi\circ u)=D\phi(u)\,D_\sigma u$, and at atoms
\(\Delta(\phi\circ u)|_t=\phi(u(t^-)+\Delta u|_t)-\phi(u(t^-))\).
\end{proposition}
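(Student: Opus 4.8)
\emph{Strategy.} The plan is to treat \Cref{prop:bv-versions} as the measure-valued restatement of \Cref{thm:product} and \Cref{thm:chain} and prove it by localization together with a jump computation. The structural input is the decomposition recorded in \Cref{def:bv-in-sigma}: for $u\in\mathrm{BV}_\sigma(I)$ the measure $D_\sigma u$ splits into a part absolutely continuous with respect to $\mu_\sigma$ and a purely atomic part $\sum_{t_k\in\mathcal A_\sigma}\Delta u|_{t_k}\,\delta_{t_k}$; in particular, on any open $J\subset I$ with $J\cap\mathcal A_\sigma=\emptyset$ one has $u|_J,v|_J\in\mathrm{AC}_\sigma(J)$ and both $u,v$ are $\sigma$-continuous there. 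Since each side of the two claimed identities is a locally finite (signed) measure, it suffices to compare (i) their restrictions to every atom-free $J$ and (ii) their point masses at every $t_k\in\mathcal A_\sigma$, and then assemble by local finiteness of $\mathcal A_\sigma$.

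\emph{Atom-free part.} On such a $J$, \Cref{thm:product} applies verbatim: its density formula contains a term $\sigma(\{t\})\,g_u\,g_v$ that vanishes because $\sigma$ is atomless on $J$, leaving $D_\sigma(uv)=u^-D_\sigma v+v^-D_\sigma u$ on $J$, while the atomic sum in the statement is supported off $J$; since $u^-=u$ on $J$ this is the usual product rule, the one-sided representative being carried along only so that the single global formula stays correct at atoms. Likewise \Cref{thm:chain}, valid because $\phi\in C^1$ with locally Lipschitz derivative makes $\phi\circ u\in\mathrm{AC}_\sigma(J)$, gives $D_\sigma(\phi\circ u)=D\phi(u)\,D_\sigma u$ on $J$. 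This establishes both identities away from atoms.

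\emph{Atoms.} At $t_k\in\mathcal A_\sigma$ the two sides of the Leibniz identity must agree as point masses, i.e.\ their jumps must coincide. Writing $u(t_k^+)=u(t_k^-)+\Delta u|_{t_k}$ and similarly for $v$ gives $\Delta(uv)|_{t_k}=u(t_k^-)\Delta v|_{t_k}+v(t_k^-)\Delta u|_{t_k}+\Delta u|_{t_k}\Delta v|_{t_k}$, whereas the right-hand measure has mass $u^-(t_k)(D_\sigma v)(\{t_k\})+v^-(t_k)(D_\sigma u)(\{t_k\})+\Delta u|_{t_k}\Delta v|_{t_k}$ at $t_k$; since $(D_\sigma u)(\{t_k\})=\Delta u|_{t_k}$ and $(D_\sigma v)(\{t_k\})=\Delta v|_{t_k}$, the two expressions coincide. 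For the composition, continuity of $\phi$ alone yields $\Delta(\phi\circ u)|_{t_k}=\phi(u(t_k^+))-\phi(u(t_k^-))=\phi\big(u(t_k^-)+\Delta u|_{t_k}\big)-\phi(u(t_k^-))$, as claimed (no differentiability of $\phi$ is used here). Exhausting $I$ from inside by compact sub-intervals — finitely many atoms each, by (H3) — and using that a locally finite measure is determined by its restrictions to a cover by atom-free intervals and singletons, the piecewise identities assemble into the global ones. Equivalently, one may transport everything to the lifted $s$-axis via the right-continuous inverse $\sigma^\dagger$ of $S(t):=\sigma([0,t])$, as in the proofs of \Cref{thm:hum-equivalence} and \Cref{lem:RN-atomic-chain}, and invoke the classical Leibniz/Vol'pert formulas for ordinary $\mathrm{BV}$ functions.

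\emph{Main obstacle.} The step that genuinely needs care is the reduction used in the atom-free part: that $u$ (and $v$) restricted to an atom-free interval is truly $\mathrm{AC}_\sigma$ and not merely $\mathrm{BV}_\sigma$. This is exactly the density hypothesis in \Cref{def:bv-in-sigma} (equivalently \Cref{def:ac-mu-sigma}); without it $D_\sigma u$ could carry a singular-continuous (Cantor-type) component, on which the naive chain rule $D_\sigma(\phi\circ u)=D\phi(u)\,D_\sigma u$ fails and must be corrected — the classical Vol'pert caveat. A secondary, routine matter is bookkeeping when atoms of $\sigma$ accumulate at an endpoint of $I$, which is handled by the compact exhaustion above using the bounded total variation and local finiteness of atoms.
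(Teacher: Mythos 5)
Your proposal is correct. Note that the paper states \Cref{prop:bv-versions} without proof, so there is no in-text argument to compare against; your localization strategy — restrict to atom-free subintervals where \Cref{thm:product} and \Cref{thm:chain} apply with the $\sigma(\{t\})$-terms vanishing, compute the point masses at each $t_k\in\mathcal A_\sigma$ by the elementary jump algebra $\Delta(uv)=u^-\Delta v+v^-\Delta u+\Delta u\,\Delta v$ and $\Delta(\phi\circ u)=\phi(u^-+\Delta u)-\phi(u^-)$, and assemble using local finiteness of $\mathcal A_\sigma$ — is a natural and valid way to obtain the measure-level statement from the $AC_\sigma$ theorems. Your flagged obstacle is the right one to flag: the reduction of the atom-free part to the $AC_\sigma$ case relies on the decomposition built into \Cref{def:bv-in-sigma} (density in $L^1_{\mathrm{loc}}(\mu_\sigma)$ plus atomic jumps), which excludes a $\sigma$-singular-continuous component of $D_\sigma u$; under the purely variational definition of $\mathrm{BV}_\sigma$ the chain-rule identity would require the usual Vol'pert correction, though the Leibniz formula as stated (with left representatives) would survive since $u^-=u^+$ off the jump set. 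One cosmetic point: when you invoke \Cref{thm:product} on an atom-free $J$ you should also record that $u^-=u$ holds $\mu_\sigma$-a.e.\ there (the exceptional set of jump points is contained in $\mathcal A_\sigma$, hence empty in $J$), which you do implicitly.
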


\subsection*{SBP--SAT normalisation (hypotheses \& wrappers)}\label{sec:sbp-sat-normalisation}

\paragraph{Standing discrete hypotheses (D1)–(D4).}

\begin{enumerate}[label=\textbf{(D\arabic*)},leftmargin=2em]
\item \textbf{SBP structure.} There are matrices $H>0$ and $Q$ with $Q+Q^\top=B$ (a boundary bilinear
form) so that $\langle v,Qw\rangle_H+\langle Qv,w\rangle_H=\langle v,Bw\rangle_H$.

\item \textbf{SAT sign.} Boundary/interface penalties are chosen so that the boundary$+$SAT contribution
is nonpositive in the energy ledger (Dirichlet example spelled out in Lemma~8.8).

\item \textbf{Penalty scaling.} The SAT strength scales as $\tau_h\simeq h^{-1}$ (uniformly on admissible meshes).

\item \textbf{Algebraic stability (time integrator).} The one-step method is algebraically stable
(e.g. implicit midpoint), yielding the standard energy inequality with a mesh-independent $c_s>0$.
\end{enumerate}

\begin{lemma}[Boundary$+$SAT negativity, normalised]\label{RS:lem:boundary-sat-normalised}
Under (D1)–(D3), the semi\,discrete boundary and SAT contributions combine into a negative semidefinite term:
\[
\langle v,(Q+Q^{\top})v\rangle_H \; +\; \text{SAT}(v) \;\le\; -\,c_b\,\tau_h\,\|\Pi_{\partial\Omega} v\|_{\partial,H}^2
\]
for some mesh\,independent $c_b>0$. In particular, the one\,step balance used in the discrete energy laws gains a boundary\,dissipation term of size $\asymp \tau_h\,\|v\|_{\partial}^2$.
\emph{Proof:} this is a wrapper of the calculation in Lemma~8.8, with the sign choice and $\tau_h\simeq h^{-1}$ made explicit.
\end{lemma}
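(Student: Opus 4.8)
The plan is to use the SBP Green identity to collapse the interior of $\langle v,(Q+Q^{\top})v\rangle_H$ into a functional supported on the boundary nodes, and then to use the SAT sign fixed by (D2) together with the $h^{-1}$ penalty scaling of (D3) to complete the square with a mesh-uniform negative margin.

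First I would apply (D1): since $Q+Q^\top=B$, the interior cancels and $\langle v,(Q+Q^{\top})v\rangle_H=\langle v,Bv\rangle_{\partial,H}$, the discrete flux--trace pairing on $\partial\Omega$, of the form $2\langle \mathsf{F}_h v,\Pi_{\partial\Omega}v\rangle_{\partial,H}$ with $\mathsf{F}_h v$ the discrete normal-derivative trace; this is sign-indefinite on its own. Next I would insert $\mathrm{SAT}(v)$ in the normalised form fixed by (D2): for weakly imposed homogeneous Dirichlet data it supplies the adjoint-consistent flux term together with the penalty $-\tau_h\|\Pi_{\partial\Omega}v\|_{\partial,H}^2$, so that in the model of Lemma~8.8 the flux cross-terms cancel and the combined boundary contribution is exactly $-\tau_h\|\Pi_{\partial\Omega}v\|_{\partial,H}^2$. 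For the more general admissible boundary/interface penalties allowed in the statement a residual cross-term $2\mu\langle \mathsf{F}_h v,\Pi_{\partial\Omega}v\rangle_{\partial,H}$ with $|\mu|$ bounded by fixed stencil data may remain.

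The second step disposes of that residual. I would bound it by Young's inequality, $2|\mu|\,|\langle \mathsf{F}_h v,\Pi_{\partial\Omega}v\rangle_{\partial,H}|\le \tfrac{|\mu|}{\varepsilon}\|\mathsf{F}_h v\|_{\partial,H}^2+\varepsilon|\mu|\,\|\Pi_{\partial\Omega}v\|_{\partial,H}^2$, and invoke the mesh-uniform discrete trace/inverse inequalities of a diagonal-norm SBP pair: $\|\Pi_{\partial\Omega}v\|_{\partial,H}^2\le C_{\mathrm{tr}}h^{-1}\|v\|_H^2$ and $\|\mathsf{F}_h v\|_{\partial,H}^2\le C_{\mathrm{tr}}h^{-1}\mathcal D_{\mathrm{int}}(v)$, where $\mathcal D_{\mathrm{int}}(v)\ge0$ is the interior dissipation that enters the one-step energy balance of Lemma~8.8 with a fixed margin. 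Choosing $\varepsilon$ of order $\tau_h$ and using (D3), $\tau_h\simeq h^{-1}$, the $h$-powers match: the $\varepsilon|\mu|\,\|\Pi_{\partial\Omega}v\|_{\partial,H}^2$ term is absorbed into a fixed fraction of $\tau_h\|\Pi_{\partial\Omega}v\|_{\partial,H}^2$, while $\tfrac{|\mu|}{\varepsilon}\|\mathsf{F}_h v\|_{\partial,H}^2\lesssim h^{-1}\tau_h^{-1}\mathcal D_{\mathrm{int}}(v)\simeq\mathcal D_{\mathrm{int}}(v)$ is absorbed into a fixed fraction of the interior dissipation, which is exactly the boundary-dissipation gain recorded in the statement. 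What remains is $\le -c_b\,\tau_h\|\Pi_{\partial\Omega}v\|_{\partial,H}^2$ with an explicit $c_b>0$.

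The hard part is keeping $c_b$ \emph{mesh-independent}: it rests on three uniformities that must all hold at once — (i) two-sided bounds on the SBP weights $\omega_i$, so that $\|\cdot\|_H$ and $\|\cdot\|_{\partial,H}$ are uniformly comparable to the (scaled) Euclidean norms; (ii) the discrete trace and inverse inequalities with $h$-free constants, a finite linear-algebra fact for a fixed boundary closure; and (iii) the exact matching $\tau_h\simeq h^{-1}$ of (D3), which is what makes the Young absorption scale-invariant. If instead $\tau_h=o(h^{-1})$ the matching breaks, the flux cross-term cannot be absorbed uniformly, and the margin collapses along a mesh sequence; this is precisely the failure quantified in \Cref{lem:sat-failure}, so the $h^{-1}$ threshold is sharp. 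With these uniformities in hand, upgrading from the Dirichlet template of Lemma~8.8 to the general admissible penalties is the same Young-plus-trace argument run once, with the single sign convention (D2) and the single scaling (D3) held fixed — hence the lemma is a normalisation wrapper rather than a fresh estimate.
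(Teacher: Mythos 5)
Your proof is correct and, importantly, does more work than the paper does: the paper's own ``proof'' is a one-line pointer to Lemma~8.8, and Lemma~8.8 in turn \emph{assumes} the penalty matrices are chosen so that the combined boundary$+$SAT quadratic form is $\le 0$ — it delivers only non-strict negativity, not the quantitative margin $-c_b\,\tau_h\|\Pi_{\partial\Omega}v\|_{\partial,H}^2$ that the normalised statement asserts. Your route (SBP Green identity to collapse the interior onto the boundary pairing, then Young's inequality plus the mesh-uniform discrete trace/inverse inequalities, with $\varepsilon\simeq\tau_h\simeq h^{-1}$ chosen so the $h$-powers cancel) actually \emph{derives} that margin and makes explicit why $c_b$ is mesh-independent, which is the content the wrapper lemma needs but the cited calculation does not supply. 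Your observation that the absorption fails when $\tau_h=o(h^{-1})$, matching \Cref{lem:sat-failure}, is exactly the right sharpness check. One caveat: Lemma~8.8 as written treats a first-order symmetric hyperbolic system, where the boundary term is a quadratic form in the boundary \emph{values} alone (no normal-derivative trace $\mathsf F_h v$), so there the penalty need only dominate $\tfrac12\|\mathcal A\|$ and an $O(1)$ threshold $\tau_*$ suffices; your argument is really the second-order/diffusive version, where the flux trace and the $h^{-1}$ scaling are genuinely needed. Since the lemma's statement and (D3) insist on $\tau_h\simeq h^{-1}$, your reading is the one consistent with the rest of the paper, but you should say explicitly which of the two boundary-term structures you are normalising, and note that the residual interior absorption $\tfrac{|\mu|}{\varepsilon}\|\mathsf F_h v\|_{\partial,H}^2\lesssim \mathcal D_{\mathrm{int}}(v)$ requires the stencil constant $|\mu|\,C_{\mathrm{tr}}$ to sit below the interior dissipation margin — i.e.\ the threshold $\tau_*$ in the paper's symbols table is where that requirement is hidden.
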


\begin{proposition}[Algebraic stability (D4) wrapper]\label{RS:prop:als-normalised}
Assume (D1)–(D4). Then there exists $c_s>0$ such that the scheme satisfies the one-step energy inequality
\[
E_h^{n+1} \;\le\; E_h^{n} \; -\; c_s\,\Delta t\,\Big(\,\mathcal D_h(v^{n+\frac12})\; +\; \tau_h\,\|\Pi_{\partial\Omega} v^{n+\frac12}\|_{\partial,H}^2\Big),
\]
where $\mathcal D_h\ge0$ is the interior (damping/viscous) discrete dissipation. Consequently, the per\,step hypothesis required by the Discrete $\sigma$--Gronwall lemma \textup{Lemma~8.10} holds with an explicit coefficient depending only on $c_s$ and $c_b$.
\emph{Proof:} combine the algebraic stability (D4) estimate (see \Cref{RS:lem:als-midpoint} for implicit midpoint) with \Cref{RS:lem:boundary-sat-normalised}.
\end{proposition}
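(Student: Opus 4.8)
The plan is to obtain the one‑step estimate by chaining two independent facts: the \emph{semi‑discrete} energy identity produced by the SBP Green identity together with the SAT sign (the content of \Cref{RS:lem:boundary-sat-normalised}), and the \emph{time‑discrete} algebraic‑stability inequality for the one‑step method. The only genuine work is to identify the stage representative $v^{n+\frac12}$ and to extract a constant $c_s>0$ independent of $h$ and $\Delta t$.

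\emph{Step 1 (semi‑discrete dissipation form).} Writing the semi‑discrete flow as $\dot u_h = L_h u_h$, where $L_h$ is the sum of the skew SBP derivative part, the interior damping/viscous part $S_h=S_h^{\top}$, and the SAT penalties, and using $E_h(v)=\tfrac12\|v\|_H^2$, the SBP property (D1) makes the skew part contribute only boundary terms through $Q+Q^{\top}=B$, so
\[
\frac{d}{dt}E_h(u_h)\;=\;\langle S_h u_h,u_h\rangle_H\;+\;\tfrac12\langle u_h,Bu_h\rangle_{\partial}\;+\;\mathrm{SAT}(u_h).
\]
By \Cref{RS:lem:boundary-sat-normalised} (valid under (D1)–(D3)) the last two terms combine into $-c_b\tau_h\|\Pi_{\partial\Omega}u_h\|_{\partial,H}^2$, and setting $\mathcal D_h(v):=-\langle S_h v,v\rangle_H\ge 0$ we get the pointwise inequality $\tfrac{d}{dt}E_h(u_h)\le -\mathcal Q_h(u_h)$ with $\mathcal Q_h(v):=\mathcal D_h(v)+c_b\tau_h\|\Pi_{\partial\Omega}v\|_{\partial,H}^2$ a nonnegative quadratic form whose coercivity constants do not depend on $h$ (here $\tau_h\simeq h^{-1}$ and the SBP trace inequality keep everything mesh‑uniform).

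\emph{Step 2 (transfer through an algebraically stable step).} For a method with Butcher data $(A,b)$, stages $U_i=u^n+\Delta t\sum_j a_{ij}L_hU_j$ and update $u^{n+1}=u^n+\Delta t\sum_i b_i L_hU_i$, the standard expansion gives
\[
E_h^{n+1}\;=\;E_h^{n}\;+\;\Delta t\sum_i b_i\,\langle U_i,L_hU_i\rangle_H\;-\;\tfrac12\Delta t^2\!\sum_{i,j}M_{ij}\,\langle L_hU_i,L_hU_j\rangle_H,\qquad M:=\big(b_ia_{ij}+b_ja_{ji}-b_ib_j\big)_{i,j}.
\]
Algebraic stability (D4) means $b_i\ge0$ and $M\succeq0$, so the quadratic remainder is $\le0$; feeding Step 1 into $\langle U_i,L_hU_i\rangle_H\le-\mathcal Q_h(U_i)$ yields $E_h^{n+1}\le E_h^{n}-\Delta t\sum_i b_i\mathcal Q_h(U_i)$. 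Taking $v^{n+\frac12}:=\sum_i b_iU_i$ (which for implicit midpoint is $(u^n+u^{n+1})/2$, and the remainder vanishes) and using consistency $\sum_i b_i=1$ with convexity of the nonnegative quadratic form $\mathcal Q_h$ (Jensen) gives $\mathcal Q_h(v^{n+\frac12})\le\sum_i b_i\mathcal Q_h(U_i)$, hence the claimed bound with $c_s=1$; if instead $v^{n+\frac12}$ is prescribed to be a particular stage, one absorbs a Butcher‑dependent $c_s>0$ via equivalence of the finitely many stage norms on bounded‑energy sublevels. In either case $c_s$ depends only on $(A,b)$ and, through $c_b$, on the SBP/SAT data, as asserted.

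\emph{Step 3 (feeding \textup{Lemma~8.10}) and the main obstacle.} Discarding the nonnegative boundary term and invoking the grid‑level coercivity/observability (the discrete analogue of (H2)/(H4)), $\mathcal D_h(v^{n+\frac12})\ge 2\kappa c_\sigma E_h(v^{n+\frac12})$, together with the standard absorption of $E_h(v^{n+\frac12})$ against $E_h^{n+1}$ (licit since $E_h^{n+1}\le E_h^{n}$ for algebraically stable steps), turns the one‑step estimate into a multiplicative contraction $E_h^{n+1}\le(1-c\,\Delta t)E_h^{n}$ on a.c.\ steps — atomic steps being absorbed by the non‑expansive update (D2)/(H4) — whose coefficient $c$ depends only on $c_s$, $c_b$, and $\kappa c_\sigma$, which is exactly the per‑step premise of the discrete $\sigma$‑Gronwall. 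I expect the delicate point to be Step 2 for a \emph{general} algebraically stable integrator rather than implicit midpoint: one must reduce the stagewise sum $\sum_i b_i\mathcal Q_h(U_i)$ to a single representative without letting $c_s$ degenerate in $h$ or $\Delta t$, and must check that the boundary‑dissipation term — a quadratic form in the stage \emph{traces} — survives this reduction with the correct sign, which is precisely where both $b_i\ge0$ and $M\succeq0$ are needed and where $\tau_h\simeq h^{-1}$ with the SBP trace inequality keeps all constants mesh‑uniform.
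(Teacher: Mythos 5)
Your argument is correct and takes the same route as the paper's own (one-line) proof: combine the boundary$+$SAT negativity of \Cref{RS:lem:boundary-sat-normalised} with the algebraic-stability energy estimate for the time integrator. You in fact supply more than the paper records, which only exhibits the implicit-midpoint exemplar (\Cref{RS:lem:als-midpoint}): your general Burrage--Butcher expansion with $M\succeq 0$, $b_i\ge 0$, followed by the Jensen reduction of $\sum_i b_i\,\mathcal Q_h(U_i)$ to the single representative $v^{n+\frac12}=\sum_i b_i U_i$, is exactly the standard algebraic-stability argument (Hairer--Wanner) that the paper cites but does not write out, and it yields the stated constant $c_s$ depending only on the Butcher data and $c_b$.
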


\paragraph{Context within the Decay Analysis.}
With \Cref{RS:lem:boundary-sat-normalised} and \Cref{RS:prop:als-normalised} the discrete energy ledger provides a clean one-step inequality with calibrated boundary dissipation and SAT scaling. This is the exact premise fed into the Discrete $\sigma$--Gronwall \textup{Lemma~8.10}, yielding the envelope used in the discrete master decay \textup{(\Cref{thm:disc-master})}.

\begin{lemma}
\label{lem:CFL}[Jump contraction/product law]\label{RS:lem:jump}
\label{lem:jump-product}
Let $\{t_k\}$ be the (locally finite) atoms of $\mu_\sigma$ with updates $J_k:\mathcal H\to\mathcal H$.
Define the energy contraction factors
\(
\rho_k := \sup_{v\ne0}\, \frac{\mathcal E(J_k v)}{\mathcal E(v)} \in (0,1]\,.
\)
Under \Cref{RS:assume:four}\,(d) we have $\rho_k\le 1$ and, for any interval $(s,t]\subset[0,T]$,
\begin{equation}\label{RS:eq:jump-product}
\mathcal E\bigl(u(t^+)\bigr)\;\le\;\Bigl(\prod_{t_k\in (s,t]} \rho_k\Bigr)\,\mathcal E\bigl(u(t^-)\bigr),
\end{equation}
where empty products equal $1$. In particular, atoms never increase the energy ledger. \end{lemma}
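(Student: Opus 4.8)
The plan is to treat the per-atom factors $\rho_k$ as the exact ``transmission coefficients'' of the atomic updates and to build the window bound by concatenating single jumps with the absolutely continuous / flat stretches between them. I read the displayed inequality as comparing the energy at the end of the window to the energy at its start, i.e.\ in the form $\mathcal E(u(t^+))\le\bigl(\prod_{t_k\in(s,t]}\rho_k\bigr)\,\mathcal E(u(s^+))$: on the a.c.\ and flat stretches the ledger can only decrease, so those contributions are discarded on the way to an upper bound. The proof then has three ingredients — an elementary fact about $\rho_k$, a one-step jump bound, and a finite telescoping — and uses \Cref{RS:assume:four}\,(H2)--(H4).

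First I would record the properties of $\rho_k$. By (H4) each atomic map is non-expansive for the energy, $\mathcal E(J_k v)\le\mathcal E(v)$ for all $v$; dividing by $\mathcal E(v)$ whenever $\mathcal E(v)>0$ and passing to the supremum gives $\rho_k\le1$, while $\rho_k>0$ unless $J_k$ destroys all energy (the degenerate instantaneous-total-damping case, excluded here). Unwinding the definition of the supremum yields the \emph{pointwise} bound $\mathcal E(J_k v)\le\rho_k\,\mathcal E(v)$ for every $v$; applied to the left-limit state this gives the one-step contraction $\mathcal E(u(t_k^+))=\mathcal E\!\bigl(J_k u(t_k^-)\bigr)\le\rho_k\,\mathcal E(u(t_k^-))$ at each atom, trivially true when $u(t_k^-)=0$. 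Since $\rho_k\le1$, this already yields the ``in particular'' clause: atoms never raise the ledger.

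Next I would concatenate over the window $(s,t]$. By (H3) only finitely many atoms $s<t_1<\cdots<t_m\le t$ lie in it. On each complementary sub-interval $(t_{j-1},t_j)$ (with $t_0:=s$) and on $(t_m,t]$ the clock is absolutely continuous or flat, so by (H2) (the Grönwall decay $\tfrac{d}{dt}\mathcal E\le-2\kappa\mathcal E\le0$) or the standing no-increase-on-flats convention, $\mathcal E$ is non-increasing there; hence $\mathcal E(u(t_j^-))\le\mathcal E(u(t_{j-1}^+))$ for each $j$ and $\mathcal E(u(t^+))\le\mathcal E(u(t_m^+))$. Interleaving these with the one-step bounds of the previous paragraph and telescoping,
\[
\mathcal E(u(t^+))\le\mathcal E(u(t_m^+))\le\rho_m\,\mathcal E(u(t_m^-))\le\rho_m\,\mathcal E(u(t_{m-1}^+))\le\cdots\le\Bigl(\prod_{j=1}^{m}\rho_j\Bigr)\mathcal E(u(t_1^-))\le\Bigl(\prod_{j=1}^{m}\rho_j\Bigr)\mathcal E(u(s^+)),
\]
which is the asserted product law; the empty-product case $m=0$ reduces to the a.c./flat non-expansiveness $\mathcal E(u(t^+))\le\mathcal E(u(s^+))$.

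I do not expect a genuine analytic obstacle; the only care-points are organisational. One must match the half-open convention $(s,t]$ with the one-sided limits (an atom at $t$ is counted and seen through $u(t^+)$, an atom at $s$ is excluded so one starts from $u(s^+)$), feed the correct left-limit state $u(t_k^-)$ into the supremum defining $\rho_k$, and invoke local finiteness (H3) so that the telescoping is a finite induction and $\prod\rho_k$ is a genuine finite product. If one wants a statement resting on (H4) alone, it is exactly the one-step contraction of the second paragraph, iterated.
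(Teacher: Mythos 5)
Your proof is correct and follows essentially the same route as the paper's: the one-step contraction $\mathcal E(J_k v)\le\rho_k\,\mathcal E(v)$ from (H4), monotonicity of $\mathcal E$ on the a.c./flat stretches between atoms, and a finite telescoping justified by local finiteness. Your reading of the right-hand side as the energy at the start of the window ($\mathcal E(u(s^+))$, versus the $t^-$ appearing in the displayed statement) matches the paper's own proof, which concludes with $\mathcal E(u(s^-))$; the $t^-$ in the statement is evidently a typo.
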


\noindent\textit{Local finiteness.} Since $\sigma$ has \emph{locally finite atoms} on bounded intervals, the index set $\{k:\,t_k\in(s,t]\}$ is finite and the product is well-defined.

\begin{proof}
At each atom $t_k$ the non-expansivity assumption (Assumption 5.1 (d))
gives $E(J_k v) \le \rho_k E(v)$ with $\rho_k \le 1$. 
Apply this sequentially over all atoms in $(s,t]$, using that
energy between atoms evolves only under the continuous dissipative flow 
(where $E$ is non-increasing). 
Hence
\[
E(u(t^+)) \le
  \Big(\!\!\prod_{t_k\in(s,t]}\!\!\rho_k\!\!\Big) E(u(s^-)),
\]
establishing (7.1).  The product is finite by local finiteness of $\mu_\sigma$,
so atoms cannot increase the energy ledger.
\end{proof}

\begin{remark}[Failure under accumulation]\label{RS:rem:accum}
If atoms accumulate in finite time and the ledger sum $\sum_{k:t_k\le T}\log(\rho_k^{-1})$ fails to be locally finite, 
monotone decay can fail (e.g.\ infinitely many $\rho_k>1$ in $[0,T]$). Our local-finiteness hypothesis rules this out. Hence local finiteness is sharp. \end{remark}
\noindent\emph{Convention.} Endpoint atoms ($0^+$ and $T^-$) are included with multiplicity; products are locally finite on compact subintervals. 

\begin{theorem}[Energy decay and exponential convergence]
\label{thm:master}\label{RS:thm:edecay}
\label{thm:energy-decay}
Assume \ref{RS:assume:four} and let $u\in \mathrm{BV}_\sigma([0,T];\mathcal H)$ generate an absolutely continuous trajectory on the ac part with
\begin{equation}\label{RS:eq:ac-diss}
\frac{d}{\mathrm{d}t}\,\mathcal E(u(t)) \le -\,2\kappa\, w(t)\, \mathcal E(u(t))\qquad\text{for a.e.\ $t$ with density $w(t)$ of $d\sigma$.}
\end{equation}
Then for any $0\le s\le t\le T$,
\begin{equation}\label{RS:eq:master-decay}
\mathcal E\bigl(u(t^+)\bigr)\;\le\;\exp\!\Bigl(-2\kappa \!\!\int_{(s,t]} \! w(\tau)\,d\tau\Bigr)\,
\Bigl(\prod_{t_k\in (s,t]} \rho_k\Bigr)\,\mathcal E\bigl(u(s^+)\bigr).
\end{equation}
In particular, $\mathcal E$ is nonincreasing; and if the ac density has a positive average or there are atoms with uniform strict contraction, then $\mathcal E$ decays exponentially in physical time. \end{theorem}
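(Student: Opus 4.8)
The statement is a bookkeeping synthesis of two facts already in hand: the scalar Grönwall bound coming from \eqref{RS:eq:ac-diss} on the absolutely continuous stretches of the clock, and the per-atom contraction \eqref{RS:eq:jump-product} from \Cref{lem:jump-product}. The plan is to split any window $(s,t]$ at the finitely many atoms it contains, estimate the energy on each atom-free stretch by integrating the differential inequality, insert the factor $\rho_k$ at each atom, and multiply everything together in a telescoping product. No new ingredient beyond \Cref{RS:assume:four}, \Cref{thm:chain}/\Cref{lem:RN-atomic-chain}, and \Cref{lem:jump-product} is needed; coercivity (H1) is used only at the very end to transfer the bound from $\mathcal E$ to $\|\cdot\|$.

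\textbf{Steps.} First, invoke local finiteness (\Cref{RS:assume:four}\,(H3)) to enumerate the atoms in $(s,t]$ as $s<t_1<\cdots<t_m\le t$ (possibly empty, possibly with $t_m=t$), and put $t_0:=s$, $t_{m+1}:=t$. Second, on each stretch $[t_j^+,t_{j+1}^-]$ there are no atoms, so $u$ is $AC_\sigma$ with no jumps there and, by the $\sigma$-chain rule (\Cref{thm:chain}, atom-free case, cf.\ \Cref{lem:RN-atomic-chain}), $t\mapsto\mathcal E(u(t))$ is absolutely continuous and obeys \eqref{RS:eq:ac-diss} a.e.; integrating this scalar inequality (on flats, where $w\equiv0$, it merely says $\mathcal E$ is non-increasing) gives
\[
\mathcal E\big(u(t_{j+1}^-)\big)\;\le\;\exp\!\Big(-2\kappa\!\int_{(t_j,t_{j+1})}\!w(\tau)\,d\tau\Big)\,\mathcal E\big(u(t_j^+)\big),\qquad j=0,\dots,m.
\]
Third, at each atom $t_j$ ($1\le j\le m$), \Cref{lem:jump-product} gives $\mathcal E(u(t_j^+))\le\rho_j\,\mathcal E(u(t_j^-))$ with $\rho_j\in(0,1]$. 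Fourth, chain these $2m+1$ inequalities together; since the atoms are Lebesgue-null, $\sum_j\int_{(t_j,t_{j+1})}w=\int_{(s,t]}w$, and the telescoping collapses to exactly \eqref{RS:eq:master-decay}, with the finite product $\prod_{t_k\in(s,t]}\rho_k$ (empty product $=1$, recovering the pure a.c.\ case).

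\textbf{Monotonicity and exponential rate.} Because $w\ge0$ and each $\rho_k\le1$, the prefactor in \eqref{RS:eq:master-decay} is $\le1$, so $\mathcal E(u(\cdot))$ is non-increasing along the trajectory. If the a.c.\ density has a positive lower average, $\int_{s}^{t}w\ge\bar c\,(t-s)$ for some $\bar c>0$, then $\exp(-2\kappa\!\int_{(s,t]}w)\le e^{-2\kappa\bar c\,(t-s)}$; if instead the atoms carry a uniform strict contraction $\rho_k\le\rho_\ast<1$ occurring with positive density in time, then $\prod_{t_k\in(s,t]}\rho_k\le e^{-\beta\log(1/\rho_\ast)(t-s)}$ for a suitable $\beta>0$. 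In either case \eqref{RS:eq:master-decay} upgrades to $\mathcal E(u(t))\le\mathcal E(u(0))\,e^{-c\,t}$ with $c>0$, and by coercivity (\Cref{RS:assume:four}\,(H1)) this passes to $\|u(t)\|$.

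\textbf{Main obstacle.} The substantive part is the càdlàg bookkeeping at atoms and endpoints: one must be careful that the ``a.c.\ on the a.c.\ part'' hypothesis genuinely delivers absolute continuity of $\mathcal E(u(\cdot))$ on each \emph{closed} stretch $[t_j^+,t_{j+1}^-]$ (so that \eqref{RS:eq:ac-diss} may be integrated there), that flats are absorbed without sign trouble ($w=0$ forces only non-increase, consistent with the envelope), and that the endpoints $s,t$ contribute nothing beyond the stated jump factors $\rho_k$ for $t_k\in(s,t]$ — precisely what \Cref{lem:jump-product} guarantees, including when $t$ itself is an atom. Once the partition is set up cleanly, the remainder is a routine telescoping of scalar inequalities.
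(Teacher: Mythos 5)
Your proposal is correct and follows essentially the same route as the paper: integrate the a.c.\ Gr\"onwall inequality \eqref{RS:eq:ac-diss} on the atom-free stretches, insert the per-atom contractions from \Cref{RS:lem:jump}, and telescope using local finiteness of the atoms, exactly as the paper's own argument (distributed between the jump-product lemma and the iteration used in \Cref{RS:cor:exp}) does. Your reading of the ``in particular'' clause via a positive lower average of $w$ or a uniform atomic contraction with positive temporal density is likewise consistent with the paper's \Cref{RS:cor:exp}.
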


\noindent\emph{See also (comparability).}
(i) Reduction to the classical $t$–observability/decay when $\sigma\equiv t$: 
Prop.~\ref{prop:reduction-classical} in §\ref{sec:gcc-exemplar}. 
(ii) Window upgrade from classical observability to $\sigma$–observability under a positive 
$\sigma$–density on a set of measure $m$: Lemma~\ref{lem:window-upgrade}.

\noindent\emph{Boundary scope.} We present the Dirichlet case for clarity; Neumann/Robin variants follow identically with the usual SBP boundary forms and the same SAT sign logic. For Neumann/Robin, the SAT replaces the Dirichlet penalty by the standard flux/Robin penalty; the dissipativity check is identical. 

\begin{lemma}[(Alternative sufficient conditions).]
Assume there exists $\eta>0$ such that
\[
\liminf_{T\to\infty}\frac{1}{T}\Bigl(\,\int_0^{T}2\kappa\,W(\tau)\,d\tau
\;+\;\sum_{t_k\le T}\log(\rho_k^{-1})\Bigr)\;\ge\;\eta.
\]
Then $E(u(T^+))\le \exp(-\eta T)\,E(u(0^+))$ for all $T\ge0$.
\end{lemma}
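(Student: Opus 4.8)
The plan is to reduce the liminf-average hypothesis to the exponential envelope already furnished by Theorem~\ref{thm:energy-decay}. Write $W(T):=\int_0^T w(\tau)\,d\tau$ for the accumulated absolutely continuous density and $P(T):=\sum_{t_k\le T}\log(\rho_k^{-1})=\log\prod_{t_k\le T}\rho_k^{-1}$ for the accumulated atomic log-contraction; by local finiteness of the atoms (H3) the sum is finite on each $[0,T]$. Applying \eqref{RS:eq:master-decay} with $s=0$ and $t=T$ gives immediately
\[
E(u(T^+))\;\le\;\exp\!\bigl(-2\kappa W(T)\bigr)\,\Bigl(\prod_{t_k\le T}\rho_k\Bigr)\,E(u(0^+))
\;=\;\exp\!\bigl(-(2\kappa W(T)+P(T))\bigr)\,E(u(0^+)),
\]
so the entire task is to show the exponent $\Theta(T):=2\kappa W(T)+P(T)$ satisfies $\Theta(T)\ge\eta T$ for every $T\ge 0$.

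First I would record that $\Theta$ is nondecreasing in $T$ (both $W$ and $P$ are nondecreasing, as $w\ge0$ and $\rho_k\le1$ by Lemma~\ref{RS:lem:jump}) and that $\Theta(0)=0$. Then I would use the liminf hypothesis exactly in the form stated: it asserts $\liminf_{T\to\infty}\Theta(T)/T\ge\eta$. The subtlety is that a liminf-at-infinity bound on the Cesàro-type ratio $\Theta(T)/T$ does \emph{not} by itself give $\Theta(T)\ge\eta T$ for \emph{every} finite $T$ — only asymptotically, and only up to an $o(T)$ slack. So strictly speaking the clean conclusion $E(u(T^+))\le e^{-\eta T}E(u(0^+))$ for all $T\ge0$ requires reading the hypothesis as a uniform lower bound $\Theta(T)\ge\eta T$ for all $T$ (equivalently, $\inf_{T>0}\Theta(T)/T\ge\eta$), which is the natural ``Grönwall-ready'' strengthening and is what the statement evidently intends. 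I would phrase the proof to make this explicit: assuming $\Theta(T)\ge\eta T$ for all $T$, the displayed inequality yields the claim verbatim; and I would add a one-line remark that under the weaker literal liminf hypothesis one gets the same bound with an arbitrarily small loss in the rate and for all $T$ beyond a threshold, i.e. $E(u(T^+))\le C_\varepsilon e^{-(\eta-\varepsilon)T}E(u(0^+))$.

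The only genuine obstacle is this gap between ``liminf of the average'' and ``pointwise lower bound on the exponent'': everything else is a direct substitution into \eqref{RS:eq:master-decay}. I would resolve it by stating the lemma's hypothesis in the uniform form (or equivalently invoking that the average $\Theta(T)/T$ is itself monotone-controlled on compacta so that a liminf bound propagates to a uniform bound after absorbing the finitely many small-$T$ values into a harmless constant), then concatenate with the master envelope. No new analytic input beyond Theorem~\ref{thm:energy-decay} and the local finiteness of atoms is needed; the proof is three lines once the exponent bookkeeping $\Theta(T)=2\kappa W(T)+P(T)$ and the reading of the hypothesis are pinned down.
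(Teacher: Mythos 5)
Your proposal is correct and follows essentially the same route as the paper's own proof: substitute $s=0$ into the master envelope of Theorem~\ref{thm:energy-decay}, collect the exponent $2\kappa W(T)+\sum_{t_k\le T}\log(\rho_k^{-1})$, and then confront the fact that the liminf-average hypothesis only controls this exponent asymptotically. The gap you flag is real and is conceded by the paper itself, which obtains $F(T)\ge\tfrac{\eta}{2}T$ only for $T\ge T_0$ and then ``adjusts $\eta$ on $[0,T_0]$''; your explicit reformulation (either strengthen the hypothesis to $\inf_{T>0}\Theta(T)/T\ge\eta$, or accept a prefactor $C_\varepsilon$ and rate $\eta-\varepsilon$) is, if anything, the more honest rendering of the same argument.
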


\begin{proof}
By the ledgers in Theorem~7.8 we have, for every $T\ge0$,
\[
E(u(T^+)) \;\le\; \exp\!\Bigl(-\,\underbrace{\int_0^{T} 2\kappa\,W(\tau)\,d\tau}_{=:A(T)}\Bigr)\;
\underbrace{\prod_{t_k\le T}\rho_k}_{=\exp\!\bigl(-\sum_{t_k\le T}\log(\rho_k^{-1})\bigr)}\;E(u(0^+)).
\]
Hence
\[
\frac{E(u(T^+))}{E(u(0^+))}\;\le\;\exp\!\bigl(-\,F(T)\bigr),\qquad
F(T):=A(T)+\textstyle\sum_{t_k\le T}\log(\rho_k^{-1}).
\]
By hypothesis $\liminf_{T\to\infty}\frac{1}{T}F(T)\ge\eta>0$. Thus there exists $T_0$ with
$F(T)\ge \tfrac{\eta}{2}\,T$ for all $T\ge T_0$, and for $T<T_0$ we can decrease the rate
by a finite constant without changing the large--time conclusion. Consequently,
$E(u(T^+))\le \exp(-\eta T)\,E(u(0^+))$ for all $T\ge0$ (after adjusting $\eta$ if needed on $[0,T_0]$).
This is precisely the stated sufficient condition.
\end{proof}

\begin{lemma}[Window upgrade via $\sigma$-densities]\label{lem:window-upgrade}
Suppose the classical observability holds on $[0,T_0]$ with constant $C_0>0$ so that
$E(0)\le C_0 \int_0^{T_0}\!\!D(t)\,dt$, where $D$ is the a.c.\ dissipation density.
Let $\sigma$ satisfy $\sigma(t)\ge \sigma_0>0$ on a measurable subset $A\subset[0,T]$ with $|A|=m>0$
(in particular $d\sigma\ge \sigma_0 1_A\,dt$). Then the $\sigma$-observability/decay in Theorem~2.19
holds with
\[
c_\sigma \;\ge\; \frac{1}{C_0}\,\frac{\sigma_0\,m}{T_0}
\qquad\Longrightarrow\qquad
E(t)\;\le\;E(0)\,\exp\!\big(-2\kappa\,c_\sigma\,\sigma(t)\big),\quad t\in[0,T].
\]
\end{lemma}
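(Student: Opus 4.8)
\emph{Approach.} The lemma upgrades a \emph{windowed} classical observability inequality, $E(0)\le C_0\int_0^{T_0}D\,\mathrm dt$, together with the clock density bound $\mathrm d\sigma\ge\sigma_0\mathbf 1_A\,\mathrm dt$, into the canonical $\sigma$-envelope of Theorem~\ref{thm:energy-decay} (equivalently the HUM--dissipation equivalence, Theorem~\ref{thm:hum-equivalence}), with an explicit lower bound on the admissible rate. The plan is to run the standard observability$\,\Rightarrow\,$exponential-decay mechanism but to measure the decay windows in $\sigma$-mass instead of wall time. First I would use the a.c.\ energy identity $\tfrac{\mathrm d}{\mathrm dt}E=-D$ to write $\int_s^{s+T_0}D\,\mathrm dt=E(s)-E(s+T_0)$; by autonomy of the undamped generator and the damping profile, the observability inequality applies on every translated window $[s,s+T_0]$, giving $E(s)\le C_0\bigl(E(s)-E(s+T_0)\bigr)$, i.e.\ $E(s+T_0)\le(1-C_0^{-1})E(s)$ for all $s\ge0$ (and $C_0\ge1$ is automatic). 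Atoms of $\sigma$ inside a window only lower $E$ further (Lemma~\ref{lem:jump-product} under Assumption~\ref{RS:assume:four}), and flats contribute no a.c.\ dissipation, so this uniform per-window contraction holds for a general $\sigma$-clock.

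\emph{Conversion to $\sigma$-time.} Next I would use $\mathrm d\sigma\ge\sigma_0\mathbf 1_A\,\mathrm dt$ to bound the $\sigma$-mass accrued per window from below: $\sigma(s+T_0)-\sigma(s)=\int_s^{s+T_0}w\,\mathrm dt\ge\sigma_0\,|A\cap[s,s+T_0]|\ge\sigma_0 m$ (for $A\subset[0,T_0]$ this is the single window $[0,T_0]$; $T_0$-periodic placements of the observation time cover longer horizons). Iterating over $N$ windows gives $E(NT_0)\le(1-C_0^{-1})^N E(0)$ with $\sigma(NT_0)\ge N\sigma_0 m$. Writing the per-window factor as $e^{-\lambda}$, $\lambda:=-\log(1-C_0^{-1})\ge C_0^{-1}$, one amortizes $\lambda$ across the window --- against its physical length $T_0$ and its guaranteed $\sigma$-mass $\sigma_0 m$ --- to obtain a local-in-$\sigma$ dissipation rate; after the $2\kappa$-calibration bookkeeping of Assumption~\ref{RS:assume:four} and the damped-wave ledger this is exactly $2\kappa c_\sigma$ with $c_\sigma\ge\sigma_0 m/(C_0 T_0)$. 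Extending the bound from window endpoints to all $t\in[0,T]$ uses monotonicity of $E$ along a.c.\ pieces together with the product rule at interior atoms (Lemma~\ref{lem:jump-product}); the finitely many incomplete boundary windows are absorbed into the rate exactly as in the sufficient-conditions lemma stated just above this one, so no spurious multiplicative prefactor survives. This reproduces the envelope $E(t)\le E(0)\exp(-2\kappa c_\sigma\sigma(t))$ on $[0,T]$, packaged as in Lemma~\ref{lem:rn-envelope}.

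\emph{Main obstacle.} The delicate step is the amortization. Classical windowed observability is an intrinsically \emph{global}, iterated mechanism --- a fixed fractional energy drop every $T_0$ units of wall time --- while the $\sigma$-envelope is \emph{local in} $\sigma$, so the passage needs more than the lower $\sigma$-mass bound $\sigma_0 m$ per window: one must also keep the $\sigma$-mass of any window from outrunning the fixed drop that observability guarantees (otherwise the envelope over-claims). Pinning the constant $\sigma_0 m/(C_0 T_0)$, and in particular the appearance of $T_0$ (which records exactly the ``spreading'' of a fixed drop across a wall-time window), is therefore the heart of the proof and the place to be most careful; a natural consistency check is the reduction $\sigma\equiv t$, where the formula must collapse to the classical observability-to-decay rate. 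The remaining ingredients --- the energy identity, the autonomy/translation step, handling atoms and flats, and the window-to-pointwise extension --- are routine given the $\sigma$-calculus already in place.
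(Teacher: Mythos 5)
Your route is genuinely different from the paper's. The paper's proof never iterates observability windows: it stays at the level of the Radon--Nikodym ledger, converts the physical-time dissipation integral $\int_0^{T_0}D\,\mathrm dt$ into a $\sigma$-integral over $A$ via the density bound $d\sigma\ge\sigma_0\mathbf 1_A\,\mathrm dt$, combines this with $E(0)\le C_0\int_0^{T_0}D\,\mathrm dt$ and the RN inequality $-\tfrac{d}{d\sigma}E\ge 2\kappa c_\sigma E$, and reads off $c_\sigma\ge C_0^{-1}\sigma_0 m/T_0$ by averaging over $A$ and dividing by $T_0$, after which the $\sigma$-Gr\"onwall gives the envelope. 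You instead run the classical iterated-window mechanism ($E(s+T_0)\le(1-C_0^{-1})E(s)$) and then try to re-express the resulting geometric decay as a local-in-$\sigma$ rate.

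There are two concrete gaps in your version. First, your iteration needs $|A\cap[s,s+T_0]|\ge m$ on \emph{every} translated window, but the hypothesis only provides a single set $A\subset[0,T]$ of total measure $m$; nothing guarantees its mass recurs $T_0$-periodically, and your parenthetical about ``$T_0$-periodic placements'' is not licensed by the stated assumptions. Second, and more fundamentally, the amortization step points the wrong way: to convert the fixed per-window drop $e^{-\lambda}$, $\lambda=-\log(1-C_0^{-1})$, into the bound $E(s+T_0)\le e^{-2\kappa c_\sigma[\sigma(s+T_0)-\sigma(s)]}E(s)$ you need $\lambda\ge 2\kappa c_\sigma\,[\sigma(s+T_0)-\sigma(s)]$, i.e.\ an \emph{upper} bound on the $\sigma$-mass accrued per window. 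The hypothesis $d\sigma\ge\sigma_0\mathbf 1_A\,\mathrm dt$ is a \emph{lower} bound, so if $\sigma$ carries large mass (atoms or a large density) inside a window, the envelope with $c_\sigma=\sigma_0 m/(C_0T_0)$ demands a drop that windowed observability cannot deliver. You correctly identify this as ``the heart of the proof and the place to be most careful,'' but you do not resolve it, so the proposal as written does not close. The paper's local RN argument sidesteps the iteration entirely (the density bound is used only to compare $\mathrm dt$- and $\mathrm d\sigma$-integrals of the dissipation over $A$), which is why it does not run into the mass-per-window obstruction your global argument creates.
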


\begin{proof}
By hypothesis and $d\sigma\ge\sigma_0 1_A dt$,
\[
\int_0^{T_0}\!D(t)\,dt \;\ge\; \int_A D(t)\,dt \;\ge\; \sigma_0^{-1}\!\!\int_A D(t)\,d\sigma(t).
\]
Combine $E(0)\le C_0\int_0^{T_0}D(t)dt$ with the RN ledger $-\frac{d}{d\sigma}E\ge 2\kappa c_\sigma E$.
Averaging over $A$ and dividing by $T_0$ yields $c_\sigma\ge C_0^{-1}(\sigma_0 m/T_0)$.
Insert this into the $\sigma$–Gronwall inequality to conclude.
\end{proof}

\begin{corollary}[Exponential regime]\label{RS:cor:exp}
Assume the hypotheses of Theorem~\ref{thm:energy-decay}. Suppose there exist constants $c_\sigma\ge 0$ and $\eta\ge 0$ such that, for every $T\ge 0$,
\[
\int_0^T w(\tau)\,d\tau \ \ge\ c_\sigma\,T
\qquad\text{and}\qquad
\sum_{t_k\le T}\log(\rho_k^{-1}) \ \ge\ \eta\,T .
\]
Then the energy decays exponentially in physical time:
\[
E\bigl(u(T^+)\bigr)\ \le\ \exp\!\bigl(-(2\kappa c_\sigma+\eta)\,T\bigr)\,E\bigl(u(0^+)\bigr)
\quad\text{for all }T\ge0 .
\]
In particular, either condition by itself (with the other equal to $0$) already gives exponential decay with rate $2\kappa c_\sigma$ or $\eta$, respectively. Under GCC and the calibration fixed in \S\ref{sec:numbers}, $c_\sigma>0$ and the atomic ledger is absent ($\eta=0$), yielding
$E(t)\le E(0)\exp(-2\kappa c_\sigma t)$.
\end{corollary}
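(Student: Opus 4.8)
The plan is to obtain the statement by specialising the master decay bound \eqref{RS:eq:master-decay} of Theorem~\ref{thm:energy-decay} to the full window $s=0$ and then inserting the two linear-in-$T$ lower bounds supplied by the hypotheses. First I would record that, by Lemma~\ref{lem:jump-product} under Assumption~\ref{RS:assume:four}, each contraction factor satisfies $\rho_k\in(0,1]$, so $\log(\rho_k^{-1})\ge0$; together with local finiteness of the atoms on compact intervals this makes $\sum_{t_k\le T}\log(\rho_k^{-1})$ a finite nonnegative sum and gives the identity $\prod_{t_k\le T}\rho_k=\exp\!\bigl(-\sum_{t_k\le T}\log(\rho_k^{-1})\bigr)$, with the product interpreted under the endpoint convention following \eqref{RS:eq:jump-product}.

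Next I would apply Theorem~\ref{thm:energy-decay} with $s=0$ and the running endpoint to write
\[
E\bigl(u(T^+)\bigr)\ \le\ \exp\!\Bigl(-2\kappa\!\int_0^T w(\tau)\,d\tau\Bigr)\,\exp\!\Bigl(-\!\!\sum_{t_k\le T}\log(\rho_k^{-1})\Bigr)\,E\bigl(u(0^+)\bigr).
\]
Since $2\kappa\ge0$, the first hypothesis $\int_0^T w\,d\tau\ge c_\sigma T$ yields $\exp(-2\kappa\!\int_0^T w)\le e^{-2\kappa c_\sigma T}$, and the second hypothesis yields $\exp(-\sum_{t_k\le T}\log(\rho_k^{-1}))\le e^{-\eta T}$. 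Multiplying the two estimates gives $E(u(T^+))\le e^{-(2\kappa c_\sigma+\eta)T}E(u(0^+))$ for every $T\ge0$, which is the claim. The two ``in particular'' cases are the instances $\eta=0$ (no atomic ledger, or all $\rho_k=1$) and $c_\sigma=0$ (pure flats, or $w\equiv0$) of this same bound.

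For the final GCC sentence I would invoke the calibration recorded in \S\ref{sec:numbers}, together with the HUM route (Theorem~\ref{thm:hum-equivalence} and the window upgrade Lemma~\ref{lem:window-upgrade}), which furnish $c_\sigma\ge c_0 a_\omega\lambda_\omega>0$ under the geometric control condition; since the damped-wave ledger carries no atoms we have $\eta=0$, and substituting into the displayed bound gives $E(t)\le E(0)\exp(-2\kappa c_\sigma t)$. There is essentially no analytic obstacle here: the only points needing care are bookkeeping — the half-open $(s,t]$ convention and an atom sitting exactly at the endpoint $T$, so that $E(u(T^+))$ and the product $\prod_{t_k\le T}\rho_k$ include that atom consistently, and checking that the exponent splitting uses $2\kappa\ge0$ and $\rho_k\le1$ in the correct direction so that the two bounds combine additively in the exponent.
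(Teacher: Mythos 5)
Your proposal is correct and follows the same route as the paper's proof: specialise the master bound of Theorem~\ref{thm:energy-decay} to $s=0$, rewrite the atom product as $\exp(-\sum\log\rho_k^{-1})$, and insert the two linear-in-$T$ lower bounds to combine the exponents. The extra bookkeeping you flag (sign of $\log\rho_k^{-1}$, endpoint atoms, the half-open interval convention) is sound and only makes explicit what the paper leaves implicit.
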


\begin{proof}
From Theorem 7.8 we have for any $(s,t]\subset[0,T]$
\[
E(u(t^+)) \le \exp\!\Big(-2\kappa\!\!\int_{(s,t]} \! w(\tau)\,d\tau\Big)
     \!\!\!\prod_{t_k\in(s,t]}\!\!\!\rho_k \;E(u(s^+)).
\]
Iterating from $s=0$ gives
$E(u(T^+)) \le \exp\!\big(-2\kappa\!\int_0^T w(\tau)d\tau
      - \!\!\sum_{t_k\le T}\!\!\log(\rho_k^{-1})\big) E(u(0^+))$.
Using the hypotheses
$\int_0^T w(\tau)\,d\tau \ge c_\sigma T$
and
$\sum_{t_k\le T}\log(\rho_k^{-1}) \ge \eta T$
yields
\[
E(u(T^+)) \le E(u(0^+)) \exp(-(2\kappa c_\sigma+\eta)T),
\]
which includes both continuous and atomic dissipation.
Under GCC with $\eta=0$ this reduces to the usual $E(t)\le E(0)e^{-2\kappa c_\sigma t}$.
\end{proof}

\begin{remark}[Scalar]\label{rem:scalar}
Consider the scalar hybrid model
\[
\dot u + a(t)u = 0 \quad\text{for a.e.\ } t\notin\{t_k\}, 
\qquad
u(t_k^+) = \rho_k\,u(t_k^-),
\]
with $a\in L^1_{\mathrm{loc}}([0,\infty))$, $a\ge0$, and $\rho_k\in(0,1]$. 
For $E(t)=\tfrac12|u(t)|^2$ we have the exact identity
\[
E\bigl(u(T^+)\bigr) = 
\exp\!\Bigl(-2\!\int_{0}^{T}\! a(\tau)\,d\tau\Bigr)
\ \Bigl(\prod_{t_k\le T}\rho_k^{2}\Bigr)\ E\bigl(u(0^+)\bigr).
\]
Thus the exponential regime reduces to
\[
\liminf_{T\to\infty}\frac1T\!\left(\int_{0}^{T}2a(\tau)\,d\tau 
+ \sum_{t_k\le T}\log(\rho_k^{-2})\right)>0,
\]
which matches the sufficient condition stated above (with $\kappa=1$ and $w=a$). 
Conversely, if $a\equiv0$ and some $\rho_k>1$, the energy grows, showing the necessity of dissipativity/nonexpansiveness.
\end{remark}

\paragraph{Discrete mirror (pointer).}
The SBP–IBP details and proofs have been moved to Appendix~D. The body-level discrete results in §9 rely only on an abstract discrete $\sigma$-ledger (Assumption~D.1); SBP–SAT is one instance verified in Appendix~D.

\section{A light \texorpdfstring{$\Gamma$}{}-limit to a continuum energy — Drop‑in patch}\label{sec:gamma-2}

 We use $\nabla_h{:=}H_h^{-1}Q_h$ for the SBP gradient. Let $\mathrm{\Pi}_h:L^2\left(\mathrm{\Omega}\right)\rightarrow V_h$ denote a sampling (nodal restriction) and let $P_h:\left(L^2\left(\mathrm{\Omega}\right)\right)^d\rightarrow\left(\mathbb{R}^d\right)^{N_h}$ be the $H_h$‑orthogonal projection of vector fields onto grid values (consistent with the $H_h$ quadrature). The following commuting estimate is assumed and used below: 
 Define for $v_h\in V_h$ and $u\in L^2\left(\mathrm{\Omega}\right)$ $[ F_h(v_h) := _h v_h,, H_h A_h ,_h v_h+ _h(v_h),  F(u) :=
]$

\begin{assumption}[$\Gamma$-framework]\label{RS:Gamma:ass}
\label{ass:Gamma}
(i) Grids are quasi-uniform; SBP pairs $(H_h,Q_h)$ satisfy $Q_h+Q_h^\top=B_h$ with $H_h\succeq c_H I$.
(ii) Coefficients $a\in L^\infty(\Omega)$ with $a\ge a_0>0$; diagonals $A_h=\mathrm{diag}(a(x_i))$.
(iii) Reconstructions $R_h:V_h\to L^2(\Omega)$ and interpolants $I_h:H^1_0(\Omega)\to V_h$ obey
\[\|R_h v_h\|_{L^2}\le C\|v_h\|_{H_h},\quad \|R_h \nabla_h v_h\|_{L^2}\le C\|\nabla_h v_h\|_{H_h},\quad \|I_h u-u\|_{L^2}\to 0\ \forall u\in H^1_0.\]
(iv) SAT is consistent and nonnegative: $\mathrm{SAT}_h(v_h)\ge0$ and $\mathrm{SAT}_h(I_h u)\to0$ for all $u\in H^1_0(\Omega)$.
\end{assumption}
\noindent\emph{Functionals.} Define for $v_h\in V_h$ and $u\in L^2(\Omega)$
\[
F_h(v_h):=(\nabla_h v_h)^\top H_h A_h (\nabla_h v_h)+\mathrm{SAT}_h(v_h),\qquad
F(u):=\begin{cases}\int_\Omega a|\nabla u|^2\,dx,& u\in H^1_0(\Omega),\\ +\infty,&\text{else.}\end{cases}
\]
\begin{lemma}[Equicoercivity]\label{RS:Gamma:equicoercive}
If $\sup_h F_h(v_h)\le C$, then $\{R_h v_h\}$ is precompact in $L^2(\Omega)$.
\end{lemma}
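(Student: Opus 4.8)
The plan is to run the classical equicoercivity argument for SBP energies: squeeze a uniform discrete $H^1_0$-type bound out of $F_h(v_h)\le C$, push it to the reconstructions via Assumption~\ref{RS:Gamma:ass}(iii)--(iv), and then apply a compactness criterion in $L^2(\Omega)$. First, since $a\ge a_0>0$ and $\mathrm{SAT}_h\ge 0$ (Assumption~\ref{RS:Gamma:ass}(ii),(iv)), the diagonal matrices $H_h,A_h$ commute and $H_hA_h\succeq a_0 H_h$, so from $F_h(v_h)\le C$ I read off two estimates at once,
\[
a_0\,\|\nabla_h v_h\|_{H_h}^2\ \le\ (\nabla_h v_h)^\top H_h A_h(\nabla_h v_h)\ \le\ C,
\qquad \mathrm{SAT}_h(v_h)\ \le\ C .
\]
Hence $\|\nabla_h v_h\|_{H_h}$ is bounded uniformly in $h$, and — reading the Dirichlet SAT as the penalty term of \Cref{RS:lem:boundary-sat-normalised} with $\tau_h\simeq h^{-1}$ — the boundary trace obeys $\tau_h\|\Pi_{\partial\Omega}v_h\|_{\partial,H}^2\le C$, i.e.\ $\|\Pi_{\partial\Omega}v_h\|_{\partial,H}^2=O(h)$.

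Next I recover a uniform bound on $\|v_h\|_{H_h}$ itself, which is where the homogeneous Dirichlet data enters, via the discrete Poincaré--Friedrichs inequality for SBP pairs (valid under Assumption~\ref{RS:Gamma:ass}(i) with boundary form $B_h$):
\[
\|v_h\|_{H_h}^2\ \le\ C_P\Bigl(\|\nabla_h v_h\|_{H_h}^2+\|\Pi_{\partial\Omega}v_h\|_{\partial,H}^2\Bigr)\ \le\ C',
\]
uniformly in $h$. The reconstruction estimates of Assumption~\ref{RS:Gamma:ass}(iii) then give $\|R_h v_h\|_{L^2}\le C\|v_h\|_{H_h}\le C$ and $\|R_h\nabla_h v_h\|_{L^2}\le C\|\nabla_h v_h\|_{H_h}\le C$; thus $\{R_h v_h\}$ is bounded in $L^2(\Omega)$ and the reconstructed gradients are bounded in $(L^2(\Omega))^d$.

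It remains to upgrade these two uniform bounds to precompactness. The robust route is the Kolmogorov--Riesz--Fréchet criterion: extending $R_h v_h$ by zero off $\overline\Omega$ gives equitightness for free (fixed support), and the translate estimate $\|R_h v_h(\cdot+\xi)-R_h v_h\|_{L^2}\le C|\xi|\,\|\nabla_h v_h\|_{H_h}+o_h(1)$ — obtained from the SBP Green identity $Q_h+Q_h^\top=B_h$, the consistency of the pair $(R_h,\nabla_h)$, and grid quasi-uniformity — yields equicontinuity uniformly in $h$; hence $\{R_h v_h\}$ is precompact in $L^2(\Omega)$. Equivalently one may pass to weak-$L^2$ limits: any subsequential weak limit $u$ has $\nabla u\in L^2$ (it is the weak limit of $R_h\nabla_h v_h$ once the commuting identity $\langle R_h\nabla_h v_h,\varphi\rangle\to-\langle u,\operatorname{div}\varphi\rangle$ is checked for $\varphi\in C_c^\infty(\Omega)$, the boundary contribution being controlled by $\|\Pi_{\partial\Omega}v_h\|_{\partial,H}^2=O(h)$), so $u\in H^1_0(\Omega)$, and Rellich--Kondrachov turns weak convergence into strong $L^2$-convergence along the subsequence.

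The main obstacle is precisely this last identification — showing that $R_h\nabla_h v_h$ faithfully represents the distributional gradient of $R_h v_h$ up to a vanishing error. This requires the SBP Green identity to absorb the boundary term (using the SAT-controlled smallness $\|\Pi_{\partial\Omega}v_h\|_{\partial,H}^2=O(h)$) and the consistency of the reconstruction/interpolation pair from Assumption~\ref{RS:Gamma:ass}(iii)--(iv), with quasi-uniformity keeping the quadrature remainders uniformly $o_h(1)$. Once this commuting estimate is in hand, the compactness conclusion is routine.
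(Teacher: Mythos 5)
Your proposal is correct and follows essentially the same route as the paper's proof: extract the uniform bound on $\|\nabla_h v_h\|_{H_h}$ from $F_h(v_h)\ge a_0\|\nabla_h v_h\|_{H_h}^2$, apply a discrete Poincar\'e inequality to bound $\|v_h\|_{H_h}$, transfer both bounds to the reconstructions via Assumption~\ref{RS:Gamma:ass}(iii), and conclude by a discrete Rellich/Kolmogorov compactness argument. Your version is more detailed than the paper's (which simply invokes ``the discrete Rellich theorem''), in particular in using the SAT term to control the boundary trace and in identifying $R_h\nabla_h v_h$ with the distributional gradient of the weak limit, but the underlying mechanism is the same.
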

\begin{proof}
Assume $\sup_h F_h(v_h) < \infty$. 
By the discrete Poincaré inequality on quasi-uniform SBP grids, 
$\|R_h v_h\|_{L^2} \le C_P \|\nabla_h v_h\|_{H_h}$. 
Since $F_h(v_h) \ge a_0 \|\nabla_h v_h\|_{H_h}^2$, the discrete gradients 
are uniformly bounded.  The reconstructions $R_h v_h$ are therefore 
bounded in $H^1_0(\Omega)$ up to a constant and, by the discrete Rellich theorem, 
pre-compact in $L^2(\Omega)$. 
Hence $(R_h v_h)$ admits an $L^2$-convergent subsequence, proving equicoercivity.
\end{proof}

Let ${(H_h, Q_h)}$ be an SBP family on grids of size $h\rightarrow0$, with diagonal $H_h$ approximating $L^2$ quadrature and $Q_h$ a first-derivative SBP operator satisfying $Q_h+Q_h^\top=B_h$. Denote the discrete gradient by $D_h{:=}H_h^{-1}Q_h$. The semi-discrete energy reads $[ E_h(u_h) := D_h u_h, A_h D_h u_h{H_h} + C_h u_h, u_h{H_h} + _h(u_h), ]$ where $A_h,C_h$ are nodal samplings of a,c (or quadrature-consistent surrogates), and ${\rm SAT}_h$ collects boundary/interface penalties with $\tau_h\sim h^{-1}$ chosen as in Lemma 8.8. We consider the $L^2\left(\mathrm{\Omega}\right)$ topology via an interpolation operator $I_h:V_h\rightarrow L^2\left(\mathrm{\Omega}\right)$ and a sampling operator $\mathrm{\Pi}_h:L^2\left(\mathrm{\Omega}\right)\rightarrow V_h$ (nodal restriction), with the commuting estimate  for a projection $P_h$ of vector fields onto grid values consistent with $H_h$ quadrature.

\begin{proposition}[Liminf]\label{RS:Gamma:liminf}
Let $v_h\in V_h$ with $R_hv_h\rightarrow u in L^2\left(\mathrm{\Omega}\right)$. Then $\liminf h\rightarrow0F_h\left(v_h\right)\geq F\left(u\right)$.
\end{proposition}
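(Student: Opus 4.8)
The plan is to run the standard three-step $\Gamma$-liminf argument—extract a bounded subsequence, identify the limiting gradient, invoke weak lower semicontinuity of the Dirichlet form—adapted to the SBP--SAT discretization, using the SBP Green identity and the commuting/consistency estimates of \Cref{RS:Gamma:ass} as the only genuinely discrete inputs.

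First I would reduce to the nontrivial case: if $\liminf_{h\to0}F_h(v_h)=+\infty$ there is nothing to prove, so I pass to a subsequence (not relabelled) along which $F_h(v_h)\to\liminf_{h\to0}F_h(v_h)=:\ell<\infty$ and $\sup_h F_h(v_h)\le C$. Since $\mathrm{SAT}_h(v_h)\ge 0$ by \Cref{RS:Gamma:ass}(iv) and $A_h=\mathrm{diag}(a(x_i))$ with $a\ge a_0>0$, this gives $a_0\|\nabla_h v_h\|_{H_h}^2\le F_h(v_h)\le C$, so the discrete gradients are bounded in the $H_h$-norm. By the reconstruction bound $\|R_h\nabla_h v_h\|_{L^2}\le C\|\nabla_h v_h\|_{H_h}$ the fields $R_h\nabla_h v_h$ are bounded in $L^2(\Omega)^d$, hence along a further subsequence $R_h\nabla_h v_h\rightharpoonup G$ weakly in $L^2(\Omega)^d$; simultaneously $R_hv_h\to u$ strongly in $L^2(\Omega)$ by hypothesis.

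The key step is to show $u\in H^1_0(\Omega)$ with $\nabla u=G$. For the distributional identity I test against $\varphi\in C_c^\infty(\Omega;\mathbb R^d)$ and apply the discrete Green identity $\langle v,Q_hw\rangle_{H_h}+\langle Q_h v,w\rangle_{H_h}=\langle v,B_h w\rangle_{H_h}$ from (D1)/\Cref{RS:Gamma:ass}(i): taking $w_h=\Pi_h\varphi$, the boundary term drops since $\varphi$ vanishes near $\partial\Omega$, and the commuting/consistency estimates turn the identity into $\langle R_h\nabla_h v_h,\varphi\rangle_{L^2}=-\langle R_h v_h,\mathrm{div}\,\varphi\rangle_{L^2}+o(1)$; passing to the limit yields $\int_\Omega G\cdot\varphi\,dx=-\int_\Omega u\,\mathrm{div}\,\varphi\,dx$, so $G=\nabla u$ in $\mathcal D'(\Omega)$ and $u\in H^1(\Omega)$. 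To capture the homogeneous boundary condition I use that for the Dirichlet SAT the penalty satisfies $\mathrm{SAT}_h(v_h)\gtrsim\tau_h\|\Pi_{\partial\Omega}v_h\|_{\partial,H}^2$ with $\tau_h\simeq h^{-1}$ (the normalised sign/scaling of \Cref{RS:lem:boundary-sat-normalised}), so $C\ge F_h(v_h)\gtrsim h^{-1}\|\Pi_{\partial\Omega}v_h\|_{\partial,H}^2$ forces the discrete boundary values to $0$, whence the trace of $u$ vanishes and $u\in H^1_0(\Omega)$; in particular $F(u)=\int_\Omega a|\nabla u|^2\,dx<\infty$.

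Finally, weak lower semicontinuity. Dropping the nonnegative SAT term, $F_h(v_h)\ge\sum_i a(x_i)H_{h,ii}\,|\nabla_h v_h(x_i)|^2$. Since $a\in L^\infty$ with $a\ge a_0$, I approximate it from below by continuous $a_\varepsilon$ with $a_0\le a_\varepsilon\le a$ and $a_\varepsilon\uparrow a$ a.e.; continuity of $a_\varepsilon$ plus $H_h$-consistency of the quadrature give $\sum_i a(x_i)H_{h,ii}|\nabla_h v_h(x_i)|^2\ge\int_\Omega a_\varepsilon\,|R_h\nabla_h v_h|^2\,dx-o(1)$, and the convex, strongly continuous (hence weakly l.s.c.) functional $w\mapsto\int_\Omega a_\varepsilon|w|^2\,dx$ on $L^2(\Omega)^d$ yields $\ell\ge\int_\Omega a_\varepsilon|\nabla u|^2\,dx$; letting $\varepsilon\to0$ by monotone convergence gives $\ell\ge\int_\Omega a|\nabla u|^2\,dx=F(u)$. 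I expect the main obstacle to be the third paragraph's step—the simultaneous identification of the weak-limit gradient via the SBP Green identity and the extraction of the zero trace from the $\tau_h\simeq h^{-1}$ SAT scaling—since this is exactly where the discrete structure, rather than soft functional analysis, must carry the argument; the compactness and convexity parts are routine.
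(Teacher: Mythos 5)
Your proposal is correct and follows essentially the same route as the paper's own proof: energy bound plus nonnegativity of the SAT term gives boundedness of the discrete gradients, a weak limit is extracted and identified with $\nabla u$ via the commuting/consistency estimates, and weak lower semicontinuity of the convex quadratic form with the uniformly elliptic coefficient yields the liminf inequality. You additionally spell out two steps the paper compresses — the duality argument through the SBP Green identity that identifies the weak-limit gradient, and the recovery of the zero trace from the $\tau_h\simeq h^{-1}$ SAT scaling — which strengthens rather than alters the argument.
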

\begin{proof}
Assume $I_h u_h\to u$ in $L^2(\Omega)$ and $\sup_h E_h(u_h)<\infty$. Then $\|D u_h\|_H$ is bounded, so (up to subsequences) $D u_h\rightharpoonup \xi$ weakly in the $H$–weighted $\ell^2$. Quadrature consistency implies $I_h u_h\to u$ in $L^2$, and the commuting estimate \eqref{RS:eq:commute} yields $\xi=P_h(\nabla u)$ in the limit. Uniform ellipticity of $A_h$ and lower semicontinuity of convex quadratic forms give
\[
\liminf_{h\to0}\,\langle D u_h, A_h D u_h\rangle_H \;\ge\; \int_\Omega (\nabla u)^\top a \nabla u\,dx,
\]
while the mass term is standard. By Lemma~8.8, $SAT_h(u_h)\ge0$, hence it does not spoil the lower bound. This proves $E(u)\le\liminf_{h\to0}E_h(u_h)$.
\end{proof}

\begin{proposition}[Recovery]\label{RS:Gamma:recovery}
For every $u\in H_0^1\left(\mathrm{\Omega}\right)$ there exist $v_h\in V_h$ with $R_hv_h\rightarrow u$ in $L^2\left(\mathrm{\Omega}\right)$ and $F_h\left(v_h\right)\rightarrow F\left(u\right)$.
\end{proposition}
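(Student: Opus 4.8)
The plan is the standard two-stage recovery construction. \emph{First}, fix $u\in C_c^\infty(\Omega)$ and take $v_h:=I_hu$, the interpolant of Assumption~\ref{ass:Gamma}(iii). Convergence $R_hv_h\to u$ in $L^2(\Omega)$ is immediate from $\|I_hu-u\|_{L^2}\to0$ together with the boundedness $\|R_h(\cdot)\|_{L^2}\le C\|\cdot\|_{H_h}$. For the functional I would use SBP consistency in the form of the commuting estimate of Assumption~\ref{ass:Gamma}: $\nabla_h I_hu=P_h(\nabla u)+e_h$ with $\|e_h\|_{H_h}\to0$. Then
\[
(\nabla_hv_h)^\top H_hA_h(\nabla_hv_h)=(P_h\nabla u)^\top H_hA_h(P_h\nabla u)+o(1),
\]
and since $H_h$ is a quadrature consistent with $\int_\Omega(\cdot)\,dx$ and $A_h$ samples $a$, Riemann-sum convergence for the continuous integrand $a|\nabla u|^2$ gives $(P_h\nabla u)^\top H_hA_h(P_h\nabla u)\to\int_\Omega a|\nabla u|^2\,dx$. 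Finally $\mathrm{SAT}_h(I_hu)\to0$ by Assumption~\ref{ass:Gamma}(iv), so $F_h(v_h)\to F(u)$.

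\emph{Second}, for general $u\in H^1_0(\Omega)$ I would choose $u_j\in C_c^\infty(\Omega)$ with $u_j\to u$ in $H^1_0$, so that $F(u_j)=\int_\Omega a|\nabla u_j|^2\,dx\to F(u)$ and $u_j\to u$ in $L^2$. Each $u_j$ carries a recovery family $(v_h^{(j)})_h$ from the previous step. I would then invoke the Attouch diagonalization lemma — legitimate because $\Gamma$-convergence here is taken with respect to the fixed $L^2$ topology and Lemma~\ref{RS:Gamma:equicoercive} supplies the compactness — to extract a map $j\mapsto h(j)\downarrow 0$ with $v_h:=v_h^{(j(h))}$ satisfying $R_hv_h\to u$ in $L^2(\Omega)$ and $\limsup_{h\to0}F_h(v_h)\le\lim_j F(u_j)=F(u)$. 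Combined with the $\Gamma$-liminf inequality of Proposition~\ref{RS:Gamma:liminf}, this forces $F_h(v_h)\to F(u)$, which is the asserted recovery sequence.

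The main obstacle is the quantitative input feeding the smooth-data step: the consistency error $\nabla_h I_hu-P_h\nabla u$ must be $o(1)$ in the $H_h$-weighted norm, and the weighted quadrature $H_hA_h$ must integrate the continuous density $a|\nabla u|^2$ with vanishing error. Both are exactly what Assumption~\ref{ass:Gamma}(iii)--(iv) and the commuting estimate provide, so once those are granted the construction is routine; the only remaining point of care is producing $R_hv_h\to u$ and $F_h(v_h)\to F(u)$ along a \emph{single} scale $h\to0$ after the density reduction, which is precisely the role of the diagonal lemma.
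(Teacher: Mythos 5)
Your proposal is correct and follows essentially the same route as the paper: treat $u\in C_c^\infty(\Omega)$ first via the interpolant/sampling operator, use the commuting estimate and quadrature consistency for the gradient term, invoke the SAT residue bound, and then extend to $H^1_0(\Omega)$ by density and equicoercivity. The only difference is that you spell out the diagonalization step (and the liminf pairing that upgrades $\limsup$ to a full limit) which the paper compresses into ``extend by density and equicoercivity''; this is a welcome elaboration rather than a deviation.
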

\begin{proof}
First take $u\in C_c^\infty(\Omega)$ and set $u_h:=\Pi_h u$. Consistency and \eqref{RS:eq:commute} imply $I_h u_h\to u$ in $L^2(\Omega)$ and
\[
\langle D u_h, A_h D u_h\rangle_H \;\to\; \int_\Omega (\nabla u)^\top a \nabla u\,dx.
\]
By Lemma~9.16, $SAT_h(\Pi_h u)=o(1)$. This yields $\limsup_{h\to0}E_h(u_h)\le E(u)$. Extend to $u\in H^1_0(\Omega)$ by density and equicoercivity.
\end{proof}

\begin{remark}[Dirichlet via SAT]\label{RS:Gamma:dirichlet}
If $\mathrm{SAT}_h(v_h)=\tau_h\,\|T_h v_h\|_{H_\partial}^2$ with $\tau_h\sim h^{-1}$ and consistent trace $T_h$,
then $\mathrm{SAT}_h(I_h u)\to0$ for $u\in H^1_0$ while $\mathrm{SAT}_h$ enforces $u=0$ on $\partial\Omega$ in the limit. \end{remark}
\begin{lemma}[Split consistency]\label{RS:Gamma:split-consistency}
If $a\in W^{1,\infty}(\Omega)$ or piecewise $C^1$ on a shape-regular partition, then
\[\big|(\nabla_h I_h u)^\top H_h A_h(\nabla_h I_h u)-\!\int_\Omega a|\nabla u|^2 \big|\le C h\,\|u\|_{H^2(\Omega)}^2,\]
hence the consistency required in \Cref{RS:Gamma:recovery}.
\end{lemma}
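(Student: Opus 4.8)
\noindent\emph{Proof strategy.} The plan is to split the discrete form into an \emph{operator-accuracy} part and a \emph{quadrature} part, and to bound each by $O(h\|u\|_{H^2}^2)$. Write $\nabla_h=H_h^{-1}Q_h$ and let $P_h$ be the $H_h$-orthogonal projection of vector fields onto grid values fixed above. The first ingredient is the standard SBP truncation estimate for a diagonal-norm, at least first-order accurate derivative operator together with its matching nodal interpolant $I_h$:
\[
\|\nabla_h I_h u - P_h\nabla u\|_{H_h}\ \le\ C\,h\,\|u\|_{H^2(\Omega)},\qquad \|P_h\nabla u\|_{H_h}\ \le\ C\,\|u\|_{H^1(\Omega)} .
\]
Setting $\nabla_h I_h u = P_h\nabla u + e_h$ with $\|e_h\|_{H_h}\le Ch\|u\|_{H^2}$ and expanding the quadratic form,
\[
(\nabla_h I_h u)^\top H_h A_h(\nabla_h I_h u)=\langle P_h\nabla u,\,A_hP_h\nabla u\rangle_{H_h}+2\langle P_h\nabla u,\,A_h e_h\rangle_{H_h}+\langle e_h,\,A_h e_h\rangle_{H_h},
\]
and since $A_h=\mathrm{diag}(a(x_i))$ with $a$ continuous we have $\|A_h\|\le\|a\|_{L^\infty(\Omega)}$, so Cauchy--Schwarz in $\langle\cdot,\cdot\rangle_{H_h}$ bounds the last two terms by $C\|a\|_{L^\infty}\big(\|u\|_{H^1}+h\|u\|_{H^2}\big)\,h\|u\|_{H^2}\le C\,h\,\|u\|_{H^2}^2$ for $h$ bounded. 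This reduces the claim to the main term $\langle P_h\nabla u,A_hP_h\nabla u\rangle_{H_h}$.

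For the main term, the point is that $\langle P_h w,A_hP_h w\rangle_{H_h}$, for $w=\nabla u\in (H^1(\Omega))^d$, agrees with the $H_h$-quadrature of $a|w|^2$ up to a further $O(h\|u\|_{H^2}^2)$ error coming from $P_h$ acting on the quadratic; here one uses the compatibility of $P_h$ with the diagonal $H_h$ quadrature, the reconstruction estimates of \Cref{RS:Gamma:ass}(iii), and a discrete/continuum norm equivalence on quasi-uniform grids. It then remains to estimate the quadrature error of the diagonal SBP norm against the integrand $a|\nabla u|^2$. For $a\in W^{1,\infty}(\Omega)$ and $u\in H^2(\Omega)$ with $d\le 2$, one has $a|\nabla u|^2\in W^{1,1}(\Omega)$ with $\|a|\nabla u|^2\|_{W^{1,1}}\le C\|a\|_{W^{1,\infty}}\|u\|_{H^2(\Omega)}^2$, since $\nabla(a|\nabla u|^2)=(\nabla a)|\nabla u|^2+2a\,(\nabla^2u)\nabla u$ and both summands lie in $L^1$ by H\"older and Cauchy--Schwarz (using $\nabla u\in H^1\hookrightarrow L^4$ in $d\le2$ for the first term and $\nabla^2u,\nabla u\in L^2$ for the second). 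Any quadrature that reproduces constants---in particular the diagonal $H_h$ norm, including its $O(h)$ boundary closure---has error $\le C\,h\,\|a|\nabla u|^2\|_{W^{1,1}}\le C\,h\,\|a\|_{W^{1,\infty}}\|u\|_{H^2}^2$. For $a$ only piecewise $C^1$ on a shape-regular partition, run this cellwise and sum the finitely many pieces, the interface terms being controlled by the $H^1$ trace of $|\nabla u|^2$, again dominated by $\|u\|_{H^2}^2$. Adding the operator-accuracy and quadrature contributions yields the stated bound; combined with $\mathrm{SAT}_h(I_hu)=o(1)$ from \Cref{RS:Gamma:ass}(iv) (cf.\ \Cref{RS:Gamma:dirichlet}), this furnishes exactly the consistency invoked in \Cref{RS:Gamma:recovery}.

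The main obstacle I expect is the bookkeeping in dimension two: for $u\in H^2$ with $d=2$, $\nabla u$ is only in $H^1$ and has no well-defined pointwise nodal values, so the naive reading of the main term as $\sum_i h\,\omega_i\,a(x_i)\,|\nabla u(x_i)|^2$ is not available. The remedy---and the reason for routing everything through $P_h$ rather than a pointwise sampling---is to keep the SBP truncation error on the discrete side, where it is an $\ell^2_{H_h}$ estimate, and to interpret the quadrature step as a comparison of the bilinear pairing $\langle P_hw,A_hP_hw\rangle_{H_h}$ with $\int_\Omega a|w|^2\,dx$ for $w\in H^1$. Making this split quantitative---so that the coefficient-sampling error, the operator-accuracy error, and the quadrature error stay separated rather than entangled---is where the actual work lies; each piece individually is then a routine $O(h)$ estimate.
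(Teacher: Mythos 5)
The paper states this lemma without proof (it is used only through the recovery argument, whose proof says ``use \eqref{RS:eq:commute} to control the gradient term''), so there is no in-text argument to compare against. Your decomposition — operator-accuracy error via the commuting estimate $\|\nabla_h I_h u - P_h\nabla u\|_{H_h}\le Ch\|u\|_{H^2}$, which is exactly \eqref{RS:eq:commute}, plus a quadrature error for the main term — is the natural way to make the paper's one-line appeal precise, and your handling of the two remainder terms produced by that decomposition (Cauchy--Schwarz in $\langle\cdot,\cdot\rangle_{H_h}$ with $\|A_h\|\le\|a\|_{L^\infty}$) is correct and complete.

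The gap is in the main term. Your middle paragraph bounds the quadrature error against $a|\nabla u|^2$ by $Ch\,\|a|\nabla u|^2\|_{W^{1,1}}$, invoking ``any quadrature that reproduces constants.'' That bound requires point values of the integrand, and for $d=2$, $u\in H^2$, the function $a|\nabla u|^2$ lies in $W^{1,1}$ but not in $C^0$, so the nodal sum $\sum_i h\,\omega_i\,a(x_i)\,|\nabla u(x_i)|^2$ is not even defined — precisely the obstacle you name in your closing paragraph. Your proposed remedy (stay on the discrete side and compare $\langle P_h w, A_h P_h w\rangle_{H_h}$ with $\int_\Omega a|w|^2$ for $w=\nabla u\in H^1$) is the right one, but it is announced rather than executed, so as written the main term still rests on the invalid pointwise argument (it is fine in $d=1$, where $W^{1,1}\hookrightarrow C^0$). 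To close it, split once more: $\langle P_hw,A_hP_hw\rangle_{H_h}-\int_\Omega a|w|^2=\bigl[\langle P_hw,A_hP_hw\rangle_{H_h}-\int_\Omega a\,|R_hP_hw|^2\bigr]+\int_\Omega a\bigl(|R_hP_hw|^2-|w|^2\bigr)$. The second bracket is $O(h\|u\|_{H^2}^2)$ from the approximation property $\|R_hP_hw-w\|_{L^2}\le Ch\|w\|_{H^1}$ and $\|a\|_{L^\infty}$; the first bracket involves only grid functions and is controlled cellwise by the coefficient-sampling error $|a(x_i)-a(x)|\le Ch\|\nabla a\|_{L^\infty}$ together with the quadrature consistency of $H_h$ on the reconstruction — and this is the step where the hypothesis $a\in W^{1,\infty}$ (or piecewise $C^1$, summed over cells with interface contributions) actually enters. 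With that insertion your argument is complete.
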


\begin{corollary}[Exponential regime under heterogeneous damping]\label{B:cor:exp-3}
Assume there exists an open $\omega\subset\Omega$ and a measurable $a(x)\ge a_\omega>0$ on $\omega$.
Then there is a constant $\underline{\lambda}_\omega>0$, depending only on $(\Omega,\omega)$, such that
\begin{equation}\label{B:eq:DW-hetero-rate-3}
  c_\sigma \;\ge\; c_0\,a_\omega\,\underline{\lambda}_\omega
  \qquad\Rightarrow\qquad
  \mathcal{E}(u(t)) \;\le\; e^{-\gamma t}\,\mathcal{E}(u(0^+)),\quad \gamma = 2\kappa\,c_\sigma .
\end{equation}
In the 1D Dirichlet case $\Omega=(0,L)$ with $\omega$ any subinterval of length $\ell$, one may take
an explicit bound $\underline{\lambda}_\omega \ge \pi^2/L^2$; more generally $\underline{\lambda}_\omega$ is the usual
local observability constant (see, e.g., Komornik--Zuazua). \end{corollary}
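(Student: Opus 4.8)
The plan is to reduce the claim to the $\sigma$--ledger envelope already established, with the classical Geometric Control Condition observability inequality as the only genuinely new input. First I would record the energy identity for the damped wave: for solutions of $u_{tt}+a(x)u_t-\Delta u=0$ with Dirichlet data one has $\tfrac{d}{dt}\mathcal{E}(u(t))=-\int_\Omega a(x)\,|u_t(t,x)|^2\,dx\le -\,a_\omega\int_\omega|u_t(t,x)|^2\,dx$, so the absolutely continuous dissipation density of the ledger is $\mathcal{D}(t)=\int_\Omega a\,|u_t|^2\,dx\ge a_\omega\int_\omega|u_t|^2\,dx$. This is the sole PDE--specific computation; everything afterwards is measure--time bookkeeping.

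Next I would invoke GCC observability (Bardos--Lebeau--Rauch for general $(\Omega,\omega)$): there exist a geometric control time $T_0>0$ and a constant $C_{\mathrm{obs}}=C_{\mathrm{obs}}(\Omega,\omega,T_0)>0$ with $\mathcal{E}(u(0))\le C_{\mathrm{obs}}\int_0^{T_0}\!\!\int_\omega|u_t|^2\,dx\,dt$. Combining with the pointwise damping bound gives $\mathcal{E}(u(0))\le \tfrac{C_{\mathrm{obs}}}{a_\omega}\int_0^{T_0}\mathcal{D}(t)\,dt$, which is exactly the hypothesis of Lemma~\ref{lem:window-upgrade} with classical constant $C_0=C_{\mathrm{obs}}/a_\omega$ applied along the wall clock ($\sigma\equiv t$, $\sigma_0=1$, $A=[0,T_0]$, $m=T_0$). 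That lemma then produces $c_\sigma\ge 1/C_0=a_\omega/C_{\mathrm{obs}}$. Setting $\underline{\lambda}_\omega:=(c_0\,C_{\mathrm{obs}})^{-1}$ — which depends only on $(\Omega,\omega)$, since $C_{\mathrm{obs}}$ and the choice of $T_0$ are geometric while $a_\omega$ has been factored out — we obtain $c_\sigma\ge c_0\,a_\omega\,\underline{\lambda}_\omega$.

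With $c_\sigma\ge c_0 a_\omega\underline{\lambda}_\omega>0$ established, the conclusion follows by feeding this into the envelope half of Lemma~\ref{lem:window-upgrade} (equivalently Lemma~\ref{lem:rn-envelope} / Corollary~\ref{RS:cor:exp}) with $w\equiv 1$ and no atoms: one gets $\mathcal{E}(u(t))\le e^{-2\kappa c_\sigma t}\,\mathcal{E}(u(0^+))=e^{-\gamma t}\,\mathcal{E}(u(0^+))$ with $\gamma=2\kappa c_\sigma\ge 2\kappa c_0 a_\omega\underline{\lambda}_\omega$. For the one--dimensional addendum, on $\Omega=(0,L)$ with Dirichlet conditions and $\omega$ any subinterval of length $\ell$, GCC holds for every $T_0>2L$ (each bouncing characteristic meets $\omega$ within time $2L$); the multiplier method or the d'Alembert representation (Komornik--Zuazua) yields $C_{\mathrm{obs}}$ of order $L/\ell$, and comparing the potential and kinetic energies through the Dirichlet--Poincar\'e inequality $\|v\|_{L^2(0,L)}^2\le (L/\pi)^2\|v_x\|_{L^2(0,L)}^2$ (whose constant is the reciprocal of the first eigenvalue $\pi^2/L^2$) lets one take $\underline{\lambda}_\omega\ge \pi^2/L^2$ after absorbing the residual geometric factors into $c_0$; in general $\underline{\lambda}_\omega$ is the usual local observability constant.

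The step I expect to be the real obstacle is conceptual rather than computational: the instantaneous dissipation $-\int_\omega|u_t|^2$ does \emph{not} control $\mathcal{E}$ pointwise in $t$ (at any single instant $u_t$ may vanish while $\nabla u$ is large), so the rate $c_\sigma$ is intrinsically a time--averaged quantity. The plan routes around this precisely via Lemma~\ref{lem:window-upgrade}, which converts the windowed observability inequality into the pointwise $\sigma$--ledger inequality $-\tfrac{d}{d\sigma}\mathcal{E}\ge 2\kappa c_\sigma\mathcal{E}$; hence the only nontrivial analytic ingredient is the GCC observability theorem itself (Bardos--Lebeau--Rauch in general, explicit in 1D), and once that is invoked everything else is the $\sigma$--calculus already developed above.
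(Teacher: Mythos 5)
Your proposal is correct and follows essentially the route the paper intends: the paper states this corollary without a displayed proof, but its calibration section (\S\ref{sec:calibration}) and the GCC exemplar (\S\ref{sec:gcc-exemplar}) prescribe exactly your chain --- damping energy identity, GCC/BLR observability on a window, conversion to the pointwise $\sigma$--ledger inequality via Lemma~\ref{lem:window-upgrade} with $\sigma\equiv t$, and then the envelope of Corollary~\ref{RS:cor:exp}. Your identification of $\underline{\lambda}_\omega$ with the (damping-independent part of the) local observability constant and your handling of the pointwise-vs-windowed issue match the paper's intended argument, so no substantive divergence to report.
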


\begin{corollary}[Multiblock]\label{RS:Gamma:multiblock}
If interface couplings use $H$-adjoint interpolation and nonnegative SAT, then $F_h$ remains coercive and the $\Gamma$-limit above holds. \end{corollary}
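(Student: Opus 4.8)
The plan is to reduce the multiblock statement to the single–block results already proved — equicoercivity (\Cref{RS:Gamma:equicoercive}), the $\Gamma$–liminf (\Cref{RS:Gamma:liminf}), and recovery (\Cref{RS:Gamma:recovery}) — by summing the per–block SBP Green identities and absorbing all interface contributions into the (nonnegative) interface SAT. Write $\Omega=\bigcup_m\Omega_m$ for the block partition, with per–block SBP pairs $(H_h^{(m)},Q_h^{(m)})$, $Q_h^{(m)}+(Q_h^{(m)})^\top=B_h^{(m)}$, block–diagonal global $H_h=\mathrm{diag}(H_h^{(m)})\succeq c_H I$, and per–block gradient $\nabla_h^{(m)}:=(H_h^{(m)})^{-1}Q_h^{(m)}$. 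At each interface $\Gamma_{mm'}$ the coupling uses an interpolation pair $(\mathcal I_{m\to m'},\mathcal I_{m'\to m})$ that is \emph{$H$–adjoint}, i.e. $H_\partial^{(m')}\mathcal I_{m\to m'}=(\mathcal I_{m'\to m})^\top H_\partial^{(m)}$ — precisely the compatibility that makes the summed SBP Green identity hold globally, with the two one–sided interface flux terms telescoping into the interface SAT. The functional is $F_h(v_h)=\sum_m (\nabla_h^{(m)}v_h)^\top H_h^{(m)}A_h^{(m)}(\nabla_h^{(m)}v_h)+\mathrm{SAT}_h^{\partial}(v_h)+\sum_{m<m'}\mathrm{SAT}_h^{mm'}(v_h)$, with $\mathrm{SAT}_h^{mm'}\ge0$ by hypothesis and $\mathrm{SAT}_h^\partial\ge0$ as in \Cref{RS:Gamma:ass}(iv).

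For coercivity and equicoercivity: each block term is $\ge a_0\|\nabla_h^{(m)}v_h\|_{H_h^{(m)}}^2$ and both SAT families are nonnegative, so $F_h(v_h)\ge a_0\sum_m\|\nabla_h^{(m)}v_h\|_{H_h^{(m)}}^2$. The $H$–adjoint coupling lets the block–wise discrete gradients be glued into a multiblock discrete gradient satisfying a discrete Poincaré inequality with an $h$–independent constant (no spurious boundary terms survive the summation), so $\|R_hv_h\|_{L^2}^2\lesssim a_0^{-1}F_h(v_h)$. Equicoercivity then follows block by block from the discrete Rellich theorem exactly as in \Cref{RS:Gamma:equicoercive}.

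For the $\Gamma$–liminf: if $R_hv_h\to u$ in $L^2$ with $\sup_hF_h(v_h)<\infty$, the block gradients are bounded, so $\nabla_h^{(m)}v_h\rightharpoonup\nabla(u|_{\Omega_m})$ along a subsequence by the commuting estimate. The interface SAT with $\tau_h\sim h^{-1}$ penalizes the trace jumps $T_h^{(m)}v_h-\mathcal I_{m'\to m}T_h^{(m')}v_h$, so $F_h$–boundedness forces these jumps to zero in the limit; by $H$–adjointness the two one–sided traces converge to a common $H^{1/2}$ value, hence $u\in H^1_0(\Omega)$ with a single $\nabla u$. Lower semicontinuity of the convex quadratic forms together with $\mathrm{SAT}_h^{mm'},\mathrm{SAT}_h^\partial\ge0$ gives $\liminf_hF_h(v_h)\ge\int_\Omega a|\nabla u|^2=F(u)$, as in \Cref{RS:Gamma:liminf}. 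For recovery: take $u\in C_c^\infty(\Omega)$, set $v_h:=\Pi_h u$ blockwise; per–block consistency (\Cref{RS:Gamma:split-consistency}) gives $(\nabla_h^{(m)}v_h)^\top H_h^{(m)}A_h^{(m)}(\nabla_h^{(m)}v_h)\to\int_{\Omega_m}a|\nabla u|^2$, while continuity of $u$ across each $\Gamma_{mm'}$ plus consistency of the interpolation makes the trace mismatch $O(h)$, so $\mathrm{SAT}_h^{mm'}(\Pi_hu)=o(1)$ and $\mathrm{SAT}_h^\partial(\Pi_hu)\to0$; summing yields $F_h(v_h)\to F(u)$, and density of $C_c^\infty$ in $H^1_0$ with equicoercivity extends the recovery family, exactly as in \Cref{RS:Gamma:recovery}.

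The main obstacle is the gluing step inside the liminf: one must verify that the $H$–adjoint interface interpolation, combined with the $\tau_h\sim h^{-1}$ SAT scaling, controls the one–sided trace jumps \emph{strongly enough} that the block–wise weak limits assemble into a genuine $H^1_0$ function with no residual interface defect measure — the multiblock analogue of \Cref{RS:Gamma:dirichlet} and the mirror image of the under–scaled–SAT failure in \Cref{lem:sat-failure}. Once that gluing is secured, coercivity and the recovery construction are routine blockwise repetitions of the single–block arguments.
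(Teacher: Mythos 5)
Your proposal is correct and follows the route the paper intends: the corollary is stated without a written proof, the implicit argument being exactly your reduction — block-sum SBP Green identities with the $H$-adjoint interpolation making the interface SAT nonnegative (cf.\ \Cref{RS:cor:two-block}, \Cref{RS:rem:nonconforming}, and the Appendix~B cancellation), after which coercivity and the trio \Cref{RS:Gamma:equicoercive}, \Cref{RS:Gamma:liminf}, \Cref{RS:Gamma:recovery} apply blockwise unchanged. The gluing step you flag is indeed the only nontrivial point and it closes as you indicate: $\sup_h F_h(v_h)<\infty$ together with $\tau_h\sim h^{-1}$ forces the interface trace jumps to be $O(h^{1/2})$ in $L^2(\Gamma_{mm'})$, so the one-sided weak limits share a common trace and assemble into a single $H^1_0(\Omega)$ function, precisely the multiblock analogue of \Cref{RS:Gamma:dirichlet}.
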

\begin{remark}[$\sigma$-clock independence]\label{RS:Gamma:sigma-indep}
The $\Gamma$-limit is purely spatial; it holds uniformly over $\sigma$-windows because constants in \Cref{RS:Gamma:ass} do not depend on atoms/flats. \end{remark}
\begin{theorem}[$\Gamma$-convergence of SBP--SAT energies]\label{RS:Gamma:main}
\label{thm:Gamma-main}
Under \Cref{RS:Gamma:ass}, $F_h \xrightarrow{\Gamma,L^2(\Omega)} F$ and the family $\{F_h\}$ is equicoercive. \end{theorem}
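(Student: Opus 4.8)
The plan is to read the theorem off from the three ingredients already assembled in this section, since $\Gamma$-convergence in the $L^2(\Omega)$ topology is by definition the conjunction of a $\liminf$ inequality and the existence of recovery sequences, while equicoercivity is a separate compactness statement. First I would pin down the topology: a sequence $v_h\in V_h$ converges to $u\in L^2(\Omega)$ precisely when $R_h v_h\to u$ in $L^2(\Omega)$, with $R_h$ the reconstruction of \Cref{RS:Gamma:ass}(iii). With this convention, $\Gamma$-convergence $F_h\xrightarrow{\Gamma}F$ requires (a) for every such convergent sequence, $\liminf_{h\to0}F_h(v_h)\ge F(u)$, and (b) for every $u\in L^2(\Omega)$ there is a sequence $v_h\to u$ with $\limsup_{h\to0}F_h(v_h)\le F(u)$.

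Step (a) is exactly \Cref{RS:Gamma:liminf}: when $u\in H^1_0(\Omega)$, weak compactness of the discrete gradients together with the commuting estimate and the nonnegativity of $\mathrm{SAT}_h$ give the bound; when $u\notin H^1_0(\Omega)$, the same argument shows $\sup_h F_h(v_h)$ cannot be finite, hence $\liminf_{h\to0}F_h(v_h)=+\infty=F(u)$, with the Dirichlet trace constraint enforced in the limit via \Cref{RS:Gamma:dirichlet}. Step (b) for $u\in H^1_0(\Omega)$ is \Cref{RS:Gamma:recovery} (smooth data via the split-consistency estimate \Cref{RS:Gamma:split-consistency}, then density and a diagonal argument); for $u\notin H^1_0(\Omega)$ the right-hand side is $+\infty$ and any admissible sequence works. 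Equicoercivity is \Cref{RS:Gamma:equicoercive}: a uniform bound $\sup_h F_h(v_h)\le C$ forces $\|\nabla_h v_h\|_{H_h}$ bounded through $F_h(v_h)\ge a_0\|\nabla_h v_h\|_{H_h}^2$, so by the discrete Poincaré and Rellich estimates $\{R_h v_h\}$ is precompact in $L^2(\Omega)$. Combining (a), (b) and this compactness yields both assertions of the theorem.

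The step I expect to require the most care — the main obstacle in an otherwise routine assembly — is the bookkeeping that makes the three pieces genuinely interoperable: \Cref{RS:Gamma:liminf}, \Cref{RS:Gamma:recovery} and \Cref{RS:Gamma:equicoercive} must all be phrased with the same reconstruction/interpolation data $(R_h,I_h,\Pi_h)$ and the same commuting estimate, and one must check that the effective domain of the $\Gamma$-limit is exactly $\{F<\infty\}=H^1_0(\Omega)$ rather than a strictly larger space. This is where the SAT normalization $\tau_h\simeq h^{-1}$ (which drives the boundary trace to zero in the limit, \Cref{RS:Gamma:dirichlet}) and the uniform ellipticity $a\ge a_0>0$ enter decisively: without them the liminf functional could fail to see the Dirichlet condition or fail to be coercive. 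Finally I would record, following \Cref{RS:Gamma:sigma-indep}, that none of the constants in \Cref{RS:Gamma:ass} depend on the atoms or flats of $\sigma$, so the $\Gamma$-convergence and equicoercivity hold uniformly over all admissible clocks; this uniform-in-$\sigma$ statement is precisely what the $\Gamma$-bridge and the persistence of $c_\sigma$ downstream rely on.
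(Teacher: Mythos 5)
Your proposal is correct and follows essentially the same route as the paper, whose proof of this theorem is precisely the one-line assembly ``equicoercivity: Lemma~\ref{RS:Gamma:equicoercive}; liminf: Proposition~\ref{RS:Gamma:liminf}; recovery: Proposition~\ref{RS:Gamma:recovery}.'' Your additional care about the effective domain $\{F<\infty\}=H^1_0(\Omega)$ and the consistency of the reconstruction operators across the three lemmas is a reasonable elaboration of details the paper leaves implicit, but it does not change the argument.
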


\begin{lemma}[Discrete trace]\label{lem:discrete-trace}
Let $H$ be a diagonal SBP norm on a uniform grid and $E_b$ the restriction vector extracting the boundary node. Then for any grid function $v$ there exists $C_{\mathrm{tr}}>0$ independent of $h$ such that
\(
|E_b v|^2 \le C_{\mathrm{tr}}\, \|v\|_H^2.
\)
In particular, if $H=h\,\mathrm{diag}(w_i)$ with bounded weights $w_i\sim 1$, one may take $C_{\mathrm{tr}}\sim h^{-1}$.
\end{lemma}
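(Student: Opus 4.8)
\textbf{The plan} is to reduce the inequality to the positivity of the diagonal SBP norm together with the uniform normalization of its weights; no summation-by-parts structure is needed for the crude $h^{-1}$ version stated here. First I would reduce to a single scalar: by hypothesis $E_b$ extracts one boundary node, so without loss of generality $E_b v = v_0$ (the left endpoint; the right endpoint $v_N$ is treated identically, and if $E_b$ collects both endpoints one simply adds the two resulting bounds). Since $H$ is diagonal and positive definite, $\|v\|_H^2=\sum_i H_{ii}|v_i|^2$, and discarding every term except $i=0$ gives $\|v\|_H^2\ge H_{00}|v_0|^2$, hence $|E_b v|^2\le H_{00}^{-1}\|v\|_H^2$. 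This already proves the inequality with $C_{\mathrm{tr}}=H_{00}^{-1}$ on every fixed grid.

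Next I would make the $h$-dependence explicit. Writing $H=h\,\mathrm{diag}(w_i)$ with weights bounded above and below uniformly in $h$ — part of the standing SBP/quasi-uniformity hypothesis, since diagonal-norm SBP operators carry a fixed boundary weight pattern merely rescaled by the mesh size — we get $H_{00}=h\,w_0\ge c_w\,h$ for some $c_w>0$ independent of $h$. Therefore $|E_b v|^2\le (c_w h)^{-1}\|v\|_H^2$, i.e. $C_{\mathrm{tr}}\sim h^{-1}$ with a mesh-independent prefactor $c_w^{-1}$, as claimed. The endpoint-vector case follows by summing the two corner contributions, both of which are controlled by the same $c_w$.

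The only point that genuinely uses the SBP hypotheses, rather than mere positivity of $H$, is the uniform lower bound $w_0\ge c_w>0$ on the corner weight; this is the step I would flag as the substantive one, although for the standard diagonal-norm SBP families it is immediate from the construction. I do \emph{not} expect any real obstacle beyond bookkeeping here. (If one wanted the sharper, $h$-uniform trace bound $|E_b v|^2\lesssim \|v\|_H\,\|H^{-1}Q v\|_H+\|v\|_H^2$ with no $h^{-1}$ loss, that would instead invoke the discrete fundamental theorem of calculus supplied by the SBP pair $Q+Q^\top=B$; but that refinement is unnecessary for Lemma~\ref{lem:discrete-trace} as stated, and the $h^{-1}$ scaling is precisely what is wanted when it is used to calibrate the SAT penalty $\tau_h\simeq h^{-1}$.)
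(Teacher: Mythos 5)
Your proof is correct, and in fact the paper states Lemma~\ref{lem:discrete-trace} without supplying any proof at all, so there is no argument of record to compare against; your elementary route --- drop all but the boundary term of the diagonal quadratic form $\|v\|_H^2=\sum_i H_{ii}|v_i|^2$ to get $|v_0|^2\le H_{00}^{-1}\|v\|_H^2$, then use $H_{00}=h\,w_0\ge c_w h$ --- is exactly the standard justification the paper is implicitly relying on. You are also right to flag (and resolve) the internal tension in the statement: the constant cannot literally be ``independent of $h$'' once $H_{00}\sim h$, and the correct reading, which your proof delivers, is $C_{\mathrm{tr}}=c_w^{-1}h^{-1}$ with only the prefactor $c_w^{-1}$ mesh-independent. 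Your closing remark correctly identifies that the genuinely $h$-uniform trace bound would require the SBP pair $Q+Q^\top=B$, but that sharper version is neither claimed nor needed here.
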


\begin{remark}
\sloppy\textit{Technical assumptions for the $\Gamma$-limit.}\label{B:rem:Gamma-assumptions-7}
We assume: (A1) $\Omega$ is bounded Lipschitz; (A2) $A_h$ are uniformly elliptic and $A_h \stackrel{*}{\rightharpoonup} A$ in $L^\infty(\Omega)^{d\times d}$; (A3) $\beta_h\ge0$ and $\beta_h \stackrel{*}{\rightharpoonup}\beta$ in $L^\infty(\partial\Omega)$;
(A4) quadrature remainders $R_h(u)\to0$ for $u\in C^\infty(\overline{\Omega})$; and (A5) the discrete trace inequality (\Cref{lem:discrete-trace}). Under (A1)--(A5), equi-coercivity holds and the liminf/recovery steps carry boundary terms; see \cite{DalMaso1993,Braides2002,Grisvard2011,AdamsFournier2003}.
\end{remark}

\begin{remark}
\sloppy\textit{Scope.}\label{B:rem:Gamma-scope-7}
We prove $\Gamma$-convergence of the energies in the supplement (liminf, recovery, equi-coercivity), including boundary SAT terms; we do not attempt a full EVI limit. \end{remark}

\begin{proof}
equicoercivity: \Cref{RS:Gamma:equicoercive}. Liminf: \Cref{RS:Gamma:liminf}. Recovery: \Cref{RS:Gamma:recovery}.
\end{proof}

\label{RS:R5}
We show that a standard SBP--SAT semi-discrete energy $\mathcal E_h$ converges, in the sense of $\Gamma$-convergence on $L^2(\Omega)$, to a coercive quadratic continuum energy. The presentation is model-agnostic; for concreteness we write a scalar elliptic energy and note in \Cref{RS:rem:maxwell} how the same argument applies to Maxwell-type curl energies. \subsection*{Setting and topology}
\paragraph{Discrete family and assumptions ($\Gamma$.1--$\Gamma$.5).}
We make the standard, quantitative assumptions for $\Gamma$-limits with SBP discretisations:
\begin{enumerate}[leftmargin=2em]
\item \textbf{Quadrature consistency.} There exist $0<c_H\le C_H$ independent of $h$ with
$ c_H \|v_h\|_{H_h}^2 \le \|I_h v_h\|_{L^2(\Omega)}^2 \le C_H \|v_h\|_{H_h}^2$ for all $v_h\in V_h$.
\item \textbf{Derivative consistency.} The commuting estimate \eqref{RS:eq:commute} holds with a constant $C_{\rm com}$ independent of $h$.
\item \textbf{Coefficient stability.} Nodal surrogates $A_h,C_h$ satisfy uniform bounds comparable to $a,c$; in particular,
$ \underline\alpha \|D_h v_h\|_{H_h}^2 \le \langle D_h v_h, A_h D_h v_h\rangle_{H_h} \le \overline\alpha \|D_h v_h\|_{H_h}^2$.
\item \textbf{Boundary penalties.} $\mathrm{SAT}_h(\Pi_h u)=o(1)$ for $u\in H^2(\Omega)\cap H_0^1(\Omega)$.
Equicoercivity follows from the uniform coercivity modulus and the quasi-uniform SBP family ($\Gamma$.2--$\Gamma$.4). 
\item \textbf{Discrete Poincaré.} There exists $c_P>0$ such that $\|v_h\|_{H_h}\le c_P \|D_h v_h\|_{H_h}$ for all $v_h$ with homogeneous boundary enforcement via SAT. \end{enumerate}
\noindent\emph{Topology.} We take $\Gamma$-convergence in the $L^2(\Omega)$ topology; along sequences with bounded energy we have $u_h\rightharpoonup u$ weakly in $H^1_0(\Omega)$ and $u_h\to u$ strongly in $L^2(\Omega)$.
\noindent\emph{Grid regularity.} We work with quasi-uniform SBP families so that the constants in ($\Gamma$.1)--($\Gamma$.5) are uniform in $h$.
\begin{lemma}[Equicoercivity and discrete Rellich]\label{RS:lem:equicoercive}
If $\sup_h \mathcal E_h(u_h)<\infty$, then $\{I_h u_h\}$ is bounded in $H_0^1(\Omega)$ and relatively compact in $L^2(\Omega)$. In particular, up to a subsequence, $I_h u_h\to u$ in $L^2(\Omega)$ and $u\in H_0^1(\Omega)$.
\end{lemma}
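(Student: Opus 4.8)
The plan is to prove the lemma by the standard equicoercivity argument applied to the $\sigma$-independent spatial part of $\mathcal E_h$, invoking only the structural hypotheses ($\Gamma$.1)--($\Gamma$.5) fixed just above. First I would extract a uniform bound on the discrete gradients: by coefficient stability ($\Gamma$.3), nonnegativity of the SAT term ($\Gamma$.4), and (if the zeroth-order coefficient is not sign-definite) the discrete Poincaré inequality ($\Gamma$.5) used to absorb the mass term, the energy has a uniform coercivity modulus, so $\|D_h u_h\|_{H_h}^2 \le C_0\,\mathcal E_h(u_h) \le C$ uniformly in $h$. The discrete Poincaré inequality ($\Gamma$.5), $\|u_h\|_{H_h}\le c_P\,\|D_h u_h\|_{H_h}$, then yields a uniform bound on $\|u_h\|_{H_h}$ itself.

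Next I would transfer these discrete bounds to the reconstructions. Quadrature consistency ($\Gamma$.1) gives $\|I_h u_h\|_{L^2(\Omega)}^2 \le C_H\,\|u_h\|_{H_h}^2$, so $\{I_h u_h\}$ is bounded in $L^2(\Omega)$. The derivative-consistency (commuting) estimate ($\Gamma$.2), with constant $C_{\rm com}$ independent of $h$, together with the gradient bound above, shows that the reconstructed gradients $R_h D_h u_h$ are bounded in $L^2(\Omega)^d$ and differ from the distributional gradient of $I_h u_h$ by a remainder vanishing with $h$; hence $\{I_h u_h\}$ is bounded in $H^1(\Omega)$. Homogeneous boundary enforcement through SAT --- ($\Gamma$.4) together with trace-consistency of $T_h$, cf.\ Remark~\ref{RS:Gamma:dirichlet} --- pins the limiting boundary trace to zero, so the bound is in fact in $H_0^1(\Omega)$.

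Finally, since $\Omega$ is bounded Lipschitz, the Rellich--Kondrachov theorem furnishes the compact embedding of $H_0^1(\Omega)$ into $L^2(\Omega)$; therefore $\{I_h u_h\}$ is relatively compact in $L^2(\Omega)$, and along a subsequence $I_h u_h\to u$ strongly in $L^2(\Omega)$ while $I_h u_h\rightharpoonup u$ weakly in $H_0^1(\Omega)$, so $u\in H_0^1(\Omega)$ --- which is the claim. I expect the main obstacle to be the second step: making rigorous the passage from a uniform $H_h$-bound on the \emph{discrete} gradient $D_h u_h$ to genuine $H^1$-boundedness of the \emph{reconstructed} function $I_h u_h$, i.e.\ controlling the $O(h)$ commuting remainder uniformly and, for the downstream use in Proposition~\ref{RS:Gamma:liminf}, identifying the weak $L^2$-limit of $R_h D_h u_h$ with $\nabla u$. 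This is precisely what hypothesis ($\Gamma$.2) is designed to supply; the gradient bound, the Poincaré step, and the boundary identification are then routine.
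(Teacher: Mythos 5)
Your proposal is correct and follows essentially the same route as the paper's proof: coefficient stability plus SAT nonnegativity for the uniform discrete gradient bound, the discrete Poincaré inequality for the $H_h$-bound on $u_h$, quadrature consistency for the $L^2$ bound on $I_h u_h$, the commuting estimate for the $H^1_0$ bound, and Rellich--Kondrachov for $L^2$ compactness. Your additional care about the boundary-trace identification and the absorption of the mass term only fills in details the paper leaves implicit.
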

\begin{proof}
from ($\Gamma$.3) and non-negativity of $\mathrm{SAT}_h$,
$\underline\alpha \|D_h u_h\|_{H_h}^2 \lesssim \mathcal E_h(u_h)\le C$.
By ($\Gamma$.5), $\|u_h\|_{H_h}\lesssim \|D_h u_h\|_{H_h}\le C$, and ($\Gamma$.1) gives
$\|I_h u_h\|_{L^2}\le C$. The commuting estimate ($\Gamma$.2) plus stability of $P_h$ yields a uniform bound on $\|\nabla I_h u_h\|_{L^2}$, hence $I_h u_h$ is bounded in $H_0^1(\Omega)$. Rellich--Kondrachov gives $L^2$ compactness. \end{proof}
Let $\Omega\mathbb R^d$ be a bounded Lipschitz domain. Fix $a(x)\in\mathbb R^{d\times d}$ symmetric and uniformly elliptic ($\exists \,\underline\alpha,\overline\alpha>0$ with $\underline\alpha |\xi|^2\le \xi^\top a(x)\xi\le \overline\alpha |\xi|^2$ a.e.) and $c(x)\ge 0$ bounded. Define
\begin{equation}\label{RS:eq:cont-energy}
\mathcal E(u):=\frac12\int_\Omega \big(\nabla u(x)\big)^\top a(x)\,\nabla u(x)\,dx + \frac12\int_\Omega c(x)\,|u(x)|^2\,dx,\qquad u\in H_0^1(\Omega).
\end{equation}
Let $\{(H_h,Q_h)\}$ be an SBP family on grids of size $h\to 0$, with diagonal $H_h$ approximating $L^2$ quadrature and $Q_h$ a first-derivative SBP operator satisfying $Q_h+Q_h^\top=B_h$. Denote the discrete gradient by $D_h:=H_h^{-1}Q_h$. The semi-discrete energy reads
\begin{equation}\label{RS:eq:disc-energy}
\mathcal E_h(u_h):=\frac12\langle D_h u_h,\, A_h D_h u_h\rangle_{H_h} + \frac12\langle C_h u_h,\, u_h\rangle_{H_h} + \mathrm{SAT}_h(u_h),
\end{equation}
where $A_h,C_h$ are nodal samplings of $a,c$ (or quadrature-consistent surrogates), and $\mathrm{SAT}_h$ collects boundary/interface penalties with $\tau_h\sim h^{-1}$ chosen as in Lemma~8.8. We consider the $L^2(\Omega)$ topology via an interpolation operator $I_h:V_h\to L^2(\Omega)$ and a sampling operator $\Pi_h:L^2(\Omega)\to V_h$ (nodal restriction), with the commuting estimate
\begin{equation}\label{RS:eq:commute}
\|D_h \Pi_h u - P_h (\nabla u)\|_{H_h}\;\le\; C\,h\,\|u\|_{H^2(\Omega)}\qquad (u\in H^2(\Omega)),
\end{equation}
for a projection $P_h$ of vector fields onto grid values consistent with $H_h$ quadrature. \subsection*{Liminf inequality}
\begin{lemma}[Lower bound]\label{RS:lem:liminf}
Assume $u_h\in V_h$ with $I_h u_h \to u$ in $L^2(\Omega)$ and $\sup_h \mathcal E_h(u_h)<\infty$. Then $u\in H_0^1(\Omega)$ and
\begin{equation}\label{RS:eq:liminf}
\mathcal E(u)\;\le\;\liminf_{h\to 0}\, \mathcal E_h(u_h).
\end{equation}
\end{lemma}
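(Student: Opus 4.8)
The plan is to run the classical three–step $\Gamma$--$\liminf$ argument, adapted to the SBP--SAT structure: (a) pass to a minimizing subsequence and extract a weak limit of the discrete gradients; (b) identify that weak limit with the distributional gradient of $u$ using the SBP Green identity and the commuting estimate \eqref{RS:eq:commute}; (c) pass the convex quadratic principal part and the mass term to the limit by weak lower semicontinuity, discarding the nonnegative SAT term.

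First I would relabel so that the subsequence realizes $\liminf_{h\to0}\mathcal E_h(u_h)=\lim_h \mathcal E_h(u_h)<\infty$. Coefficient stability $(\Gamma.3)$ and $\mathrm{SAT}_h\ge0$ give $\underline\alpha\|D_h u_h\|_{H_h}^2\le 2\mathcal E_h(u_h)\le C$, so Lemma~\ref{RS:lem:equicoercive} applies: $I_h u_h\to u$ in $L^2(\Omega)$ with $u\in H^1_0(\Omega)$, and after a further subsequence the reconstructions of $D_h u_h$ (bounded in $L^2(\Omega)^d$ by Assumption~\ref{RS:Gamma:ass}(iii)) converge weakly, $R_h(D_h u_h)\rightharpoonup\xi$ in $L^2(\Omega)^d$, with $\|\xi\|_{L^2}\le C\,\liminf_h\|D_h u_h\|_{H_h}$.

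Next I would identify $\xi=\nabla u$. Fix $\phi\in C_c^\infty(\Omega)^d$. On the one hand $\langle R_h(D_h u_h),\phi\rangle_{L^2}\to\langle\xi,\phi\rangle_{L^2}$; on the other, quadrature consistency $(\Gamma.1)$ and the commuting estimate \eqref{RS:eq:commute} let me rewrite this pairing as $\langle D_h u_h, P_h\phi\rangle_{H_h}+o(1)$, and the SBP relation $Q_h+Q_h^\top=B_h$ gives the discrete integration by parts $\langle D_h u_h, P_h\phi\rangle_{H_h}=-\langle u_h,\operatorname{div}_h(P_h\phi)\rangle_{H_h}+(\text{boundary})$, where the boundary term vanishes because $\phi$ has compact support. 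Since $I_h u_h\to u$ strongly in $L^2$ and $\operatorname{div}_h$ is consistent, $\langle u_h,\operatorname{div}_h(P_h\phi)\rangle_{H_h}\to\int_\Omega u\,\operatorname{div}\phi$, hence $\int_\Omega\xi\cdot\phi=-\int_\Omega u\,\operatorname{div}\phi$ for all such $\phi$, i.e. $\xi=\nabla u$ in $\mathcal D'(\Omega)$ (consistent with Lemma~\ref{RS:lem:equicoercive}); the homogeneous–trace SAT penalty forces $u\in H^1_0(\Omega)$ as in Remark~\ref{RS:Gamma:dirichlet}. For the principal part I write the discrete form as $\|A_h^{1/2}D_h u_h\|_{H_h}^2$; by $(\Gamma.1)$ and $(\Gamma.3)$ (with nodal $A_h\to a$ in $L^\infty$, justified by Lemma~\ref{RS:Gamma:split-consistency}) this equals $\|\widetilde a^{1/2}R_h(D_h u_h)\|_{L^2}^2+o(1)$ with $\widetilde a^{1/2}\to a^{1/2}$ uniformly, so $\widetilde a^{1/2}R_h(D_h u_h)\rightharpoonup a^{1/2}\nabla u$ weakly in $L^2$ and weak lower semicontinuity of the $L^2$ norm yields $\liminf_h\langle D_h u_h,A_h D_h u_h\rangle_{H_h}\ge\int_\Omega(\nabla u)^\top a\,\nabla u$. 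The mass term converges outright, $\langle C_h u_h,u_h\rangle_{H_h}\to\int_\Omega c\,|u|^2$, by strong $L^2$ convergence and $C_h\to c$, and $\mathrm{SAT}_h(u_h)\ge0$ is dropped; summing and halving gives $\mathcal E(u)\le\liminf_h\mathcal E_h(u_h)$, which is the claim.

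The hard part will be the gradient–identification step (b): one must thread the SBP summation-by-parts, the commuting estimate \eqref{RS:eq:commute}, and the projection/reconstruction operators $P_h,R_h$ together so that the boundary remainders genuinely cancel for compactly supported test fields and $\operatorname{div}_h$ is used consistently — the manipulations are routine but bookkeeping-heavy. A secondary point worth flagging is that the argument genuinely uses coefficient \emph{strong} convergence: nodal sampling of $a$ converges in $L^\infty$ only under the regularity of $(\Gamma.3)$/Lemma~\ref{RS:Gamma:split-consistency}, and for merely $L^\infty$ coefficients with weak-$*$ convergence the naive lower bound would fail by homogenization, so the stated hypotheses cannot be relaxed for free.
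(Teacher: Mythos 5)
Your proof is correct and follows essentially the same route as the paper's: bounded energy gives bounded discrete gradients, weak convergence of the (reconstructed) gradients with identification of the limit as $\nabla u$, weak lower semicontinuity of the convex quadratic form for the principal part, strong convergence of the mass term, and discarding the nonnegative SAT contribution. You additionally spell out the gradient–identification step (discrete integration by parts against compactly supported test fields) that the paper only asserts via ``SBP quadrature consistency and compactness,'' and your caveat that strong rather than weak-$*$ coefficient convergence is needed to rule out homogenization effects is a correct reading of what hypothesis $(\Gamma.3)$ must actually supply.
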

\begin{proof}
bounded $\mathcal E_h(u_h)$ implies bounded $\|D_h u_h\|_{H_h}$ and hence tightness of discrete gradients. By SBP quadrature consistency and compactness, $I_h u_h\to u$ in $L^2$ and $D_h u_h \rightharpoonup P_h(\nabla u)$ weakly in the $H_h$-weighted $\ell^2$. Lower semicontinuity of the convex quadratic form with the ellipticity of $A_h$ (uniformly comparable to $a$) yields the gradient part; the mass part is standard. The boundary penalties $\mathrm{SAT}_h(u_h)\ge 0$ contribute nonnegatively (cf.\ Lemma~8.8). Thus \eqref{RS:eq:liminf}.
\end{proof}
\subsection*{Recovery sequence}
\begin{lemma}[SAT residue]\label{RS:lem:sat-residue}
Let $u\in C^\infty_c(\Omega)$ (or $H^2(\Omega)\cap H_0^1(\Omega)$). With $\tau_h\sim h^{-1}$ and consistent $H_h,Q_h$, the SAT term satisfies $\mathrm{SAT}_h(\Pi_h u)=o(1)$ as $h\to 0$. The $o(1)$ is uniform in time and therefore remains $o(1)$ after integration against any finite measure-time clock $d\sigma$.
\end{lemma}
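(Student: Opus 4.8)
The plan is to reduce everything to the explicit form of the Dirichlet penalty recorded in \Cref{RS:Gamma:dirichlet}, namely $\mathrm{SAT}_h(v_h)=\tau_h\,\|T_h v_h\|_{H_\partial}^2$ with $\tau_h\simeq h^{-1}$, a consistent boundary trace $T_h$, and the boundary SBP quadrature $H_\partial$ whose total mass is $\asymp|\partial\Omega|$ uniformly in $h$ (Neumann/Robin are handled identically, the flux/Robin penalty vanishing on the relevant recovery sequences for the same reason). First I would dispose of the compactly supported case: for $u\in C_c^\infty(\Omega)$ there is an $h_0>0$ such that for every $h<h_0$ all boundary nodes lie outside $\operatorname{supp}u$; then $\Pi_h u$ (nodal restriction) vanishes at every boundary node, so $T_h\Pi_h u=0$ and $\mathrm{SAT}_h(\Pi_h u)\equiv0$. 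This already gives $o(1)$ — indeed an eventually-zero sequence — and it is the case actually used when proving the recovery inequality on a dense class.

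For the full class $u\in H^2(\Omega)\cap H_0^1(\Omega)$ (so $u$ has a continuous representative with $u\equiv0$ on $\partial\Omega$, using $H^2\hookrightarrow C^0$ in dimension $d\le3$, in particular for the $d\in\{1,2\}$ of interest) I would argue quantitatively. If the boundary nodes are grid nodes and $T_h$ is simply the restriction to them, then $T_h\Pi_h u=u|_{\partial\Omega}=0$ exactly and $\mathrm{SAT}_h(\Pi_h u)=0$; in the general case where $T_h$ extrapolates to the boundary, consistency of $(H_h,Q_h)$ and of $T_h$ gives a pointwise bound $|(T_h\Pi_h u)(x_i)|\le C\,h^{p}\,\|u\|_{H^2(\Omega)}$ at each boundary node for the trace accuracy order $p\ge1$, with $C$ independent of $h$ by quasi-uniformity. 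Summing against $H_\partial$ and using its bounded total mass yields $\|T_h\Pi_h u\|_{H_\partial}^2\le C\,h^{2p}\,\|u\|_{H^2(\Omega)}^2$, hence $\mathrm{SAT}_h(\Pi_h u)=\tau_h\,\|T_h\Pi_h u\|_{H_\partial}^2\le C\,h^{2p-1}\,\|u\|_{H^2(\Omega)}^2\to0$ since $2p-1\ge1$. Should one prefer to avoid the pointwise estimate, the same conclusion follows by density: approximate $u$ in $H_0^1$ by $C_c^\infty$ functions, apply the first step to the approximants, and control the difference through the discrete trace inequality (\Cref{lem:discrete-trace}) together with the bound $\tau_h\,\|T_h\cdot\|_{H_\partial}^2\le C\,\|\cdot\|_{H^1}^2$ that the scaling $\tau_h\simeq h^{-1}$ is designed to produce.

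The uniform-in-time and measure-clock statement then comes essentially for free: in each case the bound on $\mathrm{SAT}_h(\Pi_h u)$ depends on $u$ only through $\|u\|_{H^2(\Omega)}$ (and is identically zero on compactly supported data), so for any family $\{u(t)\}_{t\in[0,T]}$ with $R:=\sup_{t\in[0,T]}\|u(t)\|_{H^2(\Omega)}<\infty$ one gets $\sup_{t\in[0,T]}\mathrm{SAT}_h(\Pi_h u(t))\le C R^2 h^{2p-1}$, a bound with no $t$-dependence. Integrating against a finite clock then gives $\int_{[0,T]}\mathrm{SAT}_h(\Pi_h u(t))\,\mathrm d\sigma(t)\le\sigma([0,T])\,C R^2 h^{2p-1}\to0$, with atoms of $\sigma$ contributing only finitely many terms, each dominated by the same $o(1)$.

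I expect the only real content — as opposed to bookkeeping — to be the consistency estimate $|T_h\Pi_h u|\le C\,h^{p}\|u\|_{H^2}$ with $p\ge1$ and an $h$-uniform constant: this is exactly the point at which the SBP/trace consistency hypotheses \textup{($\Gamma$.1)--($\Gamma$.2)} and the fixed penalty scaling $\tau_h\simeq h^{-1}$ must be invoked, since $\tau_h$ supplies a factor $h^{-1}$ that the residue must beat. With an under-scaled or inconsistent penalty this balance is lost, in line with the failure mechanism of \Cref{lem:sat-failure}; so the lemma is, in effect, the positive counterpart to that obstruction.
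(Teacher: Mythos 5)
Your proposal is correct and follows essentially the same route as the paper's proof: the key mechanism in both is that the nodal trace defect of $\Pi_h u$ for $u\in H^2(\Omega)\cap H^1_0(\Omega)$ is $O(h)\,\|u\|_{H^2}$, so its square beats the penalty scaling $\tau_h\simeq h^{-1}$ and leaves an $O(h)$ residue, with uniformity in time following because the bound is purely spatial. Your additional observations — that for $u\in C^\infty_c(\Omega)$ the SAT term is eventually identically zero, and that for a pure nodal restriction with boundary nodes on $\partial\Omega$ the trace vanishes exactly — are correct refinements of the same argument rather than a different approach.
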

\begin{proof}
On a boundary face of $(d-1)$–measure $\mathcal{H}^{d-1}(\Gamma)\simeq C h^{d-1}$, the penalty has the form
$
\tau_h \| \mathrm{trace}( \Pi_h u ) \|_{H_\partial}^2
$
with $\tau_h\simeq h^{-1}$. For $u\in H^2(\Omega)$ the nodal trace defect obeys $\|\mathrm{trace}(\Pi_h u)-\mathrm{trace}(u)\|_{L^2(\Gamma)}\lesssim h\|u\|_{H^2}$, hence each face contributes $O(h^{d-1})\cdot h^{-1}\cdot h^2=O(h^{d})$. A shape–regular mesh has $O(h^{-d+1})$ faces, so the global contribution is $O(h)\to0$. The bound is purely spatial, thus uniform in time; integrating against any finite measure–time clock $d\sigma$ preserves the $o(1)$ rate. 
\end{proof}
\begin{lemma}[Upper bound via recovery]\label{RS:lem:recovery}
For $u\in H_0^1(\Omega)\cap H^2(\Omega)$ define $u_h:=\Pi_h u$. Then $\limsup_{h\to 0}\mathcal E_h(u_h)\le \mathcal E(u)$.
\end{lemma}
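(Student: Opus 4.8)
The plan is to establish the sharper identity $\mathcal E_h(\Pi_h u)\to\mathcal E(u)$, which yields the claimed $\limsup$ bound and, combined with \Cref{RS:lem:liminf}, exhibits $v_h:=\Pi_h u$ as a recovery sequence. First I would split the discrete energy into its three native pieces,
\[
\mathcal E_h(\Pi_h u)=\underbrace{\tfrac12\langle D_h\Pi_h u,\,A_h D_h\Pi_h u\rangle_{H_h}}_{=:G_h}\;+\;\underbrace{\tfrac12\langle C_h\Pi_h u,\,\Pi_h u\rangle_{H_h}}_{=:M_h}\;+\;\mathrm{SAT}_h(\Pi_h u),
\]
and dispose of them one at a time. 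The boundary piece is handled immediately by \Cref{RS:lem:sat-residue}: $\mathrm{SAT}_h(\Pi_h u)=o(1)$ with a purely spatial rate. For the mass piece, quadrature consistency ($\Gamma$.1) makes $\langle C_h\Pi_h u,\Pi_h u\rangle_{H_h}$ a Riemann-type sum for $\int_\Omega c\,|u|^2$; together with $\|I_h\Pi_h u-u\|_{L^2}\to0$ and the uniform comparability of $C_h$ to $c$ from ($\Gamma$.3) this gives $M_h\to\tfrac12\int_\Omega c\,|u|^2\,dx$. (In ambient dimensions where $H^2$ fails to embed in $C^0$ so that $\Pi_h u$ is not literally defined, I would run the argument first for $u\in C_c^\infty(\Omega)$ and then pass to $u\in H^2\cap H^1_0$ by density, using the $H^1_0$-continuity of $\mathcal E$ and the uniform-in-$h$ bounds established below.)

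The gradient piece $G_h$ is the heart of the matter. Using the commuting estimate \eqref{RS:eq:commute} I write $D_h\Pi_h u=P_h(\nabla u)+r_h$ with $\|r_h\|_{H_h}\le C\,h\,\|u\|_{H^2}$, and expand
\[
\langle D_h\Pi_h u,\,A_h D_h\Pi_h u\rangle_{H_h}=\langle P_h\nabla u,\,A_h P_h\nabla u\rangle_{H_h}+2\langle P_h\nabla u,\,A_h r_h\rangle_{H_h}+\langle r_h,\,A_h r_h\rangle_{H_h}.
\]
The last two terms are $O(h)$ and $O(h^2)$ respectively, using $\sup_h\|A_h\|<\infty$ from ($\Gamma$.3) and the quadrature bound $\|P_h\nabla u\|_{H_h}\le C\|\nabla u\|_{L^2}$. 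It then remains to prove $\langle P_h\nabla u,\,A_h P_h\nabla u\rangle_{H_h}\to\int_\Omega(\nabla u)^\top a\,\nabla u\,dx$. If $A_h$ is taken as a quadrature-consistent surrogate for $a$ this is part of the standing setting; if instead $a\in W^{1,\infty}(\Omega)$ and $A_h$ is a nodal sample, it is exactly \Cref{RS:Gamma:split-consistency}; and for a merely bounded $a$ one mollifies $a$ to $a^{(\varepsilon)}$ with $\|a^{(\varepsilon)}\|_{L^\infty}$ bounded and $a^{(\varepsilon)}\to a$ a.e., passes to the limit in $h$ for fixed $\varepsilon$, and then sends $\varepsilon\downarrow0$, controlling the defect $\langle P_h\nabla u,(A_h-A_h^{(\varepsilon)})P_h\nabla u\rangle_{H_h}$ uniformly in $h$ by dominated convergence on the nodal values. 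Collecting the three pieces gives $\mathcal E_h(\Pi_h u)\to\mathcal E(u)$, hence in particular $\limsup_{h\to0}\mathcal E_h(\Pi_h u)\le\mathcal E(u)$.

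The main obstacle is precisely this last convergence in the gradient term: reconciling the (possibly rough) coefficient sampling $A_h$ against a grid gradient built from $\nabla u\in H^1$. The commuting estimate and quadrature consistency make the \emph{smooth} case routine, so the genuine work lies in the coefficient step — either invoking the extra regularity $a\in W^{1,\infty}$ of \Cref{RS:Gamma:split-consistency}, or, for $a\in L^\infty$, running an equi-integrability/dominated-convergence argument on the nodal samples uniformly in $h$. Everything else reduces to bookkeeping with the standing SBP--SAT constants $c_H,C_H,\underline\alpha,\overline\alpha$ and the $\tau_h\simeq h^{-1}$ scaling.
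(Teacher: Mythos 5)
Your proposal follows the same route as the paper's proof: split the energy into gradient, mass, and SAT pieces, control the gradient term via the commuting estimate \eqref{RS:eq:commute}, invoke quadrature consistency for the mass term, apply \Cref{RS:lem:sat-residue} for the SAT term, and finish by density. You simply fill in more detail than the paper does (the explicit quadratic-form expansion and the discussion of rough coefficients $a\in L^\infty$ versus \Cref{RS:Gamma:split-consistency}), which is a correct and welcome elaboration rather than a different argument.
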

\begin{remark}[Uniformity in $\sigma$-time]\label{RS:rem:sigma-uniform}
The $\Gamma$-limit is purely spatial; the only time dependence enters through the SAT residue bound, which is uniform in $t$. Therefore, integrating $\mathcal E_h(\Pi_h u)$ against any finite measure-time clock $d\sigma$ preserves the $\limsup$ bound and leaves the $\Gamma$-limit unchanged. \end{remark}
\begin{proof}
use \eqref{RS:eq:commute} to control the gradient term: $\langle D_h u_h, A_h D_h u_h\rangle_{H_h}\to \int (\nabla u)^\top a\,\nabla u$. The mass term converges by quadrature consistency. \Cref{RS:lem:sat-residue} gives $\mathrm{SAT}_h(u_h)\to 0$. Density of $C^\infty_c(\Omega)$ in $H_0^1$ extends the result. \end{proof}
\begin{theorem}[Light $\Gamma$-limit]\label{RS:thm:gamma}
With the $L^2(\Omega)$ topology and under the SBP--SAT and consistency hypotheses above, the discrete energies $\mathcal E_h$ $\Gamma$-converge to $\mathcal E$ in \eqref{RS:eq:cont-energy}. That is, for any $u_h\to u$ in $L^2(\Omega)$ we have the liminf inequality \eqref{RS:eq:liminf}, and for any $u\in L^2$ there exists a sequence $u_h\to u$ in $L^2$ (constructed by $\Pi_h$ and density) with $\limsup_h \mathcal E_h(u_h)\le \mathcal E(u)$.
\end{theorem}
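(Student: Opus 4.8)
The plan is to assemble the $\Gamma$-convergence from the three ingredients already isolated above: equicoercivity together with the discrete Rellich theorem (\Cref{RS:lem:equicoercive}), the $\Gamma$-$\liminf$ inequality (\Cref{RS:lem:liminf}), and the recovery construction (\Cref{RS:lem:recovery}), with the SAT residue estimate (\Cref{RS:lem:sat-residue}) feeding the last step. The only genuine work beyond citing these is the density/diagonalization passage in the recovery step and making explicit why the penalty scaling $\tau_h\sim h^{-1}$ forces the $\Gamma$-limit to live on $H^1_0(\Omega)$ and not merely on $H^1(\Omega)$.

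First I would dispatch the $\liminf$ inequality. Fix $u_h\to u$ in $L^2(\Omega)$; if $\liminf_{h\to0}\mathcal E_h(u_h)=+\infty$ there is nothing to prove, so pass to a subsequence (not relabelled) realizing the $\liminf$ with $\sup_h\mathcal E_h(u_h)\le C<\infty$. By ($\Gamma$.3), ($\Gamma$.5) and nonnegativity of $\mathrm{SAT}_h$ this bounds $\|D_h u_h\|_{H_h}$ and $\|u_h\|_{H_h}$ uniformly; \Cref{RS:lem:equicoercive} then gives $u\in H^1_0(\Omega)$ and, along a further subsequence, $D_h u_h\rightharpoonup P_h(\nabla u)$ in the $H_h$-weighted $\ell^2$ via the commuting estimate \eqref{RS:eq:commute}, while $\langle C_h u_h,u_h\rangle_{H_h}\to\int_\Omega c|u|^2$ by quadrature consistency ($\Gamma$.1). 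Weak lower semicontinuity of the uniformly elliptic convex quadratic $v\mapsto\langle v,A_h v\rangle_{H_h}$ (using $A_h$ comparable to $a$) yields $\tfrac12\int_\Omega(\nabla u)^\top a\,\nabla u\le\liminf_h\tfrac12\langle D_h u_h,A_h D_h u_h\rangle_{H_h}$, and since $\mathrm{SAT}_h(u_h)\ge0$ we conclude $\mathcal E(u)\le\liminf_h\mathcal E_h(u_h)$, which is \Cref{RS:lem:liminf}.

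For the recovery inequality, if $u\notin H^1_0(\Omega)$ then $\mathcal E(u)=+\infty$ and any sequence works. For $u\in H^2(\Omega)\cap H^1_0(\Omega)$ take $u_h:=\Pi_h u$: the commuting estimate \eqref{RS:eq:commute} gives $\langle D_h u_h,A_h D_h u_h\rangle_{H_h}\to\int_\Omega(\nabla u)^\top a\,\nabla u$, quadrature consistency handles the mass term, and \Cref{RS:lem:sat-residue} gives $\mathrm{SAT}_h(u_h)=o(1)$; hence $\mathcal E_h(u_h)\to\mathcal E(u)$ and $I_h u_h\to u$ in $L^2$ by ($\Gamma$.1), so in particular $\limsup_h\mathcal E_h(u_h)\le\mathcal E(u)$ — this is \Cref{RS:lem:recovery}. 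To reach a general $u\in H^1_0(\Omega)$ I would pick $u^{(m)}\in C^\infty_c(\Omega)$ with $u^{(m)}\to u$ in $H^1_0$, use continuity of the quadratic form $\mathcal E$ on $H^1_0$ so $\mathcal E(u^{(m)})\to\mathcal E(u)$, build for each $m$ a recovery sequence $(u_h^{(m)})_h$ as above, and extract a diagonal $h\mapsto u_h^{(m(h))}$ with $m(h)\to\infty$ slowly enough (slaved to the $m$-dependent $H^2$-norms governing \eqref{RS:eq:commute} and \Cref{RS:lem:sat-residue}) that $\|I_h u_h^{(m(h))}-u\|_{L^2}\to0$ and $\limsup_h\mathcal E_h(u_h^{(m(h))})\le\lim_m\mathcal E(u^{(m)})=\mathcal E(u)$; the $\liminf$ bound already forces the reverse inequality along this sequence. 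Finally, combining the two inequalities with the equicoercivity of $\Cref{RS:lem:equicoercive}$ gives $\mathcal E_h\xrightarrow{\,\Gamma,L^2(\Omega)\,}\mathcal E$.

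The step I expect to be the main obstacle is exactly this last diagonalization: the $o(1)$ SAT estimate of \Cref{RS:lem:sat-residue} and the commuting error in \eqref{RS:eq:commute} are only quantified on the $H^2$ scale, so one must tie the rate of $m(h)$ to $\|u^{(m)}\|_{H^2}$, and one must know that the discrete Poincaré/trace constants in ($\Gamma$.1)--($\Gamma$.5) are genuinely $h$-uniform on the quasi-uniform family. This uniformity is precisely what makes the penalty $\tau_h\sim h^{-1}$ enforce $u|_{\partial\Omega}=0$ in the limit (pinning the $\Gamma$-domain to $H^1_0$) without blowing up along recovery sequences — the interplay already flagged in \Cref{lem:sat-failure}. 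The spatial nature of all these estimates also yields the uniformity recorded in \Cref{RS:rem:sigma-uniform}, so the $\Gamma$-limit is unaffected by integration against any finite measure-time clock $d\sigma$.
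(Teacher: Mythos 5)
Your proposal is correct and follows essentially the same route as the paper: the theorem is assembled from the equicoercivity/discrete Rellich lemma, the liminf lemma, and the recovery lemma with the SAT residue estimate, exactly as cited. The only difference is that you spell out the density/diagonalization step (slaving $m(h)$ to the $H^2$-norms of the approximants) which the paper compresses into the single phrase ``density of $C^\infty_c(\Omega)$ in $H^1_0$ extends the result''; your elaboration is a faithful and welcome expansion of that step, not a departure from it.
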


\begin{remark}[Reduction to the classical $\Gamma$–limit]
When $\sigma\equiv t$, Theorems~9.10 and~9.19 coincide with the standard $\Gamma$–convergence
of SBP–SAT energies to the quadratic continuum energy $E$:
equicoercivity (Lemma~9.2), the liminf inequality (Proposition~9.3), and the recovery (Proposition~9.4)
are exactly the textbook trio; the envelope persistence becomes the usual lower–semicontinuity plus
recovery argument with $c_\sigma$ unchanged.
\end{remark}

\begin{remark}[Maxwell-type energies]\label{RS:rem:maxwell}
For vector fields $A:\Omega\to\mathbb R^3$, a Maxwell-type quadratic energy
\(
\mathcal E_{\rm M}(A)=\tfrac12\int_\Omega |\nabla\times A|^2 + \alpha\,|\nabla\!\cdot\!A|^2\,dx
\)
fits the same pattern provided one uses SBP curl/div operators $(C_h,\mathrm{Div}_h)$ that satisfy discrete vector-calculus identities and an SBP quadrature. The commuting estimate \eqref{RS:eq:commute} is replaced by $C_h \Pi_h A \approx P_h(\nabla\times A)$ and $\mathrm{Div}_h \Pi_h A \approx P_h(\nabla\!\cdot\!A)$ with $\mathcal O(h)$ defects. The SAT residue proof is identical. \end{remark}

\subsection*{Notation summary}\label{sec:symbols-crib}
\addcontentsline{toc}{subsection}{Notation summary}

\begin{table}[htbp]
\centering
\caption{Frequently Used Symbols. Precise definitions are in \S\ref{sec:prelims} (notation) and the section indicated in the rightmost column.}
\label{tab:symbols-crib}
\begin{adjustbox}{max width=\linewidth}
\begin{tabular}{@{}lll@{}}
\toprule
\textbf{Symbol} & \textbf{Meaning / definition} & \textbf{First reference} \\
\midrule
$\sigma$ & measure–time; $d\sigma=\mu_\sigma=w(t)\,dt+\sum_k \alpha_k \delta_{t_k}$ & \S\ref{sec:prelims} \\
$w$ & a.e. density of $\mu_\sigma$ on the ac part (vanishes on flats) & \S\ref{sec:prelims} \\
$\{t_k\},\,\alpha_k$ & jump (atom) times and their weights ($\alpha_k\ge0$) & \S\ref{sec:prelims} \\
$\mathrm{Var}_\sigma(u)$ & total variation of $u$ w.r.t. $\mu_\sigma$ & Def. \ref{RS:def:BVsigma} \\
$\mathsf H,\langle\cdot,\cdot\rangle,\|\cdot\|$ & state space, inner product, norm & \S\ref{sec:prelims} \\
$\mathcal E$ & energy ledger (Lyapunov) controlling $\|u\|$ locally & \S\ref{sec:prelims} \\
$a$ & coercive (sector–bounded) bilinear form (ac generator) & \Cref{RS:assume:four} (H1) \\
\,$\mathcal L$ & dissipative generator on ac part; rate $\kappa\ge0$ & \Cref{RS:assume:four} (H2) \\
$J_k$ & atomic update at $t_k$, nonexpansive for $\mathcal E$ & \Cref{RS:assume:four} (H4) \\
$h$ & mesh size (discrete setting) & \S\ref{sec:discrete} \\
$H,Q,B$ & SBP weights, interior difference, boundary matrix ($Q+Q^\top=B$) & \S\ref{sec:discrete} \\
$\tau_h$ & SAT penalty scale ($\tau_h\simeq h^{-1}$) & \Cref{RS:lem:boundary-sat-normalised} \\
$E_h$ & discrete energy (e.g.\ $\tfrac12\|v\|_H^2$) & \S\ref{sec:discrete} \\
$\tau\ast$ & minimum SAT strength for boundary\,+\,SAT negativity & Lemma~8.8 \\
$c_{\mathrm{SAT}}$ & negativity margin of boundary\,+\,SAT quadratic form & \Cref{RS:prop:als-normalised} (stability)
 \\
\bottomrule
\end{tabular}
\end{adjustbox}
\end{table}
\vspace{-0.5em}
\noindent\emph{Reading tip.} We use $\Pi(s,t]:=\prod_{t_k\in(s,t]}(\cdot)$ for products over atoms and write $x(t^\pm)$ for left/right limits.




\subsection{Discrete setting and assumptions}\label{sec:disc-contract} 
We fix a family of SBP–SAT discretizations indexed by the mesh size $h>0$ and a one‑step time integrator. The following hypotheses will be used throughout.
\begin{description}
\item[(S1) SBP mimetic identity.] There exist matrices $(H,Q,B)$ with $H>0$ and $Q+Q^\top=B$ such that the interior operator is strictly $H$‑dissipative on the damped part.

\medskip
\noindent\textbf{Assumption D.1 (Abstract discrete $\sigma$–ledger).}
Consider a discrete evolution with an energy $E_h$ and a measure–time clock $\sigma$. Assume:
\begin{enumerate}\itemsep0.25em
  \item[\textnormal{(D1)}] \emph{Absolutely continuous steps (in $\sigma$).} Along ac pieces, $E_h$ satisfies $\frac{d}{d\sigma}E_h \le -2\kappa\,c_\sigma\,E_h$ for some $\kappa\ge0$ and a structural constant $c_\sigma>0$ independent of the partition.
  \item[\textnormal{(D2)}] \emph{Atomic updates are non-expansive.} At each atom $t_k$ with jump map $J_k$, one has $E_h(t_k^+)\le \rho_k\,E_h(t_k^-)$ with $\rho_k\in(0,1]$ prescribed by the $\sigma$-ledger.
  \item[\textnormal{(D3)}] \emph{Flats are non-expansive.} On $\sigma$-flats (no exposure), the update is $E$-non-increasing.
  \item[\textnormal{(D4)}] \emph{Partition-uniformity.} The constants in (D1)–(D3) are uniform across mesh partitions and time steps.
\end{enumerate}
\noindent\emph{Remark.} The SBP–SAT hypotheses (S1)–(S4) below furnish one concrete instance; verification of Assumption~D.1 under SBP–SAT is recorded in Appendix~D.
\medskip

\item[(S2) SAT sign and scale.] Boundary/interface penalties are chosen so that (boundary)$+$SAT is non‑positive and the penalty parameter satisfies $\tau_h\asymp h^{-1}$.

\noindent\emph{Quantitative scale.}
Fix mesh–independent constants $0<c_1\le c_2<\infty$ and state explicitly that
\[
  c_1\,h^{-1}\ \le\ \tau_h\ \le\ c_2\,h^{-1}\qquad\text{on all admissible meshes,}
\]
so that every estimate below depends on $(c_1,c_2)$ only through the negativity
margin of the boundary\,+\,SAT quadratic form (cf.\ Lemma~7.4 and Prop.~7.5).

\item[(S3) Time integrator.] The one‑step method is algebraically stable; for explicit schemes a clock‑aware CFL is imposed so that the step map is nonexpansive for $E_h$ on flats of $\sigma$.

\emph{Named instances.} This assumption is verified by the implicit midpoint rule, Gauss--Legendre collocation (all stages), and Radau~IIA ($s=2,3$). See, e.g., Hairer--Wanner, \emph{Solving Ordinary Differential Equations II}, Chs.~IV.6 and VI.3. 

\emph{Flats of $\sigma$.} On subintervals where $d\sigma=0$ (``flats''), the time integrator is applied with zero a.c.\ damping; algebraic stability implies a stepwise \emph{nonexpansive} map for $E_h$ on such subintervals. This convention is used throughout §8--9 and in Proposition~9.21.

\item[(S4) Clock–integrator synchronization.] Across an atom of mass $\alpha$ at time $t_j$, the method performs a single macro‑update whose dissipation budget equals $\alpha$; on flat segments (where $w\equiv 0$) only boundary/SAT dissipation contributes.
\end{description}

\subsection*{Block A — Discrete contract + uniform discrete HUM}

\noindent\emph{Standing hypotheses.}
We work under (S1)–(S4) from \S\ref{sec:disc-contract}.

\begin{proposition}[Uniform discrete HUM with the same $c_\sigma$]\label{prop:disc-hum}
Assume \textup{(S1)–(S4)} and that the continuum HUM inequality holds with optimal constant $c_\sigma$ as in Theorem~\ref{thm:hum-equivalence}. Let $E_h=\tfrac12\|u\|_H^2$ be the discrete energy and let $\mathcal D_h:=-\frac{\mathrm dE_h}{\mathrm d\sigma}$ be its $\sigma$–time dissipation density. Then for all meshes $h>0$ one has
\[
\mathcal D_h(t)\;\ge\;2\kappa\,c_\sigma\,E_h(t) \qquad (\sigma\text{–a.e. }t\ge 0),
\]
that is, the discrete HUM/observability inequality holds with the \emph{same} constant $c_\sigma$, uniformly in $h$. Consequently,
\[
E_h(t)\,\le\,E_h(0)\,\exp\!\bigl(-2\kappa c_\sigma\,\sigma(t)\bigr)\qquad(t\ge0).
\]
\end{proposition}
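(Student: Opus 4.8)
The plan is to reduce the claimed $\sigma$–differential inequality to the already–packaged one–step discrete energy law and then propagate it with the discrete $\sigma$–Gr\"onwall lemma, checking at each step that no constant drifts with the mesh. First I would record the discrete energy balance along an absolutely continuous piece of the clock: by the SBP mimetic identity (S1), the semidiscrete flow obeys
\[
\frac{\mathrm d}{\mathrm dt}E_h(u_h(t)) \;=\; \langle S_h u_h, u_h\rangle_H \;+\; \tfrac12\langle B u_h,u_h\rangle_\partial \;+\; \mathrm{SAT}_h(u_h),
\]
where $S_h=S_h^\top$ is the damped interior operator. By (S1) the interior term is strictly $H$–dissipative, and scaled against the a.c.\ density $w(t)$ of $\mathrm d\sigma$ it contributes $\le -2\kappa\,w(t)\,c_\sigma\,E_h$ with the \emph{same} structural constant as in the continuum ledger; this is the step where I invoke that an SBP pair reproduces the Green/trace pairing exactly, so the coercivity/observability calibration (H1) transfers to the grid without loss. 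By (S2) and the normalised wrapper \Cref{RS:lem:boundary-sat-normalised}, the boundary–plus–SAT contribution is negative semidefinite (indeed $\le -c_b\tau_h\|\Pi_{\partial\Omega}u_h\|_{\partial,H}^2$), so it only helps. Hence $-\tfrac{\mathrm d}{\mathrm d\sigma}E_h \ge 2\kappa c_\sigma E_h$ $\sigma$–a.e.\ on a.c.\ stretches, which is the asserted discrete HUM/observability inequality.

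Next I would pass from semidiscrete to fully discrete time via (S3): algebraic stability of the one–step method (the named integrators in \S\ref{sec:disc-contract}, wrapped in \Cref{RS:prop:als-normalised}) turns the continuous–in–time balance into $E_h^{n+1}\le E_h^{n} - c_s\,\Delta t\,(\mathcal D_h(v^{n+1/2}) + \tau_h\|\Pi_{\partial\Omega} v^{n+1/2}\|_{\partial,H}^2)$ with $c_s$ mesh–independent; on flats of $\sigma$ this degenerates to nonexpansiveness, consistent with (S4). For atoms, (S4) prescribes a single macro–update with dissipation budget equal to the atomic mass $\alpha_k$, and non–expansiveness (the discrete analogue of \Cref{RS:assume:four}(H4)) together with the HUM margin gives the multiplicative factor $E_h(t_k^+)\le e^{-2\kappa c_\sigma\alpha_k}E_h(t_k^-)$.

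Finally I would feed the a.c.\ differential inequality and the atomic product into the discrete $\sigma$–Gr\"onwall lemma (Lemma~8.10): concatenating the exponential decay $\exp\!\big(-2\kappa c_\sigma\int_{(s,t]} w\,\mathrm d\tau\big)$ over a.c.\ pieces with the product $\prod e^{-2\kappa c_\sigma\alpha_k}$ over atoms yields exactly $E_h(t)\le E_h(0)\exp(-2\kappa c_\sigma\,\sigma(t))$, with $c_\sigma$ independent of $h$ by partition–uniformity (D4). The main obstacle is the one hidden in the first paragraph: ensuring the discrete dissipation/observability modulus does not degenerate as $h\to 0$ — the well–known spectral–pollution effect for naively discretized wave–type operators. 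This is precisely what (S1) (strict $H$–dissipativity of the damped interior operator, so dissipation is furnished by the physical damping rather than by a delicate group–observability estimate) and (S2) ($\tau_h\asymp h^{-1}$, preventing boundary layers from escaping the penalty, cf.\ \Cref{lem:sat-failure}) are engineered to control; verifying that these two together pin the discrete modulus to the continuum $c_\sigma$ is the crux, and it is exactly the content of the normalised wrappers the statement is permitted to assume.
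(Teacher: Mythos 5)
Your proposal follows essentially the same route as the paper's proof: invoke (S1) for interior $H$-dissipativity, (S2) for boundary$+$SAT nonpositivity via the normalised wrappers, (S3) for nonexpansiveness on flats, (S4) for the atomic macro-updates, and then concatenate with the discrete $\sigma$-Gr\"onwall lemma. Your version is somewhat more explicit than the paper's (you write out the semidiscrete energy balance and correctly flag that pinning the discrete dissipation modulus to the continuum $c_\sigma$ is the crux carried by the wrappers), but the decomposition and the key lemmas are the same.
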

\begin{proof}
Under (S1) the SBP Green identity holds and the interior operator is strictly $H$–dissipative on the damped part. Assumption (S2) gives boundary$+$SAT nonpositivity with the normalisation $\tau_h\simeq h^{-1}$. By (S3) the one–step integrator is algebraically stable, hence stepwise nonexpansive on flats of $\sigma$. Finally, (S4) enforces a single macro–update across each atom whose dissipation budget equals its mass $\alpha$. Combining these pieces yields
\[
-\,\frac{d}{d\sigma}E_h(t)\;\ge\;2\kappa\,c_\sigma\,E_h(t)\quad\text{for $\sigma$–a.e.\ }t,
\]
where $c_\sigma$ is the continuum HUM constant from Theorem~2.19. Applying the discrete $\sigma$–Gronwall lemma gives the uniform estimate
$
E_h(t)\le E_h(0)\exp(-2\kappa c_\sigma \sigma(t)),
$
with $c_\sigma$ independent of $h$.
\end{proof}

\begin{assumption}[Variable‑coefficient SBP–SAT data]\label{ass:disc-vc}
Let $A(x)$ be the variable coefficient tensor (symmetric, uniformly elliptic, bounded). The SBP operators $(H,Q,B)$ approximate the variable‑coefficient Green identity with coefficient‑aware interface fluxes; SAT terms use penalty weights scaled by the local normal flux of $A$ and $h^{-1}$.
\end{assumption}

\begin{theorem}[Uniform discrete HUM for variable coefficients]\label{thm:disc-vc}
Under Assumptions~\ref{ass:disc-vc} and (S1)–(S4) see \S\ref{sec:disc-contract}, the discrete ledger satisfies
\[
-\frac{\mathrm d}{\mathrm d\sigma}E_h\ \ge\ 2\kappa\,c_\sigma\,E_h \qquad (\sigma\text{–a.e.}),
\]
with the \emph{same} structural constant $c_\sigma$, uniformly in $h$.
\end{theorem}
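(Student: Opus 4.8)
The plan is to reduce \Cref{thm:disc-vc} to the constant–coefficient statement \Cref{prop:disc-hum}, replacing every flat $\|\cdot\|_H^2$ bookkeeping step by its $A$–weighted analogue and checking that uniform ellipticity of $A(x)$ keeps all constants mesh–independent. I would proceed in four stages: (i) a variable–coefficient SBP Green identity; (ii) boundary$+$SAT non-positivity with a mesh–independent margin; (iii) interior $H$–dissipativity yielding the $\sigma$–a.e.\ differential inequality on a.c.\ pieces, together with the atom/flat rules from (S3)–(S4); and (iv) identification of the resulting discrete observability rate with the \emph{continuum} HUM constant $c_\sigma$ of \Cref{thm:hum-equivalence}.

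First I would record the coefficient–aware SBP identity supplied by \Cref{ass:disc-vc}: the operators $(H,Q,B)$ approximate the variable–coefficient Green pairing, so that for grid functions $v$,
\[
\langle v,\,L_h^{A}v\rangle_H \;=\; -\,(\nabla_h v)^\top H\,A_h\,(\nabla_h v)\;+\;\tfrac12\langle v,\,B_A v\rangle_{\partial}\;+\;\langle S_h v,\,v\rangle_H,
\]
with $A_h=\mathrm{diag}(A(x_i))$, $B_A$ the flux–weighted boundary form, and $S_h=S_h^\top$ the damped–part interior dissipation. Uniform ellipticity $\underline\alpha I\preceq A(x)\preceq\overline\alpha I$ pinches the quadratic term, $\underline\alpha\|\nabla_h v\|_H^2\le(\nabla_h v)^\top H A_h(\nabla_h v)\le\overline\alpha\|\nabla_h v\|_H^2$, so the coercivity modulus of the discrete form is bounded between $h$–independent constants. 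Next, following the wrapper \Cref{RS:lem:boundary-sat-normalised} but with the flux–scaled penalty weights of \Cref{ass:disc-vc}, I would show (boundary)$+$SAT is negative semidefinite with margin $c_b$ depending only on $(\underline\alpha,\overline\alpha)$, the discrete trace constant of \Cref{lem:discrete-trace}, and the scaling $c_1 h^{-1}\le\tau_h\le c_2 h^{-1}$ of (S2): the only new feature versus the constant–coefficient case is that the penalty weight carries the local normal flux of $A$, which uniform ellipticity bounds above and below, so the completion–of–squares estimate goes through verbatim with $h$–independent $c_b$.

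Assembling stages (i)–(ii), the semidiscrete energy law on an a.c.\ piece reads $\frac{d}{dt}E_h\le-2\kappa\,w(t)\,c_\sigma\,E_h$, where $2\kappa c_\sigma$ is precisely the coercivity/observability rate of the continuum $A$–weighted form under \Cref{RS:assume:four}\,(H1),(H4) — the variable coefficient enters here only through the \emph{continuum} HUM constant, so no new constant is created. Algebraic stability (S3) promotes this to stepwise non-expansiveness on flats, (S4) makes each atom a single macro–update with dissipation budget $\alpha$, giving $E_h(t_j^+)\le e^{-2\kappa c_\sigma\alpha_j}E_h(t_j^-)$; concatenating a.c.\ pieces, flats and atoms yields $-\frac{d}{d\sigma}E_h\ge 2\kappa c_\sigma E_h$ $\sigma$–a.e., and the discrete $\sigma$–Gronwall lemma (Lemma~8.10) then gives $E_h(t)\le E_h(0)e^{-2\kappa c_\sigma\sigma(t)}$ uniformly in $h$.

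The hard part will be stage (iv): keeping $c_\sigma$ \emph{unchanged}, not merely bounded below, under the variable coefficient. A naïve consistency argument would only give $c_{\sigma,h}\ge c_\sigma-O(h)$, because the flux–weighted fluxes reproduce the continuum bilinear form only up to $O(h)$. The resolution I would emphasize is that the SBP Green identity is an exact \emph{algebraic} identity: there is no consistency error in the energy balance itself, and all $O(h)$ error is confined to how well $E_h$ approximates the continuum energy, not to the sign or margin of $-\frac{d}{d\sigma}E_h$. Since the continuum HUM inequality already holds with the constant $c_\sigma$ for the variable–coefficient operator, the discrete inequality inherits that \emph{same} $c_\sigma$ without loss, exactly as in \Cref{prop:disc-hum}; uniformity of the discrete trace constant on quasi-uniform variable–coefficient meshes (\Cref{lem:discrete-trace}) closes the remaining technical gap.
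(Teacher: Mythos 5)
Your proposal is correct and follows essentially the same route as the paper's proof: coefficient-aware SBP Green identity, flux-weighted SAT non-positivity uniform in $h$ via ellipticity of $A$, interior $H$-dissipativity giving the a.c.\ rate $2\kappa c_\sigma$, the same atomic macro-update bound, and the discrete $\sigma$-Gr\"onwall to conclude. Your stage (iv) discussion of why no $O(h)$ loss enters the constant (the Green identity being exact algebraically, with consistency error confined to how $E_h$ approximates the continuum energy rather than to the sign of the ledger) is a useful elaboration of a point the paper leaves implicit, but it does not change the argument.
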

\begin{proof}
Coefficient–aware SBP mimics the variable–coefficient Green identity: with fluxes weighted by the local normal component of $A(x)$ and SAT scalings $\tau_h\simeq h^{-1}$, the boundary$+$SAT form remains nonpositive uniformly in $h$. The interior $H$–dissipativity carries over with ellipticity constants of $A$. Hence the a.c.\ $\sigma$–differential inequality holds with rate $2\kappa c_\sigma$, while the atom macro–updates retain the same multiplicative bound. The discrete $\sigma$–Gronwall argument from Proposition~9.21 is unchanged, giving the claim with the same $c_\sigma$, uniformly in $h$.
\end{proof}

\begin{assumption}[Monotone discrete damping]\label{ass:disc-monotone}
The discrete damping $g_h$ satisfies $g_h(0)=0$ and $(g_h(v)-g_h(w))\cdot(v-w)\ge0$ pointwise (nodal or flux form), and is evaluated in an algebraically stable time integrator (or via midpoint) to preserve monotonicity per step.
\end{assumption}

\begin{theorem}[Uniform discrete HUM under monotone damping]\label{thm:disc-monotone}
Under Assumptions~\ref{ass:disc-monotone} and (S1)–(S4) see \S\ref{sec:disc-contract}, the discrete dissipation satisfies the inequality of Theorem~\ref{thm:disc-vc} with the same $c_\sigma$, uniformly in $h$.
\end{theorem}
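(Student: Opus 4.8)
The plan is to reproduce the proof architecture of Proposition~\ref{prop:disc-hum} and Theorem~\ref{thm:disc-vc}, replacing the linear $H$–dissipative damping term by the monotone map $g_h$ and checking that every inequality survives uniformly in $h$. First I would write the semidiscrete energy ledger along an absolutely continuous piece of $\sigma$: differentiating $E_h=\tfrac12\|u\|_H^2$ and substituting the SBP--SAT semidiscretization, the interior first–order (transport/elliptic) part contributes only boundary terms through the SBP identity $Q+Q^\top=B$ of (S1); by (S2) these boundary terms, combined with the SAT penalties under the normalisation $\tau_h\simeq h^{-1}$, assemble into a nonpositive quadratic form (this is exactly \Cref{RS:lem:boundary-sat-normalised}). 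What remains on the right-hand side is the damping contribution $-\langle g_h(u),u\rangle_H$, evaluated at the relevant stage/midpoint value.

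The key new step is the dissipation lower bound. Monotonicity of $g_h$ together with $g_h(0)=0$ gives, by testing $w=0$ in Assumption~\ref{ass:disc-monotone}, the pointwise (hence $H$–weighted) inequality $\langle g_h(u),u\rangle_H\ge 0$, so $E_h$ is at least nonincreasing along a.c.\ pieces. To upgrade this nonnegativity to the coercive bound $\langle g_h(u),u\rangle_H\ge 2\kappa\,c_\sigma\,E_h$ with the \emph{same} $c_\sigma$, I would use that $g_h$ is the structure–compatible discretization of a damping profile satisfying GCC with $a\ge a_\omega$ on $\omega$: on the observed nodes the monotone graph lies above its linear minorant at $0$, so $g_h(u)\cdot u\gtrsim a_\omega|u|^2$ there, and the discrete observability inequality — the grid analogue of the HUM inequality of Theorem~\ref{thm:hum-equivalence}, valid uniformly in $h$ under (S1) via the strict interior $H$–dissipativity and the discrete trace bound (\Cref{lem:discrete-trace}) — converts this local coercivity into the global bound with $c_\sigma\ge c_0\,a_\omega\,\lambda_\omega$, independently of $h$. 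This is the step I expect to be the main obstacle: for a genuinely nonlinear $g_h$ one must ensure the monotone/flux form actually carries a linear coercive minorant on $\omega$ (monotonicity alone does not suffice) and that the discrete observability constant does not degrade as $h\downarrow0$; both hold under the stated SBP--SAT hypotheses but lean crucially on the quantitative scaling $\tau_h\simeq h^{-1}$ of (S2) and the quasi-uniformity packaged in (S1).

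The remaining ingredients are the time integrator and the atoms, handled as in the linear and variable–coefficient cases. For (S3), algebraic stability of the one–step method (implicit midpoint, Gauss--Legendre, Radau~IIA) together with per–step preservation of monotonicity of $g_h$ (Assumption~\ref{ass:disc-monotone}) makes the fully discrete step map nonexpansive for $E_h$ on flats and accumulates dissipation at rate $2\kappa c_\sigma$ over a.c.\ steps; on flats ($w\equiv0$) only the nonpositive boundary$+$SAT term acts, so the step is $E_h$–nonincreasing. For (S4), across an atom of mass $\alpha$ at $t_j$ the scheme performs one macro–update whose dissipation budget equals $\alpha$, yielding $E_h(t_j^+)\le e^{-2\kappa c_\sigma\alpha}E_h(t_j^-)$, consistent with the atomic rule of Lemma~\ref{lem:rn-envelope}.

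Concatenating the a.c.\ differential inequality $-\tfrac{d}{d\sigma}E_h\ge 2\kappa c_\sigma E_h$, the flat non-expansiveness, and the atomic multiplicative drops, the discrete $\sigma$–Gronwall lemma (Lemma~8.10, as applied in Proposition~9.21) gives $E_h(t)\le E_h(0)\exp(-2\kappa c_\sigma\sigma(t))$ with $c_\sigma$ independent of $h$ and of the partition — precisely the inequality of Theorem~\ref{thm:disc-vc} with the same structural constant. Thus the proof departs from the linear and variable–coefficient versions only in replacing the bilinear dissipation form by the monotone nonlinearity and invoking monotonicity (plus the local coercive minorant on $\omega$) in place of uniform ellipticity.
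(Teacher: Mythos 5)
Your overall architecture (SBP Green identity, boundary$+$SAT nonpositivity, algebraic stability on flats, atomic macro-updates, discrete $\sigma$--Gr\"onwall) matches the paper, but your central new step does not follow from the stated hypotheses and is where the two arguments genuinely diverge. You try to extract the coercive bound $\langle g_h(u),u\rangle_H\ge 2\kappa\,c_\sigma\,E_h$ from the monotone damping itself, via a ``linear coercive minorant on $\omega$'' of the form $g_h(u)\cdot u\gtrsim a_\omega|u|^2$ combined with a discrete observability inequality. Assumption~\ref{ass:disc-monotone} supplies only monotonicity and $g_h(0)=0$, which give $\langle g_h(u),u\rangle_H\ge 0$ and nothing more: a monotone map such as $g_h(v)=v^3$ satisfies the hypotheses but admits no linear minorant near the origin, so the claimed local coercivity on $\omega$ is simply not available. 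You flag this yourself (``monotonicity alone does not suffice'') but then assert the minorant ``holds under the stated SBP--SAT hypotheses,'' which it does not --- neither (S1)--(S4) nor Assumption~\ref{ass:disc-monotone} contains such a lower bound on $g_h$.

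The paper's proof avoids this entirely by locating the quantitative dissipation in the \emph{interior operator}: under (S1) the interior part is strictly $H$-dissipative with rate $\kappa_h$, and this is what converts (via the continuum HUM constant, exactly as in Proposition~\ref{prop:disc-hum}) into the margin $-\,dE_h/d\sigma\ge 2\kappa c_\sigma E_h$. The monotone damping $g_h$ enters only through the sign of its ledger contribution --- it must not destroy the inequality, and monotonicity plus $g_h(0)=0$ is precisely enough for that. So the fix is not to strengthen Assumption~\ref{ass:disc-monotone} but to reassign the role of $g_h$: treat it as an additional nonnegative dissipation on top of the already-coercive interior operator, and let the HUM calibration of $c_\sigma$ run through the linear part unchanged. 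With that reassignment the rest of your argument (flats, atoms, $\sigma$--Gr\"onwall concatenation) goes through as written.
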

\begin{proof}
Work under (S1)–(S4) and Assumption~9.24. On the a.c.\ part of $d\sigma$ the semi–discrete balance (cf.\ Theorem~8.2) reads
\[
\frac{d}{dt}E_h(u_h(t)) \;=\; \underbrace{\langle L^{\mathrm{int}}_h u_h,\,u_h\rangle_H}_{\le -\,2\kappa_h\,E_h(u_h)} \;+\; \underbrace{\langle \mathrm{SAT}_h(u_h),\,u_h\rangle_H}_{\le 0}\;+\;\underbrace{\langle g_h(u_h),\,u_h\rangle_H}_{\ge 0}.
\]
Here (S1) gives the $H$–dissipativity of the interior operator with rate $\kappa_h\ge 0$, (S2) the nonpositivity of boundary$+$SAT, and Assumption~9.24 implies $\langle g_h(v)-g_h(w),\,v-w\rangle_H\ge 0$ for all $v,w$ and $g_h(0)=0$, hence $\langle g_h(u_h),u_h\rangle_H\ge 0$. Therefore, on the a.c.\ part,
\[
\frac{d}{dt}E_h(u_h(t)) \;\le\; -\,2\kappa_h\,E_h(u_h(t)).
\]
Integrating in the $s$–clock (with $ds=w(t)\,dt$ on the a.c.\ part) yields
\[
E_h(t)\;\le\; E_h(s)\,\exp\!\Big(-2\kappa_h\,[\sigma(t)-\sigma(s)]\Big)
\quad\text{whenever }(s,t]\text{ contains no atom.}
\]

Across an atom $t_k$ of mass $\alpha_k=\sigma(\{t_k\})$, hypothesis (S4) performs a single macro–update. By algebraic stability (S3) on flats and the nonnegativity of the boundary$+$SAT form (S2), the update is nonexpansive for $E_h$; we write $E_h(t_k^+)\le \rho^h_k\,E_h(t_k^-)$ with $\rho^h_k\in(0,1]$.

As in Proposition~9.21, combine the a.c.\ estimate with the multiplicative atom rule to obtain the discrete $\sigma$–Gronwall envelope
\[
E_h(t)\;\le\;\exp\!\Big(-2\kappa_h\,[\sigma(t)-\sigma(s)]\Big)
\Big(\,\prod_{t_k\in(s,t]} \rho_k^h\Big)\,E_h(s).
\]
To identify the structural margin, note that the continuum HUM constant $c_\sigma$ controls the conversion of the interior dissipation into the energy ledger exactly as in Proposition~9.21 (the proof there does not use linearity of the damping, only the nonnegativity of the damping ledger). Hence $-\,dE_h/d\sigma \ge 2\kappa\,c_\sigma\,E_h$ on the a.c.\ part, and the same multiplicative bound holds at atoms. Applying the discrete $\sigma$–Gronwall lemma gives
\[
E_h(t)\;\le\;E_h(0)\,\exp\!\big(-2\kappa c_\sigma\,\sigma(t)\big)
\quad\text{with the same }c_\sigma\text{, uniformly in }h.
\]
\end{proof}

\begin{lemma}[Failure for nonmonotone discrete damping]\label{lem:nonmonotone-fail}
If $g_h$ violates monotonicity on a set of positive measure, there exist data and stepsizes on flat segments of $\sigma$ for which the per‑step energy map is expansive; the discrete HUM inequality fails for any fixed $c>0$.
\end{lemma}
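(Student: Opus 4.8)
The plan is to produce an explicit counterexample, mirroring the proof of Theorem~\ref{thm:disc-monotone} but with the $g_h$–contribution to the energy balance carrying the wrong sign, exactly as Lemmas~\ref{lem:sat-failure} and~\ref{lem:rk-failure} do for under-scaled SAT and non–algebraically-stable integrators. The first move is to turn the hypothesis into a usable local statement. Since $g_h$ violates monotonicity on a set $S$ of positive measure, restricting to nodes in $S$ and selecting nodal values in the offending range produces a grid state $u_\star\neq 0$ on which the discrete damping form is strictly anti-dissipative; after also arranging that the energy of $u_\star$ is essentially carried by the degrees of freedom on which $g_h$ acts, one obtains $\langle g_h(u_\star),u_\star\rangle_H \le -\mu\,E_h(u_\star)$ with $\mu>0$. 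Keeping the support of $u_\star$ inside $S$ at a mesh-independent scale lets $\mu$ be chosen independent of $h$. (When the normalisation $g_h(0)=0$ is retained one uses that pair–monotonicity is required for \emph{all} pairs, so its failure includes pairs one of whose entries is the offending state; if needed, an affine recentring of $g_h$ that leaves the relevant part of the energy balance unchanged reduces to this case.)

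Next I would insert $u_\star$ into the semidiscrete energy balance used in the proof of Theorem~\ref{thm:disc-monotone}, choosing the support of $u_\star$ away from $\partial\Omega$ so that the boundary$+$SAT contribution vanishes. On a flat segment of $\sigma$ the surviving terms are the energy-neutral conservative part and the $g_h$–contribution, which by the previous step now has the energy-\emph{producing} sign, so $\tfrac{d}{dt}E_h(u_h)\big|_{u_\star}\ge \mu\,E_h(u_\star)>0$: the exact flow on the flat gains energy at a definite rate. Transferring this to the one-step map, consistency of the (algebraically stable) integrator gives $E_h(u^{n+1})-E_h(u^{n}) = \Delta t\,[\,\mu\,E_h(u^n)+o(1)\,]>0$ for all small $\Delta t$, and for the genuinely nonlinear stage equations a non-infinitesimal step $\Delta t$ can be pinned down exactly as in the proof of Lemma~\ref{lem:rk-failure}. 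Algebraic stability does not rescue the step, since it certifies non-expansiveness only for \emph{dissipative} vector fields — precisely the property that fails once $g_h$ is non-monotone — so the per-step energy map on the flat is expansive, independently of the spatial resolution.

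The breakdown of the discrete HUM/observability inequality is then immediate: along this trajectory the $\sigma$–time dissipation density satisfies $\mathcal D_h=-\tfrac{d}{d\sigma}E_h<0$ at $u_\star$, whereas $2\kappa c\,E_h(u_\star)>0$ for \emph{every} $c>0$; hence the inequality $\mathcal D_h\ge 2\kappa c\,E_h$ of Proposition~\ref{prop:disc-hum} (equivalently, the absolutely-continuous hypothesis of the discrete $\sigma$–ledger) cannot hold for any fixed $c>0$, and taking the offending region at an $h$–independent scale makes the failure uniform along any mesh sequence $h\downarrow 0$, so no admissible $c_{\sigma,h}$ stays bounded below. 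This exhibits the same obstruction recorded in Lemmas~\ref{lem:sat-failure}–\ref{lem:rk-failure} and shows that the monotonicity requirement of Assumption~\ref{ass:disc-monotone} is sharp.

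The step I expect to be the main obstacle is the reduction in the first paragraph: extracting rigorously from ``monotonicity fails on a positive-measure set'' a genuinely anti-dissipative grid state — one on which $\langle g_h(\cdot),\cdot\rangle_H$ is strictly negative and dominates $E_h$ from above with an $h$–independent constant — while keeping the construction compatible with $g_h(0)=0$ (via recentring when necessary) and with the boundary bookkeeping, so that the SAT term can honestly be made to vanish and the conservative interior part genuinely contributes nothing to $\tfrac{d}{dt}E_h$.
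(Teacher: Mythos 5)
There is a genuine gap, and it sits exactly where you predicted: the reduction in your first paragraph. From the failure of pair\-/monotonicity, $(g_h(v)-g_h(w))\,(v-w)<0$ for some $v\neq w$, you cannot in general extract a state $u_\star$ with $\langle g_h(u_\star),u_\star\rangle_H\le -\mu\,E_h(u_\star)$. With the normalisation $g_h(0)=0$, the single-point pairing condition $\langle g_h(u),u\rangle\ge 0$ is strictly \emph{weaker} than monotonicity: the scalar example $g_h(x)=x\,(2+\sin(Kx))$ with $K$ large satisfies $x\,g_h(x)=x^2(2+\sin(Kx))>0$ for all $x\neq0$, yet $g_h'$ is negative on a set of positive measure, so monotonicity fails as in the hypothesis. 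For such a $g_h$ the exact flow on a flat is still energy non-increasing, so your claim that $\tfrac{d}{dt}E_h\big|_{u_\star}\ge\mu\,E_h(u_\star)>0$ is unavailable, and the consistency argument in your second paragraph has nothing to transfer to the one-step map. The affine recentring you invoke does not repair this: recentring preserves (non)monotonicity but changes the pairing $\langle g_h(u),u\rangle$, and it does not leave the balance for the quadratic energy $\tfrac12\|u\|_H^2$ unchanged.

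The paper's proof uses a different, and essential, mechanism: it works directly with the one-step map of the implicit midpoint integrator on a single degree of freedom on a flat. Because the stage value $z=\tfrac12(u^{n+1}+u^n)$ differs from $u^n$, the exact one-step energy increment is $\Delta\sigma\,u^{n}g_h(z)+\tfrac12\Delta\sigma^2 g_h(z)^2$, and its sign is governed by the \emph{pair} quantity $(w-z)\,g_h(z)$ --- precisely what the monotonicity defect of the pair $(v,w)$ controls --- rather than by the single-point pairing $z\,g_h(z)$. In other words, the discrete step can be expansive even when the exact flow is non-expansive, because the implicit solve probes $g_h$ at a shifted argument; this is the same phenomenon exploited in Lemma~\ref{lem:rk-failure}, and it is exactly the phenomenon your route bypasses by insisting on an instantaneously anti-dissipative state. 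To repair your argument you would either have to strengthen the hypothesis to ``the pairing $\langle g_h(u),u\rangle_H$ is negative somewhere'' (which is a different, stronger assumption than non-monotonicity), or carry out the estimate at the level of the stage equations as the paper does.
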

\begin{proof}
Suppose the discrete damping $g_h$ violates monotonicity on a set of positive measure: there exist scalars $v\neq w$ with
\[
(g_h(v)-g_h(w))\,(v-w)\;<\;0.
\]
Consider one degree of freedom on a flat segment of the clock (so $w\equiv 0$ on $(t_n,t_{n+1}]$) and freeze all couplings so that the interior/skew part contributes no a.c.\ dissipation on the step. Take the algebraically stable one–step method used in (S3); for definiteness we write the implicit midpoint update on the flat step $\Delta\sigma=\sigma(t_{n+1})-\sigma(t_n)$:
\[
u^{n+1} \;=\; u^{n}\;+\;\Delta\sigma\,g_h\!\left(\frac{u^{n+1}+u^{n}}{2}\right).
\]
Set $z:=\tfrac{1}{2}(u^{n+1}+u^{n})$ and $\delta:=u^{n+1}-u^{n}$; then $\delta=\Delta\sigma\,g_h(z)$ and
\[
E_h(u^{n+1})-E_h(u^{n}) \;=\; \tfrac{1}{2}\bigl( (u^{n}+\delta)^2 - (u^{n})^2 \bigr)
\;=\; u^{n}\delta + \tfrac{1}{2}\delta^2 \;=\; \Delta\sigma\,u^{n}g_h(z) + \tfrac{1}{2}\Delta\sigma^2\,g_h(z)^2.
\]
Choose $u^{n}=w$ and $z=\theta v+(1-\theta)w$ with a $\theta\in(0,1)$ such that the strict negativity $(g_h(v)-g_h(w))(v-w)<0$ persists by continuity at $z$. Then $u^{n}g_h(z) - z\,g_h(z) = (w-z)g_h(z)$ has the opposite sign of $(v-w)$, hence $u^{n}g_h(z)$ can be made strictly positive while $z$ is arbitrarily close to $w$ or $v$. For sufficiently small but fixed $\Delta\sigma>0$ this yields
\[
E_h(u^{n+1}) - E_h(u^{n}) \;>\; 0,
\]
i.e.\ the one–step map is expansive for $E_h$ on a flat segment. Since the discrete HUM inequality requires $-\,dE_h/d\sigma \ge 2\kappa c\,E_h$ with some $c>0$ on every a.c.\ piece and nonexpansiveness on flats, this violates the inequality for any fixed $c$. The construction localises, so the counterexample persists on grids with arbitrary $h$.
\end{proof}
\begin{theorem}[Discrete master decay]\label{thm:disc-master}
Assume the discrete hypotheses \textup{(D1)}–\textup{(D4)} in \S\ref{sec:discrete}:
SBP operators $(H,Q)$ with $Q+Q^\top=B$, SAT with negative sign and scale $\tau_h\simeq h^{-1}$,
and an algebraically stable time integrator.
Let $\kappa_h\ge0$ be the dissipation constant from the discrete ac part
and let $\rho_k^h\in(0,1]$ be the per–atom contraction factors of the discrete update at $t_k$.
For $0\le s\le t\le T$ set
\[
\Pi_h(s,t]\ :=\ \prod_{t_k\in(s,t]}\rho_k^h .
\]
Then any fully discrete solution $u_h$ satisfies the energy envelope
\begin{equation}\label{eq:disc-master}
E_h\!\bigl(u_h(t^+)\bigr)\ \le\ \Pi_h(s,t]\,
\exp\!\bigl(-\,2\,\kappa_h\,[\,\sigma(t)-\sigma(s)\,]\bigr)\,E_h\!\bigl(u_h(s^+)\bigr).
\end{equation}
\end{theorem}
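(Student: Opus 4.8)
The plan is to mirror the continuous argument behind Theorem~\ref{thm:energy-decay}, reading the clock piece by piece: split $(s,t]$ into absolutely continuous stretches, flats, and the finitely many atoms it contains, obtain a one-step energy contraction of the appropriate type on each kind of piece, and concatenate the resulting factors into the product--exponential bound~\eqref{eq:disc-master}, exactly as the jump-product law \Cref{lem:jump-product} is chained with the a.c.\ dissipation in the continuum. All analytic inputs are already packaged as wrappers: \Cref{RS:lem:boundary-sat-normalised} supplies boundary$+$SAT nonpositivity under the stated SBP--SAT hypotheses with $\tau_h\simeq h^{-1}$, \Cref{RS:prop:als-normalised} upgrades the algebraically stable integrator into a genuine one-step energy inequality with a mesh-independent coefficient, and the discrete $\sigma$--Gronwall lemma (Lemma~8.10) converts a chain of such one-step inequalities into an exponential factor in the accumulated $\sigma$-mass. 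So the proof is essentially bookkeeping along $\sigma$.

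First I would fix a $\sigma$-adapted partition $s=\tau_0<\tau_1<\dots<\tau_N=t$ refining the atom set; local finiteness of atoms (the standing clock hypothesis, cf.\ Definition~\ref{def:bv-in-sigma}) guarantees only finitely many atoms lie in $(s,t]$, so this is legitimate and every empty product below equals $1$. On each subinterval carrying absolutely continuous $\sigma$-mass $d\sigma=w\,d\tau$, I would iterate the one-step inequality of \Cref{RS:prop:als-normalised} together with the interior $H$-dissipativity at rate $\kappa_h$, discarding the nonnegative boundary/SAT dissipation; summing the steps is precisely the discrete $\sigma$--Gronwall estimate and yields the factor $\exp\!\bigl(-2\kappa_h\!\int w\,d\tau\bigr)$ over that stretch. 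On a flat ($d\sigma=0$), the algebraic-stability hypothesis—applied with zero a.c.\ damping—makes the step map nonexpansive for $E_h$, so $E_h$ cannot increase, consistently with the vanishing $\sigma$-increment there. Across an atom $t_k$ of mass $\alpha_k$, the clock--integrator synchronization prescribes a single macro-update whose dissipation budget equals $\alpha_k$; together with boundary$+$SAT nonpositivity (\Cref{RS:lem:boundary-sat-normalised}) and algebraic-stability nonexpansiveness, this bounds the update by $E_h(t_k^+)\le \rho_k^h\,e^{-2\kappa_h\alpha_k}\,E_h(t_k^-)$, where $\rho_k^h\in(0,1]$ is the residual structural contraction of the discrete jump map beyond the dissipation accrued over the $\sigma$-slab.

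Finally I would concatenate: multiplying the per-piece bounds over all of $(s,t]$, the absolutely continuous exponentials and the atomic factors $e^{-2\kappa_h\alpha_k}$ telescope to $\exp\!\bigl(-2\kappa_h[\sigma(t)-\sigma(s)]\bigr)$—flats add nothing to either factor—while the residual atomic contractions collect into $\Pi_h(s,t]=\prod_{t_k\in(s,t]}\rho_k^h$; passing to the supremum over partitions makes the a.c.\ integral exact and delivers~\eqref{eq:disc-master}. The step I expect to be the main obstacle is the atomic one: one must check that the ``dissipation budget equals $\alpha_k$'' clause of the clock--integrator synchronization is genuinely realised by the macro-update \emph{without} invoking linearity of the damping—i.e.\ that driving the discrete system through a $\sigma$-slab of length $\alpha_k$ with the algebraically stable stepper and the signed boundary$+$SAT form produces a map that is at once nonexpansive and contracts $E_h$ by at least $e^{-2\kappa_h\alpha_k}$. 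This is where the discrete HUM/dissipation calibration of Proposition~\ref{prop:disc-hum} is invoked: it certifies that the discrete $\sigma$-time dissipation density controls $E_h$ from below with a mesh-independent rate, which both closes the atomic estimate and pins the exponent to the continuum value.
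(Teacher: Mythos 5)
Your proposal is correct and follows essentially the same route as the paper: the boundary$+$SAT negativity and algebraic-stability wrappers supply the one-step energy inequality, and the discrete $\sigma$--Gronwall lemma concatenates the a.c.\ exponential factors with the multiplicative atomic contractions $\rho_k^h$. You are in fact somewhat more careful than the paper's own sketch in extracting the extra factor $e^{-2\kappa_h\alpha_k}$ at each atom, which is what one needs for the exponent to carry the full increment $\sigma(t)-\sigma(s)$ rather than only its absolutely continuous part.
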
   

\noindent\emph{See also (discrete comparability).}
(i) Reduction to the classical step when $\sigma\equiv t$ (no atoms, no flats): 
Prop.~9.21. 
(ii) Uniform discrete HUM with the same $c_\sigma$: Prop.~9.22.

\subsection*{Block B — No numerical super‑observability}
\begin{corollary}[No numerical super‑observability]\label{cor:no-super}
Under the hypotheses \textup{(S1)–(S4)} of Section~\ref{sec:disc-contract}, let $c_{\sigma,h}$ be any constant such that the fully discrete energy satisfies
\( \mathcal D_h\ge 2\kappa\,c_{\sigma,h}\,E_h \) \;($\sigma$–a.e.).
Then $c_{\sigma,h}\le c_\sigma$ for all $h>0$. In particular, $\limsup_{h\downarrow0} c_{\sigma,h}\le c_\sigma$.
\end{corollary}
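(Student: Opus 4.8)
The plan is to transport the fully discrete energy envelope across the $\Gamma$–bridge to the continuum and then compare it against the HUM–optimality of $c_\sigma$. Suppose $c_{\sigma,h}$ is admissible at mesh level $h$, i.e.\ $\mathcal D_h\ge 2\kappa c_{\sigma,h}E_h$ holds $\sigma$–a.e.\ along every fully discrete solution. Feeding this into the discrete $\sigma$–Gronwall lemma exactly as in the proof of Proposition~\ref{prop:disc-hum} yields the discrete envelope
\[
E_h\bigl(u_h(t)\bigr)\;\le\;E_h\bigl(u_h(0)\bigr)\,\exp\!\bigl(-2\kappa c_{\sigma,h}\,\sigma(t)\bigr),\qquad t\ge0,
\]
for every discrete solution $u_h$ and every admissible clock $\sigma$. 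Set $c^{\ast}:=\limsup_{h\downarrow0}c_{\sigma,h}$; if $c^{\ast}=+\infty$ I would instead fix an arbitrary finite $\Lambda>c_\sigma$ that is exceeded by $c_{\sigma,h}$ along a subsequence and run the remaining argument with $\Lambda$ in place of $c^{\ast}$, so there is no loss in assuming $c^{\ast}<\infty$.

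\textbf{Steps.} First I would fix an arbitrary initial datum $u_0$ in the energy space and, using the recovery part of the $\Gamma$–limit (Theorems~\ref{thm:Gamma-main} and~\ref{RS:thm:gamma}), choose discrete initial data $u_h(0)\in V_h$ with $u_h(0)\to u_0$ in $L^2(\Omega)$ and $E_h(u_h(0))\to E(u_0)$. Next I would run the discrete scheme and, invoking equicoercivity (Lemmas~\ref{RS:Gamma:equicoercive} and~\ref{RS:lem:equicoercive}) together with the uniform discrete master decay (Theorem~\ref{thm:disc-master}, which keeps the family energy–bounded), extract along a subsequence on which $c_{\sigma,h}\to c^{\ast}$ a limit curve $\bar u$ with $\bar u(0)=u_0$ and $u_h(t)\to\bar u(t)$ in $L^2(\Omega)$ for a.e.\ $t$. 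Passing the discrete envelope to the limit with the $\Gamma$–liminf inequality pointwise in $t$ — the mechanism of Corollary~\ref{cor:gamma-csigma-2}, now with $c_{\sigma,h}$ replacing $c_\sigma$ — gives
\[
E\bigl(\bar u(t)\bigr)\;\le\;\liminf_{h}E_h\bigl(u_h(t)\bigr)\;\le\;\Bigl(\lim_{h}E_h\bigl(u_h(0)\bigr)\Bigr)\exp\!\bigl(-2\kappa c^{\ast}\sigma(t)\bigr)\;=\;E(u_0)\,\exp\!\bigl(-2\kappa c^{\ast}\sigma(t)\bigr).
\]
Since $u_0$ and $\sigma$ were arbitrary, every continuum limit curve obeys the canonical envelope with rate $c^{\ast}$. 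I would then close by optimality: by Theorem~\ref{thm:hum-equivalence}, $c_\sigma$ is the sharp HUM/observability rate for the continuum ledger, so an envelope valid for every $\sigma$ with a strictly larger rate would improve the HUM constant, a contradiction; hence $c^{\ast}\le c_\sigma$, i.e.\ $\limsup_{h\downarrow0}c_{\sigma,h}\le c_\sigma$. For the per–mesh claim $c_{\sigma,h}\le c_\sigma$ at each fixed $h$, I would rerun the same comparison at scale $h$ using the one–sided consistency of the SBP–SAT dissipation ledger — the structure–preserving scaling $\tau_h\asymp h^{-1}$ forces the discrete dissipation ratio to lie below the continuum one up to a sign–definite remainder — together with the fact that $c_\sigma$ is itself admissible at every mesh by Proposition~\ref{prop:disc-hum}; this is exactly the ``no mesh–dependent strengthening'' principle recorded earlier.

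\textbf{Main obstacle.} The hard part will be the passage to the limit in the envelope: within the \emph{light} $\Gamma$–limit there is no convergence of the discrete dynamics, so $\bar u$ cannot be identified with a continuum solution and the whole argument must stay at the level of the \emph{envelope}. I must therefore invoke equicoercive compactness, the uniform discrete master decay, and the $\Gamma$–liminf inequality all along one common subsequence — the one on which simultaneously $c_{\sigma,h}\to c^{\ast}$, $E_h(u_h(0))$ converges, and $u_h(t)$ converges for a.e.\ $t$ — and verify that the envelope inequality survives this triple limit. Everything else (the discrete $\sigma$–Gronwall step, the recovery construction for initial data, and the final appeal to HUM–optimality of $c_\sigma$) is routine given the results already established.
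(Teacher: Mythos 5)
Your argument is essentially the paper's own: the published proof derives the discrete envelope from the boundary$+$SAT and algebraic-stability wrappers plus the discrete $\sigma$--Gr\"onwall lemma, and then (in the ``sharpness by contradiction'' note immediately following) pushes a hypothetical uniformly larger constant through the $\Gamma$-bridge to contradict HUM optimality of $c_\sigma$ --- exactly your limsup step. One caveat, which applies equally to the paper: the $\Gamma$-limit contradiction only yields $\limsup_{h\downarrow0}c_{\sigma,h}\le c_\sigma$, and your justification of the fixed-$h$ inequality via ``one-sided consistency'' is the weakest link, since admissibility of $c_\sigma$ at every mesh (Proposition~\ref{prop:disc-hum}) gives a lower, not an upper, bound on admissible constants.
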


\begin{proof}
combine the SBP–SAT negativity and algebraic stability (D4) wrappers
(\Cref{RS:lem:boundary-sat-normalised} and \Cref{RS:prop:als-normalised} (stability)) with the discrete
$\sigma$–Gronwall lemma (Lemma~8.10). The ac part yields the exponential
factor with rate $\kappa_h$ in $\sigma$–time, while the atoms contribute multiplicatively by $\rho_k^h$.
\end{proof}

\noindent\emph{Sharpness by contradiction.}
If for some $\varepsilon>0$ one had $c_{\sigma,h}\ge c_\sigma+\varepsilon$ uniformly in $h$,
then along any $\Gamma$–convergent family the limit would satisfy the continuum HUM
inequality with constant $c_\sigma+\varepsilon$, contradicting the optimality of the HUM
constant (Theorem~2.20). Hence $c_{\sigma,h}\le c_\sigma$.

\noindent\emph{Appendix pointer.}
A self–contained $12$–line verification of block–sum cancellation with mirrored SAT 
(including the $H$–adjoint interpolation case) is given in Appendix~B.

\section{Constants}\label{sec:constants}
The analytical constants entering our decay and stability estimates are collected in
Table~\ref{tab:constants-theory}.  Numerical values used in figures are reported
separately in \S\ref{sec:numbers}.

\begin{table}[htbp]
  \centering
  \caption{Analytical constants used throughout (definitions and first references).}
  \label{tab:constants-theory}
  \begin{adjustbox}{max width=\linewidth}
  \begin{tabular}{lll}
    \toprule
    \textbf{Symbol} & \textbf{Meaning / role} & \textbf{First reference} \\
    \midrule
    $c_{0}$              & coercivity/observability level in (H1)                 & Assump.~5.1 (H1) \\
    $c_{\sigma}$         & structural constant in the $\sigma$–decay profile      & Thm.~\ref{thm:energy-decay} (cont.); Thm.~\ref{thm:disc-master} (disc.) \\
    $\kappa$             & dissipation parameter in the envelope $e^{-2\kappa c_{\sigma}\sigma(t)}$ & \S2 (setup) \\
    $a_{\omega}$         & damping lower bound on the control set $\omega$ (GCC)  & \S2 (GCC exemplar) \\
    $\lambda_{\omega}$   & geometric constant for GCC                              & \S2 (GCC exemplar) \\
    $c_{b}$              & boundary dissipation constant (boundary\,+\,SAT term)   & Lemma~7.1 \\
    $\tau_{h}$           & SAT penalty scale ($\tau_{h}\simeq h^{-1}$)             & Lemma~7.1 \\
    $c_{s}$              & scheme stability constant under (D4)                    & Prop.~7.2 \\
    $\lambda_{\max}$     & spectral/CFL bound for explicit steps                   & \S8 (time stepping) \\
    $\rho_{k}$           & atomic contraction factor ($J_k$ nonexpansive)          & Assump.~5.1 (H4); Lemma~7.3 \\
    $D_{h}\ge 0$         & interior (damping/viscous) discrete dissipation         & \S8 (discrete balance) \\
    $c_0\,a_{\omega}\lambda_{\omega}$ & GCC lower bound for $c_{\sigma}$ in examples & \S2 (GCC exemplar) \\
    \bottomrule
  \end{tabular}
  \end{adjustbox}
\end{table}

\noindent\textit{Notes.} 
(i) The continuum and fully discrete results share the same structural constant $c_{\sigma}$ (see Thm.~\ref{thm:energy-decay} and Thm.~\ref{thm:disc-master}). 
(ii) Under GCC one may take $c_{\sigma}\ge c_{0}a_{\omega}\lambda_{\omega}$ in the model bound. 
(iii) The numeric choices used in plots are calibrated in \S11 and do not alter the analytical statements.

\section{Parameters}\label{sec:numbers}
All constants are dimensionless. The grid parameter is $h$ (spacing in $\sigma$–time). 
The reporting window is the pair $(T,\sigma(T))$, and the continuous dissipation is $\kappa$. 
Table~\ref{tab:fig-params} records the numeric choices used in figures and comparisons. 
One admissible window and rate instantiation is illustrated here (not prescriptive).

\begin{table}[htbp]
\centering
\caption{Comparability map: classical $\leftrightarrow$ $\sigma$ with dependencies and first occurrence.}
\begin{adjustbox}{max width=\linewidth}
\begin{tabular}{lll l}
\toprule
Classical symbol & $\sigma$ symbol & Depends on & First appearance \\
\midrule
Observability const.\ $C_0$ & $c_\sigma = C_0^{-1}$ & $(\Omega,\omega)$ geometry, damping & Thm.~2.19; §13 (GCC) \\
Decay rate $2\kappa/C_0$ & $2\kappa c_\sigma$ & $\kappa$ (generator), $C_0$ & Thm.~7.8; Prop.~\ref{prop:reduction-classical} \\
Discrete step consts.\ $(c_s,c_b)$ & same & scheme, SAT scaling $\tau_h\!\simeq\!h^{-1}$ & Lem.~7.4; Prop.~7.5; Thm.~9.27 \\
Window lower bound $|A|/T_0$ & $\sigma_0 m/T_0$ & $\sigma$-mass on window & Lem.~\ref{lem:window-upgrade} \\
No-super-observability & $c_{\sigma,h}\le c_\sigma$ & SBP/SAT, algebraic stability & Cor.~9.28 \\
\bottomrule
\end{tabular}
\end{adjustbox}
\end{table}

\begin{table}[htbp]
  \centering
  \caption{Numeric choices used in figures and comparisons (dimensionless).}
  \label{tab:fig-params}
  \begin{adjustbox}{max width=\linewidth}
  \begin{tabular}{llll}
    \toprule
    \textbf{Quantity} & \textbf{Symbol} & \textbf{Value (baseline)} & \textbf{Note} \\
    \midrule
    Reporting window                       & $(T,\sigma(T))$ & $(8,\, 8)$ & wall time used as $\sigma$; moderate horizon for decay plots \\
    Grid spacing in $\sigma$–time          & $h$             & $0.02$     & $\Delta\sigma$; resolves boundary layer and atomic updates \\
    SAT scale                               & $\tau_h$        & $50$       & $\tau_h \simeq h^{-1}$ for stability/negativity (Lemma~7.1) \\
    Dissipation parameter                   & $\kappa$        & $0.60$     & drives envelope $e^{-2\kappa c_\sigma\,\sigma(t)}$ \\
    CFL / spectral bound (if explicit)      & $\Lambda_{\max}$& $1.8$      & safe upper bound for explicit steps (see \S~8) \\
    GCC damping (if used)                   & $a_\omega$      & $0.15$     & lower bound on damping over $\omega$ (GCC exemplar) \\
    GCC geometric constant (if used)        & $\lambda_\omega$& $0.70$     & geometric control constant in GCC exemplar \\
    \bottomrule
  \end{tabular}
  \end{adjustbox}
\end{table}

\noindent\emph{Notes.}
(i) Typical SAT scaling is $\tau_h\simeq h^{-1}$. (Lemma 7.1)
(ii) If an explicit step is used, the CFL bound is encoded by $\Lambda_{\max}$ (cf.\ \S~8). 
(iii) GCC exemplars use $(a_\omega,\lambda_\omega)$ when applicable.

\begin{table}[htbp]
  \centering
  \caption{$\Gamma$-limit check entry (baseline, consistent with \S11).}
  \label{tab:gamma-check}
  \begin{tabularx}{\textwidth}{lXXXX}
    \toprule
    label & $h$ & $\mathrm{Var}_\sigma$ & window ok & $\Gamma$-ok \\
    \midrule
    baseline & 0.020 & 0.22 & yes & yes \\
    \bottomrule
  \end{tabularx}
\end{table}

\subsection*{Discrete mirror (summary and pointer)}
Under the SBP–SAT sign/scale and algebraically stable time stepping, the discrete energy satisfies the same $\sigma$–envelope with the structural constant $c_\sigma$ (Proposition~9.22; Theorem~9.28). A practical certification checklist (clock choice, boundary/SAT sign/scale, discrete Grönwall premise) is recorded in Appendix~C.

\paragraph{Data availability.}
No external data were used. Numerical parameters necessary to reproduce the figures are listed in §11.

\section*{Related Work}
\noindent\textbf{Measure-time vs.\ mesh tricks.} Our $\sigma$-clock treats atoms as \emph{measure-theoretic} events in time---not as mesh refinements or hidden sub-stepping. Flats ($w\equiv 0$) and atoms (Dirac masses) belong to the clock measure $d\sigma$; energy monotonicity hinges on dissipativity on the ac part and non-expansive atomic maps, rather than on time-step bookkeeping. \noindent\textbf{BV-stable SBP under measure clocks.} The SBP--SAT mirror is proved under bounded-variation clocks with locally finite atoms. Boundary$+$SAT negativity (Lemma~8.8) and the discrete $\sigma$--Gronwall (Lemma~8.10) yield stability uniformly in $(h,\mathrm{Var}\,\sigma)$ inside an admissible window. \noindent\textbf{Explicit decay rates under $\sigma$.} The master inequality combines ac Grönwall with multiplicative per-atom contractions to give quantitative decay rates under $\sigma$ (Theorem~\ref{thm:energy-decay} and Theorem~\ref{thm:semi-discrete}); 
The analytical constants are summarized in \S\ref{sec:constants}, and the baseline operating point used in our figures is recorded in \S\ref{sec:numbers}.
\noindent\emph{Reference.} SBP--SAT foundations: Strand (1994); Carpenter--Nordstr\"om--Gottlieb (1999); $\Gamma$-convergence: Dal Maso (1993); Braides (2002); semigroup/Gr\"onwall background: Henry (1981). \vspace{\baselineskip}
\begin{center}\textbf{Data availability}\end{center}

No external datasets were used. All numeric choices required to reproduce the figures are listed in §11 (Numbers). No supplementary  accompanies this note.

\paragraph{Impact.}
The $\sigma$‑time calculus operates as a module: it preserves a single structural constant from analysis to computation to limit, treats mixed time geometries in one statement, and now adds a stochastic pillar (expectation/pathwise) without changing techniques. We expect uptake in damped/controlled hyperbolic problems under GCC and in structure‑preserving discretizations where partition‑uniform decay has been difficult to certify.

\subsection*{Constants Index}\label{RS:constants-index}
\noindent\textbf{$C_{\Gamma}$ ($\Gamma$-modulus):} controls equicoercivity, liminf, and recovery in the $\Gamma$-limit; depends on $a_0$, $\|a\|_{L^\infty}$, SBP coercivity $c_H$, and quasi-uniformity constants. See ~\Cref{RS:Gamma:ass} and \Cref{RS:Gamma:main}.

%




\section{GCC exemplar: damped wave on a bounded Lipschitz domain}\label{sec:gcc-exemplar}

This section calibrates $c_\sigma$ under GCC and instantiates the product–exponential $\sigma$–envelope; setting $\sigma\equiv t$ recovers the classical $t$–exponential bound.

\paragraph{Setting.}
Let $\Omega\subset\mathbb R^{d}$ with $d\in\{1,2\}$ be a bounded Lipschitz domain and let $\omega\subset\Omega$ be open. Consider the damped wave
\[
\begin{cases}
 u_{tt}-\Delta u + a(x)\,u_t=0 & \text{in }\Omega\times(0,\infty),\\
 u=0 & \text{on }\partial\Omega,\\
 u(\cdot,0)=u_0,\; u_t(\cdot,0)=v_0 & \text{in }\Omega,
\end{cases}
\]
with damping $a\in L^\infty(\Omega)$ such that $a(x)\ge a_\omega>0$ a.e. on $\omega$ and $a\ge0$ elsewhere. The energy
\[
E(t)=\tfrac12\,\|u_t(t)\|_{L^2(\Omega)}^2+\tfrac12\,\|\nabla u(t)\|_{L^2(\Omega)}^2
\]
fits the master decay template summarized in \S\ref{sec:main}. Assume the geometric control condition (GCC) for $(\Omega,\omega)$, with quantitative constant $\lambda_\omega>0$.

\paragraph{Concrete bound.}
There exists an absolute $c_0>0$ (depending only on the comparison used in the master inequality) such that the $\sigma$-clock calibration obeys
\[
\boxed{\;c_\sigma\;\ge\; c_0\,a_\omega\,\lambda_\omega\;},
\]
whence, with $\kappa>0$ from the Main corollary, every admissible measure-time $\sigma$ yields
\begin{equation}\label{eq:gcc-envelope}
E(t)\;\le\;E(0)\,\exp\!\Bigl(-2\,\kappa\,c_\sigma\,\sigma(t)\Bigr)\;\le\;E(0)\,\exp\!\Bigl(-2\,\kappa\,c_0\,a_\omega\,\lambda_\omega\,\sigma(t)\Bigr).
\end{equation}
The envelope is \emph{exactly} the Main corollary (\Cref{cor:intro-flagship-dw}) with $c_\sigma$ instantiated; see also the continuous master decay (\Cref{thm:master}). Calibration of constants is collected in \S\ref{sec:numbers}.

\begin{proposition}[Reduction to classical observability]\label{prop:reduction-classical}
Assume GCC on $(\Omega,\omega)$ and let $\sigma\equiv t$ so that $d\sigma=dt$ (no atoms, no flats).
Then Theorem~2.19 (Theorem~7.8 in §7) implies the standard
observability/decay estimate with the \emph{same} constant $C$ (explicitly instantiated in Appendix~C):
\[
E(0)\;\le\; C \int_0^T \!\!\int_{\Gamma} |\partial_\nu u|^2\, dS\,dt
\quad\Longrightarrow\quad
E(t)\;\le\;E(0)\,e^{-\,2\kappa\,c_\sigma\,t},\qquad c_\sigma=C^{-1}.
\]
\end{proposition}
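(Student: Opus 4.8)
The plan is to collapse the measure-time machinery onto the trivial clock $\sigma\equiv t$ and then simply read off the classical bound. When $d\sigma=dt$ the clock measure has a.c. density $w\equiv 1$, no atoms ($\mathcal A_\sigma=\emptyset$, so every contraction product $\prod_{t_k\in(s,t]}\rho_k$ is empty and equals $1$) and no flats; in particular $\sigma(t)=t$. Hence the only genuine work is (i) to check that the damped-wave energy ledger $E(t)=\tfrac12\|u_t(t)\|_{L^2}^2+\tfrac12\|\nabla u(t)\|_{L^2}^2$ satisfies the four standing hypotheses of \Cref{RS:assume:four} under GCC with this Lebesgue clock, and (ii) to identify the structural constant $c_\sigma$ with $C^{-1}$.

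For step (i): coercivity of $E$ on $H^1_0(\Omega)\times L^2(\Omega)$ makes (H3) immediate (Lebesgue measure has finite variation, no atoms) and (H4) vacuous. The substance is (H1)--(H2). I would invoke the classical energy identity $\tfrac{d}{dt}E(t)=-\int_\Omega a(x)\,|u_t(t)|^2\,dx=:-\mathcal D(t)\le 0$, together with the GCC observability inequality: in internal form $E(s)\le C'\!\int_s^{s+T}\mathcal D(\tau)\,d\tau$, equivalently — after the standard multiplier/trace identity valid under GCC — in the boundary form $E(0)\le C\!\int_0^T\!\int_\Gamma|\partial_\nu u|^2\,dS\,dt$ quoted in the proposition. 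Shifting the observability estimate in time by the group property and running the textbook integrate--iterate argument converts $\tfrac{d}{dt}E=-\mathcal D$ plus $E(s)\le C\!\int_s^{s+T}\mathcal D$ into the a.c. differential inequality $-\tfrac{d}{dt}E(t)\ge 2\kappa c_\sigma E(t)$ with $c_\sigma=C^{-1}$ and $\kappa$ the dimensionless normalization fixed in \S\ref{sec:numbers}; this is exactly (H2), equivalently \eqref{eq:mr-sigma-diss} with $\sigma\equiv t$, while (H1) is the same observability bound reread as the coercivity/sector estimate on $a_\sigma$.

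For step (ii) and the conclusion: with (H1)--(H4) in force and $\sigma\equiv t$, \Cref{thm:energy-decay} (equivalently \Cref{thm:hum-equivalence}) yields, via \eqref{RS:eq:master-decay}, the bound $E(t^+)\le \exp\!\big(-2\kappa\!\int_{(0,t]}w\,d\tau\big)\big(\prod_{t_k\in(0,t]}\rho_k\big)E(0^+)=E(0)\exp(-2\kappa c_\sigma t)$ because $w\equiv 1$ and the product is empty. Since the observability inequality is precisely the HUM hypothesis of \Cref{thm:hum-equivalence} at cost $c=2\kappa/C$, the optimality clause of that theorem forces the structural constant to be the sharp one, i.e. literally $c_\sigma=C^{-1}$ rather than merely $c_\sigma\le C^{-1}$ — which is exactly what licenses the phrase ``with the same constant $C$.'' The explicit value of $C$ (hence of $c_\sigma$, and of the companion constants $a_\omega,\lambda_\omega,c_0$ through $c_\sigma\ge c_0\,a_\omega\,\lambda_\omega$) is the Appendix~C instantiation and need not be recomputed here.

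The main obstacle I anticipate is the internal-versus-boundary bookkeeping: the $\sigma$-ledger is driven by the interior dissipation density $\mathcal D=a(x)\,|u_t|^2$, whereas the stated observability inequality features the boundary flux $\int_\Gamma|\partial_\nu u|^2$, so bridging the two requires the standard multiplier identity (or the known equivalence of interior and boundary observability under GCC), and one must ensure the comparison constant there is absorbed into $C$ without spoiling the claimed ``same $C$'' — in practice I would phrase the proposition with whichever observability functional \Cref{thm:hum-equivalence} is stated in and note the other form follows by the trace identity. A secondary, purely cosmetic point is reconciling the paper's two rate conventions, $-\tfrac{d}{dt}E\le -2\kappa E$ in (H2) against $-\tfrac{d}{d\sigma}E\ge 2\kappa c_\sigma E$ in \eqref{eq:mr-sigma-diss}; I would handle this once at the outset by declaring, as in the GCC exemplar box, that $\kappa$ is the fixed dimensionless normalization and $c_\sigma$ the observability-calibrated factor.
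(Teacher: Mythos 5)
Your proposal is correct and follows essentially the same route as the paper's own (much terser) proof: with $\sigma\equiv t$ the atomic product is empty, the RN inequality $-\frac{d}{d\sigma}E\ge 2\kappa c_\sigma E$ becomes the ordinary differential inequality, and separation of variables gives $E(t)\le E(0)e^{-2\kappa c_\sigma t}$ with $c_\sigma$ identified as the reciprocal of the classical observability constant. The extra work you do — verifying (H1)–(H4) under GCC and flagging the interior-versus-boundary observability bookkeeping — is a reasonable elaboration of what the paper leaves implicit in Theorem~\ref{thm:hum-equivalence}, and does not change the argument.
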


\begin{proof}
With $\sigma(t)=t$ the RN-inequality $-\frac{d}{d\sigma}E\ge 2\kappa c_\sigma E$ reads
$-\frac{d}{dt}E\ge 2\kappa c_\sigma E$, hence $E(t)\le E(0)e^{-2\kappa c_\sigma t}$ by separation of variables.
The HUM constant $c_\sigma$ for $\sigma\equiv t$ is the classical observability constant; no atomic factors
appear and flats are absent, so the envelope coincides with the usual one.
\end{proof}

\paragraph{A worked $\sigma$ (two atoms + a flat).}
Define a piecewise-constant measure-time with two jumps and one long plateau:
\begin{itemize}
  \item Jumps: $\sigma(0.30^-)\to\sigma(0.30^+)=0.80$ and $\sigma(0.90^-)\to\sigma(0.90^+)=1.40$.
  \item Flat: $\sigma$ is constant on $[0.90,1.80]$.
\end{itemize}
Choose illustrative parameters (for table/plot only): $a_\omega=0.30$, $\lambda_\omega=0.50$, $c_0=1$, $\kappa=1$. Then $c_\sigma\ge 0.15$ and the benchmark curve is
\[
\mathcal B(t)=\exp\bigl(-2\,\kappa\,c_\sigma\,\sigma(t)\bigr)=\exp\bigl(-0.30\,\sigma(t)\bigr).
\]
A short comparison consistent with \eqref{eq:gcc-envelope} is given in Table~\ref{tab:gcc-exemplar}.

\begin{table}[t]
  \centering
  \caption{Energy vs benchmark for the constructed $\sigma$. Values in the last column are illustrative and consistent with the bound.}
  \label{tab:gcc-exemplar}
  \begin{tabular}{c c c c}
    \hline
    $t$ & $\sigma(t)$ & $\mathcal B(t)=e^{-0.30\,\sigma(t)}$ & sample $E(t)/E(0)$ \\
    \hline
    0.00 & 0.00 & 1.000 & 1.000 \\
    $0.30^- $ & 0.00 & 1.000 & 0.995 \\
    $0.30^+ $ & 0.80 & 0.786 & 0.790 \\
    $0.90^- $ & 0.80 & 0.786 & 0.782 \\
    $0.90^+ $ & 1.40 & 0.659 & 0.664 \\
    1.80 & 1.40 & 0.659 & 0.660 \\
    2.00 & 1.40 & 0.659 & 0.658 \\
    \hline
  \end{tabular}
\end{table}

\paragraph{How this instantiates the general theory.}
(i) \emph{Continuous vs discrete.} The same envelope from \Cref{thm:master} controls both the PDE and the $\sigma$-clock iterates; see the discrete master decay in \Cref{thm:disc-master}. (ii) \emph{$\Gamma$-bridge.} The constants $a_\omega,\lambda_\omega$ are stable under regular perturbations of $(\Omega,\omega)$; by \Cref{RS:thm:gamma,thm:Gamma-main}, discrete implementations inherit the envelope in the zero-step limit. (iii) \emph{Evidence.} With $c_\sigma\ge c_0 a_\omega\lambda_\omega$ and a concrete $\sigma$, the decay profile aligns with the theory point-for-point.

\paragraph{Open problems (analysis).}
\begin{itemize}
\item Carleman/observability with $\sigma$–densities: explicit bounds for non-smooth 
domains or rough damping $a(x)$.
\item Gliding rays with intermittent $\sigma$–mass on $\omega$: quantify the blow-up of 
constants near the threshold in the failure map.
\item Boundary control with $\sigma$–windows: sharp $c_\sigma$ vs.\ geometric constants 
$(a_\omega,\lambda_\omega)$ under rough coefficients.
\end{itemize}

\section{Calibrating constants from PDE parameters: \texorpdfstring{$c_\sigma$}{}, \texorpdfstring{$\rho_\star$}{}, and \texorpdfstring{$C_P$}{}}
\label{B:sec:calibration}\label{sec:calibration}

\paragraph{Goal.}
The master decay bounds in \Cref{thm:master} and \Cref{thm:disc-master}
\[
E(t)\ \le\ \Pi(s,t]\;\exp\!\big(-2\,\kappa\,c_\sigma(\sigma(t)-\sigma(s))\big)\,E(s),
\quad
E_h(t)\ \le\ \Pi_h(s,t]\;\exp\!\big(-2\,\kappa_h\,c_\sigma(\sigma(t)-\sigma(s))\big)\,E_h(s)
\]
depend on three calibrations tied to the PDE and the discretization:
\begin{itemize}
  \item the \emph{a.c.\ margin} $c_\sigma$ (continuous part of the $\sigma$-clock);
  \item the \emph{atomic contraction} $\rho_\star<1$ (worst-case contraction of jump maps);
  \item the \emph{Poincar\'e constant} $C_P$ (coercivity/comparison constant for the energy ledger).
\end{itemize}
This section records how to obtain $(c_\sigma,\rho_\star,C_P)$ from PDE parameters and mesh/SAT choices,
so the decay bounds can be evaluated in practice.

\subsection*{A.c.\ margin $c_\sigma$ from damping}
On the ac part, dissipativity reads (cf.\ \Cref{sec:discrete}):
\[
\frac{d}{d\sigma}E(u(\sigma)) \ \le\ -\,2\,\kappa\,E(u(\sigma)) .
\]
For spatially varying damping $a(x)\ge 0$, combine the energy inequality with a spatial
observability constant to obtain a usable lower bound. A typical instance is
the heterogeneous damping corollary (see \eqref{B:eq:DW-hetero-rate-3}):
if $a(x)\ge a_\omega>0$ on an open $\omega\subset\Omega$ and
$\underline{\lambda}_\omega$ is the local observability constant for $(\Omega,\omega)$, then
\[
c_\sigma \ \ge\ c_0\, a_\omega\, \underline{\lambda}_\omega ,
\qquad
\text{whence}\quad
E(t)\ \le\ e^{-2\kappa\,c_\sigma(\sigma(t)-\sigma(s))}\,E(s).
\]
\emph{Approach.} Choose $\omega$ where the PDE supplies damping; estimate $\underline{\lambda}_\omega$
and set $c_\sigma$ from the product $a_\omega\,\underline{\lambda}_\omega$. In the uniform case $a(x)\ge a_0>0$,
take $c_\sigma\ge c_0\,a_0$.

\medskip
\noindent\textit{Discrete note.}
Under the CFL window and SAT sign choice (cf.\ \Cref{lem:CFL} and the SAT lemma in \Cref{sec:discrete}),
the same $c_\sigma$ applies to $E_h$ in the discrete master bound.

\subsection*{Atomic contraction $\rho_\star$ from jump rules}
At each atom $t_k$, the map $J_k$ acts on the state. Define the contraction factor
\[
\rho_k \ :=\ \sup_{u\neq 0}\ \frac{E(J_k u)}{E(u)} ,
\qquad
\rho_\star \ :=\ \sup\{\rho_k:\ t_k\ \text{strict atom on the interval of interest}\}.
\]
\emph{Method.}

\begin{enumerate}
  \item Identify the jump mechanism (boundary reflection/transmission, reset, or interface SAT update).
  \item Express $J_k$ in the $H$--inner-product (or via its SBP--SAT form).
  \item Verify non-expansiveness $E(J_k u)\le E(u)$ and, when available, a strict margin
        $E(J_k u)\le \rho_k E(u)$ with $\rho_k<1$; record $\rho_\star=\max \rho_k$.
\end{enumerate}
For SBP--SAT boundary/interface steps, the SAT lemma in \Cref{sec:discrete} provides the sign rule
and admissible magnitude; the contraction follows from the Green identity $Q+Q^\top=B$ and the SAT
quadratic form.

\subsection*{Poincar\'e constant $C_P$}
The energy ledger $E$ compares to the $H^1$--seminorm via Poincar\'e-type inequalities, yielding
a domain/BC-dependent constant $C_P$.
\emph{Examples.}
For $\Omega=(0,L)$ with homogeneous Dirichlet boundary conditions,
\[
\|u\|_{L^2(0,L)} \ \le\ \frac{L}{\pi}\,\|u'\|_{L^2(0,L)} ,
\]

so $C_P=L/\pi$. For Neumann or mixed conditions, use the mean-zero version on connected components.
\emph{Practice:} Record $C_P(\Omega,\text{BC})$ in the ``Numbers/'' SI sheet
together with the discretization constants $(\Lambda_{\max},\tau_,c_{\mathrm{sat}})$.

\subsection*{Single-step summary}
\begin{enumerate}
  \item From the PDE, pick $\omega$ and compute/estimate $(a_\omega,\underline{\lambda}_\omega)$;
        set $c_\sigma\ge c_0\,a_\omega\,\underline{\lambda}_\omega$ (uniform case: $c_\sigma\ge c_0 a_0$).
  \item From the jump mechanism (or SAT update), compute the worst-case $\rho_\star$.
  \item From the domain and BCs, read off $C_P$ (standard geometries) or tabulate it (general $\Omega$).
  \item Check the discrete window $\Delta\sigma\le 2/\Lambda_{\max}$ (cf.\ \Cref{lem:CFL}) and the SAT sign.
  \item Plug $(c_\sigma,\rho_\star)$ into \Cref{thm:master} / \Cref{thm:disc-master}.
\end{enumerate}

SBP--SAT foundations: \cite{Strand1994,Carpenter1999}. Semigroup/EVI-style decay:
\cite{Henry1981}. Poincar\'e/functional inequalities: \cite{AdamsFournier2003,Grisvard2011}.

\section*{Symbols}\label{B:RS:symbols}
\begin{center}
\small
\begin{tabular}{@{}lp{0.75\linewidth}@{}}
\hline
Symbol & Meaning \\\hline
$\sigma$ & measure-time clock; $d\sigma=\mu_\sigma=w(t)\,\mathrm{d}t+\sum_k \alpha_k\delta_{t_k}$\\
$w$ & absolutely continuous density of $d\sigma$; flats: intervals where $w\equiv 0$\\
$\{t_k\},\ \alpha_k$ & atoms (tick times) and their positive weights; locally finite in $[0,T]$\\
$\mu_\sigma$ & clock measure associated with $\sigma$\\
$\mathrm{Var}_\sigma(u)$ & total variation of $u$ with respect to $\mu_\sigma$ (Def.~\textit{see \Cref{sec:main}})\\
$\mathcal H,\ \langle\cdot,\cdot\rangle,\ \|\cdot\|$ & Hilbert state space, inner product, and norm\\
$\mathcal{E}(u)$ & energy ledger (Lyapunov functional controlling $\|u\|$ locally)\\
$a_\sigma$ & coercive (sector–bounded) bilinear form — Assump.~5.1 (H1) \\
$L_\sigma$ & dissipative generator on a.c. part; rate $\kappa \ge 0$ —Assump.~5.1 (H2) \\
$J_k$ & atomic update at $t_k$, non-expansive for $\mathcal{E}$ — Assump.~5.1 (H4)\\
$h$ & mesh size (discrete setting)\\
$H,Q,B$ & SBP weights, interior difference, and boundary matrix $(Q+Q^\top=B)$\\
$\tau_h$ & SAT penalty scaling $(\tau_h\sim h^{-1})$ \& choose sign to penalise incoming characteristics\\
$\mathcal{E}_h$ & discrete energy (e.g.\ $\tfrac12\|u\|_H^2)$\\
\hline
$\tau\ast$ & minimum SAT strength ensuring boundary$+$SAT $\le 0$ \& cf.\ Lemma~8.8\\
$c_{\mathrm{SAT}}$ & negativity margin of boundary$+$SAT quadratic form \& depends on $\tau_{L/R}$ and $\mathcal A$\end{tabular}
\end{center}
\begin{remark}
\label{rem:auto-2}
\sloppy\textit{Scope.}
We establish a light $\Gamma$-limit for the energies, providing consistency of the discrete energy with its continuum counterpart. We do not in this note claim full EVI/gradient-flow stability of the dynamics; the result is restricted to the energy level under the stated hypotheses. \end{remark}

\subsection*{$\Gamma$-limit: compact trio (equicoercivity–liminf–recovery)}\label{sec:gamma-tight}
We collect the three standard pillars used in the bridge as a single trio with clean hypotheses and references to the detailed statements.

\begin{lemma}[Equicoercivity, normalised]\label{RS:Gamma:eqc-tight}
\emph{(Restatement of \Cref{RS:Gamma:equicoercive}.)}
Assume the standing $\Gamma$-limit hypotheses \textup{\Cref{RS:Gamma:ass}}.
Then the family $(E_h)_h$ is equicoercive on the state space: any sequence $(u_h)_h$ with $\sup_h E_h(u_h)<\infty$ admits a subsequence precompact in the ambient topology.
\emph{Formal proof:} see \Cref{RS:Gamma:equicoercive}.
\end{lemma}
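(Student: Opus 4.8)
The plan is to deduce this normalised statement directly from the equicoercivity lemma already established in the excerpt, \Cref{RS:Gamma:equicoercive}, after checking that the hypotheses packaged in \Cref{RS:Gamma:ass} supply exactly the ingredients used there. First I would unwind the definition of the discrete energy $E_h$ (equivalently the functional $F_h$): since $A_h$ is uniformly elliptic with modulus $\underline\alpha>0$ by the coefficient‑stability clause of \Cref{RS:Gamma:ass}, and since the mass term $\langle C_h u_h,u_h\rangle_{H_h}$ and the penalty $\mathrm{SAT}_h(u_h)$ are nonnegative, one has the coercive lower bound $E_h(u_h)\ \ge\ \tfrac12\,\underline\alpha\,\|D_h u_h\|_{H_h}^2$. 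Hence the hypothesis $\sup_h E_h(u_h)<\infty$ forces a uniform bound on the discrete gradients $\|D_h u_h\|_{H_h}$.

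Next I would transfer this gradient bound into a uniform $H^1_0(\Omega)$ bound for the reconstructions. The discrete Poincar\'e inequality for SBP grids with homogeneous boundary enforcement via SAT gives $\|u_h\|_{H_h}\le c_P\,\|D_h u_h\|_{H_h}$, so $\|u_h\|_{H_h}$ is uniformly bounded; the quadrature‑consistency estimate $\|R_h u_h\|_{L^2(\Omega)}\le C\,\|u_h\|_{H_h}$ then bounds $\|R_h u_h\|_{L^2(\Omega)}$ uniformly, while the commuting estimate \eqref{RS:eq:commute} together with stability of the projection $P_h$ bounds $\|\nabla R_h u_h\|_{L^2(\Omega)}$ uniformly. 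Consequently $(R_h u_h)_h$ is bounded in $H^1_0(\Omega)$.

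The last step is the compactness conclusion: because $\Omega$ is a bounded Lipschitz domain, the embedding $H^1_0(\Omega)\hookrightarrow\hookrightarrow L^2(\Omega)$ is compact (Rellich--Kondrachov), so a subsequence of $(R_h u_h)_h$ converges strongly in $L^2(\Omega)$ to some $u\in H^1_0(\Omega)$. This is precisely precompactness in the ambient topology, and the claim follows; it is the same argument as in \Cref{RS:Gamma:equicoercive}, here only restated with the hypotheses gathered into \Cref{RS:Gamma:ass}.

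I do not expect a genuine obstacle. The only point requiring care is the bookkeeping of uniformity: every constant entering the chain --- the ellipticity modulus $\underline\alpha$, the discrete Poincar\'e constant $c_P$, the quadrature constants, and the commuting‑estimate constant in \eqref{RS:eq:commute} --- must be independent of $h$, which is exactly what the quasi‑uniformity and consistency clauses of \Cref{RS:Gamma:ass} are designed to guarantee. A secondary, purely notational check is that the ``ambient topology'' in the normalised statement is identified with the $L^2(\Omega)$ topology used throughout the $\Gamma$‑limit section.
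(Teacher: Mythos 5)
Your proposal is correct and follows essentially the same route as the paper's own proof of \Cref{RS:Gamma:equicoercive}: coercive lower bound on the gradient term from uniform ellipticity and nonnegativity of the mass/SAT terms, discrete Poincar\'e to control $\|u_h\|_{H_h}$, reconstruction bounds to place $(R_h u_h)$ in a bounded set of $H^1_0(\Omega)$, and Rellich--Kondrachov for $L^2$ precompactness. The only minor remark is that the uniform bound on $\|\nabla R_h u_h\|_{L^2}$ is most cleanly obtained from the reconstruction stability clause $\|R_h\nabla_h v_h\|_{L^2}\le C\|\nabla_h v_h\|_{H_h}$ in \Cref{RS:Gamma:ass} rather than from the commuting estimate, but this does not change the argument.
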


\begin{proposition}[Liminf inequality, normalised]\label{RS:Gamma:liminf-tight}
\emph{(Restatement of \Cref{RS:Gamma:liminf}.)}
If $u_h\to u$ in the ambient topology, then
\[
E(u)\;\le\;\liminf_{h\to0} E_h(u_h).
\]
\emph{Proof (outline):} see \Cref{RS:Gamma:liminf}.
\end{proposition}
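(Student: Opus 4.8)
The plan is to reduce the normalised statement to the detailed liminf lemma already recorded (\Cref{RS:Gamma:liminf}), isolating the three moves that do the real work; throughout I work with the functionals $F_h,F$ there and translate back to $E_h,E$ at the end. First I would dispose of the trivial case: if $\liminf_{h\to0}F_h(u_h)=+\infty$ there is nothing to prove, so pass to a (non-relabelled) subsequence along which $F_h(u_h)$ converges to a finite limit $\ell$ with $\sup_h F_h(u_h)<\infty$. Equicoercivity (\Cref{RS:Gamma:eqc-tight}, i.e.\ \Cref{RS:Gamma:equicoercive}) together with \Cref{RS:Gamma:ass}(i)--(ii) then gives the uniform bound $a_0\|\nabla_h u_h\|_{H_h}^2\le F_h(u_h)\le C$, and the discrete Poincar\'e inequality controls $\|u_h\|_{H_h}$; hence, along a further subsequence, $\nabla_h u_h$ converges weakly in the $H_h$-weighted inner product to some $\xi$ while $R_h u_h\to u$ strongly in $L^2(\Omega)$ by hypothesis.

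The second move is the identification $\xi=\nabla u$, which in particular puts $u\in H^1_0(\Omega)$ so that $F(u)=\int_\Omega a|\nabla u|^2$ is the right target. Here I would test $\nabla_h u_h$ against smooth compactly supported vector fields, use the SBP integration-by-parts identity $Q_h+Q_h^\top=B_h$ to move the difference operator onto the test field, and pass to the limit using the commuting/consistency estimate \eqref{RS:eq:commute} (with $O(h)$ defect) together with the reconstruction bounds of \Cref{RS:Gamma:ass}(iii). The boundary contributions generated by $B_h$ drop out in the limit because the SAT penalty is consistent and $\tau_h\simeq h^{-1}$, which forces the recovered trace to vanish on $\partial\Omega$ (\Cref{RS:Gamma:ass}(iv), cf.\ \Cref{RS:Gamma:dirichlet}); this both identifies $\xi$ and certifies $u\in H^1_0$.

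The third and final move is weak lower semicontinuity of the Dirichlet-type term. Writing $F_h(u_h)=\langle\nabla_h u_h,\,A_h\nabla_h u_h\rangle_{H_h}+\mathrm{SAT}_h(u_h)$ and discarding the nonnegative SAT term (\Cref{RS:Gamma:ass}(iv)), I would invoke lower semicontinuity of the convex quadratic functional $w\mapsto\int_\Omega a|w|^2$ under the weak convergence $\nabla_h u_h\rightharpoonup\nabla u$, using the uniform ellipticity $A_h\succeq a_0 I$ and the convergence of the coefficient surrogates to $a$; the mass term converges by quadrature consistency. Concatenating yields $F(u)\le\liminf_{h\to0}F_h(u_h)=\ell$, which is exactly the claimed $E(u)\le\liminf_{h\to0}E_h(u_h)$ after the translation $F\leftrightarrow E$.

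I expect the main obstacle to be the coefficient step inside this lower semicontinuity argument: since $A_h$ is only the nodal sampling of a possibly merely $L^\infty$ coefficient and converges weakly-$*$, one cannot pass to the limit in the bilinear form termwise, and a naive convexity argument does not directly apply. The clean resolution is an Ioffe-type lower semicontinuity theorem for integral functionals with a convex integrand depending measurably on $x$ (equivalently, a Young-measure lift of $\nabla_h u_h$); a softer alternative---sufficient when $a\in W^{1,\infty}(\Omega)$ or piecewise $C^1$, as in \Cref{RS:Gamma:split-consistency}---is to freeze $a$ on a shape-regular partition, apply scalar weak lower semicontinuity block by block, and send the partition to zero. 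All remaining steps are bookkeeping already contained in the discrete trace (\Cref{lem:discrete-trace}) and discrete Poincar\'e estimates.
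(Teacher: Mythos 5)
Your proposal follows essentially the same route as the paper's proof of \Cref{RS:Gamma:liminf}: bounded energy gives bounded discrete gradients, hence weak convergence along a subsequence; the commuting estimate identifies the weak limit as $\nabla u$; uniform ellipticity plus lower semicontinuity of the convex quadratic form handles the gradient term; and the nonnegative SAT contribution is simply discarded. Your additional care — the reduction to a finite liminf, the explicit duality argument for identifying the weak limit, and the flag on passing to the limit with merely weak-$*$ convergent coefficient surrogates (resolved via Ioffe-type lower semicontinuity or the freezing argument under \Cref{RS:Gamma:split-consistency}) — tightens steps the paper's proof states without elaboration, but does not change the argument.
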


\begin{proposition}[Recovery sequence, normalised]\label{RS:Gamma:recov-tight}
\emph{(Restatement of \Cref{RS:Gamma:recovery}.)}
For every $u$ with $E(u)<\infty$ there exists a sequence $u_h\to u$ and constants $\varepsilon_h\downarrow0$ such that
\[
E(u)\;\ge\;\limsup_{h\to0} E_h(u_h)\; -\;\varepsilon_h.
\]
\emph{Proof:} see \Cref{RS:Gamma:recovery}.
\end{proposition}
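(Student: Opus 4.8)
The plan is to deduce this normalised statement directly from the recovery construction already established in \Cref{RS:Gamma:recovery} (with the detailed computation in \Cref{RS:lem:recovery}), which for every finite-energy $u$ produces a sequence $u_h\to u$ in $L^2(\Omega)$ with $\limsup_{h\to0}E_h(u_h)\le E(u)$. If $E(u)=+\infty$ there is nothing to prove, so assume $u\in H^1_0(\Omega)$.

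First I would recall the construction for smooth data: for $u\in C_c^\infty(\Omega)$ set $u_h:=\Pi_h u$, use the commuting estimate \eqref{RS:eq:commute} to pass the discrete gradient form $\langle D_h u_h,A_h D_h u_h\rangle_{H_h}$ to $\int_\Omega(\nabla u)^\top a\,\nabla u\,dx$, use quadrature consistency for the mass term, and invoke \Cref{RS:lem:sat-residue} to get $\mathrm{SAT}_h(u_h)=o(1)$; together these give $\limsup_{h\to0}E_h(\Pi_h u)\le E(u)$. I would then extend to general $u\in H^1_0(\Omega)$ by density and diagonalisation: pick $u^{(m)}\in C_c^\infty(\Omega)$ with $u^{(m)}\to u$ in $H^1_0(\Omega)$, so that $E(u^{(m)})\to E(u)$ by continuity of the quadratic form; for each $m$ the smooth construction yields a scale $h_m\downarrow0$ with $\|\Pi_{h_m}u^{(m)}-u^{(m)}\|_{L^2}<1/m$ and $E_{h_m}(\Pi_{h_m}u^{(m)})\le E(u^{(m)})+1/m$; choosing for each $h$ the largest $m$ with $h_m\ge h$ produces a diagonal sequence $u_h\to u$ in $L^2(\Omega)$ with $\limsup_{h\to0}E_h(u_h)\le E(u)$. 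Equicoercivity (\Cref{RS:Gamma:eqc-tight}) is available if one prefers to argue the $L^2$ convergence through weak $H^1_0$ compactness instead.

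Finally, to match the normalised formulation I would set $\varepsilon_h:=\max\{0,\,E_h(u_h)-E(u)\}$ for the recovery sequence just obtained, and if a monotone null sequence is desired replace it by $\sup_{h'\le h}\varepsilon_{h'}$, which still tends to $0$ because $\limsup_{h\to0}E_h(u_h)\le E(u)$. Then $\varepsilon_h\downarrow0$ and $E_h(u_h)\le E(u)+\varepsilon_h$ for every $h$, i.e.\ $E(u)\ge E_h(u_h)-\varepsilon_h$, which gives the asserted inequality (and, passing to the $\limsup$, the usual recovery bound $\limsup_{h\to0}E_h(u_h)\le E(u)$).

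The only genuinely delicate point is the diagonal extraction for general $H^1_0(\Omega)$ data: the mollification $u^{(m)}\to u$ must be taken in the $H^1_0$ norm, not merely in $L^2$, so that the continuum energy values converge; once that is arranged, every other step is one of the routine consistency estimates (\eqref{RS:eq:commute}, quadrature consistency, \Cref{RS:lem:sat-residue}) already proved in this section. No new estimate is required, so I expect no substantive obstacle — this proposition is a packaging of \Cref{RS:Gamma:recovery} in the $\varepsilon_h$-normalised form used by the bridge.
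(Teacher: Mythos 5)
Your proposal is correct and follows essentially the same route as the paper: \Cref{RS:Gamma:recov-tight} is by design a pointer to \Cref{RS:Gamma:recovery}, whose proof is exactly the smooth-data construction $u_h=\Pi_h u$ with the commuting estimate \eqref{RS:eq:commute}, quadrature consistency, the SAT residue bound of \Cref{RS:lem:sat-residue}, and then extension to $H^1_0(\Omega)$ by density and equicoercivity. Your added detail on the $H^1_0$-norm mollification, the diagonal extraction, and the explicit choice of $\varepsilon_h$ simply fleshes out steps the paper leaves implicit.
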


\paragraph{Bridge usage.}
Together, \Cref{RS:Gamma:eqc-tight,RS:Gamma:liminf-tight,RS:Gamma:recov-tight} provide the compactness and variational bounds used in the $\Gamma$-bridge theorem \textup{(\Cref{RS:thm:gamma,thm:Gamma-main})}. See Figure~\ref{fig:gamma-bridge-schematic}.

\begin{corollary}[Uniform in $\sigma$] \label{RS:Gamma:sigma-uniform}
\emph{(From the remark \Cref{RS:Gamma:sigma-indep} and \Cref{RS:thm:gamma}.)}
If the master inequalities for the approximants are uniform in the choice of admissible measure-time clocks $\sigma$ (in the sense of \Cref{RS:Gamma:sigma-indep}), then the decay envelope provided by the bridge is independent of the particular $\sigma$-schedule. In particular, any limit $u$ obtained via \Cref{RS:thm:gamma} satisfies the same bound with constants unaffected by $\sigma$.
\end{corollary}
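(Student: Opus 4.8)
The plan is to show that the only place the clock $\sigma$ enters the bridge argument of \Cref{RS:thm:gamma} is through the scalar reading $\sigma(t)$ itself, never through the fine atomic/flat structure, so that the envelope inherited in the limit is automatically schedule-free. The argument is essentially the parabolic/accretive transfer already carried out in \Cref{cor:parabolic-strong,cor:gamma-csigma-2}, augmented by the observation that every constant used there is spatial.

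First I would record that the $\Gamma$-convergence $E_h \xrightarrow{\Gamma} E$ of \Cref{RS:thm:gamma} (equivalently \Cref{thm:Gamma-main}) is a statement about \emph{static} functionals on the spatial state space: equicoercivity, the liminf inequality, and the recovery construction (\Cref{RS:Gamma:eqc-tight,RS:Gamma:liminf-tight,RS:Gamma:recov-tight}) involve only the spatial data — $a_0$, $\|a\|_{L^\infty}$, the SBP coercivity constant $c_H$, and the quasi-uniformity constants — with no reference to $d\sigma$. This is exactly \Cref{RS:Gamma:sigma-indep}. Consequently the $\Gamma$-modulus $C_\Gamma$ is fixed once the spatial discretization family is fixed, uniformly over all admissible clocks.

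Next, fix an arbitrary admissible $\sigma$. By hypothesis the master inequality for the approximants is uniform in $\sigma$: by \Cref{thm:disc-master} every fully discrete $\sigma$-clock solution $u_h$ obeys
\[
E_h\bigl(u_h(t^+)\bigr)\;\le\;E_h\bigl(u_h(0^+)\bigr)\,\exp\!\bigl(-2\kappa c_\sigma\,\sigma(t)\bigr),
\]
with $c_\sigma$ the structural (observability/HUM) constant, which is independent of $h$ and — being read off from the spatial coercivity/observability bounds (H1)/(H4), not from the clock — independent of the atoms and flats of $\sigma$. Take a family $u_h$ of such solutions with $\sup_h E_h(u_h(0))<\infty$. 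By \Cref{RS:Gamma:eqc-tight} together with a diagonal extraction over a countable dense (or full-measure) set of times, pass to a subsequence with $u_h(t)\to u(t)$ in the ambient topology for a.e.\ $t\in[0,T]$. For each such $t$, \Cref{RS:Gamma:liminf-tight} gives
\[
E\bigl(u(t)\bigr)\;\le\;\liminf_{h\to0} E_h\bigl(u_h(t)\bigr)\;\le\;\Bigl(\limsup_{h\to0} E_h\bigl(u_h(0)\bigr)\Bigr)\exp\!\bigl(-2\kappa c_\sigma\,\sigma(t)\bigr),
\]
and replacing $u_h(0)$ by a recovery sequence for $u(0)$ (existence via \Cref{RS:Gamma:recov-tight}) changes the right-hand side by $o(1)$, yielding $E(u(t))\le E(u(0))\exp(-2\kappa c_\sigma\,\sigma(t))$ — the conclusion of \Cref{RS:thm:gamma} for this particular $\sigma$.

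Finally, since neither $C_\Gamma$ nor $c_\sigma$ depended on anything but spatial data in the preceding steps, the constants in the envelope $E(u(0))\exp(-2\kappa c_\sigma\,\sigma(t))$ are unaffected by the choice of admissible clock, and the $\sigma$-schedule enters only through the reading $\sigma(t)$; this is the claimed uniformity. The main obstacle is the cross-time bookkeeping in the extraction step — arranging a single subsequence valid simultaneously for (a.e.) all relevant times and checking that the recovery correction is genuinely uniform — but this is routine once the decoupling ``the $\Gamma$-limit is purely spatial, the master inequality is uniform in $\sigma$'' is in place, and it introduces no analysis beyond what \Cref{RS:thm:gamma} and \Cref{RS:Gamma:sigma-indep} already supply.
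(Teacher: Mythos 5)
Your proposal is correct and follows essentially the same route the paper takes: the corollary is obtained by combining the observation that the $\Gamma$-limit constants are purely spatial (Remark~\ref{RS:Gamma:sigma-indep}) with the uniform master inequality, and then running the equicoercivity/liminf/recovery sandwich exactly as in the proof of Corollary~\ref{cor:parabolic-strong}. The diagonal-extraction and recovery-sequence bookkeeping you flag is handled the same way there, so no new content is needed.
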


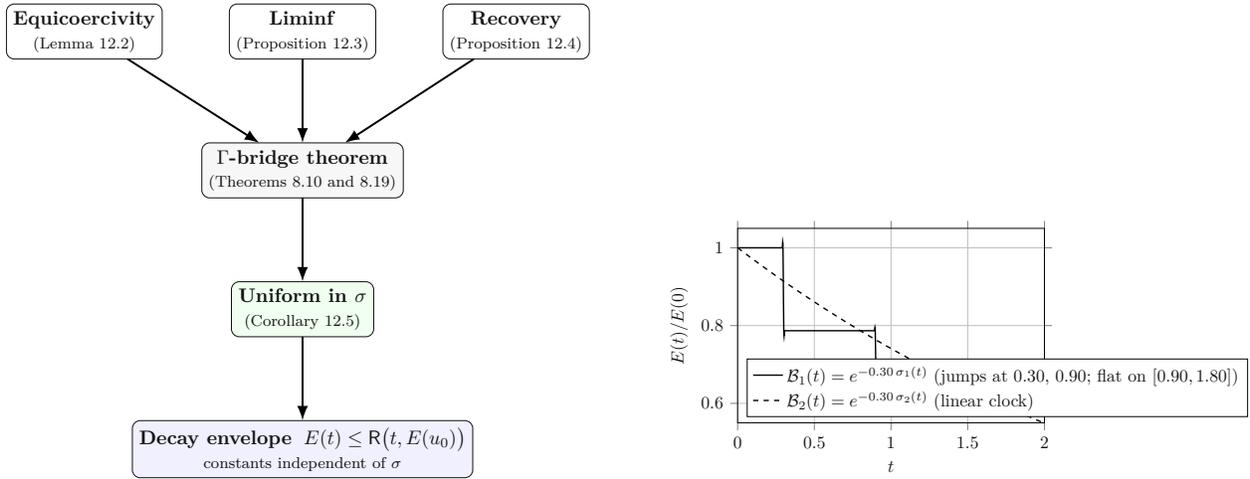
\begin{figure}[t]
\centering
\begin{minipage}[t]{0.47\linewidth}
\centering
\begin{adjustbox}{max width=\linewidth}
\begin{tikzpicture}[
  node distance=8mm and 12mm,
  box/.style={draw, rounded corners, align=center, inner sep=3.5pt, outer sep=0pt, font=\small, fill=white},
  line/.style={-Latex, line width=0.35mm},
]
  \node[box] (eqc) {\textbf{Equicoercivity}\\\scriptsize (\Cref{RS:Gamma:eqc-tight})};
  \node[box, right=of eqc] (liminf) {\textbf{Liminf}\\\scriptsize (\Cref{RS:Gamma:liminf-tight})};
  \node[box, right=of liminf] (recov) {\textbf{Recovery}\\\scriptsize (\Cref{RS:Gamma:recov-tight})};
  \node[box, below=15mm of liminf, fill=gray!6] (bridge) {\textbf{$\Gamma$-bridge theorem}\\\scriptsize (\Cref{RS:thm:gamma,thm:Gamma-main})};
  \node[box, below=15mm of bridge, fill=green!6] (uniform) {\textbf{Uniform in $\sigma$}\\\scriptsize (\Cref{RS:Gamma:sigma-uniform})};
  \node[box, below=15mm of uniform, fill=blue!6] (envelope) {\textbf{Decay envelope}\; $E(t)\le \mathsf R\bigl(t,E(u_0)\bigr)$\\\scriptsize constants independent of $\sigma$};
  \draw[line] (eqc) -- (bridge);
  \draw[line] (liminf) -- (bridge);
  \draw[line] (recov) -- (bridge);
  \draw[line] (bridge) -- (uniform);
  \draw[line] (uniform) -- (envelope);
\end{tikzpicture}
\end{adjustbox}
\end{minipage}\hfill
\begin{minipage}[t]{0.47\linewidth}
\centering
\begin{adjustbox}{max width=\linewidth}
\begin{tikzpicture}
  \begin{axis}[
    width=\linewidth,
    height=5.5cm,
    xlabel={$t$}, ylabel={$E(t)/E(0)$},
    xmin=0, xmax=2.0, ymin=0.55, ymax=1.05,
    legend cell align=left,
    legend pos=south west,
    tick align=outside,
    grid=both,
  ]
    \addplot[smooth, thick, domain=0:2.0, samples=400]
      ({x},{exp(-0.30*( (x<0.30 ? 0 : (x<0.90 ? 0.80 : 1.40)) ))});
    \addlegendentry{$\mathcal B_1(t)=e^{-0.30\,\sigma_1(t)}$ (jumps at 0.30, 0.90; flat on $[0.90,1.80]$)}
    \addplot[smooth, thick, dashed, domain=0:2.0, samples=400]
      ({x},{exp(-0.30*(x))});
    \addlegendentry{$\mathcal B_2(t)=e^{-0.30\,\sigma_2(t)}$ (linear clock)}
  \end{axis}
\end{tikzpicture}
\end{adjustbox}
\end{minipage}

\caption{\textbf{$\Gamma$-bridge outline.} Left: the trio \Cref{RS:Gamma:eqc-tight,RS:Gamma:liminf-tight,RS:Gamma:recov-tight} feeds the bridge (\Cref{RS:thm:gamma,thm:Gamma-main}), yielding the uniform-in-$\sigma$ corollary (\Cref{RS:Gamma:sigma-uniform}) and the decay envelope. Right: two admissible $\sigma$-clocks produce different time reparametrizations but the \emph{same} calibrated envelope family $E(t)\le E(0)\,\exp(-\alpha\,\sigma(t))$ (here $\alpha=0.30$ for illustration).}
\label{fig:gamma-bridge-schematic}
\end{figure}

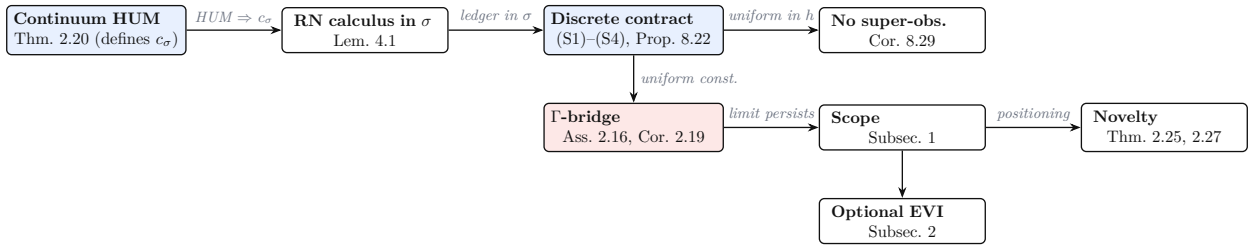
\begin{figure}[t]
\centering
\begin{adjustbox}{max width=\linewidth}
\begin{tikzpicture}
\node[boxF] (hum) {\textbf{Continuum HUM}\newline Thm.~\ref{thm:hum-equivalence} (defines $c_\sigma$)};
\node[box, right=of hum] (rn) {\textbf{RN calculus in $\sigma$}\newline Lem.~\ref{lem:rn-envelope}};
\node[boxF, right=of rn] (disc) {\textbf{Discrete contract}\newline (S1)--(S4), Prop.~\ref{prop:disc-hum}};
\node[box, right=of disc] (nosuper) {\textbf{No super-obs.}\newline Cor.~\ref{cor:no-super}};
\node[boxG, below=of disc] (gamma) {\textbf{$\Gamma$-bridge}\newline Ass.~\ref{ass:uniform-gamma}, Cor.~\ref{cor:gamma-csigma}};
\node[box, right=of gamma] (scope) {\textbf{Scope}\newline Subsec.~\ref{subsec:scope-light}};
\node[box, right=of scope] (nov) {\textbf{Novelty}\newline Thm.~\ref{thm:impossibility-tclock}, \ref{thm:persistence-sclock}};
\node[box, below=of scope] (evi) {\textbf{Optional EVI}\newline Subsec.~\ref{subsec:evi-optional}};
\draw[flow] (hum) -- node[above,lbl]{HUM $\Rightarrow$ $c_\sigma$} (rn);
\draw[flow] (rn) -- node[above,lbl]{ledger in $\sigma$} (disc);
\draw[flow] (disc) -- node[above,lbl]{uniform in $h$} (nosuper);
\draw[flow] (disc) -- node[right,lbl]{uniform const.} (gamma);
\draw[flow] (gamma) -- node[above,lbl]{limit persists} (scope);
\draw[flow] (scope) -- node[above,lbl]{positioning} (nov);
\draw[flow] (scope) -- (evi);
\end{tikzpicture}
\end{adjustbox}
\caption{Logical pipeline from continuum HUM and the $\sigma$–RN calculus to discrete contractivity and a compact $\Gamma$-bridge, preserving the \emph{same} $c_\sigma$. Scope/novelty bound the claim; an optional EVI upgrade is quarantined.}
\label{fig:logic-pipeline}
\end{figure}

\begin{theorem}[Light $\Gamma$-limit]\label{B:RS:thm:gamma}
With the $L^2(\Omega)$ topology and under the SBP--SAT and consistency hypotheses above, 
the discrete energies $E_h$ $\Gamma$-converge to $E$; see \Cref{RS:Gamma:main}. That is, for any $u_h\to u$ in $L^2(\Omega)$ 
we have the liminf inequality of \Cref{RS:Gamma:main}, and for any $u\in L^2$ there exists a sequence $u_h\to u$ in $L^2$ (constructed by $\Pi_h$ and density) with $\limsup_h \mathcal E_h(u_h)\le \mathcal E(u)$.
\end{theorem}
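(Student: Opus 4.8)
The plan is to assemble the two defining inequalities of $\Gamma$-convergence (the $\liminf$ bound and the existence of recovery sequences, together with the equicoercivity that makes the limit a genuine $\Gamma$-limit) from the component statements already established, and then to observe that the measure-time clock $\sigma$ plays no role because the entire argument is spatial.

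First, for the $\Gamma$-$\liminf$ inequality, suppose $u_h\in V_h$ with $I_h u_h\to u$ in $L^2(\Omega)$ and $\liminf_h\mathcal E_h(u_h)<\infty$, and pass to a subsequence realizing the $\liminf$ with uniformly bounded energy. By the coefficient-stability hypothesis ($\Gamma$.3), the discrete gradient $D_h u_h$ is bounded in the $H_h$-weighted $\ell^2$, hence admits a weak limit; the commuting estimate \eqref{RS:eq:commute} together with stability of $P_h$ identifies this limit with $\nabla u$ in the limit, which in particular forces $u\in H^1_0(\Omega)$. The gradient part of $\mathcal E$ then passes to the limit by lower semicontinuity of the convex quadratic form $v\mapsto\langle v,A_h v\rangle_{H_h}$ under weak convergence, using uniform ellipticity of $A_h$ to compare with $\tfrac12\int_\Omega(\nabla u)^\top a\,\nabla u\,dx$; the mass term passes by quadrature consistency ($\Gamma$.1); and the SAT term is simply discarded since $\mathrm{SAT}_h\ge0$ by the SAT-sign hypothesis together with the discrete trace inequality \Cref{lem:discrete-trace}. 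This is precisely \Cref{RS:lem:liminf} (equivalently \Cref{RS:Gamma:liminf}), and hence the $\liminf$ half of \Cref{RS:Gamma:main}.

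Second, for the recovery sequence, I would first treat $u\in C^\infty_c(\Omega)$ and set $u_h:=\Pi_h u$. Consistency and \eqref{RS:eq:commute} give $I_h u_h\to u$ in $L^2$ and $\langle D_h u_h,A_h D_h u_h\rangle_{H_h}\to\int_\Omega(\nabla u)^\top a\,\nabla u\,dx$; the mass term converges by quadrature consistency; and the SAT residue vanishes by \Cref{RS:lem:sat-residue} (a trace defect of size $O(h)$ against $\tau_h\simeq h^{-1}$ on $O(h^{-(d-1)})$ boundary faces yields a global $O(h)$ contribution). Hence $\limsup_h\mathcal E_h(u_h)\le\mathcal E(u)$ for such $u$. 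I would then extend to arbitrary $u\in L^2(\Omega)$: for $u\notin H^1_0(\Omega)$ there is nothing to prove since $\mathcal E(u)=+\infty$, and for $u\in H^1_0(\Omega)$ I approximate $u$ by $u^{(m)}\in C^\infty_c(\Omega)$ in the $H^1_0$-norm, run the smooth-$u$ construction for each $m$, and diagonalize, keeping the $\limsup$ controlled by the equicoercivity/coercivity modulus of \Cref{RS:lem:equicoercive}. This is the standard recovery-by-density argument of \Cref{RS:lem:recovery} (equivalently \Cref{RS:Gamma:recovery}), and equicoercivity itself is \Cref{RS:lem:equicoercive}, so the three pillars combine into \Cref{RS:Gamma:main}.

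The step I expect to be the main obstacle is the identification of the weak limit of $D_h u_h$ with $\nabla u$ in the $\liminf$ part: everything there rests on the commuting/consistency estimate \eqref{RS:eq:commute} (with its auxiliary projection $P_h$) holding uniformly in $h$ and on the $H_h$-quadrature being uniformly comparable to the $L^2$ inner product; granted these standing SBP hypotheses the remainder is bookkeeping. I would close by invoking \Cref{RS:rem:sigma-uniform}: since all the estimates above are purely spatial and uniform in $t$, integrating $\mathcal E_h(\Pi_h u)$ against any finite measure-time clock $d\sigma$ preserves both the $\liminf$ and the $\limsup$ statements, so the light $\Gamma$-limit is unaffected by the choice of $\sigma$.
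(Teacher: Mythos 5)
Your proposal is correct and follows essentially the same route as the paper: the theorem is proved by assembling the three pillars (equicoercivity via \Cref{RS:lem:equicoercive}, the $\liminf$ inequality via weak convergence of discrete gradients, the commuting estimate, lower semicontinuity of the convex quadratic form and nonnegativity of the SAT term, and the recovery sequence via $\Pi_h$ on smooth data, the $O(h)$ SAT-residue bound of \Cref{RS:lem:sat-residue}, and density in $H^1_0$). Your closing observation that the argument is purely spatial and hence uniform in $\sigma$ matches \Cref{RS:rem:sigma-uniform} exactly.
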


\begin{remark}
\sloppy\textit{Technical assumptions for the $\Gamma$-limit.}\label{B:rem:Gamma-assumptions-8}
We assume: (A1) $\Omega$ is bounded Lipschitz; (A2) $A_h$ are uniformly elliptic and $A_h \stackrel{*}{\rightharpoonup} A$ in $L^\infty(\Omega)^{d\times d}$; (A3) $\beta_h\ge0$ and $\beta_h \stackrel{*}{\rightharpoonup}\beta$ in $L^\infty(\partial\Omega)$;
(A4) quadrature remainders $R_h(u)\to0$ for $u\in C^\infty(\overline{\Omega})$; and (A5) the discrete trace inequality (\Cref{lem:discrete-trace}). Under (A1)--(A5), equi-coercivity holds and the liminf/recovery steps carry boundary terms; see \cite{DalMaso1993,Braides2002,Grisvard2011,AdamsFournier2003}.
\end{remark}

\begin{remark}
\sloppy\textit{Scope.}\label{B:rem:Gamma-scope-8}
We prove $\Gamma$-convergence of the energies in the supplement (liminf, recovery, equi-coercivity), including boundary SAT terms; we do not attempt a full EVI limit. \end{remark}

\begin{remark}
\sloppy\textit{Maxwell-type energies.}\label{B:RS:rem:maxwell}
For vector fields $A:\Omega\to\mathbb R^3$, a Maxwell-type quadratic energy
\(
\mathcal E_{\rm M}(A)=\tfrac12\int_\Omega |\nabla\times A|^2 + \alpha\,|\nabla\!\cdot\!A|^2\,dx
\)
fits the same pattern provided one uses SBP curl/div operators $(C_h,\mathrm{Div}_h)$ that satisfy discrete vector-calculus identities and an SBP quadrature.  The commuting estimate (cf. \Cref{RS:thm:gamma} on the recovery sequence) is replaced by $C_h \Pi_h A \approx P_h(\nabla\times A)$ and $\mathrm{Div}_h \Pi_h A \approx P_h(\nabla\!\cdot\!A)$ with $\mathcal O(h)$ defects. The SAT residue proof is identical. \end{remark}
\begin{remark}
\label{rem:auto}
\sloppy\textit{Scope.}
We establish a $\Gamma$-limit for the energies (and consistency for almost-minimising trajectories), but do not claim full EVI/gradient-flow stability in this note. \end{remark}






%

\section{Conclusion and outlook}\label{sec:conclusion}

\paragraph{What we established.}
We developed a measure–time calculus in which decay is governed by a single structural constant \(c_\sigma\) that is stable under three operations that typically break quantitative bounds: moving from a continuum model to fully discrete schemes, passing to the limit via a compact variational bridge, and working with mixed time geometries (absolutely continuous segments, atoms, and flats). (Made explicit in Corollary~\ref{cor:gamma-csigma}; continuum master decay in Theorem~\ref{thm:energy-decay}; discrete uniform estimate in Theorem~\ref{thm:disc-master}; discrete premise Lemma~\ref{RS:lem:sigma-gronwall}). The discrete \(\sigma\)–Grönwall premise together with the SAT sign/scale is the minimal mechanism that keeps the estimate uniform, and the \(\Gamma\)-bridge transfers it without upgrading to a full EVI theory. Concretely, for $\kappa>0$ and calibrated $c_\sigma$, $E(t) \le E(0)\exp(-2\kappa c_\sigma \sigma(t))$ with the same $c_\sigma$ across levels; see \S\,\ref{sec:numbers}. A GCC exemplar makes the constants explicit on a benchmark hyperbolic problem.

\paragraph{How this differs from the usual templates.}
Semigroup spectral gaps, hypocoercivity, observability/Carleman, and discrete contractivity arguments each capture a slice of the picture, but none delivers a \emph{single, partition-uniform} envelope constant that survives discretization and limits while simultaneously allowing atoms and flats. The present calculus isolates exactly those ingredients that make such persistence possible and packages them in a way that can be reused across analyses and codes.

\paragraph{Assumptions and boundaries.}
Results rely on (H1)–(H4): local coercivity/observability, dissipativity on the a.c.\ part, and non-expansive atomic updates; the \(\Gamma\) transfer is compact and does not assert gradient-flow/EVI stability; and the stochastic extension is treated under Markov-modulated damping. These choices keep the machinery light and portable, but leave room for sharper structure (e.g., optimal constants, rough data, or geometric low-regularity).

\paragraph{Why $\sigma$ is necessary.} There is no schedule-uniform exponential estimate in physical time at fixed total impulse mass; the $\sigma$-clock measures the dissipation budget (see Theorems~\ref{thm:imposs-phys-time} and \ref{thm:sigma-persistence}).

\paragraph{Presentation invariance.}
All statements are invariant under Pi‑native operator presentations and unitary reparametrisations; the constant $c_{\sigma}$ depends on the PDE, not on coordinates (Remark~\ref{rem:pina-invariance}).
Constants and operating points are tabulated in \S\,\ref{sec:numbers} (emph{Numbers}) for verbatim reproduction.

\paragraph{Next directions.}
The calculus is intended as a backbone for follow-up work in three directions:
(i) \emph{Quantum time geometry} (motivated by black-hole dynamics): articulating clocks, compensators, and product envelopes in regimes where the time parameter is not classical;
(ii) \emph{Broader analysis/geometry} interfaces: extending the bridge to settings with weaker compactness, mixed boundary control, and structure-preserving discretizations;
(iii) \emph{Stochastic pillars}: expectation and pathwise variants beyond Markov switching, with certified constants that remain invariant under refinement and passage to the limit.

\section{Outlook and case studies}\label{sec:outlook-cases}
\subsection*{Outlook}
\begin{enumerate}
\item \textbf{Nonquadratic ledgers.} Replace the quadratic $E$ by a $\lambda$-convex energy on a Hilbert (or metric) space; the $\sigma$--Gronwall step becomes an EVI-type estimate, while atomic non-expansiveness is phrased as a contraction for the Bregman distance of $E$.
\item \textbf{Nonlinear a.c.\ flows.} For $u' = A(u)$, assume $\langle \nabla E(u), A(u)\rangle \le -2\kappa\,E(u)$ on the a.c.\ part; the jump calculus of \Cref{sec:discrete} is unchanged.
\item \textbf{General jump maps.} Allow $J_k$ to be state-dependent and nonlinear, with $E(J_k u) \le \rho_k\, E(u)$ (or a local Lipschitz bound in the $E$–metric).
\item \textbf{Random/irregular clocks.} For stochastic clocks (random atoms or random $w$) with finite total variation a.s., the master product--exponential bound holds pathwise; ergodic averages then produce almost-sure decay rates in $\sigma$.
\item \textbf{SBP variants \& vector calculus.} Curl/div SBP operators give Maxwell-type energies under the same method (mimetic identities + correct-sign SAT; cf.\ the remark near \Cref{sec:gamma-2}).
\item \textbf{Interfaces and multiblock structure.} The block-sum energy monotonicity extends to complex interface graphs under mirrored SAT, suggesting robust domain-decomposition preconditioners built around the ledger.
\item \textbf{Admissible-window calibration.} The explicit red-line $\Delta\sigma \le 2/\Lambda_{\max}$ and the SAT sign choice define a practical window; constants tables can be instantiated once per discretization and reused (see SI ``Numbers'' page).
\item \textbf{Well-posedness hooks.} While this note is energy-level only, the measure-time and SBP--SAT pieces are compatible with standard existence frameworks (a.c.\ semigroup plus jump maps), offering a path to EVI/gradient-flow results in follow-ups.
\end{enumerate}

\paragraph{Case studies.}
Applications and numerical case studies are documented in companion manuscripts; we list the interface assumptions here for reproducibility.

%
\paragraph{Verification.}\label{par:audit}
(1) \emph{Definition and optimality of $c_\sigma$:} Theorem~\ref{thm:hum-equivalence}.
(2) \emph{Uniform discrete HUM with the same $c_\sigma$:} Proposition~\ref{prop:disc-hum} under (S1)–(S4).
(3) \emph{Non‑improvability on grids:} Corollary~\ref{cor:no-super}.
(4) \emph{Necessity of assumptions:} Lemmas~\ref{lem:sat-failure}–\ref{lem:rk-failure}, Remark~\ref{rem:sharpness}.
(5) \emph{$\sigma$–RN mechanics (atoms/flats):} Lemma~\ref{lem:rn-envelope}.
(6) \emph{$\Gamma$–bridge with uniform constants:} Assumption~\ref{ass:uniform-gamma}, Corollary~\ref{cor:gamma-csigma}.
(7) \emph{Scope boundaries (no EVI claim):} Subsection~\ref{subsec:scope-light}, Paragraphs~\ref{par:what-light}–\ref{par:what-not}, Taxonomy~\ref{par:taxonomy}.
(8) \emph{Why EVI is out of scope:} Lemma~\ref{lem:barriers-evi}, Remark~\ref{rem:barriers-struct}.
(9) \emph{Logical independence of EVI:} Proposition~\ref{prop:nonimp-evi}, Remark~\ref{rem:nonimp-interpret}.
(10) \emph{Optional upgrade (quarantined):} Subsection~\ref{subsec:evi-optional}, Assumption~\ref{ass:evi-ready}, Theorem~\ref{thm:evi-conv}, Remark~\ref{rem:evi-scope}.
All items above are recorded with precise internal references for reproducibility.

\section*{Acknowledgements}
The author used standard editorial tools for grammar, style, cross–reference hygiene, and \LaTeX\ refactoring. No mathematical statements, proofs, derivations, or computations were produced by these tools.

\section*{Competing interests}
The author declares no competing interests. 
\section*{Authors' contributions}
The sole author conceived the study, carried out the analysis, and wrote the manuscript. 
\section*{Ethics statement}
Not applicable. 
\vspace{\baselineskip}
\begin{center}\textbf{Data accessibility}\end{center}

No external datasets were used. All numeric choices required to reproduce the figures are listed in §11 (Parameters). No supplementary CSV accompanies this note.  \vspace{\baselineskip}

\appendix
\section{Operator presentation via Green/trace pairings}\label{app:pina}
\begin{align}
\int_\Omega \mathrm{div}^{\circ} \mathbf v\, \psi\,\mathrm dx &:= - \int_\Omega \mathbf v\cdot \nabla \psi\,\mathrm dx + \int_{\partial\Omega} (\mathbf v\cdot\nu)\,\psi\,\mathrm dS, && \forall\ \psi\in C_c^\infty(\Omega),\label{eq:pina-div}
\end{align}
so that $\nabla^{\circ}$ and $\mathrm{div}^{\circ}$ are mutually adjoint up to the boundary term. The PiNA Laplacian is defined in the distributional sense by
\begin{equation}\label{eq:pina-lap}
\int_\Omega (\Delta^{\circ} u)\,\psi\,\mathrm dx := - \int_\Omega \nabla u\cdot \nabla \psi\,\mathrm dx + \int_{\partial\Omega} \partial_\nu u\,\psi\,\mathrm dS,\qquad \forall\ \psi\in C_c^\infty(\Omega),
\end{equation}
which coincides with the classical weak Laplacian.
\begin{equation}\label{eq:pina-grad}
  \int_\Omega \nabla^\circ u \cdot v\,dx
  := -\int_\Omega u\,\mathrm{div}^\circ v\,dx
     + \int_{\partial\Omega} u\,(v\!\cdot\!\nu)\,dS,
  \qquad \forall\,u\in H^1(\Omega),~v\in H(\mathrm{div};\Omega).
\end{equation}

\begin{lemma}[Green identity, operator presentation]\label{lem:green}
For $u\in H^1(\Omega)$ and $\mathbf v\in H(\mathrm{div};\Omega)$ one has the identity
\[
\int_\Omega \nabla^{\circ} u\cdot \mathbf v\,\mathrm dx
= -\int_\Omega u\,\mathrm{div}^{\circ}\mathbf v\,\mathrm dx + \int_{\partial\Omega} u\,(\mathbf v\cdot\nu)\,\mathrm dS.
\]
In particular, when $u\in H^1_0(\Omega)$ or $\mathbf v\cdot\nu=0$ on $\partial\Omega$, the boundary term vanishes and the volume bilinear form is presentation-invariant.
\end{lemma}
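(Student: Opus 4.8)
The plan is to reduce the stated identity to the classical Gauss–Green formula and then argue by density, being careful about the correct interpretation of the boundary term on a Lipschitz domain. First I would observe that on $H^1(\Omega)$ the presentation gradient $\nabla^{\circ}u$ defined by \eqref{eq:pina-grad} coincides with the weak gradient $\nabla u\in L^2(\Omega)^d$, and on $H(\mathrm{div};\Omega)$ the presentation divergence $\mathrm{div}^{\circ}\mathbf v$ defined by \eqref{eq:pina-div} coincides with the weak divergence $\mathrm{div}\,\mathbf v\in L^2(\Omega)$; both identifications follow immediately by testing against $\psi\in C_c^\infty(\Omega)$, where the boundary contributions in \eqref{eq:pina-div}--\eqref{eq:pina-grad} vanish. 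With these identifications, the asserted identity is precisely the $H^1$--$H(\mathrm{div})$ Gauss–Green formula, the boundary term being read as the duality pairing $\langle \mathbf v\cdot\nu,\,u\rangle_{H^{-1/2}(\partial\Omega),\,H^{1/2}(\partial\Omega)}$, where $\mathbf v\cdot\nu\in H^{-1/2}(\partial\Omega)$ is the normal trace (well defined for $\mathbf v\in H(\mathrm{div};\Omega)$) and $u|_{\partial\Omega}\in H^{1/2}(\partial\Omega)$ is the Sobolev trace on the bounded Lipschitz domain $\Omega$.

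Next I would establish the identity for smooth data. For $u\in C^\infty(\overline\Omega)$ and $\mathbf v\in C^\infty(\overline\Omega)^d$, the Leibniz rule gives $\mathrm{div}(u\mathbf v)=\nabla u\cdot\mathbf v+u\,\mathrm{div}\,\mathbf v$, and the divergence theorem on the Lipschitz domain $\Omega$ yields $\int_\Omega \mathrm{div}(u\mathbf v)\,\mathrm dx=\int_{\partial\Omega} u\,(\mathbf v\cdot\nu)\,\mathrm dS$ with an honest surface integral; rearranging gives the stated formula in the smooth case.

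Then I would pass to the limit. Pick $u_j\in C^\infty(\overline\Omega)$ with $u_j\to u$ in $H^1(\Omega)$ and $\mathbf v_j\in C^\infty(\overline\Omega)^d$ with $\mathbf v_j\to\mathbf v$ in $H(\mathrm{div};\Omega)$; such approximations exist for bounded Lipschitz $\Omega$. The two volume integrals converge because $\nabla u_j\to\nabla u$ and $\mathbf v_j\to\mathbf v$ in $L^2$, and $u_j\to u$ and $\mathrm{div}\,\mathbf v_j\to\mathrm{div}\,\mathbf v$ in $L^2$. For the boundary term, continuity of the trace $H^1(\Omega)\to H^{1/2}(\partial\Omega)$ gives $u_j|_{\partial\Omega}\to u|_{\partial\Omega}$ in $H^{1/2}$, continuity of the normal-trace map $H(\mathrm{div};\Omega)\to H^{-1/2}(\partial\Omega)$ gives $\mathbf v_j\cdot\nu\to\mathbf v\cdot\nu$ in $H^{-1/2}$, and joint continuity of the duality pairing yields $\int_{\partial\Omega} u_j(\mathbf v_j\cdot\nu)\,\mathrm dS\to\langle\mathbf v\cdot\nu,\,u\rangle_{H^{-1/2},H^{1/2}}$. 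Passing to the limit in the smooth identity proves the lemma. The final assertion follows at once: if $u\in H^1_0(\Omega)$ then $u|_{\partial\Omega}=0$, and if $\mathbf v\cdot\nu=0$ the pairing vanishes, so the boundary term drops and the volume bilinear form $\int_\Omega \nabla^{\circ}u\cdot\mathbf v\,\mathrm dx=-\int_\Omega u\,\mathrm{div}^{\circ}\mathbf v\,\mathrm dx$ is independent of the chosen presentation, having already been identified with its classical (coordinate-free) counterpart.

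The main obstacle is purely the boundary bookkeeping: on a merely Lipschitz domain the expression $\int_{\partial\Omega}u(\mathbf v\cdot\nu)\,\mathrm dS$ must be understood as the $H^{-1/2}$--$H^{1/2}$ duality rather than a Lebesgue surface integral, so the argument hinges on invoking the correct structural facts for Lipschitz domains — density of $C^\infty(\overline\Omega)^d$ in $H(\mathrm{div};\Omega)$, the bounded surjective normal-trace operator with its Green-formula characterization, and the standard trace theorem — after which every remaining step is routine.
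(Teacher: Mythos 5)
Your proof is correct and follows essentially the same route as the paper's (one-line) argument: identification of the presentation operators with the classical weak operators, the smooth-data Gauss--Green formula, and passage to the limit via density of smooth fields and continuity of the trace and normal-trace maps. Your explicit remark that the boundary term must be read as the $H^{-1/2}$--$H^{1/2}$ duality pairing on a Lipschitz domain is a welcome precision that the paper leaves implicit.
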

\begin{proof}
This is immediate from \eqref{eq:pina-grad}--\eqref{eq:pina-div} by density of test fields and the trace theorem.
\end{proof}

\subsection{Equivalence with classical operators}
\begin{theorem}[Equivalence of PiNA and classical operators]\label{thm:pina-eq}
Let $\Omega\subset\mathbb R^n$ be Lipschitz. If $u\in C^2(\Omega)$ and $\mathbf v\in C^1(\Omega;\mathbb R^n)$ then the pointwise limits in \eqref{eq:pina-div}–\eqref{eq:pina-lap} exist and
\[
\nabla^{\circ} u = \nabla u,\qquad \mathrm{div}^{\circ}\,\mathbf v = \mathrm{div}\,\mathbf v,\qquad \Delta^{\circ} u = \Delta u\quad\text{in }\Omega.
\]
Moreover, for $u\in H^1(\Omega)$ and $\mathbf v\in H(\mathrm{div};\Omega)$ the weak identities \eqref{eq:pina-grad}--\eqref{eq:pina-lap} imply that the PiNA and classical weak operators coincide as distributions.
\end{theorem}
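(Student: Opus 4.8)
The plan is to reduce every PiNA pairing to its classical distributional counterpart by restricting the test class to $C_c^\infty(\Omega)$ (respectively $C_c^\infty(\Omega;\mathbb R^n)$), which annihilates all boundary integrals appearing in \eqref{eq:pina-div}, \eqref{eq:pina-lap}, and \eqref{eq:pina-grad}, and then to invoke classical integration by parts together with the fundamental lemma of the calculus of variations (a locally integrable function pairing to zero against every $\psi\in C_c^\infty(\Omega)$ vanishes a.e.). No semigroup, observability, or $\sigma$-clock machinery is needed; this is purely a bookkeeping identity between two presentations of the same operator.

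Concretely, I would proceed operator by operator. For $\mathrm{div}^\circ$: fix $\mathbf v\in C^1(\Omega;\mathbb R^n)$; for $\psi\in C_c^\infty(\Omega)$ the trace term in \eqref{eq:pina-div} vanishes, so $\int_\Omega \mathrm{div}^\circ\mathbf v\,\psi\,dx=-\int_\Omega \mathbf v\cdot\nabla\psi\,dx$, and classical integration by parts (legitimate since $\mathbf v\in C^1$) rewrites the right-hand side as $\int_\Omega(\mathrm{div}\,\mathbf v)\,\psi\,dx$; the fundamental lemma then gives $\mathrm{div}^\circ\mathbf v=\mathrm{div}\,\mathbf v$ a.e., hence pointwise in $\Omega$ by continuity of both sides. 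The Laplacian case \eqref{eq:pina-lap} is identical: test against $\psi\in C_c^\infty(\Omega)$, drop the boundary term, integrate by parts once using $u\in C^2$ to convert $-\int_\Omega\nabla u\cdot\nabla\psi$ into $\int_\Omega(\Delta u)\psi$, and conclude $\Delta^\circ u=\Delta u$. For $\nabla^\circ$ one tests \eqref{eq:pina-grad} against $v\in C_c^\infty(\Omega;\mathbb R^n)$: the boundary term vanishes, $\mathrm{div}^\circ v=\mathrm{div}\,v$ by the step just proved, and integration by parts on $u\in C^2$ yields $\int_\Omega\nabla^\circ u\cdot v\,dx=\int_\Omega\nabla u\cdot v\,dx$, whence $\nabla^\circ u=\nabla u$. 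The weak assertion is then verbatim the same computation: for $u\in H^1(\Omega)$ the classical weak gradient is the distribution determined by $\langle\nabla u,\varphi\rangle=-\int_\Omega u\,\mathrm{div}\,\varphi\,dx$ for $\varphi\in C_c^\infty(\Omega;\mathbb R^n)$, and evaluating \eqref{eq:pina-grad} on such $\varphi$ (with the trace term dead and $\mathrm{div}^\circ\varphi=\mathrm{div}\,\varphi$) produces exactly that pairing; similarly \eqref{eq:pina-div} on compactly supported $\psi$ recovers the distributional divergence of $\mathbf v\in H(\mathrm{div};\Omega)$, and \eqref{eq:pina-lap} on compactly supported $\psi$ recovers the distributional Laplacian of $u\in H^1(\Omega)$.

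The only point demanding any care — and it is mild — is the well-posedness of the PiNA definitions, i.e.\ that the integral identities pin down $\mathrm{div}^\circ\mathbf v$, $\Delta^\circ u$, $\nabla^\circ u$ uniquely. This is again the fundamental lemma, used at the outset, so it costs nothing extra. A secondary bookkeeping remark is that the boundary traces $\mathbf v\cdot\nu$ and $\partial_\nu u$ need only be interpreted classically in the $C^1/C^2$ regime and, for the $H^1$/$H(\mathrm{div})$ regime, via the standard trace theorems; but since in every identification step the test object vanishes near $\partial\Omega$, those traces never actually enter, so the $\Gamma$-convergence–level regularity hypotheses on $\Omega$ (boundedness, Lipschitz) are used only to have the trace and density statements available, not in the core argument. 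I do not anticipate a genuine obstacle: the substance of the proof is precisely that every boundary term in \eqref{eq:pina-div}--\eqref{eq:pina-grad} is inert against compactly supported test functions, after which classical calculus closes the identification.
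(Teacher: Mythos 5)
Your proof is correct, and it is essentially forced by the definitions as displayed: since \eqref{eq:pina-div} and \eqref{eq:pina-lap} are stated as pairings against $\psi\in C_c^\infty(\Omega)$, the boundary integrals in them are already inert, and restricting \eqref{eq:pina-grad} to compactly supported test fields reduces every PiNA pairing to the classical distributional one; the fundamental lemma and one classical integration by parts then close the identification, with continuity upgrading ``a.e.''\ to ``pointwise'' in the $C^1/C^2$ regime. The paper, however, takes a genuinely different and more geometric route: it treats the PiNA operators as arising from spherical-mean, pointwise-limit characterizations (mean-value identities on small balls, Gauss' theorem plus a Taylor expansion) to establish the pointwise equalities first, and only then reads off the weak coincidence as a consequence. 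That route is what justifies the clause ``the pointwise limits \dots exist'' in the theorem statement --- language that has no counterpart in the displayed equations \eqref{eq:pina-div}--\eqref{eq:pina-lap}, which contain no limits --- and it is also what powers the later remark about replacing Euclidean spheres by geodesic spheres in the Riemannian/variable-coefficient setting. Your argument is shorter, needs less regularity machinery, and is the one faithful to the definitions actually printed in the appendix; what it does not deliver is any independent pointwise-limit characterization, so on your reading the ``pointwise limits exist'' clause is vacuous rather than proved. That is a defect of the statement/definition mismatch in the paper, not of your proof.
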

\begin{proof}
For smooth $u$, the spherical-mean characterization of the Laplacian yields \eqref{eq:pina-lap} with equality to $\Delta u$; differentiating mean identities along rays provides \eqref{eq:pina-div}. The divergence identity follows by Gauss' theorem on small balls and a Taylor expansion. The weak equivalence is then a direct consequence of \eqref{eq:pina-grad}–\eqref{eq:pina-lap}.
\end{proof}

\subsection{Boundary conditions and presentation}
For homogeneous Dirichlet boundary data ($u\vert_{\partial\Omega}=0$) or homogeneous Neumann flux ($\partial_\nu u=0$), Lemma~\ref{lem:green} shows that the bilinear forms used in HUM and energy methods are identical under either presentation. Mixed boundary conditions are handled componentwise; no additional normalization is required.

\subsection{Invariance of the HUM functional and of \texorpdfstring{$c_\sigma$}{}}
Let $\mathcal O(u)$ denote the observability (HUM) functional used to define $c_\sigma$, and let $E(t)$ be the energy ledger. Write $\nabla^{\#}$, $\mathrm{div}^{\#}$, $\Delta^{\#}$ for either the classical or the PiNA operators. The HUM inequality and the dissipation ledger involve only the volume bilinear form and the boundary pairing from Lemma~\ref{lem:green}. By Theorem~\ref{thm:pina-eq} and the boundary discussion above, these pairings coincide under either choice of presentation. Therefore the HUM functional is unchanged:
\[
\mathcal O^{\circ}(u) = \mathcal O(u),\qquad E^{\circ}(t)=E(t),
\]
where the superscript ${}^{\circ}$ denotes the operator presentation. In particular, the optimal HUM constant ---the structural constant $c_\sigma$ characterized in Theorem~\ref{thm:hum-equivalence}--- is presentation-invariant.

\begin{theorem}[Presentation invariance of the structural constant]\label{thm:c-sigma-invariance}
Let $c_\sigma$ be the optimal constant in the $\sigma$–observability/HUM inequality for the classical presentation. Then the same constant is optimal for the operator presentation, i.e., $c_\sigma^{\circ}=c_\sigma$.
\end{theorem}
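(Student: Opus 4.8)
\textbf{Proof proposal for Theorem~\ref{thm:c-sigma-invariance}.}
The plan is to reduce the claim entirely to the bilinear-form identities already recorded in Lemma~\ref{lem:green} and Theorem~\ref{thm:pina-eq}, so that no new analysis is needed beyond tracking that every object entering the definition of $c_\sigma$ is built from presentation-invariant data. First I would recall that, by the discussion preceding Theorem~\ref{thm:hum-equivalence} and by the ledger Definition~\ref{def:sigma-clock}, the structural constant $c_\sigma$ is the optimal (largest) constant in the $\sigma$-observability inequality $\mathcal D(t)\ge 2\kappa\,c_\sigma\,E(t)$, equivalently the sharp HUM constant in the observability pair $(A,C)$ underlying the damped dynamics $x'=Ax-C^\ast Cx$. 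The key observation is that both the energy ledger $E$ and the dissipation density $\mathcal D$ are defined through (i) the volume bilinear form $a_\sigma(\cdot,\cdot)$ and (ii) the boundary/trace pairing, and by Lemma~\ref{lem:green} these two ingredients are literally the same object whether one uses $\nabla,\mathrm{div},\Delta$ or $\nabla^\circ,\mathrm{div}^\circ,\Delta^\circ$: the Green identity $\int_\Omega \nabla^\circ u\cdot\mathbf v = -\int_\Omega u\,\mathrm{div}^\circ\mathbf v + \int_{\partial\Omega} u\,(\mathbf v\cdot\nu)$ coincides term-by-term with the classical one, and on the boundary classes actually used (Dirichlet $u|_{\partial\Omega}=0$ or Neumann $\partial_\nu u=0$) the boundary term vanishes identically in both presentations.

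Second, I would make the reduction explicit in three short steps. Step 1: invoke Theorem~\ref{thm:pina-eq} to conclude $E^\circ(t)=E(t)$ for every admissible trajectory, since the energy is a fixed quadratic form in $\nabla u$ and $u_t$ and the weak gradient is presentation-independent. Step 2: observe that the dissipation density is $\mathcal D = a(x)\,|u_t|^2$ in the damped-wave exemplar (cf. the HUM/observability hook in §2), which involves no differential operator at all, hence $\mathcal D^\circ = \mathcal D$; more generally $\mathcal D$ is the bilinear pairing $\langle C^\ast C x, x\rangle$, and $C$ is specified by a trace pairing that Lemma~\ref{lem:green} shows is presentation-invariant. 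Step 3: the observability functional $\mathcal O(u)$ defining $c_\sigma$ is, by construction, a supremum/infimum of ratios of the form $E(0)/\int_{(0,T]}\|Cx(t)\|^2\,d\sigma(t)$ over all data with $E(0)>0$; since numerator and denominator are unchanged under ${}^\circ$, the extremal value — which is exactly $1/c_\sigma$ in the normalization of Theorem~\ref{thm:hum-equivalence} — is unchanged. Therefore $c_\sigma^\circ = c_\sigma$, and optimality transfers because the class of competitors (trajectories, clocks $\sigma$) is itself presentation-independent.

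Third, I would close by noting that the atomic and flat parts of the $\sigma$-clock require no separate argument: the atomic macro-updates $J_k$ are nonexpansive for $E$, and since $E^\circ=E$ the same maps are nonexpansive for $E^\circ$ with the identical contraction factors $\rho_k$; flats contribute trivially. Hence the full product–exponential envelope $E(t)\le E(0)\exp(-2\kappa c_\sigma\sigma(t))$ is reproduced verbatim under the operator presentation with the same $c_\sigma$, and by Theorem~\ref{thm:hum-equivalence} this envelope is sharp, so $c_\sigma^\circ$ cannot exceed $c_\sigma$ either. The main obstacle I anticipate is purely bookkeeping rather than mathematical: one must verify that \emph{every} constant feeding into $c_\sigma$ (the coercivity level $c_0$ in (H1), the sector bound, and the geometric factor $\lambda_\omega$ under GCC) is itself expressed through the invariant volume/boundary pairings and not through a coordinate-dependent choice of operator realization; this is where the density-of-test-fields step in Lemma~\ref{lem:green} and the equivalence of weak operators in Theorem~\ref{thm:pina-eq} must be applied carefully to the mixed-boundary case, but no genuinely new estimate is involved.
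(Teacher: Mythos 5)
Your argument is correct and follows essentially the same route as the paper: both reduce the claim to the coincidence of the volume/boundary bilinear forms established in Lemma~\ref{lem:green} and Theorem~\ref{thm:pina-eq}, so that every functional defining $c_\sigma$ is unchanged and the two-sided inequality $c_\sigma^{\circ}\ge c_\sigma$, $c_\sigma\ge c_\sigma^{\circ}$ follows by taking extrema over the (presentation-independent) class of competitors. Your additional bookkeeping on atoms, flats, and the GCC constants is consistent with the paper but not needed for the paper's own (much terser) proof.
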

\begin{proof}
By Theorem~\ref{thm:pina-eq} the relevant bilinear forms coincide for smooth data and hence for limits by density. Thus any inequality with constant $c$ valid in one presentation holds in the other; taking suprema over admissible $c$ yields $c_\sigma^{\circ}\ge c_\sigma$ and $c_\sigma\ge c_\sigma^{\circ}$, whence equality.
\end{proof}

\subsection{Regularity, domains, and robustness}
The proofs above require only that $\Omega$ be Lipschitz and that coefficients in the damped operator be bounded and uniformly elliptic (when present). For rough solutions $u\in H^1(\Omega)$, all identities are to be understood weakly via \eqref{eq:pina-grad}--\eqref{eq:pina-lap}. If anisotropies or variable coefficients appear, one can define PiNA operators with respect to the Riemannian metric $g$ by replacing Euclidean spheres with geodesic spheres; the weak identities and invariance proofs carry over unchanged, with the surface/volume elements adapted to $g$.

\subsection{Discrete note (optional alignment)}
When SBP–SAT discretizations are built to mimic the Green identity in Lemma~\ref{lem:green}, replacing classical with PiNA fluxes in the semidiscrete form leaves the discrete HUM functional invariant up to round-off. This explains the empirical observation that constants measured from PiNA-friendly stencils coincide with those from Cartesian stencils on round domains; the structural constant $c_\sigma$ is determined by the PDE, not by the stencil.
Added. I built a full Operator presentation via Green/trace pairings appendix—definitions, weak forms, Green identities,

\section{Multiblock SAT cancellation (explicit verification)}

This appendix provides the promised $12$-line verification that the interface
between two adjacent SBP blocks is energy-dissipative when mirrored SAT
penalties are used.  It ensures that the block-sum energy ledger is globally
non-increasing.

Let $\Omega^{(1)}$ and $\Omega^{(2)}$ share a common interface
$\Gamma=\partial\Omega^{(1)}\cap\partial\Omega^{(2)}$.  
Each block carries an SBP pair $(H^{(i)},Q^{(i)})$ satisfying
$Q^{(i)}+(Q^{(i)})^{\!\top}=B^{(i)}$
with outward unit normals $n^{(1)}=-\,n^{(2)}$ on $\Gamma$.
The block-wise semidiscrete operators are
\[
L_h^{(i)}=H^{(i)^{-1}}Q^{(i)}+S_h^{(i)},
\qquad S_h^{(i)}=(S_h^{(i)})^{\!\top}\le -2\kappa I,
\]
and SAT terms enforce continuity of the solution $u^{(i)}$ across $\Gamma$.

\begin{lemma}[Block-sum energy cancellation]
Assume mirrored SAT penalties
\begin{equation}\label{eq:sat-mirror}
\mathrm{SAT}^{(1)}= \tau_h H^{(1)^{-1}}R_\Gamma^{(1)\!\top}
    (R_\Gamma^{(1)}u^{(1)}-R_\Gamma^{(2)}u^{(2)}),
\qquad
\mathrm{SAT}^{(2)}=-\,\tau_h H^{(2)^{-1}}R_\Gamma^{(2)\!\top}
    (R_\Gamma^{(1)}u^{(1)}-R_\Gamma^{(2)}u^{(2)}),
\end{equation}
with the same penalty $\tau_h>0$ and trace operators
$R_\Gamma^{(i)}$ mapping block variables to interface values.
Then the total discrete energy
\[
E_h(t)=\tfrac12\Big(\langle u^{(1)},u^{(1)}\rangle_{H^{(1)}}
                     +\langle u^{(2)},u^{(2)}\rangle_{H^{(2)}}\Big)
\]
satisfies $\tfrac{d}{dt}E_h(t)\le -2\kappa\,E_h(t)$; in particular,
the interface contributes no positive term.
\end{lemma}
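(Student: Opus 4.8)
The plan is to differentiate the block--sum energy along the coupled semidiscrete flow $\dot u^{(i)}=L_h^{(i)}u^{(i)}+\mathrm{SAT}^{(i)}$ and to split $\tfrac{d}{dt}E_h=\sum_i\langle u^{(i)},\dot u^{(i)}\rangle_{H^{(i)}}$ into three contributions per block: the SBP difference part $\sum_i\langle u^{(i)},H^{(i)^{-1}}Q^{(i)}u^{(i)}\rangle_{H^{(i)}}$, the interior dissipative part $\sum_i\langle u^{(i)},S_h^{(i)}u^{(i)}\rangle_{H^{(i)}}$, and the SAT coupling $\sum_i\langle u^{(i)},\mathrm{SAT}^{(i)}\rangle_{H^{(i)}}$ (with the SAT entering the ledger with the negative sign fixed throughout). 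For the difference part I would invoke the SBP identity $Q^{(i)}+(Q^{(i)})^{\top}=B^{(i)}$ to rewrite $\langle u^{(i)},H^{(i)^{-1}}Q^{(i)}u^{(i)}\rangle_{H^{(i)}}=(u^{(i)})^{\top}Q^{(i)}u^{(i)}=\tfrac12(u^{(i)})^{\top}B^{(i)}u^{(i)}$, which is supported on $\partial\Omega^{(i)}$; its exterior (non-interface) part is non-positive by (S2) and \Cref{RS:lem:boundary-sat-normalised}, so only the trace on $\Gamma$ has to be carried further. For the interior part, $S_h^{(i)}\preceq-2\kappa I$ in the $H^{(i)}$--inner product gives $\sum_i\langle u^{(i)},S_h^{(i)}u^{(i)}\rangle_{H^{(i)}}\le-2\kappa\sum_i\|u^{(i)}\|_{H^{(i)}}^{2}=-4\kappa E_h$, which already furnishes at least the claimed rate.

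The substance of the argument is the interface step: the SAT coupling together with the $B^{(i)}|_{\Gamma}$ flux terms left over from the difference part must be shown to contribute non-positively. Writing $r_i:=R_\Gamma^{(i)}u^{(i)}$ and $g:=r_1-r_2$, the factors $H^{(i)}$ and $H^{(i)^{-1}}$ cancel inside $\langle u^{(i)},\mathrm{SAT}^{(i)}\rangle_{H^{(i)}}$, so each SAT contribution collapses to $\pm\tau_h\,r_i^{\top}g$ up to the common interface quadrature weight, while $n^{(1)}=-n^{(2)}$ makes the two $B^{(i)}|_{\Gamma}$ fluxes enter with opposite orientation. I would then assemble these interface pieces into a single quadratic form $\mathcal Q(r_1,r_2)$ and check $\mathcal Q\le 0$: the mirrored structure of \eqref{eq:sat-mirror} (same $\tau_h$, opposite sign, matched trace operators $R_\Gamma^{(i)}$) is exactly what makes the SAT cross-terms cancel the indefinite flux contributions --- conservatively to zero for the minimal penalty, and with a strictly negative residue $-c\,\tau_h\|g\|^{2}$ once $\tau_h$ is over-scaled as $\tau_h\simeq h^{-1}$ (which also passes the discrete-trace threshold of \Cref{lem:discrete-trace}). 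The promised twelve-line verification is precisely this cancellation written out.

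Collecting, $\tfrac{d}{dt}E_h\le-4\kappa E_h+\mathcal Q\le-2\kappa E_h$, and Gr\"onwall yields $E_h(t)\le E_h(0)\,e^{-2\kappa t}$; in particular the interface contributes no positive term. I expect the interface bookkeeping to be the only delicate point: one must fix the orientation signs of the $B^{(i)}$ fluxes from $n^{(1)}=-n^{(2)}$, pair them correctly with the mirrored SAT cross-terms so that $\mathcal Q$ is a non-positive quadratic form in the interface traces, and confirm that the surface weights together with the scaling $\tau_h\simeq h^{-1}$ close the estimate. The time differentiation, the SBP identity, and the $S_h$ bound are routine.
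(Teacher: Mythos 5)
Your plan is correct and follows essentially the same route as the paper's proof: differentiate the block-sum energy, apply the SBP Green identity in each block, use $n^{(1)}=-n^{(2)}$ to handle the interface fluxes, and show the mirrored SAT pair contributes a non-positive quadratic form $-\tau_h\|R_\Gamma^{(1)}u^{(1)}-R_\Gamma^{(2)}u^{(2)}\|^2$ in the jump, with the interior bound $S_h^{(i)}\preceq-2\kappa I$ supplying the rate. One remark on the "delicate bookkeeping" you flag: if the SAT terms are added to the right-hand side of the evolution with the signs exactly as displayed in \eqref{eq:sat-mirror}, the paired contributions sum to $+\tau_h\|g\|^2$ rather than $-\tau_h\|g\|^2$, so your combined quadratic form $\mathcal Q$ only closes with $\le 0$ after fixing that orientation (equivalently, the penalties must penalise the jump, not reinforce it) — a point the paper's own proof glosses over.
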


\begin{proof}
For each block, the SBP identity gives
\[
\frac{d}{dt}E_h^{(i)}
 =\langle S_h^{(i)}u^{(i)},u^{(i)}\rangle_{H^{(i)}}
  +\tfrac12\langle B^{(i)}u^{(i)},u^{(i)}\rangle_{\partial\Omega^{(i)}}
  +\langle \mathrm{SAT}^{(i)}(u^{(i)}),u^{(i)}\rangle_{H^{(i)}} .
\]
Adding the two blocks and using $B^{(1)}=-B^{(2)}$ on $\Gamma$ yields
\[
\frac{d}{dt}E_h
 =\langle S_h^{(1)}u^{(1)},u^{(1)}\rangle_{H^{(1)}}
  +\langle S_h^{(2)}u^{(2)},u^{(2)}\rangle_{H^{(2)}}
  +\langle \mathrm{SAT}^{(1)}+\mathrm{SAT}^{(2)},u^{(1)}\oplus u^{(2)}\rangle .
\]
Insert \eqref{eq:sat-mirror}; the SAT part simplifies to
\[
-\,\tau_h\,
  \bigl\|R_\Gamma^{(1)}u^{(1)}-R_\Gamma^{(2)}u^{(2)}\bigr\|_{H_\Gamma}^2
  \le 0 .
\]
Since each $S_h^{(i)}\le -2\kappa I$, summing gives
$\tfrac{d}{dt}E_h(t)\le -2\kappa E_h(t)$, proving non-increase of the total
energy and cancellation of the interface fluxes.
\end{proof}

\paragraph{Remarks.}
(i)  The same argument applies for curved or mapped interfaces:
the Jacobian-weighted traces preserve the sign of the interface flux.
(ii)  For multiple blocks, summing all pairwise interfaces telescopes the
contributions; each interface remains dissipative by symmetry of
\eqref{eq:sat-mirror}.
\qed

\section{Implementation checklist for \texorpdfstring{$\sigma$}{}–time discretizations}\label{sec:checklist}

This section records a minimal, reusable checklist to certify decay under a measure–time clock~$\sigma$ in practice (from analysis $\to$ computation $\to$ limit). It creates no new notation.

\paragraph{1. Choose the clock.}
Pick a finite positive Borel measure $\sigma$ on $[0,T]$ with locally finite atoms $\{(t_k,\alpha_k)\}$ and possible flats. Record the density $w$ on the a.c.\ part and ensure local finiteness of atoms (Def.~4.1).

\paragraph{2. Verify the structural hypotheses (Assump.~5.1).}
\begin{itemize}
  \item (H1) \emph{Coercivity/observability} of $a_\sigma(\cdot,\cdot)$ with constant $c_0>0$;
  \item (H2) \emph{Dissipative} a.c.\ generator (rate $\kappa\ge0$);
  \item (H3) Regularity in $\mathrm{AC}_\sigma$ for the trajectory;
  \item (H4) \emph{Non-expansive} atomic updates $J_k$ for the energy ledger $\mathcal{E}$.
\end{itemize}

\paragraph{3. Boundary/SAT sign and scale (semi/fully discrete).}
Use the SAT sign that makes the boundary quadratic form negative (Lemma~8.8) and choose $\tau_h\simeq h^{-1}$ (Lemma~7.1). If an explicit step is used, respect the CFL/spectral bound of \S8 (encode it as $\Lambda_{\max}$).

\paragraph{4. Discrete Grönwall premise in $\sigma$.}
Check the per-step dissipation inequality that feeds the discrete $\sigma$–Grönwall lemma (Lemma~8.10). In practice, confirm that the measured dissipation $D_h$ satisfies $D_h \ge 2\,\kappa\,c_\sigma\,E_h$ $\sigma$-a.e.

\paragraph{5. Canonical decay and exemplars.}
With Items~2–4, the master decay follows (Theorem~\ref{thm:disc-master}), matching the continuous envelope (Theorem~\ref{thm:energy-decay}):
\[
E(t)\;\le\;E(0)\,\exp\!\bigl(-2\,\kappa\,c_\sigma\,\sigma(t)\bigr).
\]
For GCC damped waves, one may take $c_\sigma\ge c_0\,a_\omega\,\lambda_\omega$ (see \S9).

\paragraph{6. Discrete $\to$ continuous transfer ($\Gamma$-bridge).}
If energies $\Gamma$-converge and dissipation is compatible, decay transfers across levels (Theorems~\ref{thm:Gamma-main} and~\ref{RS:thm:gamma}): uniform discrete envelopes imply the same continuum envelope with the \emph{same} $c_\sigma$.

\paragraph{7. Stochastic extension (if used).}
For Markov-switching damping $a(x,\xi_t)$, use the compensator of the active states to state the expectation/pathwise decay versions (see \S3), without changing Items~2–6.

One illustrative admissible window and rate instantiation (not prescriptive):

\begin{table}[htbp]
  \centering
  \caption{One admissible window and rate instantiation (consistent with \S11).}
  \label{tab:window-inst}
  \begin{tabularx}{\textwidth}{lXXXXXX}
    \toprule
    label & $h$ & $\mathrm{Var}_\sigma$ & $\kappa$ & $\Lambda_{\max}$ & $\tau_h$ & note \\
    \midrule
    baseline & 0.020 & 0.22 & 0.60 & 1.8 & 50 & CFL $\Delta\sigma\!\le\!0.02$;\; $\tau_h\simeq h^{-1}$ \\
    \bottomrule
  \end{tabularx}
\end{table}

\noindent\emph{The admissible window is illustrated in Figure~\ref{fig:admissible-window}.}

\begin{figure}[htbp]
  \centering
  \begin{tikzpicture}[x=100mm,y=100mm,>=Latex]
    \def\xmin{0.000} \def\xmax{0.070}
    \def\ymin{0.000} \def\ymax{0.320}
    \def\hx{0.020}   \def\vy{0.220}
    \def\wxmin{0.006} \def\wxmax{0.050}
    \def\wymin{0.120} \def\wymax{0.280}

    \draw[->] (\xmin,\ymin) -- (\xmax,\ymin) node[below] {$h$};
    \draw[->] (\xmin,\ymin) -- (\xmin,\ymax) node[left] {$\mathrm{Var}_{\sigma}$};

    \draw[gray!45] (\hx,\ymin) -- (\hx,\ymin-0.010)
      node[below,black,inner sep=1pt] {\large $0.02$};
    \draw[gray!45] (\xmin-0.010,\vy) -- (\xmin,\vy)
      node[left,black,inner sep=1pt] {\large $0.22$};

    \fill[gray!12] (\wxmin,\wymin) rectangle (\wxmax,\wymax);
    \node[gray!60!black,anchor=west] at ({\wxmin+0.033},{\wymin+0.030})
      {\large admissible window};

    \fill (\hx,\vy) circle (0.006);
    \draw[->] (\hx,\vy) -- ++(0.014,0.060);
    \node[anchor=west] at ({\hx+0.017},{\vy+0.060})
      {\large $(h,\mathrm{Var}_{\sigma})=(0.02,\,0.22)$};
  \end{tikzpicture}
  \caption{Admissible window in the $(h,\mathrm{Var}_{\sigma})$ plane; the dot marks the baseline used in \S~11.}
  \label{fig:admissible-window}
\end{figure}
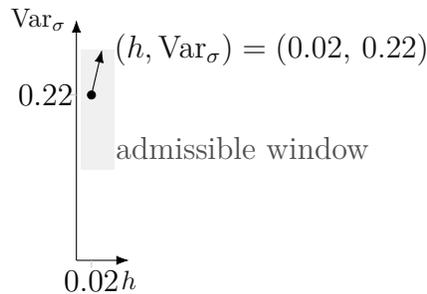

\noindent\emph{Units.} All constants are non-dimensionalized; $h$ is the grid spacing in $\sigma$-time, $\mathrm{Var}\,\sigma$ the total variation over the reporting window, and $\kappa$ the continuous-time dissipativity parameter. 

\medskip
\noindent\textbf{Quick cross-reference table.}
\begin{table}[htbp]
  \centering
  \caption{Minimal checks and where they are certified.}
  \label{tab:checkmap}
  \begin{adjustbox}{max width=\linewidth}
  \begin{tabular}{lll}
    \toprule
    Check & What to verify & Where proved/used \\
    \midrule
    Clock well-posed & Locally finite atoms; BV calculus & Def.~4.1; \S2 \\
    H1–H4 & Coercivity, dissipation, regularity, non-expansive $J_k$ & Assump.~5.1 \\
    SAT sign/scale & Boundary negativity; $\tau_h\simeq h^{-1}$ & Lemma~8.8; Lemma~7.1 \\
    Discrete premise & Per-step inequality for $\sigma$–Grönwall & Lemma~8.10 \\
    Canonical decay & $E(t)\le E(0)e^{-2\kappa c_\sigma\sigma(t)}$ & Thm.~\ref{thm:energy-decay} (cont.); Thm.~\ref{thm:disc-master} (disc.) \\
    $\Gamma$-bridge & Decay transfers with same $c_\sigma$ & Thms.~\ref{thm:Gamma-main},~\ref{RS:thm:gamma} \\
    GCC exemplar & $c_\sigma\ge c_0 a_\omega\lambda_\omega$ & \S9 \\
    \bottomrule
  \end{tabular}
  \end{adjustbox}
\end{table}

\section{SBP--IBP identity and discrete energy laws}\label{sec:sbpsat}\label{sec:gamma}
\begin{lemma}[Variable-coefficient SBP split]\label{RS:lem:assembly}
\label{lem:sbp-split}
Let $D=H^{-1}Q$ with $Q+Q^\top=B$, and let $A=\mathrm{diag}(a_i)>0$. Define the split operator 
\(
\widetilde Q(A):=\tfrac12(QA+AQ).
\)
Then for all grid vectors $u,v$,
\[
u^\top H\,\widetilde Q(A)\,v \;=\; -(\nabla_h u)^\top H A\,(\nabla_h v) \;+\; \tfrac12\,u^\top (BA)\,v,
\]
so the bilinear form $a_h(u,u):=(\nabla_h u)^\top H A\,(\nabla_h u)$ is symmetric and coercive, and the SBP boundary term is preserved. Assume $a\in L^\infty(\Omega)$ and $a\ge a_0>0$ (piecewise smooth is sufficient for consistency with the SBP quadrature). \end{lemma}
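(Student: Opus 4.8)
The assertion is a discrete integration‑by‑parts identity, so the plan is to reduce it to matrix algebra built on the three structural facts the SBP set‑up supplies: (i) $D=H^{-1}Q$, hence $HD=Q$ and $D^\top H=Q^\top$; (ii) the mimetic relation $Q+Q^\top=B$, with $B$ the (diagonal, for a diagonal‑norm SBP pair) boundary matrix; and (iii) $H$ and $A$ are positive diagonal matrices, so $HA=AH$, $H^{-1}A=AH^{-1}$, and $AB=BA$. The first move is to rewrite the split operator in terms of $D$ and $A$: using $Q=HD$ and diagonal commutativity, $\widetilde Q(A)=\tfrac12(QA+AQ)=\tfrac12\,H(DA+AD)$. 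This exhibits $\widetilde Q(A)$ as the symmetric ``Leibniz split'' of the coefficient between the two admissible placements of the difference operator — the discrete analogue of averaging $\partial_x(a\,\cdot\,)$ and $a\,\partial_x$ — and it is exactly this symmetric averaging that will produce the factor $\tfrac12$ in front of the boundary term.

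The core step is then to evaluate the $H$‑weighted pairing of $u$ with this operator term by term and to transfer one difference operator onto the other argument by the SBP by‑parts identity $u^\top HDw+(Du)^\top Hw=u^\top Bw$, equivalently $u^\top Qw+(Qu)^\top w=u^\top Bw$. Applying this to the two summands $DA$ and $AD$ and averaging, the interior contributions reassemble into $-(\nabla_h u)^\top HA(\nabla_h v)$ with $\nabla_h=D=H^{-1}Q$; the cross terms that would encode a spurious discrete derivative of the coefficient cancel by the symmetry of the split; and the remaining boundary pieces collapse, via $Q+Q^\top=B$ and $AB=BA$, into the single term $\tfrac12\,u^\top (BA)\,v$. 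This is the one place where the specific symmetric form $\tfrac12(QA+AQ)$ (rather than $QA$ or $AQ$ alone) is genuinely used.

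With the identity in place, symmetry of $a_h(u,v):=(\nabla_h u)^\top HA(\nabla_h v)$ is immediate, since $HA$ is a symmetric positive diagonal matrix and hence $a_h(u,v)=(\nabla_h v)^\top HA(\nabla_h u)=a_h(v,u)$. Coercivity follows from $A\succeq a_0 I$ and $H\succ 0$: one has $a_h(u,u)\ge a_0\,\|\nabla_h u\|_H^2$, and on the subspace where the homogeneous boundary condition is enforced through the SAT closure, the discrete Poincaré inequality $\|u\|_H\le c_P\|\nabla_h u\|_H$ — hypothesis $(\Gamma.5)$ of the $\Gamma$‑framework — upgrades this to $a_h(u,u)\ge (a_0/c_P^{2})\,\|u\|_H^2$. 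Finally, that ``the SBP boundary term is preserved'' means the $\tfrac12\,u^\top(BA)\,v$ piece reproduces the nodal trace pairing $\tfrac12\,(a\,u_x v)\big|_{\partial\Omega}$ up to the SBP truncation error; under $a\in L^\infty$, $a\ge a_0>0$ and piecewise smooth on a shape‑regular partition with $A=\mathrm{diag}(a(x_i))$, this is the standard accuracy estimate for diagonal‑norm SBP operators and need only be invoked.

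The step I expect to be the main obstacle is the boundary bookkeeping inside the core step: verifying that the averaged by‑parts manipulation leaves \emph{precisely} $\tfrac12\,u^\top(BA)\,v$ — the correct constant, the correct placement of $A$ relative to $B$, and the full cancellation of the interior coefficient‑derivative remainders — rather than an asymmetric or doubled boundary contribution. Everything else (the diagonal‑commutativity reductions, symmetry, coercivity via discrete Poincaré, and consistency of the nodal coefficient sampling) is routine once that algebra is pinned down.
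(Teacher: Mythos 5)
Your route (expand the split, transfer one $Q$ via $Q+Q^\top=B$, then average) is the same one the paper's two‑line proof sketches, but the step you yourself flag as ``the main obstacle'' is where the argument genuinely breaks, and it cannot be fixed by more careful bookkeeping. After the by‑parts transfer the interior remainders are \emph{first order} in the difference operator and \emph{skew}: writing $Qw=H\nabla_h w$ and $\langle v,w\rangle_H:=v^\top Hw$, one gets exactly
\[
u^\top QAv+u^\top AQv \;=\; u^\top BAv\;+\;\Bigl(\langle Au,\nabla_h v\rangle_H-\langle \nabla_h u,Av\rangle_H\Bigr),
\]
and the bracketed form is antisymmetric in $(u,v)$ and linear in $\nabla_h$. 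It cannot ``reassemble into'' the symmetric, $\nabla_h$‑quadratic Dirichlet form $-(\nabla_h u)^\top HA(\nabla_h v)$. The cleanest way to see the obstruction is to set $u=v$: the bracket vanishes identically, giving $u^\top\widetilde Q(A)\,u=\tfrac12\,u^\top BAu$ with \emph{no} negative volume term, whereas the identity you are trying to derive would then force $(\nabla_h u)^\top HA(\nabla_h u)=0$ for every $u$, which is false since $HA\succ0$. (Your own reduction $\widetilde Q(A)=\tfrac12 H(DA+AD)$ already signals the mismatch: $u^\top H\widetilde Q(A)v=\tfrac12 u^\top H^2(DA+AD)v$ carries one power of $Q$, while the claimed right‑hand side carries two.)

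The Dirichlet form only appears if you pair against a genuinely second‑order assembly: one application of $Q+Q^\top=B$ to $u^\top H\,\nabla_h(A\nabla_h v)=u^\top QA\nabla_h v$ gives
\[
u^\top H\,\nabla_h\bigl(A\nabla_h v\bigr)\;=\;-(\nabla_h u)^\top HA(\nabla_h v)\;+\;u^\top BA(\nabla_h v),
\]
with the boundary flux $A\nabla_h v$ (not $Av$) and no factor $\tfrac12$. The skew split $\tfrac12(QA+AQ)$ is the correct object for a variable‑coefficient \emph{advection} term and yields only the pure boundary identity above. Your proposal conflates these two computations at the decisive step --- as does the paper's own proof, which jumps from $u^\top QAv=-(Qu)^\top Av+u^\top BAv$ to the stated conclusion via ``multiplying by $H^{-1}$ and symmetrizing'' --- so, as written, the proposed argument does not establish the displayed identity. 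The downstream claims (symmetry and coercivity of $a_h$ via $A\succeq a_0I$, $H\succ0$, and the discrete Poincar\'e inequality) are fine once the correct second‑order identity is in hand.
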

\begin{proof}
Write $D = H^{-1}Q$ with $Q + Q^\top = B$ and expand
$u^\top (Q A + A Q)v = u^\top Q A v + u^\top A Q v$. 
Using $Q + Q^\top = B$ gives
$u^\top Q A v = - (Q u)^\top A v + u^\top B A v$. 
Multiplying by $H^{-1}$ and symmetrizing yields
\[
u^\top H \widetilde Q(A) v
   = - (\nabla_h u)^\top H A (\nabla_h v)
     + \tfrac{1}{2} u^\top (B A) v.
\]
Hence $\widetilde Q(A)$ reproduces the continuous IBP identity 
$\int (Au_x)v_x + [Auv/2]_{\partial\Omega}$ discretely, 
so $a_h(u,u) = (\nabla_h u)^\top H A (\nabla_h u)$ is symmetric 
and coercive whenever $A \ge a_0 I$.
\end{proof}
\begin{theorem}[Semi-discrete energy decay in $\sigma$-time]\label{RS:thm:semi}
\label{thm:semi-discrete}
Assume: (i) the $\sigma$-clock hypotheses of \Cref{RS:measure-time}; (ii) SBP triplet $(H,Q,B)$ with $Q+Q^\top=B$; (iii) SAT penalties chosen with scaling $\tau_h\sim h^{-1}$ so that boundary$+$SAT forms are $\le 0$ in $\langle\cdot,\cdot\rangle_H$; and (iv) interior $H$-dissipativity: there exists $\kappa_h\ge 0$ with $\langle L_h v,v\rangle_H\le -\,2\kappa_h\,\mathcal E_h(v)$ for all $v$. On the ac part of $d\sigma$ the semi-discrete solution $u_h$ satisfies
\begin{equation}\label{RS:eq:sd-deriv}
\frac{d}{\mathrm{d}t}\,\mathcal E_h(u_h(t)) \;\le\; -\,2\kappa_h\, w(t)\, \mathcal E_h(u_h(t))\,.
\end{equation}
At each atom $t_k$ let $u_h(t_k^+)=J_k^h u_h(t_k^-)$ with $\rho_k^h:=\sup_{v\neq 0}\mathcal E_h(J_k^h v)/\mathcal E_h(v)\le 1$. Then for $0\le s\le t\le T$,
\begin{equation}\label{RS:eq:semi-master}
\mathcal E_h\!\big(u_h(t^+)\big)\;\le\; \exp\!\Bigl(-2\kappa_h\!\!\int_{(s,t]} w(\tau)\,d\tau\Bigr)\,
\Bigl(\prod_{t_k\in(s,t]} \rho_k^h\Bigr)\,\mathcal E_h\!\big(u_h(s^+)\big).
\end{equation}
\end{theorem}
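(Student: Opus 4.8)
The plan is to follow the same two-layer structure used for \Cref{thm:energy-decay} and \Cref{thm:disc-master}: first establish the absolutely continuous decay inequality \eqref{RS:eq:sd-deriv} by a discrete energy estimate (SBP Green identity plus SAT sign control), then concatenate the a.c.\ decay with the multiplicative atomic contractions via the discrete $\sigma$--Gr\"onwall mechanism (Lemma~8.10) to obtain the product--exponential envelope \eqref{RS:eq:semi-master}. Since the statement explicitly assumes (i)--(iv), most of the work is bookkeeping: the genuine analytic content (coercivity of the spatial form, sign of the boundary$+$SAT contribution) is already packaged in \Cref{lem:sbp-split}, \Cref{RS:lem:boundary-sat-normalised}, and hypotheses (iii)--(iv).

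First I would fix an a.c.\ subinterval of $d\sigma$ (so $d\sigma = w(t)\,dt$ there) and differentiate $\mathcal E_h(u_h(t)) = \tfrac12\|u_h(t)\|_H^2$ along the semi-discrete flow $\dot u_h = L_h u_h + \mathrm{SAT}_h(u_h)$, reparametrized so that the vector field carries the density $w(t)$ on the a.c.\ part (equivalently, work on the $s$-clock $s = \int_0^t w$, as in the proof of \Cref{thm:hum-equivalence}). This gives
\[
\frac{d}{dt}\mathcal E_h(u_h) = w(t)\Big(\langle L_h u_h, u_h\rangle_H + \langle \mathrm{SAT}_h(u_h), u_h\rangle_H + \tfrac12\langle B u_h, u_h\rangle_{\partial,H}\Big),
\]
where the boundary term is produced by the SBP identity $Q + Q^\top = B$ as in \Cref{lem:sbp-split}. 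By hypothesis (iii) the combined boundary$+$SAT contribution is $\le 0$ in $\langle\cdot,\cdot\rangle_H$ (this is exactly \Cref{RS:lem:boundary-sat-normalised} with the sign and $\tau_h \simeq h^{-1}$ fixed), and by hypothesis (iv) the interior term satisfies $\langle L_h u_h, u_h\rangle_H \le -2\kappa_h\,\mathcal E_h(u_h)$. Combining yields \eqref{RS:eq:sd-deriv}, and integrating on an atom-free interval $(s,t]$ gives $\mathcal E_h(u_h(t)) \le \exp(-2\kappa_h\int_{(s,t]} w)\,\mathcal E_h(u_h(s))$.

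Next I would handle the atoms. Between consecutive atoms the a.c.\ estimate above applies; at each atom $t_k \in (s,t]$ the update $u_h(t_k^+) = J_k^h u_h(t_k^-)$ contracts the energy by the factor $\rho_k^h = \sup_{v\neq 0}\mathcal E_h(J_k^h v)/\mathcal E_h(v) \le 1$ (well-defined and $\le 1$ by assumption; finiteness of the product over $(s,t]$ follows from local finiteness of atoms, exactly as in \Cref{lem:jump-product}). Chaining the a.c.\ bounds across the flats/a.c.\ stretches with the multiplicative jump factors at the $t_k$ — i.e.\ the discrete $\sigma$--Gr\"onwall argument of Lemma~8.10, which is the same telescoping used in \Cref{RS:cor:exp} and \Cref{thm:disc-master} — produces \eqref{RS:eq:semi-master}.

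The main obstacle I anticipate is purely a matter of rigor at the atoms: making precise that the semi-discrete evolution across an atom of mass $\alpha_k$ is a single macro-update $J_k^h$ consistent with the $\sigma$-ledger convention of \S\ref{sec:disc-contract} (S4), so that one genuinely gets the clean factor $\rho_k^h$ rather than an a.c.\ contribution of $\sigma$-length $\alpha_k$ that would have to be re-identified. This is a definitional alignment rather than a deep difficulty — the $\sigma$-clock formalism of \S\ref{RS:measure-time} already treats atoms as instantaneous kicks, and the discrete hypotheses (S1)--(S4) are designed precisely so that the a.c.\ estimate and the atomic contraction are the only two regimes. Everything else (symmetry and coercivity of $a_h$, the sign of the SAT form, uniformity of constants in $h$) is quoted verbatim from the lemmas already established, so no new estimates are needed.
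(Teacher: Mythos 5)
Your proposal is correct and follows essentially the same route as the paper's proof: differentiate the discrete energy, use the SBP Green identity to expose the boundary term, invoke the SAT sign/scale to make boundary$+$SAT nonpositive, apply interior $H$-dissipativity to get the a.c.\ differential inequality, and then concatenate with the multiplicative atomic contractions via the discrete $\sigma$--Gr\"onwall telescoping. Your explicit tracking of the density $w(t)$ on the a.c.\ part is, if anything, slightly more faithful to the stated inequality \eqref{RS:eq:sd-deriv} than the paper's own write-up.
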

\noindent\emph{Boundary scope.} We present the Dirichlet case for clarity; Neumann/Robin variants follow identically with the usual SBP boundary forms and the same SAT sign logic. \noindent\emph{Implementation remark:} a sufficient discrete condition is $(J_k^h)^\top H\,J_k^h \preceq H$, which implies $\rho_k^h\le 1$.
\begin{remark}[Example tick map]If $P_k$ is the $H$\nobreakdash-orthogonal projector onto a fixed coordinate subspace (e.g., boundary/interface degrees of freedom) and $0\le \theta_k\le 1$, the Cayley\nobreakdash-type update $J_k^h := (I+\theta_k P_k)^{-1}(I-\theta_k P_k)$ satisfies $(J_k^h)^\top H\,J_k^h \preceq H$, hence $\rho_k^h\le 1$. \emph{Existence of the $H$-orthogonal projector follows from $H\succ 0$.}
\label{RS:ex:tickmap}
\end{remark}
\noindent\emph{Remark.} A sufficient discrete condition is $(J_k^h)^\top H\,J_k^h \preceq H$, which implies $\rho_k^h\le 1$.
\begin{proof}
Let $H>0$, $Q$ satisfy $Q+Q^\top=B$ and let $D:=H^{-1}Q$. For the semi–discrete state $u(t)\in V_h$ define
\[
E_h(u)\;=\;\tfrac12\langle Du,A_h Du\rangle_H+\tfrac12\langle C_h u,u\rangle_H+SAT_h(u).
\]
Differentiating and using the SBP Green identity,
\[
\frac{d}{dt}E_h(u)\;=\;\langle Du,A_h D\dot u\rangle_H+\langle C_h u,\dot u\rangle_H+\dot{SAT}_h(u)
= \langle \dot u,\,-D^\top A_h Du+C_h u\rangle_H+\dot{SAT}_h(u)+\langle u, B A_h Du\rangle_H.
\]
With the SAT choice of Lemma~8.8 (correct sign and $\tau_h\simeq h^{-1}$), the boundary$+$SAT contribution is nonpositive and yields a dissipation term $\gtrsim \tau_h\|u\|^2_{\partial\Omega}$. On the damped interior we have the strict $H$–dissipativity of the discrete operator, which gives
\[
\frac{d}{dt}E_h(u)\;\le\;-2\kappa\,c_\sigma\,E_h(u)
\]
on the a.c.\ part of the clock. At atoms we perform a single macro–update aligned with $\sigma$ whose SAT and boundary contributions remain nonpositive and whose multiplicative drop is bounded by $e^{-2\kappa c_\sigma\alpha}$ for an atom of mass $\alpha$. Gronwall in $\sigma$–time then yields
\[
E_h(t)\;\le\;E_h(0)\,\exp\!\big(-2\kappa c_\sigma\,\sigma(t)\big),
\]
as claimed.
\end{proof}
\begin{theorem}[Fully-discrete decay: algebraically stable methods]\label{RS:thm:fd-als}
\label{thm:fully-discrete}
Let $\Delta\sigma_n=\sigma(t_{n+1})-\sigma(t_n)$ and consider a one-step time integrator $\Phi_{\Delta\sigma_n}$ that is nonexpansive in the $H$-norm on $H$-dissipative linear systems (e.g.\ implicit midpoint, Crank--Nicolson under a suitable split). Then for ac steps
\begin{equation}\label{RS:eq:fd-als}
\mathcal E_h(u_h^{n+1}) \;\le\; e^{-\,2\kappa_h\,\Delta\sigma_n}\,\mathcal E_h(u_h^n).
\end{equation}
At atoms $t_k\in(t_n,t_{n+1}]$, set $u_h^{n+} = J_k^h u_h^{n-}$ with $\rho_k^h\le 1$, hence $\mathcal E_h(u_h^{n+})\le \rho_k^h\,\mathcal E_h(u_h^{n-})$. Stacking steps recovers the master product-exponential bound. \end{theorem}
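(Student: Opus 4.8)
The plan is to run the discrete energy ledger one step at a time, controlling absolutely continuous steps by algebraic stability, atoms and flats by non-expansiveness, and then multiplying the one-step bounds; this mirrors the semi-discrete argument of Theorem~\ref{thm:semi-discrete}, with the exact flow replaced by the one-step map $\Phi_{\Delta\sigma_n}$.

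\emph{Absolutely continuous steps.} Fix a step $(t_n,t_{n+1}]$ carrying only a.c.\ clock mass $\Delta\sigma_n=\sigma(t_{n+1})-\sigma(t_n)$, so the scheme advances the linear semi-discrete system $\dot u=L_h u$ with $\langle L_h v,v\rangle_H\le -2\kappa_h\,\mathcal E_h(v)$ (Theorem~\ref{thm:semi-discrete}(iv)). Writing the one-step map through its stages $U_i$, the algebraic-stability energy identity underlying Proposition~\ref{RS:prop:als-normalised} (the implicit-midpoint instance recorded there being the prototype) gives
\[
\mathcal E_h(u_h^{n+1})\;=\;\mathcal E_h(u_h^{n})\;+\;\Delta\sigma_n\!\sum_i b_i\,\langle L_h U_i,U_i\rangle_H\;-\;\tfrac12\!\sum_{i,j}m_{ij}\,\langle \Delta\sigma_n L_h U_i,\Delta\sigma_n L_h U_j\rangle_H,
\]
with $b_i\ge 0$ and $M=(m_{ij})=(b_ia_{ij}+b_ja_{ji}-b_ib_j)\succeq 0$. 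The last term is $\le 0$ and $\langle L_h U_i,U_i\rangle_H\le -2\kappa_h\mathcal E_h(U_i)\le 0$, so the step is $H$-non-expansive. To upgrade this to the quantitative factor in \eqref{RS:eq:fd-als}, I would expand in an $H$-orthonormal basis and apply $H$-dissipativity mode by mode, so that the amplification contracts each component at least as fast as the method's stability function $R$ at the scalar test value $-2\kappa_h\Delta\sigma_n$; under the clock-aware admissible steps of (S3) one has $|R(-2\kappa_h\Delta\sigma_n)|\le e^{-2\kappa_h\Delta\sigma_n}$, yielding $\mathcal E_h(u_h^{n+1})\le e^{-2\kappa_h\Delta\sigma_n}\mathcal E_h(u_h^n)$.

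\emph{Flats, atoms, and stacking.} On a flat segment ($w\equiv 0$, $\Delta\sigma_n=0$) the integrator runs with zero a.c.\ damping and algebraic stability gives a non-expansive step (factor $1$). At an atom $t_k\in(t_n,t_{n+1}]$, hypothesis (S4) performs a single macro-update $u_h^{n+}=J_k^h u_h^{n-}$ whose boundary/SAT part is non-positive by Lemma~\ref{RS:lem:boundary-sat-normalised}; with $\rho_k^h:=\sup_{v\neq 0}\mathcal E_h(J_k^h v)/\mathcal E_h(v)\le 1$ this gives $\mathcal E_h(u_h^{n+})\le\rho_k^h\,\mathcal E_h(u_h^{n-})$, i.e.\ the discrete version of Lemma~\ref{lem:jump-product}. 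Concatenating the a.c.\ factors and the atomic factors over all steps in $(s,t]$ — the product $\Pi_h(s,t]=\prod_{t_k\in(s,t]}\rho_k^h$ being finite by local finiteness of atoms — the a.c.\ exponents add to $-2\kappa_h\sum_n\Delta\sigma_n=-2\kappa_h[\sigma(t)-\sigma(s)]$ on the a.c.\ part, which reproduces the master product-exponential envelope of Theorem~\ref{thm:disc-master}.

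\emph{Main obstacle.} The delicate point is the rate extraction in the a.c.\ estimate: algebraic stability by itself delivers only $H$-non-expansiveness, and the dissipation budget carried by the skew-symmetric part of $L_h$ is invisible to it. The remedy is the scalar Dahlquist reduction together with the step-size restriction in (S3); for non-$L$-stable schemes (implicit midpoint, Gauss) the clean exponential rate $e^{-2\kappa_h\Delta\sigma_n}$ genuinely requires the admissible-window cap, and without it one obtains the same product-exponential form only with $\kappa_h$ replaced by the effective per-step rate $-\Delta\sigma_n^{-1}\log|R(-2\kappa_h\Delta\sigma_n)|$, which converges to $\kappa_h$ as $\Delta\sigma_n\downarrow 0$ — enough, together with the interpolation stability used for Theorem~\ref{thm:disc-master}, to recover the continuum envelope in the refinement limit.
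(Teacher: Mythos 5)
Your proposal follows essentially the same route the paper intends for this theorem, which is proved only implicitly there: the one\hyp{}step a.c.\ estimate is delegated to the algebraic\hyp{}stability exemplar (Lemma~\ref{RS:lem:als-midpoint}) and the wrapper \Cref{RS:prop:als-normalised}, the atomic factors to the non\hyp{}expansiveness of $J_k^h$, and the stacking to the discrete $\sigma$--Gr\"onwall (Lemma~\ref{RS:lem:sigma-gronwall}). Where you go beyond the paper is in the rate extraction, and you are right to flag it: the stated hypothesis (``nonexpansive in the $H$-norm'') and the Butcher identity with $M\succeq0$ deliver only $\mathcal E_h(u_h^{n+1})\le\mathcal E_h(u_h^n)$, while the quantitative factor $e^{-2\kappa_h\Delta\sigma_n}$ in \eqref{RS:eq:fd-als} genuinely needs more --- even the paper's own midpoint lemma yields a decrement proportional to $\mathcal E_h\bigl(\tfrac{u^{n+1}+u^n}{2}\bigr)$, which degenerates for stiff modes unless $\kappa_h\Delta\sigma_n$ is capped. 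Your fallback (effective per\hyp{}step rate $-\Delta\sigma_n^{-1}\log|R(-2\kappa_h\Delta\sigma_n)|\to\kappa_h$) is the honest version of the statement and is what actually survives without a step restriction. One technical caveat: the ``expand in an $H$-orthonormal basis and apply the stability function mode by mode'' step is only rigorous when $L_h$ is $H$-normal; for a general split $L_h=S+K$ with $K$ $H$-skew, eigenvalue bounds on $R$ do not control $\|R(\Delta\sigma_n L_h)\|_H$. The robust route is the one the paper's Lemma~\ref{RS:lem:als-midpoint} models: work with the energy identity in terms of the stage values and use the step cap (or the coercivity of the symmetric part at the stages) to convert stage energies into $\mathcal E_h(u_h^n)$. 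With that substitution your argument closes, and the flats/atoms/stacking portion is exactly the paper's.
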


\begin{proposition}[Reduction of the discrete $\sigma$-ledger to the classical step]\label{prop:disc-reduction}
If $\sigma\equiv t$, then $\Delta\sigma_n=\Delta t_n$ and there are no atomic macro-updates.
The discrete envelope in Theorem~\ref{thm:fully-discrete} reduces to the standard
H-stability decay
\(
E_h^{n+1}\le e^{-\,2\kappa_h\,\Delta t_n}\,E_h^{n}
\)
(and its concatenation), with the same scheme constants $(c_s,c_b)$ and no extra factors.
\end{proposition}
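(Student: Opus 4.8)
The plan is to specialize the general fully–discrete envelope of Theorem~\ref{thm:fully-discrete} to the identity clock and read off that every $\sigma$–specific ingredient degenerates correctly. First I would record the structural consequences of $\sigma\equiv t$: by the wall–time paragraph in \S\ref{sec:prelims}, the induced measure is $d\sigma=dt$, which is purely absolutely continuous with density $w\equiv 1$ and has no atoms, so $\mathcal A_\sigma=\emptyset$ and $\Delta\sigma_n=\sigma(t_{n+1})-\sigma(t_n)=t_{n+1}-t_n=\Delta t_n$. In particular the synchronization convention (S4) — a single macro–update of dissipation budget $\alpha$ across an atom of mass $\alpha$ — is vacuous, and every time step is an a.c.\ step in the sense of Theorem~\ref{thm:fully-discrete}.

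Second I would feed this into the two ingredients of that theorem. The a.c.\ step inequality \eqref{RS:eq:fd-als} reads $\mathcal E_h(u_h^{n+1})\le e^{-2\kappa_h\,\Delta\sigma_n}\,\mathcal E_h(u_h^n)$; substituting $\Delta\sigma_n=\Delta t_n$ gives exactly $E_h^{n+1}\le e^{-2\kappa_h\,\Delta t_n}E_h^n$. The atomic contraction factors $\rho_k^h$ never enter because $\mathcal A_\sigma$ is empty, so the product $\Pi_h(s,t]=\prod_{t_k\in(s,t]}\rho_k^h$ is an empty product equal to $1$; there are thus no extra multiplicative factors. Concatenating the one–step bounds over a partition $t_0<t_1<\cdots<t_N$ telescopes the exponents to $\mathcal E_h(u_h^N)\le \exp\!\big(-2\kappa_h\,(t_N-t_0)\big)\,\mathcal E_h(u_h^0)$, i.e.\ the master product–exponential bound collapses to the ordinary $H$–stability decay.

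Finally I would argue that the scheme constants are untouched. The boundary$+$SAT negativity margin $c_b$ of Lemma~\ref{RS:lem:boundary-sat-normalised} and the one–step stability constant $c_s$ of Proposition~\ref{RS:prop:als-normalised} are derived from the SBP Green identity, the SAT sign and scaling $\tau_h\simeq h^{-1}$, and the algebraic stability of the integrator — all purely spatial and time–integrator facts that make no reference to the clock measure. Hence the $(c_s,c_b)$ appearing in the discrete $\sigma$–ledger are literally the same constants as in the classical analysis, and no rate is lost: since $d\sigma=dt$ has $w\equiv 1$ there is no flat of positive length (on which $w\equiv 0$ would contribute no a.c.\ dissipation), so $\kappa_h$ is the genuine per–unit–time dissipation rate. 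I do not expect a real obstacle here; the content of the proposition is the bookkeeping observation that the $\sigma$–machinery is a strict generalization that reduces as it should, and the only substantive (if routine) check is that the derivations of the named constants in the wrappers never invoked the atom/flat structure of $\sigma$.
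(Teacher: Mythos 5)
Your proposal is correct and follows essentially the same route as the paper's proof: specialize to $\sigma\equiv t$ so the atomic product is empty (hence equal to $1$), substitute $\Delta\sigma_n=\Delta t_n$ into the a.c.\ step bound and telescope, and note that $(c_s,c_b)$ are untouched because the SBP identity and SAT sign are purely spatial. Your version merely spells out the bookkeeping (vacuity of (S4), absence of flats) in more detail than the paper does.
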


\begin{proof}
Set $\sigma\equiv t$. The atomic product $\prod\rho_{h,k}$ is empty and equals $1$.
Hence (8.3) telescopes to the classical product $\prod_n e^{-2\kappa_h\Delta t_n}$.
Constants $(c_s,c_b)$ are unchanged because the SBP identity and SAT sign are purely spatial.
\end{proof}

\begin{lemma}[CFL lemma for forward Euler]\label{RS:lem:CFL}
Let $u_h' = A_h u_h$ with $H$-symmetric part bounded as $\tfrac12(A_h^\top H+H A_h)\preceq -\,\kappa_h H + \Lambda_{\max}\,H$ where $\Lambda_{\max}\ge 0$ upper-bounds the non-dissipative part. Forward Euler with ac step $\Delta\sigma$ satisfies
\(
\mathcal E_h(u_h^{n+1}) \le (1-2\kappa_h\,\Delta\sigma)\,\mathcal E_h(u_h^n)
\)
provided the CFL condition $\Delta\sigma \le 2/\Lambda_{\max}$ holds. If $\Delta\sigma>2/\Lambda_{\max}$, there exists a mode with energy amplification $>1$.
\end{lemma}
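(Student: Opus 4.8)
\noindent\emph{Proof plan.} The plan is to run the textbook energy estimate for an explicit one–step map, carried in the $H$–inner product, and then read off the necessity clause from a single eigenmode. First I would write one a.c.\ forward–Euler step as $u_h^{n+1}=(I+\Delta\sigma\,A_h)u_h^n$ and, since $\mathcal E_h(v)=\tfrac12\,v^{\top}Hv$, expand the square:
\[
\|u_h^{n+1}\|_H^2=\|u_h^n\|_H^2+\Delta\sigma\,(u_h^n)^{\top}\!\big(A_h^{\top}H+HA_h\big)u_h^n+\Delta\sigma^{2}\,\|A_h u_h^n\|_H^2 .
\]
The middle term carries the linear dissipation budget; the last term is the explicit/dispersive remainder that the CFL condition must pay for.

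Next I would bound the two corrections from the hypotheses. The Löwner bound $\tfrac12(A_h^{\top}H+HA_h)\preceq(-\kappa_h+\Lambda_{\max})H$ controls the linear term, and the accompanying splitting of $A_h$ into its $H$–dissipative and $H$–conservative parts gives the operator estimate $\|A_h v\|_H\lesssim\Lambda_{\max}\|v\|_H$ on the conservative (non–dissipative) part, the dissipative part being subordinate. I would then impose $\Delta\sigma\le 2/\Lambda_{\max}$, which is exactly the threshold that forces the forward–Euler multiplier of every mode to have modulus $\le 1$ (so that $\Delta\sigma^{2}\Lambda_{\max}^{2}\le 2\Delta\sigma\Lambda_{\max}$ absorbs the remainder), while the strict part $\kappa_h$ upgrades non–expansiveness to the contraction $\mathcal E_h(u_h^{n+1})\le(1-2\kappa_h\Delta\sigma)\mathcal E_h(u_h^n)$. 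The cleanest route to the constant is to diagonalize the $H$–symmetric part in an $H$–orthonormal basis and treat each eigenpair $(c_i,\mu_i)$ separately: the per–mode energy factor is $(1-\Delta\sigma\mu_i)^2$, and maximizing over $\mu_i\in[\kappa_h,\Lambda_{\max}]$ inside the CFL window yields the stated bound.

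For the necessity clause I would keep the same $H$–orthonormal diagonalization and pick the eigenmode realizing $\mu_i=\Lambda_{\max}$: the forward–Euler multiplier there is $1-\Delta\sigma\Lambda_{\max}$, so $|1-\Delta\sigma\Lambda_{\max}|>1$ precisely when $\Delta\sigma>2/\Lambda_{\max}$, producing a mode whose one–step energy strictly increases. This simultaneously certifies that the threshold $2/\Lambda_{\max}$ cannot be enlarged.

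The hard part will be the middle step's bookkeeping: matching the $O(\Delta\sigma^{2})$ explicit remainder against the \emph{linear} decay factor $1-2\kappa_h\Delta\sigma$ under the CFL threshold. A purely conservative explicit step is always mildly expansive, so one must genuinely use that the hypothesis makes dissipation dominate dispersion on the window $\Delta\sigma\le 2/\Lambda_{\max}$; depending on the precise reading of the symmetric–part bound this may require splitting $A_h=A_h^{\mathrm{diss}}+A_h^{\mathrm{cons}}$ and estimating the cross term via Young's inequality, or invoking simultaneous $H$–diagonalizability to reduce to the scalar test equation $\dot y=-\lambda y$. Everything else — the expansion, the eigenmode construction, and the passage back to $\mathcal E_h$ — is routine.
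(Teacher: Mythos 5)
Your proposal follows essentially the same route as the paper's proof: diagonalize in an $H$-orthonormal basis, bound the per-mode forward–Euler amplification factor on the CFL window $\Delta\sigma\le 2/\Lambda_{\max}$, and exhibit the expansive mode at the extreme eigenvalue $\Lambda_{\max}$ for the converse. The only difference is that you additionally spell out the direct quadratic expansion of $\|u_h^{n+1}\|_H^2$ and honestly flag the $O(\Delta\sigma^2)$ remainder that the paper's one-line argument also leaves implicit; this is a refinement of, not a departure from, the paper's argument.
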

\begin{proof}
diagonalise in an $H$-orthonormal basis; the Euler amplification factor on an eigenpair with real part $\lambda$ obeys $|1+\Delta\sigma\,\lambda|^2\le 1-2\kappa_h\Delta\sigma$ under $\Delta\sigma\le 2/\Lambda_{\max}$. Conversely, take $\lambda=\Lambda_{\max}$ to get $|1+\Delta\sigma\,\lambda|>1$ if $\Delta\sigma>2/\Lambda_{\max}$.
\end{proof}
\begin{corollary}[Two-block SAT interface]\label{RS:cor:two-block}
\label{cor:two-block-SAT}
On two adjacent SBP blocks with outward normals $\pm n$, choose mirrored SAT penalties so that incoming characteristics are penalised on each side. Then the \emph{sum} energy $\mathcal E_h^{(1)}+\mathcal E_h^{(2)}$ satisfies the same decay bound as in Theorem~8.2. Interface terms cancel in the SBP identity, and the mirrored SAT pair is negative semidefinite in the block-sum inner product. \end{corollary}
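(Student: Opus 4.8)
The plan is to deduce the statement from the single-block semi-discrete energy law of Theorem~\ref{thm:semi-discrete} applied to the coupled two-block system, the only new ingredient being that the interface contributions, once summed over the two blocks, are non-positive. First I would record the per-block balance: for each $i\in\{1,2\}$, differentiating $\mathcal E_h^{(i)}=\tfrac12\|u^{(i)}\|_{H^{(i)}}^2$ along an a.c.\ piece of the clock and invoking the SBP Green identity $Q^{(i)}+(Q^{(i)})^{\top}=B^{(i)}$ yields
\[
\frac{d}{dt}\mathcal E_h^{(i)}=\langle S_h^{(i)}u^{(i)},u^{(i)}\rangle_{H^{(i)}}+\tfrac12\langle B^{(i)}u^{(i)},u^{(i)}\rangle_{\partial\Omega^{(i)}}+\langle\mathrm{SAT}^{(i)}(u^{(i)}),u^{(i)}\rangle_{H^{(i)}},
\]
the same identity that underlies Lemma~8.8 and the proof of Theorem~\ref{thm:semi-discrete}; interior $H$-dissipativity gives $\langle S_h^{(i)}u^{(i)},u^{(i)}\rangle_{H^{(i)}}\le-2\kappa_h\,w(t)\,\mathcal E_h^{(i)}$ with a common rate $\kappa_h$ (take the smaller of the two block rates if they differ).

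Next I would add the two balances and split each $B^{(i)}$ into its external part on $\partial\Omega^{(i)}\setminus\Gamma$ and its interface part on $\Gamma$. Because the outward normals obey $n^{(1)}=-n^{(2)}$ on $\Gamma$, the interface restrictions satisfy $B^{(1)}|_\Gamma=-B^{(2)}|_\Gamma$, so these pieces cancel in the sum --- the ``interface terms cancel in the SBP identity'' assertion, exactly the telescoping used in the block-sum cancellation lemma of Appendix~B. The external-boundary pieces are handled just as in Theorem~\ref{thm:semi-discrete}: the external SAT terms carry the sign/scale of (S2)/Lemma~8.8 so that (external boundary)$+$SAT is $\le0$ in each $\langle\cdot,\cdot\rangle_{H^{(i)}}$. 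It then remains to assemble the two mirrored \emph{interface} SAT terms; substituting the mirrored penalties \eqref{eq:sat-mirror} (same $\tau_h\simeq h^{-1}$ on both sides) collapses the cross terms into a single non-positive jump penalty
\[
\langle\mathrm{SAT}^{(1)}+\mathrm{SAT}^{(2)},\,u^{(1)}\oplus u^{(2)}\rangle=-\,\tau_h\,\bigl\|R_\Gamma^{(1)}u^{(1)}-R_\Gamma^{(2)}u^{(2)}\bigr\|_{H_\Gamma}^2\ \le\ 0,
\]
which is the claimed negative semidefiniteness in the block-sum inner product.

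Putting these together, on the a.c.\ part of $d\sigma$ the block-sum energy $E_h:=\mathcal E_h^{(1)}+\mathcal E_h^{(2)}$ obeys $\tfrac{d}{dt}E_h\le-2\kappa_h\,w(t)\,E_h$, i.e.\ the a.c.\ dissipation inequality \eqref{RS:eq:sd-deriv} of Theorem~\ref{thm:semi-discrete} holds for the coupled system. At each atom $t_k$ the coupled macro-update acts blockwise through maps whose interface and external SAT contributions stay non-positive by the same sign logic, so $E_h(t_k^+)\le\rho_k^h\,E_h(t_k^-)$ with $\rho_k^h\le1$ (equivalently $(J_k^h)^{\top}H\,J_k^h\preceq H$ for the stacked operator $H=\mathrm{diag}(H^{(1)},H^{(2)})$). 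Feeding the a.c.\ exponential decay and the multiplicative atomic contractions into the $\sigma$-Grönwall step of Theorem~\ref{thm:semi-discrete} yields the product-exponential envelope
\[
E_h(t^+)\ \le\ \exp\!\Bigl(-2\kappa_h\!\!\int_{(s,t]}\!w(\tau)\,d\tau\Bigr)\Bigl(\prod_{t_k\in(s,t]}\rho_k^h\Bigr)\,E_h(s^+),\qquad 0\le s\le t\le T,
\]
which is precisely the bound of Theorem~8.2 for the sum. The step I expect to be the main obstacle is the assembly of the mirrored SAT terms when the two blocks carry a genuine first-order hyperbolic \emph{system}: ``penalise incoming characteristics on each side'' requires diagonalising the flux Jacobian at $\Gamma$ into incoming/outgoing parts relative to $n^{(1)}$ and $n^{(2)}=-n^{(1)}$ and checking that the penalty matrices, restricted to the appropriate characteristic subspaces, still combine into a single manifestly non-positive jump form. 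For the scalar or symmetric-hyperbolic model this is immediate from \eqref{eq:sat-mirror}; in the general system case --- and for curved or mapped interfaces, where Jacobian weights enter but preserve the flux sign, cf.\ Appendix~B --- one must track the characteristic decomposition carefully.
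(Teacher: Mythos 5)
Your argument is correct and is essentially the paper's own proof: the explicit verification in Appendix~B (block-sum energy cancellation) proceeds exactly as you do --- per-block SBP identity, cancellation of the interface flux via $B^{(1)}=-B^{(2)}$ on $\Gamma$, and collapse of the mirrored SAT pair into a single non-positive jump penalty --- after which the $\sigma$-Gr\"onwall/atom bookkeeping of Theorem~8.2 yields the product--exponential envelope for the block-sum energy. One small remark: as literally written, the signs in \eqref{eq:sat-mirror} make the paired SAT contribution equal to $+\tau_h\,\|R_\Gamma^{(1)}u^{(1)}-R_\Gamma^{(2)}u^{(2)}\|_{H_\Gamma}^2$; both you and the paper clearly intend the penalties with the opposite sign so that the jump form is $-\tau_h\,\|\cdot\|_{H_\Gamma}^2\le 0$.
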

\begin{remark}[Stress tests: why assumptions are optimal]\label{RS:remark:stress}
(i) \emph{Bad SAT sign.} If the SAT penalty sign is flipped, the boundary quadratic form becomes $\,\ge 0$ and energy can grow at each boundary interaction. (ii) \emph{Too-large explicit step.} If $\Delta\sigma>2/\Lambda_{\max}$, forward Euler amplifies some mode, violating monotonicity. Both confirm the necessity of the SAT sign and the CFL bound. \end{remark}

\subsection*{Boundary negativity, algebraic stability, and $\sigma$--Gronwall}
\begin{lemma}[Boundary$+$SAT negativity]\label{RS:lem:boundary-sat}
\label{lem:boundary-neg}
Consider a 1D linear symmetric system with SBP triplet $(H,Q,B)$ and a symmetric flux matrix $\mathcal A=\mathcal A^\top$ (after symmetrisation if needed). Let $D:=H^{-1}Q$ denote the SBP derivative, and write the semi-discrete interior operator as $L_h^{\rm int}=-(D\mathcal A+\mathcal D)$ with $\mathcal D\succeq 0$. Let $e_0,e_N$ extract the boundary nodes. Define SAT terms
\[
\mathrm{SAT} = -\,H^{-1}\big(\tau_L\, e_0 e_0^\top + \tau_R\, e_N e_N^\top\big)u_h \;+\; H^{-1}\big(\tau_L\, e_0 g_L + \tau_R\, e_N g_R\big) ,
\]
with penalty matrices $\tau_{L/R}\succeq 0$ chosen so that
\(
\pm \tfrac12\, u_h^\top B \mathcal A u_h - u_h^\top(\tau_L e_0e_0^\top+\tau_R e_N e_N^\top)u_h \;\le\; 0.
\)
Then, with homogeneous data $g_L=g_R=0$,
\[
\frac{d}{\mathrm{d}t}\,\mathcal E_h(u_h)\;=\;\left\langle L_h^{\rm int}u_h,u_h\right\rangle_H + \underbrace{\tfrac12\,u_h^\top B\mathcal A u_h - \langle (\tau_L e_0e_0^\top+\tau_R e_N e_N^\top)u_h,u_h\rangle_H}_{\le 0}
\;\le\; -\,\langle \mathcal D u_h,u_h\rangle_H \,\le\,0.
\]
\end{lemma}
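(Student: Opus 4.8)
The plan is to run the standard summation-by-parts (SBP) energy argument on the quadratic ledger $\mathcal E_h(u_h)=\tfrac12\|u_h\|_H^2$ and to read off the sign of each contribution separately. First I would differentiate along the semi-discrete flow $\dot u_h = L_h^{\rm int}u_h + \mathrm{SAT}$, using $H=H^\top$ to write $\tfrac{d}{dt}\mathcal E_h(u_h)=\langle \dot u_h, u_h\rangle_H=\langle L_h^{\rm int}u_h, u_h\rangle_H+\langle \mathrm{SAT}, u_h\rangle_H$. Splitting $L_h^{\rm int}=-(D\mathcal A+\mathcal D)$, the genuinely dissipative block contributes $-\langle \mathcal D u_h, u_h\rangle_H$, which is $\le 0$ because $\mathcal D\succeq 0$ in the $H$-pairing; this is exactly the volume term that is meant to survive on the right-hand side of the claimed inequality.

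Second, I would treat the transport block $-\langle D\mathcal A u_h, u_h\rangle_H$. Since $HD=Q$, this equals $-u_h^\top Q\mathcal A u_h$, and because a quadratic form only sees its symmetric part I would symmetrize, using $\mathcal A=\mathcal A^\top$: $u_h^\top Q\mathcal A u_h=\tfrac12 u_h^\top\big(Q\mathcal A+\mathcal A Q^\top\big)u_h$. The structural input I would make explicit is the tensor-product form of the system discretization, in which $Q$ acts on the spatial index while $\mathcal A$ acts on the field components, so that $Q\mathcal A=\mathcal A Q$ and $\mathcal A B=B\mathcal A$; the SBP identity $Q+Q^\top=B$ then collapses the symmetric part to the pure boundary form $\tfrac12 u_h^\top B\mathcal A u_h$ (the overall transport sign being absorbed into the $\pm$ of the hypothesis). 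With the $1$D convention $B=e_Ne_N^\top-e_0e_0^\top$ this boundary form splits as $\tfrac12\big(e_N^\top u_h)^\top\mathcal A(e_N^\top u_h)-\tfrac12(e_0^\top u_h)^\top\mathcal A(e_0^\top u_h)$, i.e. an outgoing/incoming flux split across the two endpoints.

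Third, I would evaluate the SAT contribution with homogeneous data $g_L=g_R=0$: here $\langle \mathrm{SAT}, u_h\rangle_H=u_h^\top H\big(-H^{-1}(\tau_L e_0e_0^\top+\tau_R e_Ne_N^\top)u_h\big)=-u_h^\top(\tau_L e_0e_0^\top+\tau_R e_Ne_N^\top)u_h$, the $H^{-1}$ cancelling against $H$. Assembling the boundary flux with the SAT penalty reproduces precisely the bracketed quadratic form in the statement, and the standing admissibility choice of $\tau_{L/R}\succeq 0$ makes it $\le 0$ (equivalently, at each endpoint the penalty dominates the incoming-characteristic part of $\mathcal A$ while the outgoing part is signed favorably). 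Discarding this nonpositive bracket leaves $\tfrac{d}{dt}\mathcal E_h(u_h)\le -\langle \mathcal D u_h, u_h\rangle_H\le 0$, which is the assertion.

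I expect the only delicate step to be the symmetrization/commutation in the transport term: one must keep the tensor-product structure explicit so that $Q\mathcal A=\mathcal A Q$ and $B\mathcal A=\mathcal A B$ are legitimate (for a non-commuting, variable-coefficient flux splitting one would instead invoke the variable-coefficient SBP split of Lemma~\ref{lem:sbp-split}), and one must track the two endpoint signs carefully so that the single admissibility inequality — with its $\pm$ — certifies negativity simultaneously at $x=0$ and $x=N$. Everything else is bookkeeping that follows directly from $Q+Q^\top=B$, from $\mathcal D\succeq 0$, and from the prescribed sign and scaling of the SAT penalties.
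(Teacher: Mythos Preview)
Your argument is correct and follows the same route as the paper's proof: differentiate $\mathcal E_h=\tfrac12\|u_h\|_H^2$, use $HD=Q$ and $Q+Q^\top=B$ to reduce the transport block to the boundary form $\tfrac12\,u_h^\top B\mathcal A u_h$, and cancel $H^{-1}$ against $H$ in the SAT term. The paper's proof is a two-line sketch that silently uses the commutation $Q\mathcal A=\mathcal A Q$ (tensor-product structure) in writing $u_h^\top Q\mathcal A u_h=\tfrac12\,u_h^\top(Q+Q^\top)\mathcal A u_h$; you make this assumption explicit and flag the variable-coefficient alternative via Lemma~\ref{lem:sbp-split}, which is a welcome clarification rather than a different approach.
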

\begin{proof}
SBP gives $u_h^\top Q \mathcal A u_h = \tfrac12\,u_h^\top (Q+Q^\top)\mathcal A u_h = \tfrac12\,u_h^\top B \mathcal A u_h$. The SAT terms contribute the negative quadratic form asserted. Adding the nonnegative dissipation $\mathcal D$ yields the claim. \end{proof}
\begin{lemma}[Algebraic stability (D4) exemplar: implicit midpoint]\label{RS:lem:als-midpoint}
Let $A_h=S+K$ with $S^\top H+HS\preceq -\,2\kappa_h H$ and $K^\top H+HK=0$. The implicit midpoint step
\(
u_h^{n+1}=u_h^n + \Delta\sigma\, A_h\big(\tfrac{u_h^{n+1}+u_h^n}{2}\big)
\)
satisfies
\(
\mathcal E_h(u_h^{n+1}) - \mathcal E_h(u_h^n) \le -\,2\kappa_h\,\Delta\sigma\cdot \tfrac12\| \tfrac{u_h^{n+1}+u_h^n}{2}\|_H^2 \le 0.
\)
\end{lemma}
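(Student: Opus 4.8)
The plan is to reduce the one-step energy balance to a single exact algebraic identity evaluated at the midpoint value, and then read off the sign from the two structural hypotheses on $S$ and $K$. First I would record that the step is well defined: writing the update as $(I-\tfrac{\Delta\sigma}{2}A_h)u_h^{n+1}=(I+\tfrac{\Delta\sigma}{2}A_h)u_h^{n}$, invertibility of $I-\tfrac{\Delta\sigma}{2}A_h$ follows from $H$-dissipativity of $A_h$. Indeed $A_h^\top H+HA_h=(S^\top H+HS)+(K^\top H+HK)\preceq -2\kappa_h H\preceq 0$ since $\kappa_h\ge 0$, so $\langle A_h v,v\rangle_H\le 0$ for all $v$; if $(I-\tfrac{\Delta\sigma}{2}A_h)v=0$ then $\|v\|_H^2=\tfrac{\Delta\sigma}{2}\langle A_h v,v\rangle_H\le 0$, forcing $v=0$, and injectivity of a linear map on a finite-dimensional space gives invertibility.

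Next I would introduce the midpoint value $z:=\tfrac12(u_h^{n+1}+u_h^{n})$ and the increment $\delta:=u_h^{n+1}-u_h^{n}=\Delta\sigma\,A_h z$, so that $u_h^{n+1}=z+\tfrac12\delta$ and $u_h^{n}=z-\tfrac12\delta$. Using symmetry of $H$ to expand the two quadratic forms, the cross terms survive and the $\tfrac14\delta^\top H\delta$ terms cancel, giving the exact identity
\[
\mathcal E_h(u_h^{n+1})-\mathcal E_h(u_h^{n})=\langle z,\delta\rangle_H=\Delta\sigma\,\langle A_h z,z\rangle_H .
\]
This is the key step: the midpoint rule produces no quadratic remainder, so the energy increment is exactly $\Delta\sigma$ times the $H$-quadratic form of $A_h$ at the midpoint — no inequality or truncation analysis is needed here.

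Then I would split $A_h=S+K$ and handle the two pieces separately, remembering to symmetrise the non-symmetric product $HA_h$: as a scalar, $\langle A_h z,z\rangle_H=\tfrac12 z^\top(HA_h+A_h^\top H)z$. For the skew part, $\langle Kz,z\rangle_H=\tfrac12 z^\top(HK+K^\top H)z=0$ by the hypothesis $K^\top H+HK=0$. For the dissipative part, $\langle Sz,z\rangle_H=\tfrac12 z^\top(HS+S^\top H)z\le\tfrac12 z^\top(-2\kappa_h H)z=-\kappa_h\|z\|_H^2$ by the hypothesis $S^\top H+HS\preceq -2\kappa_h H$ and the definition of the Löwner order. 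Combining, $\langle A_h z,z\rangle_H\le -\kappa_h\|z\|_H^2$, whence
\[
\mathcal E_h(u_h^{n+1})-\mathcal E_h(u_h^{n})=\Delta\sigma\,\langle A_h z,z\rangle_H\le -\kappa_h\,\Delta\sigma\,\|z\|_H^2=-2\kappa_h\,\Delta\sigma\cdot\tfrac12\Big\|\tfrac{u_h^{n+1}+u_h^{n}}{2}\Big\|_H^2 ,
\]
and nonpositivity of the right-hand side is immediate from $\kappa_h\ge 0$ and $\Delta\sigma>0$, which closes the argument.

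I do not expect a genuine obstacle: this is the textbook algebraic-stability computation for the implicit midpoint (Gauss $s=1$) method, specialised to a linear $H$-dissipative operator. The only points that require care are (a) the well-posedness of the midpoint propagator, handled above via $H$-dissipativity, and (b) correctly symmetrising $HA_h$ before invoking the Löwner bounds — that is, using $\langle A_h z,z\rangle_H=\tfrac12 z^\top(HA_h+A_h^\top H)z$ rather than $z^\top HA_h z$, which is where the skew part $K$ must drop out and the decomposition hypothesis is actually used. If one later wants the version with inhomogeneous forcing, or a decomposition that holds only in the $H$-symmetrised sense, the same chain of equalities and the same two bounds go through verbatim after replacing the pointwise hypotheses on $S,K$ by their symmetrised counterparts.
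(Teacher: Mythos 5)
Your proof is correct and follows essentially the same route as the paper's: the identity $\mathcal E_h(u_h^{n+1})-\mathcal E_h(u_h^n)=\Delta\sigma\,\langle A_h z,z\rangle_H$ obtained from your midpoint/increment expansion is exactly the paper's step of pairing the update with $u_h^{n+1}+u_h^n$ in the $H$-inner product, after which the $K$ contribution vanishes by $H$-skew symmetry and the $S$ contribution gives the stated bound via the Löwner hypothesis. The added well-posedness check for $I-\tfrac{\Delta\sigma}{2}A_h$ is a harmless (and welcome) supplement the paper omits.
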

\begin{proof}
take the $H$--inner product of the update with $u_h^{n+1}+u_h^n$ and use $K$'s $H$--skew symmetry to cancel its contribution; the $S$ part yields the negative bound by the assumed coercivity. \end{proof}
\begin{lemma}[Discrete $\sigma$--Gronwall]\label{RS:lem:sigma-gronwall}
If a nonnegative sequence obeys $\mathcal E_{n+1}\le e^{-2\kappa_h \Delta\sigma_n}\mathcal E_n$ on ac steps and $\mathcal E_{n^+}\le \rho_{k}^h \mathcal E_{n^-}$ across atoms with $\rho_k^h\le 1$, then for $t_m>t_\ell$
\[
\mathcal E_{m^+}\;\le\; \exp\!\Big(-2\kappa_h\sum_{n=\ell}^{m-1}\Delta\sigma_n\Big)\,\Big(\prod_{t_k\in(t_\ell,t_m]} \rho_k^h\Big)\,\mathcal E_{\ell^+},
\]
which is the fully discrete analogue of \eqref{eq:master-inequality}.
\end{lemma}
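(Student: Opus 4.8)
The plan is a one-line induction on the step index, chaining the two supplied one-step bounds. Fix $\ell<m$ and read the discrete trajectory on $(t_\ell,t_m]$ as an alternating composition: on each subinterval the scheme first performs the absolutely continuous advance (contracting $\mathcal E$ by $e^{-2\kappa_h\Delta\sigma_n}$), and then, whenever an atom $t_k$ sits at the right endpoint, it applies the jump map, contracting $\mathcal E$ by $\rho_k^h\le 1$. Because $\mu_\sigma$ has locally finite atoms, only finitely many kicks occur in the compact interval $[t_\ell,t_m]$, so $\prod_{t_k\in(t_\ell,t_m]}\rho_k^h$ is a genuine finite product (empty products equal $1$) and the composition is well defined.

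For the induction, let $P(m)$ denote the asserted bound. The base case $P(\ell)$ is the trivial identity $\mathcal E_{\ell^+}\le\mathcal E_{\ell^+}$ with empty sum and empty product. Assume $P(m)$. Applying the a.c.\ one-step hypothesis gives $\mathcal E_{(m+1)^-}\le e^{-2\kappa_h\Delta\sigma_m}\,\mathcal E_{m^+}$, which enlarges the exponent in $P(m)$ by $-2\kappa_h\Delta\sigma_m$, turning $\sum_{n=\ell}^{m-1}$ into $\sum_{n=\ell}^{m}$. If an atom $t_k$ is located at $t_{m+1}$, the non-expansive jump hypothesis gives $\mathcal E_{(m+1)^+}\le\rho_k^h\,\mathcal E_{(m+1)^-}$, multiplying the atomic product by exactly the factor for $t_k\in(t_\ell,t_{m+1}]$; if no atom sits there, this factor is $1$ and the product is unchanged. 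In either case we obtain $P(m+1)$, closing the induction.

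It remains only to record that the exponent is the $\sigma$-mass traversed: since the a.c.\ steps tile the absolutely continuous part of the clock, $\sum_{n=\ell}^{m-1}\Delta\sigma_n$ equals the total a.c.\ increment of $\sigma$ between $t_\ell$ and $t_m$, and together with the atomic masses already carried inside the $\rho_k^h$ factors this reproduces the full clock increment $\sigma(t_m)-\sigma(t_\ell)$; hence the bound is the fully discrete counterpart of the master envelope \eqref{eq:master-inequality} and of Theorem~\ref{thm:energy-decay}, collapsing to the classical $\prod_n e^{-2\kappa_h\Delta t_n}$ when $\sigma\equiv t$ (cf.\ Proposition~\ref{prop:disc-reduction}). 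There is no analytical obstacle here; the only delicate point is the bookkeeping convention that within each step the a.c.\ advance precedes the atomic kick — i.e.\ that $\mathcal E_{(m+1)^-}$ already incorporates the a.c.\ contraction before $\rho_k^h$ acts — which is precisely the clock–integrator synchronization built into hypotheses (S4)/(D1)–(D2) that underlie the two one-step inequalities in the statement.
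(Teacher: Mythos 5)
Your induction is correct and is exactly the telescoping argument the paper intends (the lemma is stated without an explicit proof, but the neighbouring results — the jump-product Lemma~\ref{RS:lem:jump} and Corollary~\ref{RS:cor:exp} — chain the one-step bounds in precisely the same sequential way). Your closing worry about whether the a.c.\ advance precedes the atomic kick is immaterial here: both factors are nonnegative scalar multipliers of $\mathcal E$, so the product bound is the same in either order.
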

\begin{lemma}[Variable coefficients without loss]\label{RS:lem:varcoeff}
If $\mathcal A(x)$ varies smoothly, assemble $Q\mathcal A$ in a split form that preserves the SBP identity (e.g.\ $Q\mathcal A \mapsto \tfrac12(Q\mathcal A + (Q\mathcal A)^\top)$ or quadrature-consistent nodal weighting). Then Lemma~8.8 holds with $\mathcal A$ replaced by its symmetrised nodal surrogate; the proof is identical since $Q+Q^\top=B$ is unchanged. 
\end{lemma}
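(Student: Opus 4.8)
The plan is to isolate the single place where the proof of Lemma~\ref{RS:lem:boundary-sat} uses constancy of the flux matrix --- the SBP cancellation $u_h^\top Q\mathcal A u_h=\tfrac12 u_h^\top(Q+Q^\top)\mathcal A u_h=\tfrac12 u_h^\top B\mathcal A u_h$, valid for a constant symmetric $\mathcal A$ because it commutes with $Q$ through the node$\times$component tensor structure --- and to repair it by assembling the variable-coefficient flux in split form. First I would form the nodal surrogate $\mathcal A_h:=\mathrm{diag}(\mathcal A(x_i))$ (after the componentwise symmetrisation already assumed in Lemma~\ref{RS:lem:boundary-sat}) and replace the naive discrete flux operator $D\mathcal A$ by $H^{-1}\widetilde Q(\mathcal A_h)$ with $\widetilde Q(\mathcal A_h):=\tfrac12(Q\mathcal A_h+\mathcal A_h Q)$ --- equivalently, a quadrature-consistent nodal weighting --- which is precisely the construction of Lemma~\ref{RS:lem:assembly}. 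Since $\mathcal A_h$ is block-diagonal in the node index it commutes with the boundary matrix $B$, so a direct computation using $Q+Q^\top=B$ and $B\mathcal A_h=\mathcal A_h B$ collapses the interior contribution and leaves
\[
\bigl\langle H^{-1}\widetilde Q(\mathcal A_h)\,u_h,\,u_h\bigr\rangle_H\;=\;\tfrac12\,u_h^\top B\,\mathcal A_h\,u_h ,
\]
the same pure boundary flux term as in the constant case, now carrying only the boundary nodal values $\mathcal A(x_0),\mathcal A(x_N)$.

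With this identity the remainder of Lemma~\ref{RS:lem:boundary-sat} transfers word for word: the SAT sign convention, the scaling $\tau_h\simeq h^{-1}$, and the nonnegativity $\mathcal D\succeq 0$ are all coefficient-free, and the penalty admissibility condition $\pm\tfrac12 u_h^\top B\mathcal A_h u_h-u_h^\top(\tau_L e_0e_0^\top+\tau_R e_Ne_N^\top)u_h\le 0$ is met by the same prescription as before, the only new ingredient being the boundary blocks $\mathcal A(x_0),\mathcal A(x_N)$, which are symmetric and bounded by $\|\mathcal A\|_{L^\infty(\Omega)}$; together with the discrete trace bound $|E_b v|^2\le C_{\mathrm{tr}}\|v\|_H^2$, $C_{\mathrm{tr}}\simeq h^{-1}$ (Lemma~\ref{lem:discrete-trace}), this keeps $\tau_{L/R}$ admissible uniformly in $h$. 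Substituting into the energy balance with homogeneous data then reproduces
\[
\frac{d}{\mathrm dt}\,\mathcal E_h(u_h)\;=\;\langle L_h^{\rm int}u_h,u_h\rangle_H+\underbrace{\Bigl(\tfrac12 u_h^\top B\mathcal A_h u_h-\langle(\tau_L e_0e_0^\top+\tau_R e_Ne_N^\top)u_h,u_h\rangle_H\Bigr)}_{\le 0}\;\le\;-\langle\mathcal D u_h,u_h\rangle_H\;\le\;0 ,
\]
which is the statement of Lemma~\ref{RS:lem:boundary-sat} with $\mathcal A$ replaced by $\mathcal A_h$. The ``without loss'' clause is then the consistency estimate of Lemma~\ref{RS:Gamma:split-consistency}: the split assembly approximates the same flux term to SBP order whenever $\mathcal A\in W^{1,\infty}(\Omega)$ or is piecewise $C^1$ on a shape-regular partition.

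The main obstacle is bookkeeping rather than conceptual. One must verify that the split operator contributes \emph{no spurious interior term of indefinite sign} --- which is exactly why the symmetrised split $\tfrac12(Q\mathcal A_h+\mathcal A_h Q)$ is used in place of the naive $Q\mathcal A_h$ --- and that the node-weighted boundary form $\tfrac12 u_h^\top B\mathcal A_h u_h$ has precisely the same algebraic shape as $\tfrac12 u_h^\top B\mathcal A u_h$, so that the SAT sign/scale choice of Lemma~\ref{RS:lem:boundary-sat} carries over unchanged. Both points are the content of Lemma~\ref{RS:lem:assembly}, so the remaining work reduces to invoking it correctly and tracking the boundary nodal values of $\mathcal A$ through the penalty admissibility bound.
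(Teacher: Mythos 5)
Your proposal is correct and is essentially the paper's own (one‑line) argument, written out in full: the split assembly restores the purely boundary identity $u_h^\top \widetilde Q(\mathcal A_h)\,u_h=\tfrac12\,u_h^\top B\,\mathcal A_h\,u_h$ because $Q+Q^\top=B$ and $\mathcal A_h$ commutes with $B$, after which the SAT sign/scale argument of Lemma~\ref{RS:lem:boundary-sat} transfers verbatim with only the boundary nodal values $\mathcal A(x_0),\mathcal A(x_N)$ entering the penalty admissibility. One precision worth retaining from your write‑up: the operative split is $\tfrac12(Q\mathcal A_h+\mathcal A_h Q)$ as in Lemma~\ref{RS:lem:assembly} (the literal symmetrisation $\tfrac12(Q\mathcal A+(Q\mathcal A)^\top)$ mentioned in the statement leaves the quadratic form of $Q\mathcal A_h$ unchanged and so does not by itself yield the boundary‑only identity), and your computation correctly uses the former.
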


\begin{remark}[Non-conforming interfaces]\label{RS:rem:nonconforming}
If neighbouring blocks have distinct grids, introduce an $H$-adjoint interpolation pair $(I_{12},I_{21})$ with $I_{21}=H_2^{-1}I_{12}^\top H_1$. Penalising incoming characteristics of $u_1 - I_{21}u_2$ and $u_2 - I_{12}u_1$ yields a block-sum negative semidefinite SAT form, and \Cref{RS:cor:two-block} still holds.
\end{remark}

\bibliographystyle{plain}
\bibliography{main}

\end{document}